\definecolor{persianblue}{rgb}{0.11, 0.22, 0.73}
\colorlet{col_linkcolor}{persianblue}
\colorlet{col_citecolor}{persianblue}
\colorlet{col_urlcolor}{persianblue}
\newcommand{\iid}{\textit{i.i.d.}}
\newcommand{\inid}{\textit{i.n.i.d.}}
\DeclareMathOperator{\sign}{sign}
\newcommand{\eps}{\varepsilon}
\newcommand{\Prob}{\mathbb{P}}
\newcommand{\E}{\mathbb{E}}
\newcommand{\Nstar}{\mathbb{N}^{*}}
\newcommand{\Indicator}{\mathds{1}}
\newcommand{\Rb}{\mathbb{R}}
\newcommand{\Zb}{\mathbb{Z}}
\newcommand{\DeltaB}{\Delta_{n,\text{B}}}
\newcommand{\DeltaE}{\Delta_{n,\text{E}}}
\newcommand{\resp}{\textit{resp.}}
\newcommand{\Normal}{\mathcal{N}}
\newcommand{\Ktroisntilde}{\widetilde{K}_{3,n}}
\newcommand{\Ktroisn}{K_{3,n}}
\newcommand{\Kquatren}{K_{4,n}}
\newcommand{\lambdatroisn}{\lambda_{3,n}}
\newcommand{\caracfsum}{f_{S_n}}
\newcommand{\vp}{\text{p.v.}}
\newcommand{\colRev}[1]{#1}
\newcommand*{\ExistingBoundBEiid}{0.4690}
\newcommand*{\ExistingBoundBEinid}{0.5583}
\newcommand*{\NewBoundBEinid}{0.4403}
\newcommand*{\NewBoundBEiid}{0.3990}
\newcommand*{\rninidskew}[1]{r_{#1,n}^{\textnormal{inid,skew}}}
\newcommand*{\rniidskew}[1]{r_{#1,n}^{\textnormal{iid,skew}}}
\newcommand*{\rninidnoskew}[1]{r_{#1,n}^{\textnormal{inid,noskew}}}
\newcommand*{\rniidnoskew}[1]{r_{#1,n}^{\textnormal{iid,noskew}}}
\newcommand*{\RinidInt}{\overline R_{n}^{\textnormal{inid}}}
\newcommand*{\RiidIntNoskew}{\overline R_{n}^{\textnormal{iid,noskew}}}
\newcommand*{\RiidIntSkew}{\overline R_{n}^{\textnormal{iid,skew}}}
\newcommand*{\RiidInt}{\overline R_{n}^{\textnormal{iid}}}
\newcommand*{\Rinid}{R_{n}^{\textnormal{inid}}}
\newcommand*{\Riid}{R_{n}^{\textnormal{iid}}}
\newtheorem{theorem}{Theorem}%  meant for continuous numbers
\newtheorem{proposition}[theorem]{Proposition}% 
\newtheorem{corollary}[theorem]{Corollary}% 
\newtheorem{lemma}[theorem]{Lemma}%
\newtheorem{example}{Example}%
\newtheorem{remark}{Remark}%
\newtheorem{definition}{Definition}%
\newtheorem{assumption}{Assumption}%
\title{Explicit non-asymptotic bounds for the distance to the first-order Edgeworth expansion}
\author{Alexis Derumigny\thanks{Delft University of Technology, Mourik Broekmanweg 6,
2628 XE  Delft, Netherlands.
\newline E-mail address: a.f.f.derumigny@tudelft.nl},
Lucas Girard\thanks{Centre de Recherche en \'Economie et de Statistiques (CREST), CNRS, \'Ecole polytechnique, GENES, ENSAE Paris, Institut Polytechnique de Paris, 91120 Palaiseau, France.
\newline E-mail address: lucas.girard@ensae.fr},
Yannick Guyonvarch\thanks{PSAE-INRAE, 22 Place de l'Agronomie, 91120 Palaiseau, France.
\newline E-mail address:
yannick.guyonvarch@inrae.fr
\newline \indent We would like to thank professors Victor-Emmanuel Brunel and Xavier D'Haultf\oe{}uille for insightful discussions as well as seminar participants at CREST, University of Surrey, Université Paris-Saclay, and CIREQ Montreal Econometrics Conference. 
All possible errors remain ours.
Part of this article was written while A.D. was employed by the University of Twente and Y.G. was employed by the University Paris-Sud and then by Télécom Paris. No specific funding was received to assist with the preparation of this manuscript.}
}
\date{{\small This is the working paper version of the article \cite{derumigny2024explicit} published in \textit{Sankhya A 86}, 261–336 (2024). 
DOI: \url{https://doi.org/10.1007/s13171-023-00320-y}}}
\begin{document}

\maketitle

\begin{abstract}
In this article, we obtain explicit bounds on the uniform distance between the cumulative distribution function of a standardized sum \(S_n\) of \(n\) independent centered random variables with moments of order four and its first-order Edgeworth expansion. Those bounds are valid for any sample size with \(n^{-1/2}\) rate under moment conditions only and \(n^{-1}\) rate under additional regularity constraints on the tail behavior of the characteristic function of~\(S_n\). In both cases, the bounds are further sharpened if the variables involved in \(S_n\) are unskewed. We also derive new Berry-Esseen-type bounds from our results and discuss their links with existing ones. 
Following these theoretical results, we discuss the practical use of our bounds, which depend on possibly unknown moments of the distribution of~\(S_n\).
Finally, we apply our bounds to investigate several aspects of the non-asymptotic behavior of one-sided tests: informativeness, sufficient sample size in experimental design, distortions in terms of levels and p-values.
\end{abstract}

\medskip

\noindent
\textbf{Keywords}: Berry-Esseen bound, Edgeworth expansion, normal approximation, central limit theorem, non-asymptotic tests.

\noindent
\textbf{MSC Classification}: 62E17; 60F05; 62F03.

\section{Introduction}

As the number of observations~$n$ in a statistical experiment goes to infinity, many statistics of interest have the property to converge weakly to a $\Normal(0,1)$ distribution, once adequately centered and scaled, see, e.g., Chapter~5 of \cite{vanderVaart2000} for a thorough introduction.
Hence, when little is known on the distribution of a statistic for a fixed sample size, a classical approach to conduct inference on the parameters of the statistical model amounts to approximating that distribution by its tractable Gaussian limit.
A recurring theme in statistics and probability is thus to quantify the distance between those two distributions for a given~$n$.

In this article, we present some refined results in the canonical case of a standardized sum of independent random variables.
We consider independent but not necessarily identically distributed random variables to encompass a broader range of applications.
For instance, certain bootstrap schemes such as the multiplier ones (see Chapter~9 in \cite{vanderVaartWellner1996} or Chapter~10 in \cite{Kosorok2006}) boil down to studying a sequence of mutually independent not necessarily identically distributed (\inid{}) random variables conditional on the initial sample.

More formally, let ${(X_i)_{i=1, \dots, n}}$ be a sequence of \inid{} random variables satisfying
for every~${i \in \{1,...,n\}}$,
${\E[X_i] = 0}$ and ${\gamma_i := \E[ X_i^4 ] < +\infty}$.
We also define the standard deviation $B_n$ of the sum of the $X_i$'s, i.e., ${B_n := \sqrt{\sum_{i=1}^n\E[X_i^2]}},$ so that the standardized sum can be written as ${S_n := \sum_{i=1}^n X_i/B_n}$.
Finally, we define the average individual standard deviation ${\overline{B}_n := B_n/\sqrt{n}}$ and the average standardized third raw moment ${\lambda_{3,n} := \frac{1}{n}\sum_{i=1}^n\mathbb{E}[X_i^3]/\overline{B}_n^3}$.
The main results of this article are of the form
\begin{align}
\label{eq:edg_exp_res}
    & \underbrace{\sup_{x \in \Rb}
    \left|
    \Prob(S_n \leq x) - \Phi(x) - \frac{\lambda_{3,n}}{6\sqrt{n}}(1-x^2)\varphi(x)
    \right|
    }_{\displaystyle =:\Delta_{n,\text{E}}}
    \leq \delta_n,
\end{align}
where $\Phi$ is the cumulative distribution function of a standard Gaussian random variable, $\varphi$ its density function and $\delta_n$ is a positive sequence that depends on the first four moments of $(X_i)_{i=1,\dots,n}$ and tends to zero under some regularity conditions. 
In the following, we use the notation $G_n(x) := \Phi(x) + \lambda_{3,n} (6\sqrt{n})^{-1} (1-x^2)\varphi(x)$.

The quantity $G_n(x)$ is usually called the one-term Edgeworth expansion of $\Prob\left(S_n \leq x\right)$, hence the letter E in the notation $\Delta_{n,\text{E}}$.
Controlling the uniform distance between $\Prob\left(S_n \leq \cdot \right)$ and $G_n(\cdot)$ has a long tradition in statistics and probability, see for instance \cite{esseen1945} and the books by \cite{cramer1962} and \cite{bhattacharyarao1976}.
As early as in the work of \cite{esseen1945}, it was acknowledged that in independent and identically distributed (\iid{}) cases, $\Delta_{n,\text{E}}$ was of the order $n^{-1/2}$ in general and of the order $n^{-1}$ if $(X_i)_{i=1,\dots,n}$ has a nonzero continuous component.
These results were then extended in a wide variety of directions, often in connection with bootstrap procedures,
see for instance \cite{hall1992bootstrap} and \cite{lahiri2003resampling} for the dependent case.

A one-term Edgeworth expansion can be seen as a refinement of the so-called Berry-Esseen inequality (\cite{berry1941}, \cite{esseen1942}) which goal is to bound
\begin{equation}
\label{eq:def_Delta_n_B}
    \Delta_{n,\text{B}} := \sup_{x \in \Rb}\big| \Prob(S_n \leq x) - \Phi(x) \big|.
\end{equation}
The refinement stems from the fact that in $\Delta_{n,\text{E}},$ the distance between $\Prob\left(S_n \leq \cdot \right)$ and $\Phi$ is adjusted for the presence of non-asymptotic skewness in the distribution of $S_n$.
Contrary to the literature on Edgeworth expansions, there is a substantial amount of work devoted to explicit constants in the Berry-Esseen inequality and its extensions, see, e.g., \cite{bentkus1996}, \cite{bentkus2003}, \cite{pinelis2016}, \cite{chernozhukov2017}, \cite{raivc2018multivariate}, \cite{raic2019}.
The sharpest known result in the \inid{} univariate framework is due to \cite{shevtsova2013}, which shows that for every ${n \in \Nstar}$, if ${\E[|X_i|^3]<+\infty}$ for every~${i \in \{1,...,n\}}$, then $\Delta_{n,\text{B}} \leq \ExistingBoundBEinid  \, K_{3,n} / \sqrt{n}$ where $K_{p,n} := n^{-1} \sum_{i=1}^n\E[|X_i|^p]/(\overline{B}_n)^p$, for ${p \in \Nstar}$, denotes the average standardized $p$-th absolute moment.
$K_{p,n}$ measures tail thickness, with $K_{2,n}$ normalized to 1 and $\Kquatren$ the kurtosis.
An analogous result is given in \cite{shevtsova2013} under the \iid{} assumption where $\ExistingBoundBEinid $ is replaced with $\ExistingBoundBEiid$.
A close lower bound is due to \cite{esseen1956}: there exists a distribution such that $\Delta_{n,\text{B}} = (C_B/\sqrt{n}) \left(n^{-1} \sum_{i=1}^n\E[|X_i|^3]/\overline{B}_n^3\right)$ with ${C_B \approx 0.4098}$. Another line of research applies Edgeworth expansions in order to get a bound on $\Delta_{n,\text{B}}$ that contains higher-order terms, see \cite{adell2008}, \cite{boutsikas2011} and~\cite{zhilova2020new}.

Despite the breadth of those theoretical advances, there remain some limits to take full advantage of those results even in simple statistical applications, for instance, when conducting inference on the expectation of a real random variable.\footnote{In this article, we only give results for \textit{standardized} sums of random variables, i.e., sums that are rescaled by their standard deviation. In practice, the variance is unknown and has to be replaced with some empirical counterpart, leading to what is usually called a \textit{self-normalized} sum. This is an important question in practice that we leave aside for future research. There exist numerous results on self-normalized sums in the fields of Edgeworth expansions and Berry-Esseen inequalities (\cite{hall1987}, \cite{delapena2009}). 
However, the practical limitations of existing results that we point out in our work still prevail.} 
If we focus on Berry-Esseen inequalities, we show in Section~\ref{subsec:uninformativeness_examples} shows that even the sharpest upper bound to date on $\Delta_{n,\text{B}}$ can be uninformative when conducting inference on an expectation even for $n$ larger than 59,000. 
Therefore, it is natural to wonder whether bounds derived from a one-term Edgeworth expansion could be tighter in moderately large samples (such as a few thousands). In the \iid{} case and under some smoothness conditions, \cite{senatov2011} obtains such improved bounds.
To our knowledge, the question is nevertheless still open in the \inid{} setup, as well as in the general setup when no condition on the characteristic function is assumed.
In particular, most articles that present results of the form of~\eqref{eq:edg_exp_res} do not provide a fully explicit value for $\delta_n$, that is, $\delta_n$ is defined up to some ``universal'' but unknown constant, see for instance~\cite{cramer1962} and~\cite{bentkus1997}, among others.

In this article, we derive novel inequalities of the form of~\eqref{eq:edg_exp_res} that aim to be relevant in practical applications.
Such ``user-friendly'' bounds seek to achieve two goals.
First, we provide explicit values for $\delta_n$, which are implemented in the new \texttt{R} package \texttt{BoundEdgeworth} \cite{packageBoundEdgeWorth}
using the function \texttt{Bound\_EE1} (the function \texttt{Bound\_BE} provides a bound on \(\DeltaB\)).
Second, the bounds $\delta_n$ should be small enough to be informative even with small (${n \approx}$~hundreds) to moderate (${n \approx}$~thousands) sample sizes.
We obtain these bounds in an \iid{} setting and in a more general \inid{} case only assuming finite fourth moments.

We give improved bounds on $\Delta_{n,\text{E}}$ under some regularity assumptions on the tail behavior of the characteristic function $\caracfsum$ of~$S_n$.
Such conditions are related to the continuity of the distribution of~$S_n$ and the differentiability of the corresponding density (with respect to Lebesgue's measure).
These are well-known conditions required for the Edgeworth expansion to be a good approximation of ${\Prob(S_n\leq \cdot \,)}$ with fast rates.
Our main results are summed up in Table~\ref{tab:summary_all_results}.

\begin{table}[ht]
\begin{center}
\begin{minipage}{\textwidth}
\makegapedcells % for beautiful alignment of cells
\begin{tabular*}{\textwidth}{@{\extracolsep{\fill}}lll@{\extracolsep{\fill}}}
    \toprule
        \textit{Setup}
        & \textit{General case}
        %(Section~\ref{ssec:thms_nocont})
        &\textit{Under regularity assumptions on} ${\caracfsum}$
        %(Section~\ref{ssec:thms_cont}) 
        \\
    \midrule
        \multirow{2}{*}{ \inid{} }
        & $\dfrac{0.3990 K_{3,n}}{\sqrt{n}} + O(n^{-1})$
        & $\dfrac{0.195 \, K_{4,n}
        + 0.038 \, \lambda_{3,n}^2}{n} + O(n^{-5/4}+n^{-p/2})$ \\
        & (Theorem~\ref{thm:nocont_choiceEps}) & (Corollary~\ref{cor:improvement_inid_case}) \\
    \midrule
        \multirow{2}{*}{ \iid{} }
        & $\dfrac{0.1995(K_{3,n}+1)}{\sqrt{n}} + O(n^{-1})$ 
        & $\dfrac{0.195 \, K_{4,n}
        + 0.038 \, \lambda_{3,n}^2}{n} %+ \tilde c(n,K_{3,n}) 
        + O(n^{-5/4})$ \\
        & (Theorem~\ref{thm:nocont_choiceEps}) & (Corollary~\ref{cor:improvement_iid_case}) \\
    \bottomrule
\end{tabular*}
\caption{Summary of the new bounds on $\Delta_{n,\text{E}}$ under different scenarios.
    We use the notation $O(n^{-\alpha})$ to indicate terms that are smaller than $C n^{-\alpha}$ for some constant $C$.
    All these terms are given with explicit expressions for any sample size and most of them are significantly reduced when there is no skewness. 
    $p \geq 0$ is a constant depending on the tail decay of the characteristic function~$\caracfsum$.
    Note that the corresponding term is dominant if \(p \leq 2\) (see Section~\ref{ssec:thms_cont} for additional discussions).
    For this application of Corollary~\ref{cor:improvement_iid_case}, we impose an alternative tail decay condition, namely $\sup_n \kappa_n < 1$ (see Section~\ref{ssec:thms_cont} for the definition of $\kappa_n$).}
\label{tab:summary_all_results}
\end{minipage}
\end{center}
\end{table}

In the rest of this section, we
% provide more details about the lack of information given by the Berry-Esseen inequality in Example~\ref{ex:be_ineq_mean} and
introduce notation used in the rest of the paper.
Section~\ref{ssec:thms_nocont} presents our bounds on $\DeltaE$ under moment conditions only in \inid{} or \iid{} settings.
In Section~\ref{ssec:thms_cont}, we develop tighter bounds under regularity assumptions on the characteristic function of~$S_n$.
They rely on an alternative control of \(\DeltaE\) that involves the integral of \(\caracfsum\),
enabling us to use additional regularity assumptions on the tails of that function.
In Section~\ref{sec:practical_considerations}, we discuss practical aspects related to our bounds: how to choose or estimate the moments of the distribution of~\(S_n\) involved in order to compute our bounds.
We also perform numerical comparisons between our and existing bounds for some particular distributions (Student and Gamma).\footnote{
The code to replicate our results is available in the Github repository \newline \url{https://github.com/AlexisDerumigny/Reproducibility-BoundsDistanceEdgeworth}. 
}
%and comparison for two particular examples.
In Section~\ref{sec:application_to_testing}, we apply our results to analyze several aspects of one-sided tests based on the normal approximation of a sample mean.
In particular, based on our bounds, we propose a new method to compute sufficient sample sizes for experimental design with given effect size to be detected and nominal power.
All proofs are postponed in the appendix.
The proofs of the main results are gathered in Appendix~\ref{appendix:sec:proof_main_theorems}, relying on the computations of Appendix~\ref{appendix:sec:omega}. 
Useful lemmas are given in Appendix~\ref{sec:lemmas}.

\medskip

\textbf{Additional notation.}
$\vee$ (\resp{} $\wedge$) denotes the maximum (\resp{} minimum) operator. 
For a random variable $X$, we denote its probability distribution by $P_X$.
For a distribution $P$, let $f_P$ denote its characteristic function; similarly, for a random variable $X$, we denote by $f_X$ its characteristic function.
We recall that $f_{\mathcal{N}(0,1)}(t)=e^{-t^2/2}$.
We denote the (extended) lower incomplete Gamma function by
$\gamma(a, x) := \int_0^x |u|^{a-1} e^{-u} du$
(for $a > 0$ and $x \in \Rb$),
the upper incomplete Gamma function by
$\Gamma(a,x) := \int_x^{+\infty} u^{a-1} e^{-u} du$
(for $a \geq 0$ and $x  > 0$)
and the standard gamma function by 
$\Gamma(a) := \Gamma(a,0) = \int_0^{+\infty} u^{a-1} e^{-u} du$
(for $a > 0$).
For two sequences $(a_n),$ $(b_n),$ we write $a_n = O(b_n)$ whenever there exists $C>0$ such that ${a_n \leq C b_n}$; $a_n = o(b_n)$ whenever $a_n / b_n \to 0$; and $a_n \asymp b_n$ whenever $a_n = O(b_n)$ and $b_n = O(a_n)$.
We denote by $\chi_1$ the constant $\chi_1 := \sup_{x>0} x^{-3} |\cos(x)-1+x^2/2| \approx 0.099$
% 0.099162
\citep{shevtsova2010}, and by $\theta_1^*$ the unique root in $(0,2\pi)$ of the equation $\theta^2+2\theta\sin(\theta)+6(\cos(\theta)-1)=0$. 
We also define $t_1^* := \theta_1^* / (2\pi) \approx 0.64$
% 0.635967
\citep{shevtsova2010}.
For every~${i \in \Nstar}$, we define the individual standard deviation ${\sigma_{i} := \sqrt{\E[X_i^2]}}$.
Henceforth, we reason for a fixed arbitrary sample size~${n \in \Nstar}$.
Densities and continuous distributions are always assumed implicitly to be with respect to Lebesgue's measure.

\medskip

For clarity, we define below the concept of an explicit expression. In the rest of the article, the goal is to find bounds on $\DeltaE$ that are explicit expressions in the sense of Definition~\ref{def:explicit_expression}.

\begin{definition}
    An expression is called \textbf{explicit} if it can be written as a finite sequence of terms.
    A \textbf{term} is defined as
    \begin{itemize}[nolistsep]
        \item either a numerical constant (i.e. a computable real number),
        \item or one of the parameters of the framework (such as $n$, $\lambda_{3,n}$, $\Kquatren$ and so on),
        \item or one of the standard functions (rational functions, exponential functions, logarithmic functions, incomplete Gamma functions, indicator functions, absolute value, maximum or minimum) applied to a finite set of terms,
        \item or, recursively, as an explicit expression itself.
    \end{itemize}
    
    \label{def:explicit_expression}
\end{definition}

\section{Control of \texorpdfstring{$\boldsymbol{\Delta_{n,\text{E}}}$}{Delta(n,E)} under moment conditions only}
\label{ssec:thms_nocont}

We start by introducing two versions of our basic assumptions on the distribution of the variables~$(X_i)_{i=1, \dots, n}$.

\begin{assumption}[Moment conditions in the \inid{} framework]
\label{hyp:basic_as_inid}
	$(X_i)_{i=1, \dots, n}$ are independent and centered random variables 
	such that for every $i=1,\dots,n$, the fourth raw individual moment $\gamma_{i} := \E[ X_i^4 ]$ is positive and finite.
\end{assumption}

\begin{assumption}[Moment conditions in the \iid{} framework]
\label{hyp:basic_as_iid}
% 	For every $n \in \Nstar,$ 
	$(X_i)_{i=1,\dots,n}$ are \iid{} centered random variables such that the fourth raw moment $\gamma_{n} := \E[ X_n^4 ]$ is positive and finite.
\end{assumption}

Assumption~\ref{hyp:basic_as_iid} corresponds to the classical \iid{} sampling with finite fourth moment while Assumption~\ref{hyp:basic_as_inid} is its generalization in the \inid{} framework.
Those two assumptions primarily ensure that enough moments of $(X_i)_{i=1,\dots,n}$ exist to build a non-asymptotic upper bound on $\Delta_{n,\text{E}}.$
In some applications, such as the bootstrap, it is required to consider an array of random variables $(X_{i,n})_{i=1,\dots,n}$ instead of a sequence. For example, \cite{efron1979}'s nonparametric bootstrap procedure consists in drawing $n$ elements in the random sample $(X_{1,n},...,X_{n,n})$ with replacement. Conditional on $(X_{i,n})_{i=1,\dots,n},$ the $n$ values drawn with replacement can be seen as a sequence of $n$ \iid{} random variables with distribution $\frac{1}{n}\sum_{i=1}^n\delta_{\{X_{i,n}\}}$,
% that depends on $n$,
denoting by $\delta_{\{a\}}$ the Dirac measure at a given point~${a \in \Rb}$.
Our results encompass these situations directly.
Nonetheless, we do not use the array terminology here as our results hold non-asymptotically, i.e., for any fixed sample size~$n$.

To state our first theorem, 
remember that ${\overline{B}_n := (1 / \sqrt{n}) \sqrt{\sum_{i=1}^n\sigma_{i}^2}}$, for $p \in \Nstar$,
$K_{p,n} := n^{-1} \sum_{i=1}^n\E[|X_i|^p]/ \overline{B}_n^p$,
and let us introduce
\(\Ktroisntilde := K_{3,n} + \frac{1}{n}\sum_{i=1}^n\mathbb{E}|X_i|\sigma_{i}^2 / \overline{B}_n^3\),
$\Delta:= (1 - 4 \chi_1 - \sqrt{K_{4,n}/n}) / 2$,
and the terms $\rninidskew{1}$, $\rninidnoskew{1}$, $\rniidskew{1}$ and $\rniidnoskew{1}$.

These remainder terms are defined by:
\begin{align}
    \rninidskew{1}
    &:= \frac{(14.1961 + 67.0415) \, \Ktroisntilde^4}{16\pi^4 n^2}
    + \frac{4.3394 \, |\lambdatroisn| \Ktroisntilde^3}{8 \pi^3 n^2}+ \frac{1.0435 K_{4,n}^{5/4}}{n^{5/4}} \nonumber \\ 
    & + \frac{1.1101 \Kquatren^{3/2} + 31.9921 |\lambdatroisn| \times \Kquatren}{n^{3/2}} +
    \frac{0.6087 \Kquatren^{7/4}}{n^{7/4}}
    + \frac{9.8197\Kquatren^{2}}{n^2} \nonumber \\
    &+ \frac{ |\lambdatroisn| \big( \Gamma( 3/2 , \sqrt{0.2} (n/K_{4,n})^{1/4} \wedge 2 \sqrt{n} / \Ktroisntilde)
    - \Gamma( 3/2 , 2 \sqrt{n} / \Ktroisntilde) \big) }{\sqrt{n}} \nonumber \\
    &+ \frac{1.0253\Ktroisn}{6\pi\sqrt{n}} \Bigg\{ 0.5|\Delta|^{-3/2}\Indicator_{\{\Delta \neq 0\}} \times \bigg|\gamma(3/2, 4 \Delta n / \Ktroisntilde^2) \nonumber \\
    & \qquad\qquad\qquad\;\; - \gamma\big(3/2, 2\Delta ( 0.1 (n/K_{4,n})^{1/2} \wedge 2 n / \Ktroisntilde^2 ) \big) \bigg| \nonumber \\
    & \qquad\qquad\qquad\;\; + \Indicator_{\{\Delta = 0\}}
    \frac{ ( 2 \sqrt{n} / \Ktroisntilde )^3
    - (\sqrt{0.2} (n/K_{4,n})^{1/4} \wedge 2 \sqrt{n} / \Ktroisntilde )^3 }{3} \Bigg\},
    \label{eq:def_r_1n_inid_eps_eq_01}
\end{align}
\begin{align}
    \rninidnoskew{1}
    &:= \frac{(14.1961 + 67.0415) \, \Ktroisntilde^4}{16\pi^4 n^2} + 
    \frac{0.6661 \Kquatren^{3/2}}{n^{3/2}} + 
    \frac{6.1361 \Kquatren^{2}}{n^{2}} \nonumber \\
    &+ \frac{1.0253\Kquatren}{6\pi n} \Bigg\{ 0.5|\Delta|^{-2}\Indicator_{\{\Delta \neq 0\}} \times \bigg|\gamma(2, 4 \Delta n / \Ktroisntilde^2) \nonumber \\
    & \qquad\qquad\qquad\;\; - \gamma\big(2, 2\Delta ( 0.1 (n/K_{4,n})^{1/2} \wedge 2 n / \Ktroisntilde^2 ) \big) \bigg| \nonumber \\
    & \qquad\qquad\qquad\;\; + \Indicator_{\{\Delta = 0\}}
    \frac{ ( 2 \sqrt{n} / \Ktroisntilde )^4
    - (\sqrt{0.2} (n/K_{4,n})^{1/4} \wedge 2 \sqrt{n} / \Ktroisntilde )^4 }{4} \Bigg\},
    \label{eq:def_r_1n_inid_eps_eq_01_noskew}
\end{align}
\begin{align}
    \rniidskew{1} & :=
    \frac{(14.1961 + 67.0415) \, \Ktroisntilde^4}{16\pi^4 n^2}
    + \frac{4.3394 \, |\lambdatroisn| \Ktroisntilde^3}{8 \pi^3 n^2}
    % &+ \RiidIntSkew \nonumber \\
    + \RiidIntSkew \nonumber \\
    & + \frac{1.306 \big( e_{2,n}
    % - e_3(0.1)
    - 1.006792
    \big) \lambdatroisn^2}{36 n} \nonumber \\
    &+ \frac{ |\lambdatroisn| \big( \Gamma( 3/2 , \sqrt{0.2} (n/K_{4,n})^{1/4} \wedge 2 \sqrt{n} / \Ktroisntilde)
    - \Gamma( 3/2 , 2 \sqrt{n} / \Ktroisntilde) \big) }{\sqrt{n}} \nonumber \\
    &+ 
    \frac{1.0253 \times 2^{5/2} \, \Ktroisn}{3 \pi \sqrt{n}}
    \bigg( \Gamma \big( 3/2, \big\{ \sqrt{0.2} (n/\Kquatren)^{1/4} \wedge 2\sqrt{n}/\Ktroisntilde \big\}^2/8 \big) \nonumber 
    \\
    & \qquad \qquad \qquad \qquad \qquad - \Gamma \big( 3/2,
    4 n / (8 \Ktroisntilde^2)
    \big) \bigg),
    \label{eq:def_r_1n_iid_eps_eq_01}
\end{align}
and
\begin{align}
    \rniidnoskew{1}
    &:= \frac{(14.1961 + 67.0415) \, \Ktroisntilde^4}{16\pi^4 n^2}
    % &+ \RiidIntNoskew \nonumber \\
    + \RiidIntNoskew \nonumber \\
    &+ \frac{16 \times 1.0253\Kquatren}{3\pi n} 
    \bigg( \Gamma \big( 2 , \big\{ \sqrt{0.2} (n/\Kquatren)^{1/4}
    \wedge 2\sqrt{n}/\Ktroisntilde \big\}^2/8 \big) \nonumber \\
    & \qquad\qquad\qquad\qquad\qquad - \Gamma \big( 2 , 4 n / (8 \Ktroisntilde^2) \big) \bigg),
    \label{eq:def_r_1n_iid_eps_eq_01_noskew}
\end{align}
where
\begin{normalsize}
\begingroup \allowdisplaybreaks
\begin{align}
    \RiidIntSkew 
    &:=  \dfrac{0.06957 |\lambdatroisn|}{n^{1.5}} 
    + \dfrac{0.6661 \Kquatren}{n^{2}} 
    + \dfrac{0.4441 \lambdatroisn^{2}}{n^{2}} 
    + \dfrac{0.6087 |\lambdatroisn|  \times \Kquatren}{n^{2.5}} \nonumber \\
    & + \dfrac{0.2221 \Kquatren^{2}}{n^{3}} 
    \nonumber \\
    & + e_{2,n} \times \Bigg(\dfrac{0.1088 \Kquatren^{2}}{n^{2}} 
    + \dfrac{1.3321 \Kquatren}{n^{2}} 
    + \dfrac{0.3972 |\lambdatroisn|  \times \Kquatren^{0.75}}{n^{2.25}} \nonumber \\
    & \qquad + \dfrac{0.04441 \Kquatren^{1.5}}{n^{2.5}} 
    + \dfrac{0.02961 \Kquatren^{0.5}  \times \lambdatroisn^{2}}{n^{2.5}} 
    + \dfrac{0.006620 |\lambdatroisn|  \times \Kquatren^{1.25}}{n^{2.75}} 
    \nonumber \\
    & + \dfrac{0.0003701 \Kquatren^{2}}{n^{3}} 
    + \dfrac{4.0779 }{n^{2}} 
    + \dfrac{2.4316 |\lambdatroisn|  \times \Kquatren^{-0.25}}{n^{2.25}} 
    + \dfrac{0.2719 \Kquatren^{0.5}}{n^{2.5}} 
    \nonumber \\
    & + \dfrac{0.1813 \Kquatren^{-0.5}  \times \lambdatroisn^{2}}{n^{2.5}}
    + \dfrac{0.1216 |\lambdatroisn|  \times \Kquatren^{0.25}}{n^{2.75}} 
    + \dfrac{0.002266 \Kquatren}{n^{3}} 
    \nonumber \\ 
    & + \dfrac{0.3625 |\lambdatroisn|^{2}  \times \Kquatren^{-0.5}}{n^{2.5}} 
    + \dfrac{0.05404 |\lambdatroisn|  \times \Kquatren^{-0.75}  \times \lambdatroisn^{2}}{n^{2.75}}
    \nonumber \\
    & + \dfrac{0.01209 |\lambdatroisn|^{2}  \times \Kquatren^{0}}{n^{3}} 
    + \dfrac{0.002027 |\lambdatroisn|  \times \Kquatren^{0.75}}{n^{3.25}} 
    + \dfrac{0.004531 \Kquatren}{n^{3}} 
    \nonumber \\
    & + \dfrac{0.006042 \Kquatren^{0}  \times \lambdatroisn^{2}}{n^{3}} 
    + \dfrac{7.552 \times 10^{-5} \Kquatren^{1.5}}{n^{3.5}} 
    \nonumber \\ & + \dfrac{0.002014 \Kquatren^{-1}  \times \lambdatroisn^{4}}{n^{3}} 
    + \dfrac{0.0009006 |\lambdatroisn|  \times \Kquatren^{-0.25}  \times \lambdatroisn^{2}}{n^{3.25}} \nonumber \\
    & + \dfrac{5.035 \times 10^{-5} \Kquatren^{0.5}  \times \lambdatroisn^{2}}{n^{3.5}} 
    \nonumber \\ & + \dfrac{0.0001007 |\lambdatroisn|^{2}  \times \Kquatren^{0.5}}{n^{3.5}} 
    + \dfrac{1.126 \times 10^{-5} |\lambdatroisn|  \times \Kquatren^{1.25}}{n^{3.75}} 
    + \dfrac{3.147 \times 10^{-7} \Kquatren^{2}}{n^{4}} 
    \nonumber \\
    & + \dfrac{0.2983 |\lambdatroisn|  \times \Kquatren}{n^{1.5}} 
    \nonumber \\ 
    & + \dfrac{1.8261 |\lambdatroisn|}{n^{1.5}} 
    + \dfrac{0.5445 |\lambdatroisn|^{2}  \times \Kquatren^{-0.25}}{n^{1.75}} 
    + \dfrac{0.06087 |\lambdatroisn|  \times \Kquatren^{0.5}}{n^{2}} \nonumber \\
    & + \dfrac{0.04058 |\lambdatroisn|  \times \Kquatren^{-0.5}  \times \lambdatroisn^{2}}{n^{2}} 
    \nonumber \\ & + \dfrac{0.009074 |\lambdatroisn|^{2}  \times \Kquatren^{0.25}}{n^{2.25}} 
    + \dfrac{0.0005073 |\lambdatroisn|  \times \Kquatren}{n^{2.5}}
    \Bigg),
    \label{eq:definition_R2n_int_skew_bound_when_eps_01}
\end{align}
\endgroup
\begingroup \allowdisplaybreaks
\begin{align}
    \RiidIntNoskew  
    & := \dfrac{0.6661 \Kquatren}{n^{2}} 
    + \dfrac{0.2221 \Kquatren^{2}}{n^{3}} 
    + e_{2,n} \times
    \Bigg(\dfrac{0.1088 \Kquatren^{2}}{n^{2}} \nonumber \\
    & + \dfrac{1.3321 \Kquatren}{n^{2}} 
    + \dfrac{0.04441 \Kquatren^{1.5}}{n^{2.5}} 
    \nonumber \\
    & + \dfrac{0.0003701 \Kquatren^{2}}{n^{3}}
    + \dfrac{4.0779 }{n^{2}}
    + \dfrac{0.2719 \Kquatren^{0.5}}{n^{2.5}} 
    + \dfrac{0.002266 \Kquatren}{n^{3}}
    \nonumber \\
    & + \dfrac{0.004531 \Kquatren}{n^{3}} 
    + \dfrac{7.552 \times 10^{-5} \Kquatren^{1.5}}{n^{3.5}} 
    + \dfrac{3.147 \times 10^{-7} \Kquatren^{2}}{n^{4}} 
    \Bigg).
    \label{eq:definition_R2n_int_noskew_bound_when_eps_01}
\end{align}
\endgroup
and
\begin{align*}
    e_{2,n} &:= \exp\Bigg(
    0.0119 + 0.000071 \times
    \bigg( \frac{42.9326|\lambda_{3,n}|}{(K_{4,n}^{1/4}n^{1/4})}
    + 4.8 \left(\frac{K_{4,n}}{n}\right)^{1/2} \\
    & + \frac{3.2\lambda_{3,n}^2}{(K_{4,n}n)^{1/2}}
    + \frac{0.7156K_{4,n}^{1/4}|\lambda_{3,n}|}{n^{3/4}}
    + \frac{0.04K_{4,n}}{n} \bigg) \Bigg).
\end{align*}
\end{normalsize}

% The last four quantities are given explicitly in Equations~\eqref{eq:def_r_1n_inid_eps_eq_01},~\eqref{eq:def_r_1n_inid_eps_eq_01_noskew},~\eqref{eq:def_r_1n_iid_eps_eq_01}, and~\eqref{eq:def_r_1n_iid_eps_eq_01_noskew} respectively. They only depend on \(n\) and the moments \(\Ktroisn\), \(\Ktroisntilde\), \(\lambdatroisn\), and \(\Kquatren\).
The following theorem is proved in Sections~\ref{ssec:proof:inid_nocont} (``\inid{}'' case) and~\ref{ssec:proof:iid_nocont} (``\iid{}'' case).

\begin{theorem}[Control of the one-term Edgeworth expansion with bounded moments of order four]
\label{thm:nocont_choiceEps} 
    If Assumption~\ref{hyp:basic_as_inid} (\textit{resp.} Assumption~\ref{hyp:basic_as_iid}) holds and $n \geq 3$, we have the bound
    \begin{equation}
        \DeltaE \leq  \frac{0.1995 \, \widetilde{K}_{3,n}}{\sqrt{n}}
        + \frac{0.031 \, \widetilde{K}_{3,n}^2
        + 0.195 \, K_{4,n}
        + 0.054 \, |\lambda_{3,n}|\widetilde{K}_{3,n}
        + 0.03757 \, \lambda_{3,n}^2}{n} + r_{1,n} \, ,
    \label{eq:generall_bound_nocont}
    \end{equation}
    where $r_{1,n}$ is one of the four possible remainders $\rninidskew{1}$, $\rninidnoskew{1}$, $\rniidskew{1}$ or $\rniidnoskew{1}$, depending on whether Assumption~\ref{hyp:basic_as_inid} (``\inid{}'' case) or~\ref{hyp:basic_as_iid} (``\iid{}'' case) is satisfied and whether $\E[X_i^3]=0$ for every $i = 1,\dots,n$ (``noskew'' case) or not (``skew'' case).
\end{theorem}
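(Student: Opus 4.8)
The plan is to bound $\DeltaE$ through Esseen's smoothing inequality, which controls the uniform distance between a distribution function and a smooth comparison function by an integral of the difference of their Fourier transforms. Writing $\hat G_n(t) = e^{-t^2/2}\bigl(1 + \frac{\lambdatroisn}{6\sqrt n}(it)^3\bigr)$ for the Fourier--Stieltjes transform of the Edgeworth expansion $G_n$, the smoothing lemma gives, for every $T>0$,
$$\DeltaE \le \frac{1}{\pi}\int_{-T}^{T}\left|\frac{\caracfsum(t)-\hat G_n(t)}{t}\right|\,dt + \frac{c_0}{T},$$
where $c_0$ is an explicit constant coming from $\sup_{x}|G_n'(x)|$. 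The whole difficulty is then transferred to estimating the characteristic-function integral and to choosing $T\asymp\sqrt n$.

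First I would treat the central region $|t|\le t_*\sqrt n$ for a fixed threshold $t_*$. There I would write $\caracfsum(t)=\prod_{i=1}^n f_{X_i/B_n}(t)$, expand each factor, and bound the remainder of $\log\caracfsum(t)$ using only $\E[X_i^2]$, $\E[X_i^3]$ and $\gamma_i=\E[X_i^4]$. The sharp inequality $|\cos x - 1 + x^2/2|\le \chi_1|x|^3$ and its sine analogue let me control the relevant third-order remainders with the small explicit constant $\chi_1\approx 0.099$, which governs the sharpness of the leading coefficient $0.1995$ in~\eqref{eq:generall_bound_nocont}. Matching the expansion of $\caracfsum$ against $\hat G_n$ cancels the Gaussian and the third-cumulant contributions, leaving a pointwise bound on $|\caracfsum(t)-\hat G_n(t)|$ whose leading behaviour in $t$ yields, after dividing by $|t|$ and integrating against the Gaussian weight, the $\Ktroisntilde/\sqrt n$ term and the $\Kquatren/n$ and $\lambdatroisn^2/n$ corrections. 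The composite moment $\Ktroisntilde$ arises precisely from collecting the absolute-moment bounds used when the individual $\E[X_i^3]$ cannot be signed.

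The second region $t_*\sqrt n\le|t|\le T$ is where the main obstacle lies: here the Taylor expansion is no longer accurate and I must instead bound $|\caracfsum(t)|$ directly. I would use a Shevtsova-type estimate of the form $|f_{X_i/B_n}(t)|\le \exp(-c\,t^2+\dots)$ valid up to the critical frequency governed by $t_1^*\approx 0.64$, and integrate the resulting tail contribution. The interplay between the split point $t_*$, the truncation $T$, and this characteristic-function tail bound is delicate: these choices must be optimised to keep the numerical constants small while ensuring that everything produced on this range is absorbed into the remainder $r_{1,n}$. The four variants $\rninidskew{1}$, $\rninidnoskew{1}$, $\rniidskew{1}$, $\rniidnoskew{1}$ correspond to the two sampling schemes (the \iid{} case permitting sharper, summation-free bounds) and to whether the third moments vanish; in the ``noskew'' case $\lambdatroisn=0$, so $G_n=\Phi$ and several expansion terms disappear. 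The precise arithmetic of this optimisation, carried out in the appendices via the auxiliary function $\omega$, is the most technical part.

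Finally, for the rate statement I would inspect the explicit formulas~\eqref{eq:def_r_1n_inid_eps_eq_01}--\eqref{eq:def_r_1n_iid_eps_eq_01_noskew}. Assuming $\Kquatren=O(1)$, Jensen's and Lyapunov's inequalities force $\Ktroisn$, $\Ktroisntilde$ and $|\lambdatroisn|$ to be $O(1)$ as well, so it suffices to substitute these orders into each remainder and read off the dominant power of $n$. Terms carrying a factor $\lambdatroisn$ or $\lambdatroisn^2$ drop out in the ``noskew'' cases, and the absence of the $i$-indexed heterogeneity in the \iid{} setting removes further low-order contributions; together these explain the improvement from $O(n^{-5/4})$ to $O(n^{-3/2})$ and $O(n^{-2})$ in the respective variants.
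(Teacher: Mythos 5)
Your overall architecture (split the frequency axis into a central region treated by fourth-order Taylor expansion of $\log\caracfsum$, an intermediate region treated by Shevtsova-type characteristic-function decay involving $\chi_1$ and $t_1^*$, and a truncation $T\asymp\sqrt n$) parallels the paper's proof, but your starting point is fatally weaker for the result as stated: you invoke the \emph{classical} Esseen smoothing inequality, whose smoothing cost is $c_0/T$ with $c_0 = 24\sup_x|G_n'(x)|/\pi \approx 24\varphi(0)/\pi \approx 3.06$. With the largest admissible truncation $T = 2\pi\sqrt n/\Ktroisntilde$ this term alone contributes roughly $0.49\,\Ktroisntilde/\sqrt n$, already more than twice the claimed leading term $0.1995\,\Ktroisntilde/\sqrt n$; and you cannot push $T$ further, because under moment conditions only, the exponential bound on $|\caracfsum|$ (Equation~\eqref{eq:bound_caracfsum_u_exp}) becomes trivial beyond the frequency $2\pi\sqrt n/\Ktroisntilde$, so the integral term would no longer be controllable. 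The paper's proof instead rests on Prawitz's smoothing inequality (Lemma~\ref{lemma:smoothing}), whose kernel $\Psi$ yields a smoothing cost $\Omega_1$ that, after \emph{numerical} optimization of the kernel integrals (Lemma~\ref{lemma:bound_omega_1}, giving $\sup_T I_{1,1}(T)\le 1.2533$), equals $1.2533/T$, whence the constant $1.2533/(2\pi)<0.1995$. Since the entire content of Theorem~\ref{thm:nocont_choiceEps} is the explicit constants (the $n^{-1/2}$ rate being classical), this substitution of smoothing lemmas is a genuine gap, not a stylistic variant.

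Two secondary points. First, your claim that $\chi_1\approx 0.099$ ``governs the sharpness of the leading coefficient $0.1995$'' is incorrect: $\chi_1$ enters only the tail bounds ($\Omega_2$, $\Omega_4$ and hence the remainders $r_{1,n}$), while $0.1995$ comes entirely from the Prawitz kernel numerics; relatedly, several other constants in the bound ($67.0415$, $1.2187$, $14.1961$, \dots) also require numerical integration, and the paper notes that purely analytic bounding of $\Omega_1$ roughly doubles the leading constant. Second, your treatment of part (iii) is essentially right in spirit, but ``substitute orders and read off the dominant power'' is not quite enough: the remainders contain differences of incomplete Gamma functions evaluated at arguments growing like powers of $n$, and one needs the expansion $\Gamma(a,x)=x^{a-1}e^{-x}(1+O((a-1)/x))$ to verify these are exponentially small rather than merely bounded, before the stated rates $O(n^{-5/4})$, $O(n^{-3/2})$, $O(n^{-2})$ follow.
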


{\color{black}
\begin{remark}
    Assume that there exists a constant $K_4$ such that $K_{4,n} \leq K_4$ for all $n \geq 3$ (this is the case, for example, if the data is an \iid~sample from a given infinite homogeneous population).
    Then the remainder terms can be bounded in the following way:
    $|\rninidskew{1}| = O(n^{-5/4})$, 
    $|\rninidnoskew{1}| = O(n^{-3/2})$,
    $|\rniidskew{1}| = O(n^{-5/4})$, and
    $|\rniidnoskew{1}| = O(n^{-2})$.
    This can be seen directly from the previous equations, as it is always possible to find the main term, and then bound all the others by the required powers.
\end{remark}

\begin{remark}
    In the regime where $K_{4,n}$ tends to infinity faster than $\sqrt{n}$, our bounds do not tend to $0$. This is the case in particular for the term that is multiplied by $\Indicator_{\{\Delta \neq 0\}}$.
    In this case, the bounds given by Theorem~\ref{thm:nocont_choiceEps} are still valid; in some cases, the right-hand side will be larger than $1$ and therefore the inequality trivially still holds.
    This can be interpreted in the following sense: the average kurtosis of the distribution increases too fast for the distance to the first-order Edgeworth expansion to be controlled by our techniques.
\end{remark}
}

Note that it is possible to replace $\Ktroisntilde$ by the simpler upper bound $2 K_{3,n}$ under Assumption~\ref{hyp:basic_as_inid} (respectively by $K_{3,n}+1$ under Assumption~\ref{hyp:basic_as_iid}).
This theorem displays a bound of order $n^{-1/2}$ on $\DeltaE$ {\color{black}in the regime where $K_{4,n}$ is bounded by a fixed constant.}
The rate $n^{-1/2}$ cannot be improved when only assuming moment conditions on $(X_i)_{i=1,\dots,n}$ (\cite{esseen1945}, \cite{cramer1962}).
Another nice aspect of those bounds is their dependence on $\lambdatroisn$.
For many classes of distributions, $\lambda_{3,n}$ can, in fact, be exactly zero.
This is the case if for every $i = 1,\dots,n$, $X_i$ has a non-skewed distribution, such as any distribution that is symmetric around its expectation.
More generally, $|\lambda_{3,n}|$ can be substantially smaller than $K_{3,n}$, decreasing the related terms.

As mentioned in the Introduction, we are not aware of explicit bounds on $\DeltaE$ under moment conditions only.
It is thus difficult to assess how our bounds compare to the literature.
On the other hand, there exist well-established bounds on $\DeltaB$.
Using Theorem~\ref{thm:nocont_choiceEps}, the bound $(1-x^2)\varphi(x) / 6 \leq \varphi(0)/6 \leq 0.0665 $ for all $x \in \Rb$, and applying the triangle inequality, we can control \(\DeltaB\) as well.
More precisely, for every $n \geq 3$, we have
\begin{equation}
\label{eq:be_bound}
    \DeltaB \leq \frac{0.1995\widetilde{K}_{3,n}+0.0665|\lambda_{3,n}|}{\sqrt{n}} 
    + O(n^{-1}).
\end{equation}
Under Assumption~\ref{hyp:basic_as_inid}, \(\Ktroisntilde \leq 2 \Ktroisn \).
Combined with the refined inequality $|\lambda_{3,n}| \leq 0.621K_{3,n}$ \citep[Theorem~1]{pinelis2011relations}, we can derive a simpler bound that involves only $K_{3,n}$
\begin{align*}
    \frac{0.1995\widetilde{K}_{3,n}
    + 0.0665|\lambda_{3,n}|}{\sqrt{n}}
    \leq \frac{\NewBoundBEinid K_{3,n}}{\sqrt{n}}.
\end{align*}

The bound $\DeltaB \leq \NewBoundBEinid K_{3,n} / \sqrt{n} + O(n^{-1})$ 
is already tighter than the sharpest known Berry-Esseen inequality in the \inid{} framework, $\DeltaB \leq \ExistingBoundBEinid K_{3,n}/\sqrt{n}$,
as soon as the remainder term \(O(n^{-1}\) is smaller than the difference $0.118 K_{3,n}/\sqrt{n}$.
This bound is also tighter than the sharpest known Berry-Esseen inequality in the \iid{} case, $\DeltaB \leq \ExistingBoundBEiid K_{3,n}/\sqrt{n}$, up to a \(O(n^{-1}\) term.
We recall that the sharpest existing bounds \citep{shevtsova2013} only require a finite third moment while we use further regularity in the form of a finite fourth moment.
We refer to Example~\ref{ex:be_bounds_comparison_nocont} and Figure~\ref{fig:graph_nocont} for a numerical comparison, showing improvements for $n$ of the order of a few thousands.
The most striking improvement is obtained in the unskewed case when ${\E[X_i^3] = 0}$ for every integer~$i$. 
In this case, Theorem~\ref{thm:nocont_choiceEps} and the inequality ${\Ktroisntilde \leq 2 K_{3,n}}$ yield ${\DeltaB \leq \NewBoundBEiid K_{3,n} / \sqrt{n} + O(n^{-1})}$.
Note that this result does not contradict \cite{esseen1956}'s lower bound ${0.4098 K_{3,n} / \sqrt{n}}$ as the distribution he constructs does not satisfy $\E[X_i^3] = 0$ for every~$i$.

Under Assumption~\ref{hyp:basic_as_iid}, $\Ktroisntilde \leq K_{3,n}+1$ and we can combine this with~\eqref{eq:be_bound} and the inequality $|\lambda_{3,n}| \leq 0.621K_{3,n}$, so that we obtain
\begin{align*}
    \DeltaB & \leq \frac{0.1995 (K_{3,n} + 1)
    + 0.0665 \times 0.621 K_{3,n}}{\sqrt{n}}
    + O(n^{-1}) \\
    & \leq \frac{0.2408 K_{3,n} + 0.1995}{\sqrt{n}}
    + O(n^{-1}).
\end{align*}
As in the \inid{} case discussed above, the numerical constant in front of $K_{3,n}$ in the leading term is smaller than the lower bound constant ${C_B:=0.4098}$ derived in \cite{esseen1956}.
The point is addressed in detail in \cite{shevtsova2012}, where the author explains that the constant coming from \cite{esseen1956} cannot be improved only if one seeks control of $\DeltaB$ with a leading term of the form
$c_1 K_{3,n}/\sqrt{n}$ for some $c_1 > 0$.
In contrast, our bound on $\DeltaB$ exhibits a leading term of the form $(c_1 K_{3,n} + c_2)/\sqrt{n}$ for positive constants~$c_1$ and~$c_2$.

\begin{example}[Implementation of our bounds on $\DeltaB$]
\label{ex:be_bounds_comparison_nocont}
Theorem~\ref{thm:nocont_choiceEps} provides new tools to control $\DeltaB$, and we compare them with existing results.
To compute our bounds, we need numerical values for $\Ktroisntilde$, $\lambdatroisn$, and $\Kquatren$ or upper bounds thereon.
{\color{black}As discussed in Section~\ref{subsec:discussion_bounds}, controlling \(\Kquatren\) is in fact sufficient to bound~\(\DeltaE\) and~\(\DeltaB\). 
In that section, we also explain that the choice \(\Kquatren \leq 9\) is reasonable in practice as it covers a wide range of commonly encountered distributions.
Consequently, we stick to this value in our numerical examples.}

The different bounds{\color{black}, without or with the assumption of an unskewed distribution (\(\lambdatroisn = 0\)),} are plotted as a function of~\(n\) in Figure~\ref{fig:graph_nocont}:
%The different bounds (including the remainder term $r_{1,n}$ for which explicit expressions are given
%{\color{black}above}
% in the proof of Theorem~\ref{thm:nocont_choiceEps}
%) are plotted as a function of $n$ in Figure~\ref{fig:graph_nocont}:
\begin{itemize}
    
    \item \cite{shevtsova2013} \inid{}: $\frac{\ExistingBoundBEinid }{\sqrt{n}} \Ktroisn$
    
    \item \cite{shevtsova2013} \iid{}: $\frac{\ExistingBoundBEiid }{\sqrt{n}} \Ktroisn$
    
    \item Thm.~\ref{thm:nocont_choiceEps} \inid{}: $\frac{\NewBoundBEinid }{\sqrt{n}} \Ktroisn + r_{1,n}$
    
    \item Thm.~\ref{thm:nocont_choiceEps} \inid{} (unskewed): $\frac{\NewBoundBEiid }{\sqrt{n}} \Ktroisn + r_{1,n}$
    
    \item Thm.~\ref{thm:nocont_choiceEps} \iid{}:
    $\frac{0.2408 K_{3,n} + 0.1995}{\sqrt{n}} + r_{1,n}$

    \item Thm.~\ref{thm:nocont_choiceEps} \iid{} (unskewed): $\frac{0.1995 (\Ktroisn + 1)}{\sqrt{n}} + r_{1,n}$,
    
\end{itemize}
{\color{black}where the explicit expressions of~\(r_{1,n}\), according to the set-up, are given in Equations~\eqref{eq:def_r_1n_inid_eps_eq_01}, \eqref{eq:def_r_1n_inid_eps_eq_01_noskew}, \eqref{eq:def_r_1n_iid_eps_eq_01}, and~\eqref{eq:def_r_1n_iid_eps_eq_01_noskew}.}

\begin{figure}[t]
    \centering
    \includegraphics[width=0.92\textwidth]{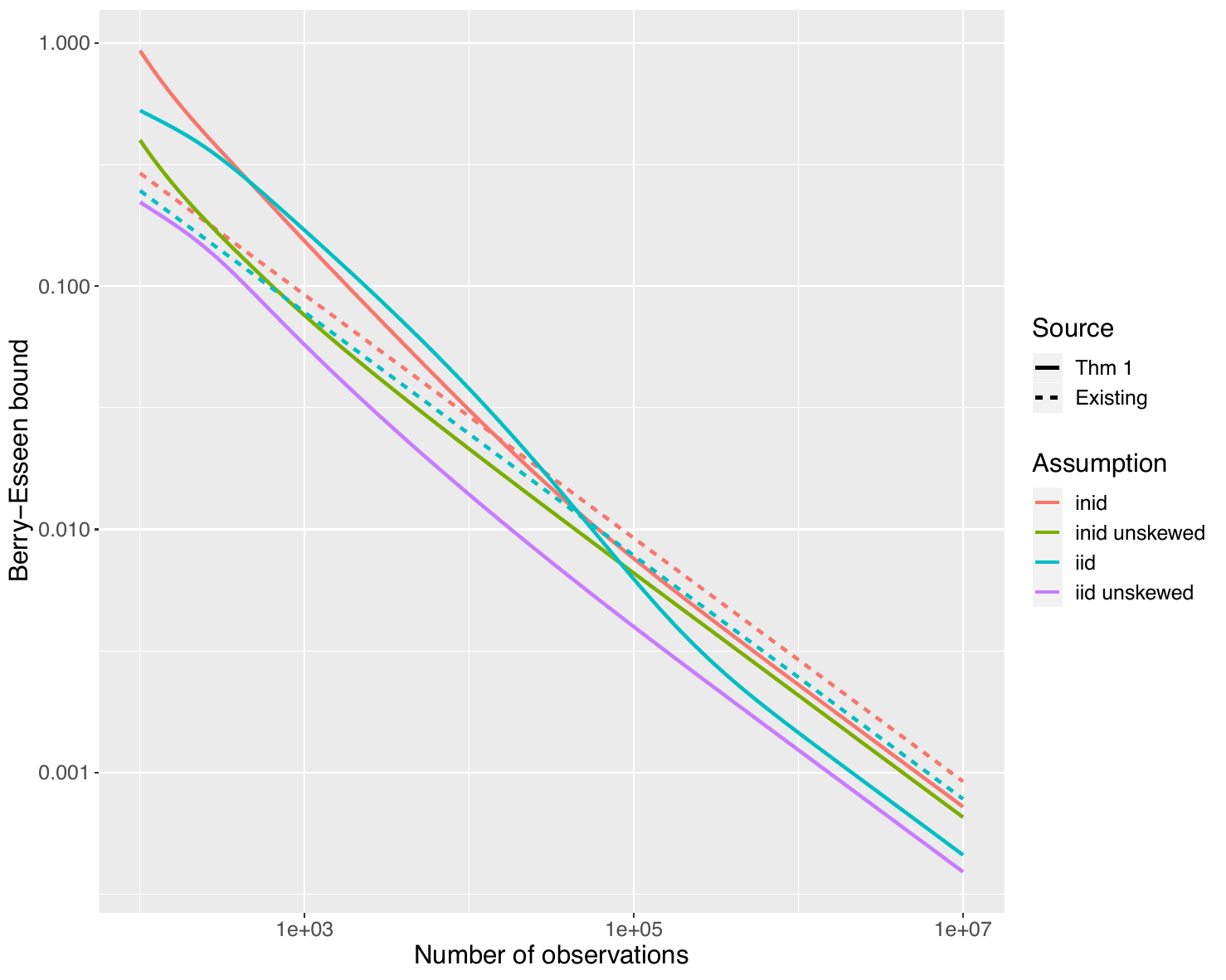}
    \caption{{\small Comparison between existing \citep{shevtsova2013} and new (Theorem~\ref{thm:nocont_choiceEps}) Berry-Esseen upper bounds on $\Delta_{n,\text{B}} := \sup_{x \in \Rb}\left| \Prob(S_n \leq x) - \Phi(x) \right|$ for different sample sizes under moment conditions only (log-log scale).
    {\color{black}As remarked by a reviewer, we note that the improvement we obtain should not come as a surprise since our results require boundedness of \(4\)th order moments while \cite{shevtsova2013}'s bounds remain valid under boundedness of $3$rd order moments only. 
    In that respect, the comparison is somewhat unfair.}
    %{\color{black}As remarked by a reviewer, we note that this comparison is quite unfair since we assume boundedness of the $4$th order moment while \citep{shevtsova2013} only requires boundedness of the $3$rd order moment.}
    }}
    \label{fig:graph_nocont}
\end{figure}
As previously mentioned, our bound in the baseline \inid{} case gets close to and even improves upon the best known Berry-Esseen bound in the \iid{} setup \citep{shevtsova2013} for $n$ of the order of tens of thousands. 
When ${\lambdatroisn = 0}$, our bounds are smaller, highlighting improvements of the Berry-Esseen bounds for unskewed distributions.
In parallel, the bounds are also reduced in the \iid{} framework.
\end{example}

\section{Improved bounds on \texorpdfstring{$\boldsymbol{\DeltaE}$}{Delta(n,E)} under assumptions on the tail behavior of \texorpdfstring{$\boldsymbol{\caracfsum}$}{fSn}}
\label{ssec:thms_cont}

In this section, we derive tighter bounds on~$\DeltaE$ under additional regularity conditions on the tail behavior of the characteristic function of~$S_n$.
They follow from Theorem~\ref{thm:cont_choiceEps}, which provides an alternative upper bound on~$\DeltaE$ that involves the tail behavior of $\caracfsum$.
To state this theorem, let us introduce the terms $\rninidskew{2}$, $\rninidnoskew{2}$, $\rniidskew{2}$ and $\rniidnoskew{2}$
{\color{black}
% which are given explicitly in Equations~\eqref{eq:def_r_2n_inid_eps_eq_01},~\eqref{eq:def_r_2n_inid_eps_eq_01_noskew},~\eqref{eq:def_r_2n_iid_eps_eq_01}, and~\eqref{eq:def_r_2n_iid_eps_eq_01_noskew}
% respectively and only depend on \(n\) and the moments \(\Ktroisn\), \(\Ktroisntilde\), \(\lambdatroisn\), and \(\Kquatren\).
\begin{align}
    \rninidskew{2} & := \frac{1.2533 \, \Ktroisntilde^4}{16\pi^4n^2}
    + \frac{0.3334 \, \Ktroisntilde^4 \, |\lambda_{3,n}|}{16\pi^4n^{5/2}}
    + \frac{14.1961 \, \Ktroisntilde^{16}}{(2\pi)^{16}n^8}
    + \frac{4.3394 \, |\lambdatroisn| \, \Ktroisntilde^{12}}{(2\pi)^{12} n^{13/2}} \nonumber \\
    & + \frac{|\lambdatroisn|
    \big( \Gamma( 3/2 , \sqrt{0.2} (n/K_{4,n})^{1/4} \wedge 16\pi^3n^2/\Ktroisntilde^4)
    - \Gamma( 3/2 , 16\pi^3n^2/\Ktroisntilde^4) \big)}{\sqrt{n}} \nonumber \\
    & + \frac{1.0435 K_{4,n}^{5/4}}{n^{5/4}} + 
    \frac{1.1101 \Kquatren^{3/2} + 8.2383 |\lambdatroisn| \times \Kquatren}{n^{3/2}} +
    \frac{0.6087 \Kquatren^{7/4}}{n^{7/4}} \nonumber \\
    & +
    \frac{9.8197\Kquatren^{2}}{n^2} \nonumber \\
    & + \frac{1.0253\Ktroisn}{6\pi\sqrt{n}} \Bigg\{ 0.5|\Delta|^{-3/2}\Indicator_{\{\Delta \neq 0\}} \times \bigg|\gamma(3/2, 2^8\pi^6 \Delta n^4 / \Ktroisntilde^8) \nonumber \\
    & \qquad\qquad\qquad\;\; - \gamma\big(3/2, \Delta ( 0.2 (n/K_{4,n})^{1/2} \wedge 2^8\pi^6 n^4 / \Ktroisntilde^8 ) \big) \bigg| \nonumber \\
    & \qquad\qquad\qquad + \Indicator_{\{\Delta = 0\}}
    \frac{(16 \pi^3 n^2 / \Ktroisntilde^4)^3
    - (\sqrt{0.2} (n/K_{4,n})^{1/4} \wedge 16 \pi^3 n^2 / \Ktroisntilde^4)^3}{3}
    \Bigg\} \nonumber \\  
    & + \frac{1.0253}{\pi} \left( \Gamma\left( 0 , (4\pi^2n/\Ktroisntilde^2 \wedge 144\pi^8n^4/\Ktroisntilde^8)(1-4\pi\chi_1t_1^*) / (2\pi^2) \right) \right. \nonumber \\
    & \left. \qquad\qquad\quad - \Gamma\left( 0 , (4t_1^{*2}\pi^2n/\Ktroisntilde^2 \wedge 144\pi^6n^4/\Ktroisntilde^8)(1-4\pi\chi_1t_1^*) / 2 \right) \right) \nonumber \\
    & + \frac{1.0253}{\pi} \left( \Gamma\left( 0 , (4\pi^2n/\Ktroisntilde^2 \wedge 144\pi^8n^4/\Ktroisntilde^8) / (2\pi^2) \right) - \Gamma\left( 0 , 144\pi^6n^4/\Ktroisntilde^8 \right) \right),
    \label{eq:def_r_2n_inid_eps_eq_01}  
\end{align}
\begin{align}
    \rninidnoskew{2} & := \frac{1.2533 \, \Ktroisntilde^4}{16\pi^4n^2}
    + \frac{14.1961 \, \Ktroisntilde^{16}}{(2\pi)^{16}n^8}
    \nonumber \\
    & + \frac{1.0253\Kquatren}{6\pi n} \Bigg\{ 0.5|\Delta|^{-2}\Indicator_{\{\Delta \neq 0\}} \times \bigg|\gamma(2, 2^8\pi^6 \Delta n^4 / \Ktroisntilde^8) \nonumber \\
    & \qquad\qquad\qquad\;\; - \gamma\big(2, \Delta ( 0.2 (n/K_{4,n})^{1/2} \wedge 2^8\pi^6 n^4 / \Ktroisntilde^8 ) \big) \bigg| \nonumber \\
    & \qquad\qquad\quad + \Indicator_{\{\Delta = 0\}}
    \frac{(16 \pi^3 n^2 / \Ktroisntilde^4)^4 
    - (\sqrt{0.2} (n/K_{4,n})^{1/4} \wedge 16 \pi^3 n^2 / \Ktroisntilde^4)^4}{4}
    \Bigg\} \nonumber \\  
    & + \frac{1.0253}{\pi} \left( \Gamma\left( 0 , (4\pi^2n/\Ktroisntilde^2 \wedge 144\pi^8n^4/\Ktroisntilde^8)(1-4\pi\chi_1t_1^*) / (2\pi^2) \right) \right. \nonumber \\
    & \left. \qquad\qquad\quad - \Gamma\left( 0 , (4t_1^{*2}\pi^2n/\Ktroisntilde^2 \wedge 144\pi^6n^4/\Ktroisntilde^8)(1-4\pi\chi_1t_1^*) / 2 \right) \right) \nonumber \\
    & + \frac{1.0253}{\pi} \left( \Gamma\left( 0 , (4\pi^2n/\Ktroisntilde^2 \wedge 144\pi^8n^4/\Ktroisntilde^8) / (2\pi^2) \right) \right. \nonumber \\
    & \left. \qquad\qquad\quad - \Gamma\left( 0 , 144\pi^6n^4/\Ktroisntilde^8 \right) \right).
    \label{eq:def_r_2n_inid_eps_eq_01_noskew}    
\end{align}
\begin{align}
    \rniidskew{2} & := \frac{1.2533 \, \Ktroisntilde^4}{16\pi^4n^2}
    + \frac{0.3334 \, \Ktroisntilde^4 \, |\lambda_{3,n}|}{16\pi^4n^{5/2}}
    + \frac{14.1961 \, \Ktroisntilde^{16}}{(2\pi)^{16}n^8}
    + \frac{4.3394 \, |\lambdatroisn| \, \Ktroisntilde^{12}}{(2\pi)^{12} n^{13/2}} \nonumber \\
    & + \frac{|\lambdatroisn|
    \big( \Gamma( 3/2 , \sqrt{0.2} (n/K_{4,n})^{1/4} \wedge 16\pi^3n^2/\Ktroisntilde^4)
    - \Gamma( 3/2 , 16\pi^3n^2/\Ktroisntilde^4) \big)}{\sqrt{n}} \nonumber \\
    & + \RiidIntSkew \nonumber \\
    & + \frac{1.0253 \times 2^{5/2} \, \Ktroisn}{3\pi\sqrt{n}} \big| \Gamma(3/2, 2^5\pi^6n^4/\Ktroisntilde^8) \nonumber \\
    &\qquad\qquad\qquad\qquad\quad\; - \Gamma(3/2, 0.1\sqrt{n/(16\Kquatren)} \wedge 2^5\pi^6n^4/\Ktroisntilde^8) \big| \nonumber \\
    & + \frac{1.306 \big( e_{2,n}(0.1) - e_3(0.1) \big) \lambdatroisn^2}{36 n} \nonumber \\
    & + \frac{1.0253}{\pi} \left( \Gamma\left( 0 , (4\pi^2n/\Ktroisntilde^2 \wedge 144\pi^8n^4/\Ktroisntilde^8)(1-4\pi\chi_1t_1^*) / (2\pi^2) \right) \right. \nonumber \\
    & \left. \qquad\qquad\quad - \Gamma\left( 0 , (4t_1^{*2}\pi^2n/\Ktroisntilde^2 \wedge 144\pi^6n^4/\Ktroisntilde^8)(1-4\pi\chi_1t_1^*) / 2 \right) \right) \nonumber \\
    & + \frac{1.0253}{\pi} \left( \Gamma\left( 0 , (4\pi^2n/\Ktroisntilde^2 \wedge 144\pi^8n^4/\Ktroisntilde^8) / (2\pi^2) \right) - \Gamma\left( 0 , 144\pi^6n^4/\Ktroisntilde^8 \right) \right),
    \label{eq:def_r_2n_iid_eps_eq_01}    
\end{align}
and
\begin{align}
    \rniidnoskew{2}
    & := \frac{1.2533 \, \Ktroisntilde^4}{16\pi^4n^2}
    + \frac{14.1961 \, \Ktroisntilde^{16}}{(2\pi)^{16}n^8} + \RiidIntNoskew \nonumber \\
    & + \frac{16 \times 1.0253 \, \Ktroisn
        \big| \Gamma(2, 2^5\pi^6n^4/\Ktroisntilde^8) - \Gamma(2, 0.1\sqrt{n/(16\Kquatren)} \wedge 2^5\pi^6n^4/\Ktroisntilde^8) \big|}{3\pi n} \nonumber \\  
    & + \frac{1.0253}{\pi} \left( \Gamma\left( 0 , (4\pi^2n/\Ktroisntilde^2 \wedge 144\pi^8n^4/\Ktroisntilde^8)(1-4\pi\chi_1t_1^*) / (2\pi^2) \right) \right. \nonumber \\
    & \left. \qquad\qquad\quad - \Gamma\left( 0 , (4t_1^{*2}\pi^2n/\Ktroisntilde^2 \wedge 144\pi^6n^4/\Ktroisntilde^8)(1-4\pi\chi_1t_1^*) / 2 \right) \right) \nonumber \\
    & + \frac{1.0253}{\pi} \left( \Gamma\left( 0 , (4\pi^2n/\Ktroisntilde^2 \wedge 144\pi^8n^4/\Ktroisntilde^8) / (2\pi^2) \right) \right. \nonumber \\
    & \left. \qquad\qquad\quad - \Gamma\left( 0 , 144\pi^6n^4/\Ktroisntilde^8 \right) \right).
    \label{eq:def_r_2n_iid_eps_eq_01_noskew}
\end{align}
}

Recall also that $t_1^* \approx 0.64$ and let
$a_n := 2t_1^*\pi\sqrt{n}/\Ktroisntilde \wedge 16\pi^3n^2/\Ktroisntilde^4$ and
$b_n := 16\pi^4n^2/\Ktroisntilde^4$. In practice, even for fairly small~\(n\), \(a_n\) is equal to \(2t_1^*\pi\sqrt{n}/\Ktroisntilde\).

\begin{theorem}
    \noindent
    If Assumption~\ref{hyp:basic_as_inid} (\textit{resp.} Assumption~\ref{hyp:basic_as_iid}) holds and $n \geq 3$, we have the bound
    \begin{align}
        \label{eq:generall_bound_cont}
        \DeltaE \leq \frac{0.195 \, K_{4,n}
        + 0.038 \, \lambda_{3,n}^2}{n}
        + \frac{1.0253}{\pi} \int_{a_n}^{b_n} \frac{|\caracfsum(t)|}{t}dt
        + r_{2,n} \, ,
    \end{align}
    where $r_{2,n}$ is one of the four possible remainders $\rninidskew{2}$, $\rninidnoskew{2}$, $\rniidskew{2}$ or $\rniidnoskew{2}$, depending on whether Assumption~\ref{hyp:basic_as_inid} (``\inid{}'' case) or~\ref{hyp:basic_as_iid} (``\iid{}'' case) is satisfied and whether $\E[X_i^3]=0$ for every $i = 1,\dots,n$ (``noskew'' case) or not (``skew'' case).
\label{thm:cont_choiceEps}
\end{theorem}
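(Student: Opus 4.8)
The plan is to obtain the alternative bound \eqref{eq:generall_bound_cont} by starting from a smoothing/inversion inequality that expresses $\DeltaE$ in terms of the integral of $|\caracfsum(t) - \widehat{G}_n(t)|/t$, where $\widehat{G}_n$ denotes the Fourier--Stieltjes transform of the Edgeworth approximation $G_n$. Recall that $G_n(x) = \Phi(x) + \lambda_{3,n}(6\sqrt{n})^{-1}(1-x^2)\varphi(x)$, whose transform is the explicit function $e^{-t^2/2}\bigl(1 + \lambda_{3,n}(it)^3/(6\sqrt{n})\bigr)$. The key idea is to split the frequency axis into three regions determined by the thresholds $a_n$ and $b_n$: a central region $t \in [0, a_n]$, where the difference $\caracfsum - \widehat{G}_n$ is controlled by a Taylor-type expansion of the cumulants of $S_n$ up to order four; an intermediate/outer region $t \in [a_n, b_n]$, where one cannot hope to control the difference termwise and instead bounds $|\caracfsum(t)|$ and $|\widehat{G}_n(t)|$ separately, the latter having Gaussian decay and the former being left \emph{as is} inside the integral $\int_{a_n}^{b_n} |\caracfsum(t)|/t\,dt$; and a far tail $t \geq b_n$ where the standard Esseen bound on the density of $G_n$ finishes the estimate.

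First I would invoke an Esseen-type smoothing inequality (a lemma of the kind collected in Appendix~\ref{sec:lemmas}) to write $\DeltaE \lesssim \frac{1}{\pi}\int_0^{T} |\caracfsum(t) - \widehat{G}_n(t)|/t\,dt + (\text{boundary term in } 1/T)$, and then choose the cutoff so that the boundary term is absorbed into the $O(n^{-1})$ leading constant. The principal term $\frac{0.195\,K_{4,n} + 0.038\,\lambda_{3,n}^2}{n}$ should emerge from the central region: on $[0,a_n]$ the logarithm of $\caracfsum$ admits a fourth-order cumulant expansion whose residual, after subtracting the Edgeworth polynomial already encoded in $\widehat{G}_n$, is governed by the fourth cumulant (hence $K_{4,n}$) and by the square of the skewness (hence $\lambda_{3,n}^2$, coming from the squared third-order term in the exponential expansion). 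The constants $0.195$ and $0.038$, together with the factor $1.0253/\pi$ multiplying the surviving integral, would be pinned down using the auxiliary constant $\chi_1 \approx 0.099$ and the standard Gaussian moment integrals; these are exactly the computations deferred to Appendix~\ref{appendix:sec:omega}. The choice $a_n = 2t_1^*\pi\sqrt{n}/\Ktroisntilde \wedge 16\pi^3 n^2/\Ktroisntilde^4$ is designed so that on $[0,a_n]$ the cumulant expansion remains valid (the characteristic function stays bounded away from its singular behavior, which is where $t_1^*$ enters), and $b_n = 16\pi^4 n^2/\Ktroisntilde^4$ marks where the Gaussian envelope of $\widehat{G}_n$ becomes negligible.

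The remainder $r_{2,n}$ would then collect (i) the error in replacing the exact characteristic-function difference on $[0,a_n]$ by its principal part, (ii) the contribution of $|\widehat{G}_n(t)|/t$ on $[a_n,b_n]$ (Gaussian-small because $a_n \asymp \sqrt{n}$), and (iii) the far-tail contribution beyond $b_n$. The four variants $\rninidskew{2}, \rninidnoskew{2}, \rniidskew{2}, \rniidnoskew{2}$ arise exactly as in Theorem~\ref{thm:nocont_choiceEps}: the \iid{} structure sharpens the bound on $\Ktroisntilde$ (from $2K_{3,n}$ to $K_{3,n}+1$) and hence improves the constants in the cumulant estimates, while the unskewed hypothesis $\E[X_i^3]=0$ forces $\lambda_{3,n}=0$, killing the leading odd-order term in the expansion and improving the rate of the residual. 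I expect the rate claims ($O(n^{-5/4})$ in the skew cases, $O(n^{-3/2})$ and $O(n^{-2})$ in the noskew cases) to follow by inserting the explicit forms of the $r_{2,n}$ from Equations~\eqref{eq:def_r_2n_inid_eps_eq_01}--\eqref{eq:def_r_2n_iid_eps_eq_01_noskew} and using $K_{4,n}=O(1)$, which forces $\Ktroisntilde = O(1)$ and thus $a_n \asymp \sqrt{n}$.

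The main obstacle, as I see it, is not the central-region expansion (which is routine Taylor analysis) but the careful bookkeeping needed to ensure that the \emph{boundary} between what is absorbed into the explicit leading term, what is left inside the integral $\int_{a_n}^{b_n}|\caracfsum(t)|/t\,dt$, and what is relegated to $r_{2,n}$ is chosen so that every discarded piece is genuinely of order $n^{-5/4}$ or smaller. In particular, one must verify that the intermediate-region contribution of $\widehat{G}_n$ and the far-tail contribution past $b_n$ both decay faster than $n^{-1}$ uniformly in the four cases --- this is where the precise values of $a_n$ and $b_n$, and the dependence of the Gaussian tail on $\sqrt{n}$, have to be matched delicately against the polynomial-in-$n$ thresholds. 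Getting the constant $1.0253/\pi$ to be tight while still honestly bounding the neglected high-frequency parts of $\caracfsum - \widehat{G}_n$ (as opposed to $\caracfsum$ alone) is the delicate accounting step that the appendix computations must discharge.
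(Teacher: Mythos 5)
Your overall architecture --- a Fourier smoothing inequality, a low-frequency region handled by an expansion of $\log\caracfsum$, a middle band where $\int|\caracfsum(t)|/t\,dt$ is kept as is, and a negligible far tail --- is indeed the skeleton of the paper's proof (which uses Prawitz's smoothing inequality, Lemma~\ref{lemma:smoothing}, rather than the classical Esseen lemma; that choice mainly affects the constant $1.0253$). The genuine gap is your treatment of the central region: you claim that on all of $[0,a_n]$, with $a_n\asymp\sqrt{n}/\Ktroisntilde$, the difference between $\caracfsum$ and the Fourier transform of the Edgeworth expansion is controlled by a fourth-order cumulant expansion. Under Assumption~\ref{hyp:basic_as_inid} this fails beyond $|t|\asymp(n/\Kquatren)^{1/4}$: Cauchy--Schwarz only gives $\max_j\sigma_j^2\le B_n^2\sqrt{\Kquatren/n}$, and this order cannot be improved, so the quantities $U_{j,n}(t)=f_{P_{X_j}}(t/B_n)-1$, which must be uniformly smaller than $1$ for $\log(1+U_{j,n}(t))$ to be expanded, are guaranteed small only for $|t|\lesssim(n/\Kquatren)^{1/4}$. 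This is exactly why the paper's expansion lemma (Lemma~\ref{lem:taylor_exp_cf}) is stated only for $|t|\le\sqrt{2\eps}(n/\Kquatren)^{1/4}$, and why the proof cuts the central region at $\tau=\sqrt{2\eps}(n/\Kquatren)^{1/4}$, not at $a_n$.

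Consequently your sketch has no mechanism for the band $(n/\Kquatren)^{1/4}\lesssim t\lesssim\sqrt{n}/\Ktroisntilde$, and that is where much of the paper's work lies: there one bounds $|\caracfsum(t)-e^{-t^2/2}|$ by Shevtsova-type exponential inequalities whose cubic correction carries the constant $\chi_1$ (Lemma~\ref{lemma:bound_I32} together with the $J_2$/$J_3$ incomplete-Gamma lemmas, and Lemma~\ref{lemma:omega4_smooth}), while the skewness part of the Edgeworth transform is bounded separately; these steps produce precisely the incomplete-Gamma terms in $\rninidskew{2}$ and its variants. (So $\chi_1$ enters in this bridge region, not in pinning down the central-region constants as you suggest.) The gap is material rather than cosmetic: if, to stay within the validity of the cumulant expansion, you let the kept integral start at $\asymp n^{1/4}$ instead of $a_n$, you prove a strictly weaker theorem --- in Corollary~\ref{cor:improvement_inid_case} with $p=2$ the term $a_n^{-p}$ would then be of order $n^{-1/2}$ rather than $n^{-1}$, destroying the advertised rate. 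A smaller misattribution: the $O(n^{-2})$ noskew rate in the \iid{} case comes from $\max_j\sigma_j^2/B_n^2=1/n$ replacing $\sqrt{\Kquatren/n}$ inside those exponential bounds ($J_3$ versus $J_2$), not from the inequality $\Ktroisntilde\le K_{3,n}+1$.
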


{\color{black}
\begin{remark}
    Assume that there exists a constant $K_4$ such that $K_{4,n} \leq K_4$ for all $n \geq 0$ (this is the case, for example, if the data is an \iid~sample from a given infinite homogeneous population).
    Then the remainder terms can be bounded in the following way:
    $|\rninidskew{2}| = O(n^{-5/4})$, 
    $|\rninidnoskew{2}| = O(n^{-3/2})$,
    $|\rniidskew{2}| = O(n^{-5/4})$, and
    $|\rniidnoskew{2}| = O(n^{-2})$, for every $n \geq 3$.
\end{remark}

\medskip
}

This theorem is proved in Section~\ref{ssec:proof:inid_cont} under Assumption~\ref{hyp:basic_as_inid} (\resp{} in Section~\ref{ssec:proof:iid_cont} under Assumption~\ref{hyp:basic_as_iid}). 
The first term contains quantities that were already present in the term of order~$1/n$ in the bound of Theorem~\ref{thm:nocont_choiceEps}: $0.195\Kquatren$ and $0.038 \lambdatroisn^2$.
On the contrary, the other terms are encompassed in the integral term and in the remainder.
Indeed, a careful reading of the proofs (see notably Section~\ref{ssec:proof:outline} that outlines the structure of the proofs of all theorems) shows that the leading term $0.1995 \, \widetilde{K}_{3,n} / \sqrt{n}$ in the bound~\eqref{eq:generall_bound_nocont} comes from choosing a free tuning parameter~$T$ of the order of~$\sqrt{n}$.
Here, we make another choice for~$T$ such that this term is now negligible.
The cost of this change of $T$ is the introduction of the integral term involving $\caracfsum$.
The leading term of the bound thus depends on the tail behavior of~$\caracfsum$.

Note that the result is obtained under the same conditions as Theorem~\ref{thm:nocont_choiceEps}, namely under moment conditions only.
Nonetheless, it is mainly interesting combined with some assumptions on \(\caracfsum\) over the interval \([a_n, b_n]\), otherwise we do not have an explicit control on the integral term involving~\(\caracfsum\).
In the rest of this section, we present two possible assumptions on \(\caracfsum\) that yield such a control.

\subsection{Polynomial tail decay on \texorpdfstring{\(\boldsymbol{|\caracfsum|}\)}{|fSn|}}

As a first regularity condition on \(\caracfsum\), we can assume a polynomial rate decrease.
Corollary~\ref{cor:improvement_inid_case} presents the resulting bound in the \inid{} case.
In fact, a similar condition could be invoked with \iid{} data by requesting a polynomial decrease of the characteristic function of \(X_n / \sigma_n\).
However, we present in the next paragraph milder assumptions in the \iid{} case that remain sufficient to obtain an explicit control of the tails of $\caracfsum$.

\begin{corollary}
\label{cor:improvement_inid_case}
    Let $n \geq 3$. If Assumption~\ref{hyp:basic_as_inid} holds and if there exist some positive constants \(C_0, p\) such that for all~$|t| \geq a_n$, $|\caracfsum(t)| \leq C_0 |t|^{-p}$, then
    \begin{align*}
        \DeltaE \leq \frac{0.195 \, K_{4,n}
        + 0.038 \, \lambda_{3,n}^2}{n}
        + \frac{1.0253 \, C_0 a_n^{-p}}{\pi}
        + r_{3,n}
    \end{align*}
    where $r_{3,n} := r_{2,n} - 1.0253 \, C_0 b_n^{-p}/\pi$.
    %$a_n := 2t_1^*\pi\sqrt{n}/\widetilde{K}_{3,n}$.
\end{corollary}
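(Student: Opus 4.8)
The plan is to invoke Theorem~\ref{thm:cont_choiceEps} and then bound only the integral term, since the leading term $(0.195\,K_{4,n}+0.038\,\lambda_{3,n}^2)/n$ and the remainder $r_{2,n}$ are inherited verbatim. Under Assumption~\ref{hyp:basic_as_inid} and for $n \ge 3$, that theorem already gives
\[
\DeltaE \le \frac{0.195\,K_{4,n}+0.038\,\lambda_{3,n}^2}{n} + \frac{1.0253}{\pi}\int_{a_n}^{b_n}\frac{|\caracfsum(t)|}{t}\,dt + r_{2,n},
\]
so the whole task reduces to converting the hypothesized tail decay of $|\caracfsum|$ into an explicit bound on this integral.

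First I would check that the domain of integration lies entirely in the region where the decay hypothesis applies: on $[a_n,b_n]$ we have $t>0$ (indeed $a_n>0$, being a minimum of two strictly positive quantities), hence $|t|=t\ge a_n$ and the assumption $|\caracfsum(t)|\le C_0|t|^{-p}$ gives $|\caracfsum(t)|\le C_0 t^{-p}$ throughout. Substituting this pointwise bound and integrating the resulting power of $t$ yields
\[
\int_{a_n}^{b_n}\frac{|\caracfsum(t)|}{t}\,dt \le C_0\int_{a_n}^{b_n} t^{-p-1}\,dt = \frac{C_0}{p}\bigl(a_n^{-p}-b_n^{-p}\bigr).
\]
I would then note that $a_n<b_n$ holds unconditionally: the second branch $16\pi^3 n^2/\Ktroisntilde^4$ of the minimum defining $a_n$ is already strictly smaller than $b_n = 16\pi^4 n^2/\Ktroisntilde^4$, so $a_n^{-p}-b_n^{-p}>0$. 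In the regime $p\ge 1$ — the natural one here, since the continuity-type conditions making this corollary useful force $|\caracfsum|$ to decay at least like $1/t$ — the factor $1/p\le 1$ can be dropped, leaving $\int_{a_n}^{b_n}|\caracfsum(t)|\,t^{-1}\,dt \le C_0(a_n^{-p}-b_n^{-p})$.

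Finally I would plug this back into the displayed bound, split the factor $a_n^{-p}-b_n^{-p}$, keep $1.0253\,C_0 a_n^{-p}/\pi$ as the new explicit leading term, and absorb the negative contribution by setting $r_{3,n} := r_{2,n} - 1.0253\,C_0 b_n^{-p}/\pi$, which reproduces exactly the claimed inequality. Since $b_n\asymp n^2/\Ktroisntilde^4$ with $\Ktroisntilde$ bounded above and below when $K_{4,n}=O(1)$, the subtracted term is of order $n^{-2p}$, so $r_{3,n}$ inherits the asymptotic order of $r_{2,n}$ up to this harmless correction.

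Honestly, the difficulty here is bookkeeping rather than substance: exact integration produces a factor $1/p$, and the clean statement without it relies on $p\ge1$. The step I would scrutinize most carefully is therefore whether $p\ge1$ is genuinely guaranteed in the intended range of application; if one wished to allow $0<p<1$, the coefficient $1.0253\,C_0/\pi$ multiplying $a_n^{-p}$ and $b_n^{-p}$ would have to be replaced by $1.0253\,C_0/(\pi p)$. No analytic obstacle beyond this arises, since the integrand is continuous and $t^{-p-1}$ is integrable on the compact interval $[a_n,b_n]\subset(0,\infty)$.
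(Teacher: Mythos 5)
Your proof is correct and is essentially the paper's own argument: Corollary~\ref{cor:improvement_inid_case} is stated as an immediate consequence of Theorem~\ref{thm:cont_choiceEps}, obtained exactly as you do by plugging the assumed tail decay into the integral term, keeping the $a_n^{-p}$ part explicit and folding $-1.0253\,C_0 b_n^{-p}/\pi$ into $r_{3,n}$. Your caveat about the factor $1/p$ is well taken and is the one point where the paper is loose: integrating $t^{-p-1}$ over $[a_n,b_n]$ gives $C_0\bigl(a_n^{-p}-b_n^{-p}\bigr)/p$, so the stated constant $1.0253\,C_0/\pi$ is only justified for $p\geq 1$, whereas the paper's surrounding discussion asserts validity ``for every positive $p$''; since every use of the corollary in the paper takes $p\geq 2$ (indeed $p=2$ in the examples), your restricted argument covers the intended range, but for $0<p<1$ the constant must indeed be replaced by $1.0253\,C_0/(\pi p)$ as you note.
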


Besides moment conditions, Corollary~\ref{cor:improvement_inid_case} requires a uniform control of $\caracfsum$ outside the interval $(-a_n,a_n)$.
When $\Ktroisntilde = o(\sqrt{n})$, $a_n$ goes to infinity. In this case, the condition is a tail control of the characteristic function of~$S_n$ in a neighborhood of infinity, thus making the condition weaker to impose. 

Placing restrictions on the tails of $\caracfsum$ is not very common in statistical applications. However, this notion is closely related to the smoothness of the underlying distribution of $S_n$. Proposition~\ref{prop:charac_bound_differentiable} in the Appendix (which builds upon classical results such as~\cite[Theorem 1.2.6]{ushakov2011}) shows that the tail condition on $\caracfsum$ is satisfied with $p \geq 1$ whenever $P_{S_n}$ has a density $g_{S_n}$ that is $p-1$ times differentiable and such that its $(p-1)$-th derivative is of bounded variation with total variation $V_n := \mathrm{Vari}[g_{S_n}^{(p-1)}]$ uniformly bounded in $n$.
In such situations, we can take $C_0 = 1 \vee \sup_{n \in \Nstar}V_n$.

Although Corollary~\ref{cor:improvement_inid_case} is valid for every positive~$p$, it is only an improvement on the results of the previous section under the stricter condition~${p > 1}$, a situation in which $P_{S_n}$ admits a density with respect to Lebesgue's measure (second part of Proposition~\ref{prop:charac_bound_differentiable}).
In particular when $p=2$, $a_n^{-p}$ is exactly {\color{black}proportional to} $n^{-1}$ and we obtain
\begin{equation*}
    \DeltaE \leq \frac{0.195 \, K_{4,n}
    + 0.038 \, \lambda_{3,n}^2 + 1.0253 \, C_0 \pi^{-1} }{n}
    + O(n^{-5/4}),
\end{equation*}
for every $n \geq 3$.
When ${p > 2}$, $a_n^{-p}$ becomes negligible compared to $n^{-5/4}$ so that
\begin{equation*}
    \DeltaE \leq \frac{0.195 \, K_{4,n}
    + 0.038 \, \lambda_{3,n}^2}{n}
    + O(n^{-5/4}).
\end{equation*}
Combining these bounds on $\DeltaE$ with the expression of the Edgeworth expansion translates into upper bounds on $\DeltaB$ of the form
\begin{equation*}
    \DeltaB \leq \frac{0.0665 \, |\lambda_{3,n}|}{\sqrt{n}} 
    + O(n^{-1})
    \leq \frac{0.0413 \, \Ktroisn}{\sqrt{n}}
    + O(n^{-1}).
\end{equation*}
As soon as the previous \(O(n^{-1})\) term gets smaller than $0.0413 \Ktroisn/\sqrt{n}$, the bound on $\DeltaB$ becomes much better than $\ExistingBoundBEinid \Ktroisn/\sqrt{n}$ or $\ExistingBoundBEiid \Ktroisn/\sqrt{n}$.
This can happen even for sample sizes $n$ of the order of a few thousands, assuming that $\Ktroisn$ and $\Kquatren$ are reasonable (\textit{e.g.} $\Kquatren \leq 9$).
When $\E[X_i^3]=0$ for every $i = 1,\dots,n$, we remark that $\DeltaB = \DeltaE$, meaning that we obtain a bound on $\DeltaB$ of order $n^{-1}$.

We confirm these rates through a numerical application in Example~\ref{ex:be_bounds_comparison_cont} for the specific choices $C_0 = 1$ and $p = 2$. 
These choices are satisfied for common distributions such as the Laplace distribution (for which these values of $C_0$ and $p$ are sharp) and the Gaussian distribution. This actually opens the way for another restriction on the tails of $\caracfsum$: we could impose $|\caracfsum(t)| \leq \max_{1 \leq r \leq M}|\rho_r(t)|$ for all~$|t| \geq a_n$ and for $(\rho_r)_{r=1, \dots, M}$ a family of known characteristic functions.
This second suggestion boils down to a semiparametric assumption on $P_{S_n}$: $\caracfsum$ is assumed to be controlled in a neighborhood of $\pm\infty$ by the behavior of at least one of the $M$ characteristic functions $(\rho_r)_{r=1, \dots, M},$ but $\caracfsum$ need not be exactly one of those $M$ characteristic functions.
This semiparametric restriction becomes less and less stringent as $n$ increases since we need to control $\caracfsum$ on a region that vanishes as $n$ goes to infinity.
Since $S_n$ is centered and of variance~$1$ by definition, the choice of possible $\rho_r$ is naturally restricted to the set of characteristic functions that correspond to such standardized distributions.

\subsection{Alternative control of \texorpdfstring{\(\boldsymbol{|\caracfsum|}\)}{|fSn|} in the \iid{} case}

We state a second corollary that deals with the \iid{} framework. 
We define the following quantity
$\kappa_n := 
\sup_{t: \, |t| \geq a_n/\sqrt{n}} |f_{X_n/\sigma_n}(t)|$ 
and let $c_n := b_n/a_n$.
% for every $s > 0:$
% $\kappa_n(s) := 
% \sup_{t: |t| \geq s} |f_{X_n/\sigma_n}(t)|.$
Under Assumption~\ref{hyp:basic_as_iid}, we remark that
$\sup_{t: \, |t|\geq a_n} |\caracfsum(t)| = \kappa_n^n.$

\begin{corollary}
\label{cor:improvement_iid_case}
    Let \(n \geq 3\).
    Under Assumption~\ref{hyp:basic_as_iid},
    \begin{align*}
        \DeltaE \leq \frac{0.195 \, K_{4,n}
        + 0.038 \, \lambda_{3,n}^2}{n}
        + \frac{1.0253 \, \kappa_n^n\log(c_n)}{\pi}
        + r_{2,n} \,. 
    \end{align*}
    Furthermore, $\kappa_n < 1$ as soon as $P_{X_n/\sigma_n}$ has an absolutely continuous component.
\end{corollary}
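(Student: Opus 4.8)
The plan is to invoke Theorem~\ref{thm:cont_choiceEps} and then exploit the multiplicative structure of $\caracfsum$ available under Assumption~\ref{hyp:basic_as_iid} to control the integral term. First I would record the factorization: since in the \iid{} case $B_n = \sqrt{n}\,\sigma_n$, one has $S_n = (\sqrt{n}\,\sigma_n)^{-1}\sum_{i=1}^n X_i$, so that independence and identical distribution give $\caracfsum(t) = \big(f_{X_n/\sigma_n}(t/\sqrt{n})\big)^n$ and hence $|\caracfsum(t)| = |f_{X_n/\sigma_n}(t/\sqrt{n})|^n$. Taking suprema over $|t| \geq a_n$ and substituting $s = t/\sqrt{n}$ turns the range into $|s| \geq a_n/\sqrt{n}$, which yields exactly the identity $\sup_{|t|\geq a_n}|\caracfsum(t)| = \kappa_n^n$ stated just before the corollary.

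For the bound itself, I would control $|\caracfsum|$ on $[a_n,b_n]$ by this supremum. Because $|\caracfsum|$ is even and every $t \in [a_n,b_n]$ satisfies $|t| \geq a_n$, we have $|\caracfsum(t)| \leq \kappa_n^n$ there, whence
\[
  \int_{a_n}^{b_n} \frac{|\caracfsum(t)|}{t}\,dt \;\leq\; \kappa_n^n \int_{a_n}^{b_n} \frac{dt}{t} \;=\; \kappa_n^n \,\log(b_n/a_n) \;=\; \kappa_n^n \,\log(c_n).
\]
Substituting this estimate into~\eqref{eq:generall_bound_cont} while leaving $r_{2,n}$ untouched gives the displayed inequality. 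This part is essentially immediate once the factorization is written down.

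The substantive part is the claim $\kappa_n < 1$, where the difficulty is that the supremum defining $\kappa_n$ ranges over the \emph{unbounded} set $\{|s| \geq a_n/\sqrt{n}\}$; the pointwise inequality $|f_{X_n/\sigma_n}(s)| < 1$ for $s \neq 0$ together with compactness is not by itself enough, since a non-lattice distribution may still have $|f|$ approaching $1$ along a sequence tending to infinity. My plan is to use the absolutely continuous component through the decomposition $P_{X_n/\sigma_n} = \alpha\,P_{\mathrm{ac}} + (1-\alpha)\,P_{\mathrm{rest}}$ with $\alpha \in (0,1]$ and $P_{\mathrm{ac}}$ absolutely continuous, which gives $|f_{X_n/\sigma_n}(s)| \leq \alpha\,|f_{P_{\mathrm{ac}}}(s)| + (1-\alpha)$. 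Two properties of $f_{P_{\mathrm{ac}}}$ then drive the argument: (i) an absolutely continuous law is non-lattice, so $|f_{P_{\mathrm{ac}}}(s)| < 1$ for all $s \neq 0$, forcing $|f_{X_n/\sigma_n}(s)| < 1$ on the region considered; and (ii) by the Riemann--Lebesgue lemma $|f_{P_{\mathrm{ac}}}(s)| \to 0$ as $|s| \to \infty$, so $\limsup_{|s|\to\infty} |f_{X_n/\sigma_n}(s)| \leq 1-\alpha < 1$.

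Finally I would combine (i) and (ii) by splitting the supremum at a radius $M$. Property (ii) lets me pick $M$ with $\sup_{|s|>M} |f_{X_n/\sigma_n}(s)| \leq 1 - \alpha/2 < 1$; on the compact annulus $\{a_n/\sqrt{n} \leq |s| \leq M\}$, continuity of $|f_{X_n/\sigma_n}|$ and property (i) give a maximum strictly below $1$. Since $\kappa_n$ is the larger of these two quantities, $\kappa_n < 1$. The main obstacle is precisely this tail step, which is why an absolutely continuous component---rather than mere non-latticeness---is the right hypothesis.
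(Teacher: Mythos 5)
Your proof is correct, and for the displayed inequality it follows the same (essentially forced) path as the paper: Theorem~\ref{thm:cont_choiceEps}, the identity $\sup_{|t|\geq a_n}|\caracfsum(t)| = \kappa_n^n$ --- which the paper records just before the corollary via exactly your factorization $\caracfsum(t) = \big(f_{X_n/\sigma_n}(t/\sqrt{n})\big)^n$ --- and $\int_{a_n}^{b_n} t^{-1}\,dt = \log(c_n)$, leaving $r_{2,n}$ untouched.

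The genuine divergence is in the claim $\kappa_n<1$, and here your route buys something real. The paper handles this claim by citation: it invokes \cite[Theorem 1.1.3]{ushakov2011} in the form ``$\sup_{|t|\geq s}|f_Z(t)|=1$ if and only if $P_Z$ is lattice,'' and then notes that a law with an absolutely continuous component is not lattice. You instead prove the claim directly from the decomposition $P_{X_n/\sigma_n}=\alpha P_{\mathrm{ac}}+(1-\alpha)P_{\mathrm{rest}}$: pointwise strictness $|f_{X_n/\sigma_n}(s)|<1$ for $s\neq 0$ (from non-latticeness of $P_{\mathrm{ac}}$ and the \emph{attained-value} lattice theorem), the Riemann--Lebesgue lemma for the tail, giving $\limsup_{|s|\to\infty}|f_{X_n/\sigma_n}(s)|\leq 1-\alpha<1$, and compactness of the annulus in between. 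Your cautionary remark --- that a non-lattice law may have $|f|$ approach $1$ along a sequence tending to infinity, so pointwise strictness plus compactness cannot close the argument --- is accurate: for instance the (singular continuous, hence non-lattice) law of $\sum_{k\geq 1}\varepsilon_k/k!$ with $(\varepsilon_k)$ \iid{} Bernoulli$(1/2)$ satisfies $\limsup_{|t|\to\infty}|f(t)|=1$, so the supremum-over-an-unbounded-set ``if and only if'' cannot be taken literally; only the attained-value form $|f(t_0)|=1 \Rightarrow \text{lattice}$ is unproblematic. Consequently, your Riemann--Lebesgue argument is not a mere stylistic variant: it is precisely what the absolutely-continuous-component hypothesis of the corollary supplies, and it makes the $\kappa_n<1$ step self-contained and, on this point, more watertight than the paper's justification. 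The only limitation is that your argument does not extend to merely non-lattice laws, but the corollary does not claim that case.
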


Note that for any given $s > 0$ and any random variable $Z$, $\sup_{t: |t| \geq s} |f_Z(t)| = 1$ if and only if $P_{Z}$ is a lattice distribution, i.e., concentrated on a set of the form $\{a + nh, n \in \Zb \}$ \cite[Theorem 1.1.3]{ushakov2011}. Therefore, $\kappa_n < 1$ as soon as the distribution is not lattice, which is the case for any distribution with an absolute continuous component.

In Corollary~\ref{cor:improvement_iid_case}, the first term on the right-hand side of the inequality as well as $r_{2,n}$ are unchanged compared to Theorem~\ref{thm:cont_choiceEps} and Corollary~\ref{cor:improvement_inid_case}.
The second term on the right-hand side of the inequality, $(1.0253/\pi)\kappa_n^n\log(c_n),$ corresponds to an upper bound on the integral term of Equation~\eqref{eq:generall_bound_cont} in Theorem~\ref{thm:cont_choiceEps}.
Imposing $K_{4,n} \leq K_4$, we can only claim that $1.0253 \, \kappa_n^n\log(c_n)/\pi
= O(\kappa_n^n\log n)$, which does not provide an explicit rate on $\DeltaE$.
If we also assume $\sup_{n\geq 3}\kappa_n<1$ then we can write
\begin{equation*}
    \DeltaE \leq \frac{0.195 \, K_{4,n}
    + 0.038 \, \lambda_{3,n}^2}{n}
    + O(n^{-5/4}),
\end{equation*}
and
\begin{equation*}
    \DeltaB
    \leq \frac{0.0665 \, |\lambda_{3,n}|}{\sqrt{n}}
    + O(n^{-1})
    \leq \frac{0.0413 \, \Ktroisn}{\sqrt{n}}
    + O(n^{-1}).
\end{equation*}

\medskip

When is the assumption
$\sup_{n\geq 3}\kappa_n < 1$ reasonable? 
First, it always holds in the \iid{} setting with a distribution of the \((X_i)_{i = 1, \ldots, n}\) independent of~$n$ and continuous.
By definition of $a_n$ and by the fact that $\Ktroisntilde \geq 1$, $a_n/\sqrt{n}$ is larger than $2 t_1^* \pi$ for $n$ large enough. 
Consequently, $\kappa_n$ is upper bounded by
$\kappa := \sup_{t: \, |t| \geq 2t_1^*\pi} |f_{X_1/\sigma_1}(t)|$
for $n$ large enough.
In this case, if $P_{X_1/\sigma_1}$ has an absolutely continuous component, \(\kappa < 1\).
For smaller~\(n\), we use the fact that \(\kappa_n < 1\) for every~\(n\) as explained right after Corollary~\ref{cor:improvement_iid_case}.
The value of $\kappa$ depends on the distribution $P_{X_1/\sigma_1}$.
The closer to one $\kappa$ gets, the less regular $P_{X_1/\sigma_1}$ is, in the sense that the latter becomes hardly distinguishable from a lattice distribution.

Second, we could impose that the characteristic function $f_{X_n/\sigma_n}$ be controlled by some finite family of known characteristic functions $\rho_1, \dots, \rho_M$ (independent of $n$) beyond $a_n / \sqrt{n}$.
This follows the suggestion mentioned after Corollary~\ref{cor:improvement_inid_case}, except that we now obtain an exponential upper bound instead of a polynomial one.
Indeed, for $n$ large enough, $\kappa_n \leq \kappa := \sup_{t:|t|\geq 2t_1^*\pi}\max_{1\leq m \leq M}|\rho_m(t)|$ and $\kappa < 1$ provided that $(\rho_m)_{m=1,\dots,M}$ are characteristic functions of continuous distributions.

In Example~\ref{ex:be_bounds_comparison_cont}, we plot our bounds on $\DeltaB$ by imposing the restriction $\kappa_n \leq 0.99$ which we argue is a very reasonable choice.
To justify this claim, we compare our restriction to the value of $\kappa_n$ we would get if $X_n/\sigma_n$ were standard Laplace, a distribution whose characteristic function has much fatter tails than the standard Gaussian or Logistic for instance.
In fact, if we were to compute $\sup_{t:|t|\geq 2t_1^*\pi}|\rho(t)|$ with $\rho$ the characteristic function of a standard Laplace distribution, we would get $\kappa_n < 0.11$.
Despite our fairly conservative bound on $\kappa_n$, we witness considerable improvements of our bounds compared to those given in Section~\ref{ssec:thms_nocont}.

\begin{example}[Implementation of our bounds on $\DeltaB$]
\label{ex:be_bounds_comparison_cont}
We compare the bounds on $\DeltaB$ obtained in Corollaries~\ref{cor:improvement_inid_case} and~\ref{cor:improvement_iid_case} to $\ExistingBoundBEinid K_{3,n}/\sqrt{n}$ and $\ExistingBoundBEiid K_{3,n}/\sqrt{n}.$ 
As in Example~\ref{ex:be_bounds_comparison_nocont}, we fix $K_{4,n} \leq 9$, which is enough to control~\(\Ktroisn\) (see Section~\ref{subsec:discussion_bounds}).
As explained above, we set \(p = 2\) and \(C_0 = 1\) to apply Corollary~\ref{cor:improvement_inid_case} and \(\kappa = 0.99\) for Corollary~\ref{cor:improvement_iid_case}.
 % , thus \( \Ktroisn \leq \Kquatren^{3/4} \leq 9^{3/4} \).
\begin{itemize}

    \item Cor.~\ref{cor:improvement_inid_case} \inid{}:
    %, $p=2$, $C_0=1$:  
    $\DeltaB \leq \frac{0.0413 \, \Ktroisn}{\sqrt{n}}
    + \frac{0.195 \, K_{4,n}
    + 0.0147 \, \Ktroisn^2}{n}
    + \frac{1.0253}{\pi} a_n^{-2} + r_{3,n}$
    
    \item Cor.~\ref{cor:improvement_inid_case} \inid{} unskewed:
    %$p=2$, $C_0=1$: 
    $\DeltaB \leq \frac{0.195 \, \Kquatren}{n}
    + \frac{1.0253}{\pi} a_n^{-2} + r_{3,n}$
    
    \item Cor.~\ref{cor:improvement_iid_case} \iid{}:
    %, $\kappa_n \leq 0.99$:
    $\DeltaB \leq \frac{0.0413 \, \Ktroisn}{\sqrt{n}}
    + \frac{0.195 \, K_{4,n} + 
    + 0.0147 \, \Ktroisn^2}{n}
    + \frac{1.0253 \, \kappa_n^n\log(c_n)}{\pi}
    + r_{2,n}$
    
    \item Cor.~\ref{cor:improvement_iid_case} \iid{}
    unskewed:
    %$\kappa_n \leq 0.99$:
    $\DeltaB \leq \frac{0.195 \, K_{4,n}}{n}
    + \frac{1.0253 \, \kappa_n^n\log(c_n)}{\pi}
    + r_{2,n}$
    
\end{itemize}
Figure~\ref{fig:graph_cont} displays the different bounds that we obtain as a function of the sample size $n$, alongside with the existing bounds \citep{shevtsova2013} that do not assume such regularity conditions.
The new bounds take advantage of these regularity conditions and are therefore tighter in all settings for \(n\) larger than~\(10,000\).
In the unskewed case, the improvement arises for much smaller~\(n\) and the rate of convergence gets faster from~$1/\sqrt{n}$ to~$1/n$.
\end{example}

\begin{figure}[H]
    \centering
    \includegraphics[width=0.92\textwidth]{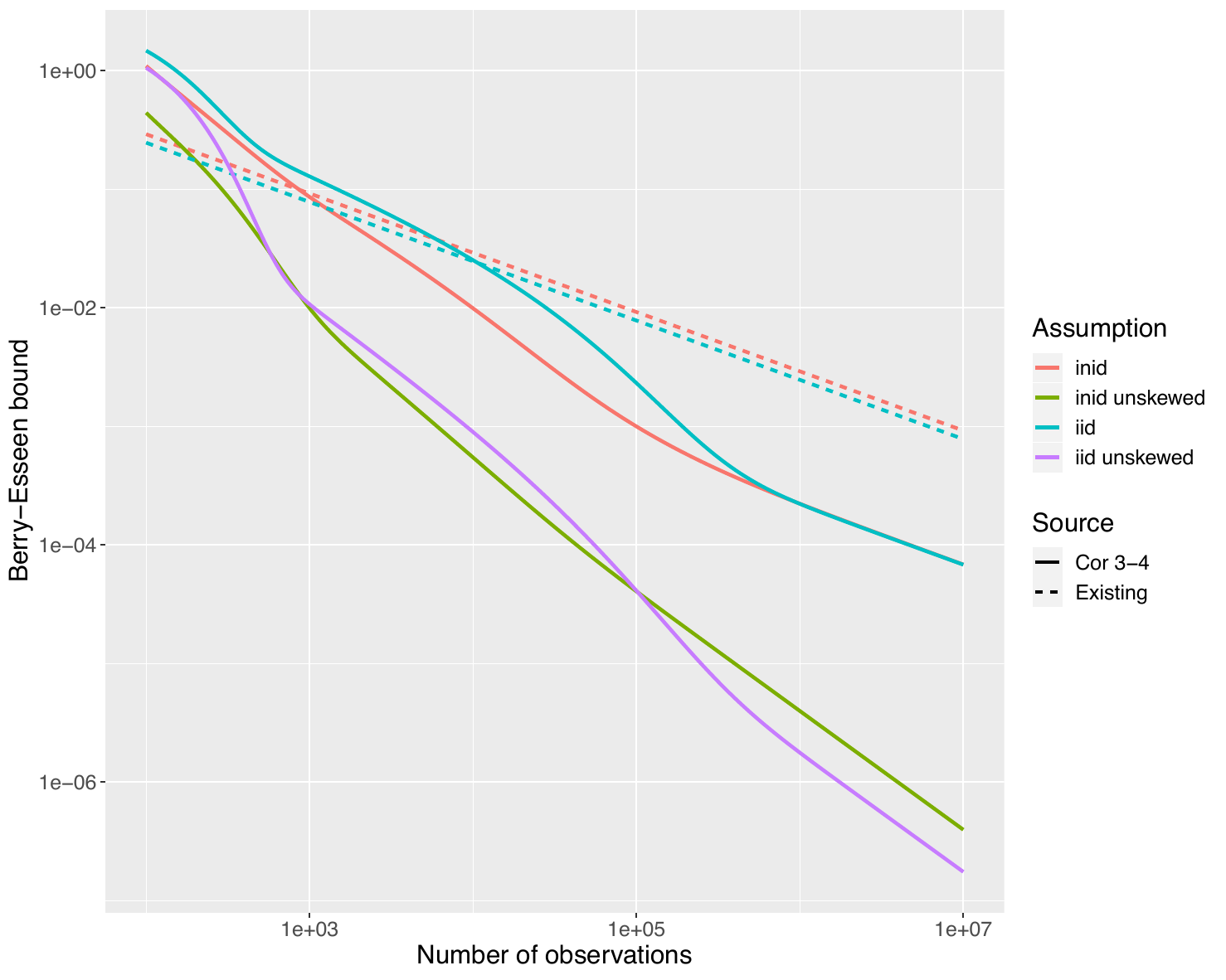}
    \caption{{\small Comparison between existing \citep{shevtsova2013} and new (Corollaries~\ref{cor:improvement_inid_case} and~\ref{cor:improvement_iid_case}) Berry-Esseen upper bounds on $\Delta_{n,\text{B}} := \sup_{x \in \Rb}\left| \Prob(S_n \leq x) - \Phi(x) \right|$ for different sample sizes with additional regularity assumption on~$\caracfsum$ (log-log scale).
    {\color{black}Note that, compared to existing ones, the new bounds make use of the regularity assumption and of the boundedness of the 4th order moments.}}}
    \label{fig:graph_cont}
\end{figure}

\section{Practical considerations}
\label{sec:practical_considerations}

\subsection{Default value \texorpdfstring{$\Kquatren \leq 9$}{K4n = 9} or ``Plug-in'' approach}
\label{subsec:discussion_bounds}

{\color{black}
As seen in the previous examples, explicit values or bounds on some functionals of $P_{S_n}$ are required to compute our non-asymptotic bounds on a standardized sample mean.
This phenomenon is not unique to our bounds, and arises for any Berry-Esseen- or Edgeworth-type bounds. 
A value or a bound on~\(\Ktroisn\) is indeed required to compute existing Berry-Esseen bounds as in the seminal works of \cite{berry1941} and~\cite{esseen1942} and its recent improvement (\textit{e.g.} \cite{shevtsova2013}).
Similar to us, recent extensions to these bounds proposed in \cite{adell2008}, \cite{boutsikas2011} and~\cite{zhilova2020new} also depend on several (potentially unknown) moments of the distributions.

\medskip 

Under moment conditions only, the main term and remainder~\(r_{1,n}\) of Theorem~\ref{thm:nocont_choiceEps} solely depend on $\lambdatroisn$, \(\Ktroisn\) or \(\Ktroisntilde\), and \(\Kquatren\).
As a matter of fact, a bound on \(\Kquatren\) is sufficient to control all those quantities:
\cite{pinelis2011relations} ensures $|\lambdatroisn| \leq 0.621 \Ktroisn$, and a convexity argument yields $\Ktroisn \leq K_{4,n}^{3/4}$ (and remember that \(\Ktroisntilde\) is lower than \(2 \Ktroisn\) in the \inid{} case and \(\Ktroisn + 1\) in the \iid{} case). 
Having access to a bound on~\(\Kquatren\) is thus crucial to compute our bounds in practice.

% Paragraph 1 of moments
First, in some situations, one may rely on auxiliary information about the distribution.
In the \iid{} case in particular, we note that imposing the bound ${\Kquatren \leq 9}$ allows for a wide family of distributions used in practice: 
any Gaussian, Gumbel, Laplace, Uniform, or Logistic distribution satisfies it, as well as any Student with at least 5 degrees of freedom, any Gamma or Weibull with shape parameter at least~1.
In this case, remember that \(\Kquatren\) is the kurtosis of~\(X_n\), a natural and well-studied feature of a distribution.

In the \inid{} case, $\Kquatren$ can be rewritten as a weighted average of individual kurtosis.
In that respect, the bound ${\Kquatren \leq 9}$ indicates that, on average, the individual kurtosis are lower than~\(9\).

% Paragraph 2 de Moment - plug-in
Second, if a bound on \(\Kquatren\) is not available, a ``plug-in'' approach remains applicable.
%for the statistician to get an intuition of the size of $\DeltaE$. 
The idea is to estimate the moments $\lambdatroisn$, $\Ktroisn$ and $\Kquatren$ by their empirical counterparts in the data (method of moments estimation), and then compute \(\delta_n\) by replacing the unknown needed quantities with those estimates.
% DECISION: ne pas parler de la possibilite d'estimer seulement K4n et d'utiliser ensuite les relations, sorte de mixte plug-in et utilisation des relations entre les moments théoriques.
We acknowledge that this type of ``plug-in'' approach is only approximately valid, although somewhat unavoidable when bounds on the unknown moments are not given to the researcher.
%On the other hand, such an estimated plug-in bound may be less conservative than an arbitrary bound.
% DECISION: 3 mai avec Yannick, ne pas parler de plug-in qui peut être moins conservateur pour ne pas ouvrir la discussion qui demanderait plus de details: moins conservateur mais seulement en un sens pointwise, pour la distribution particuliere consideree, plus il faudrait des moments en plus pour garder une validite (asymptotique seulement ?) formelle.
%
% Version précédente: 
%Nonetheless, we may prefer to avoid such a restriction, either because it is unlikely to be satisfied or, on the contrary, because it might be overly conservative.
%An alternative would be to estimate the moments $\lambdatroisn$, $\Ktroisn$ and $\Kquatren$ using the data. 
%In the \iid{} case, we suggest estimating them by their empirical counterparts (method of moments estimation).
%We could then compute our bounds by replacing the unknown needed quantities with their estimates.
%We acknowledge that this type of ``plug-in'' approach is only approximately valid.

In addition to the dependence on these moment bounds, Theorem~\ref{thm:cont_choiceEps} involves the integral $\int_{a_n}^{b_n} |\caracfsum(t)| / t \, dt$ that depends on the a priori unknown characteristic function of~$S_n$.
% 
% Theorem~\ref{thm:cont_choiceEps}, which underlies Corollaries~\ref{cor:improvement_inid_case} and~\ref{cor:improvement_iid_case}, involves the integral $\int_{a_n}^{b_n} |\caracfsum(t)| / t \, dt$. It depends on the a priori unknown characteristic function of $S_n$.
%
The application of the resulting 
Corollaries~\ref{cor:improvement_inid_case} and~\ref{cor:improvement_iid_case}
%They 
requires a control on the tail of this characteristic function through the quantities \(C_0\) and \(p\) in the \inid{} case (respectively \(\kappa_n\) in the \iid{} case), which 
can be given using expert knowledge of the regularity of the density of $S_n$, as discussed in Section~\ref{ssec:thms_cont}.
}
It is also possible to estimate the integral directly, for instance using the empirical characteristic function \cite[Chapter~3]{ushakov2011}.

\subsection{Numerical comparisons of our bounds on \texorpdfstring{\(\Prob(S_n \leq x)\)}{Prob(S\_n <= x)} and existing ones}

{\color{black}

To give a better sense of the accuracy of our results, we perform a comparison between our bounds on \(x \mapsto \Prob(S_n \leq x)\) and the existing ones \citep{shevtsova2013}.
Indeed, a control~\(\delta_n\) on \(\DeltaE\) (respectively \(\DeltaB\)) naturally yields upper and lower brackets on \(\Prob(S_n \leq x)\) of the form  
\(\left[\Phi(x) + \lambdatroisn / (6 \sqrt{n}) \times (1 - x^2) \varphi(x)\right] \pm \delta_n\) (respectively \(\Phi(x) \pm \delta_n\)), for any real~\(x\).
We plot those upper and lower brackets in the \iid{} framework for three distinct distributions: Student distributions with 5 (Figure~\ref{fig:graph_Prob_Sn_student_df5}) or 8 (Figure~\ref{fig:graph_Prob_Sn_student_df8}) degrees of freedom and an Exponential distribution with expectation equal to~1, re-centered to fall in our framework (Figure~\ref{fig:graph_Prob_Sn_gamma}).
These three distributions are continuous with respect to Lebesgue's measure which allows us to resort to our sharpest \iid{} bounds, namely those presented in Corollary~\ref{cor:improvement_iid_case} (compared to Figures~\ref{fig:graph_nocont} and~\ref{fig:graph_cont}, we only report those improved bounds here).
On the contrary, remember that the existing bounds \citep{shevtsova2013} assume finite third-order moments only; hence, they do not leverage the additional information about skewness and regularity of the considered distributions. 

The bound \(\delta_n\) depends on various features of the distribution of \(S_n\).
In line with Example~\ref{ex:be_bounds_comparison_cont}, we set \(\kappa = 0.99\), which happens to be a conservative choice with those distributions as \(\kappa = 0.42\) for a Student(df = 8), 0.54 for a Student(df = 5), and 0.63 for the Exponential distributions we consider.
In the following comparisons, we focus on the impact of the unknown moments \(\Kquatren\), \(\Ktroisn\), and \(\lambdatroisn\) on the accuracy of our bounds. 
}

\begin{figure}[htbp]
    \centering
    \includegraphics[width=1\textwidth]{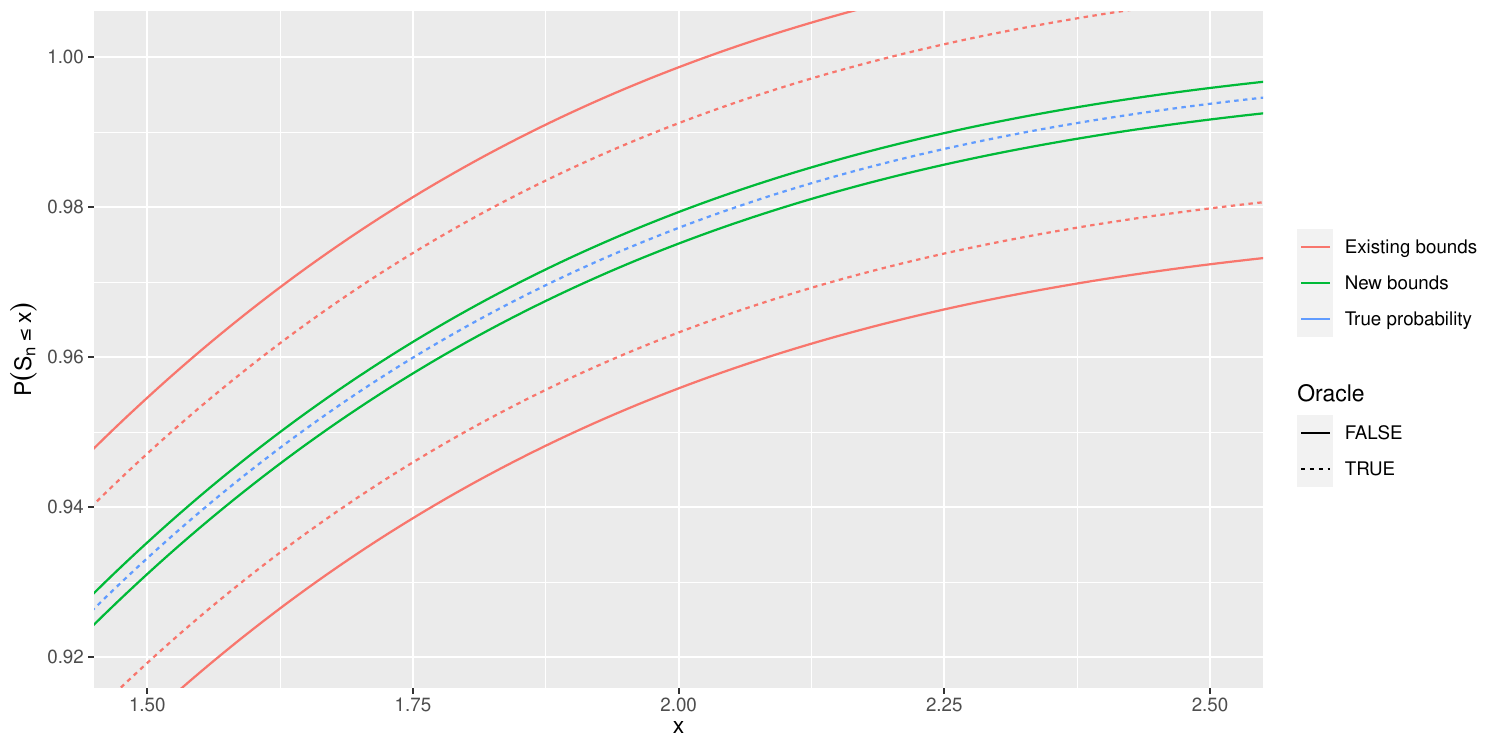}
    \caption{
    Setting: \iid{} unskewed ($\lambdatroisn = 0$)
    with $X_n \sim \textnormal{Student}(\textnormal{df} = 5)$ and $n = 5,\!000$. \\  
    {\footnotesize \textbf{Blue line:} $\Prob(S_n \leq x)$ as a function of $x$.} \\
    {\footnotesize \textbf{Continuous green lines:} bounds
    \(\Phi(x) \pm \delta_n^{\textnormal{new}}\)
    where $\delta_n^{\textnormal{new}}$ denotes the right-hand side of Corollary~\ref{cor:improvement_iid_case} with \(\kappa_n = 0.99\), $\Kquatren \leq 9$, and \(\Ktroisn \leq 9^{3/4}\). }\\
    %, and \(\lambdatroisn = 0\). \\
    {\footnotesize \textbf{Dashed green lines:} bounds
    \(\Phi(x) \pm  \delta_n^{\textnormal{new, oracle}}\), where $\delta_n^{\textnormal{new, oracle}}$ denotes the right-hand side of Corollary~\ref{cor:improvement_iid_case} with \(\kappa_n = 0.99\) and using the true (oracle) values of $\Kquatren = 9$ and $\Ktroisn \approx 2.1$.} \\
    {\footnotesize \textbf{Continuous red lines:} bounds $\Phi(x) \pm \ExistingBoundBEiid \Ktroisn / \sqrt{n}$ using the bound \(\Ktroisn \leq 9^{3/4} \approx 5.2\).} \\
    {\footnotesize \textbf{Dashed red lines:} bounds $\Phi(x) \pm \ExistingBoundBEiid \Ktroisn / \sqrt{n}$ using the true value \(\Ktroisn \approx 2.1\).}
    }
    \label{fig:graph_Prob_Sn_student_df5}
\end{figure}

\begin{figure}[htbp]
    \centering
    \includegraphics[width=1\textwidth]{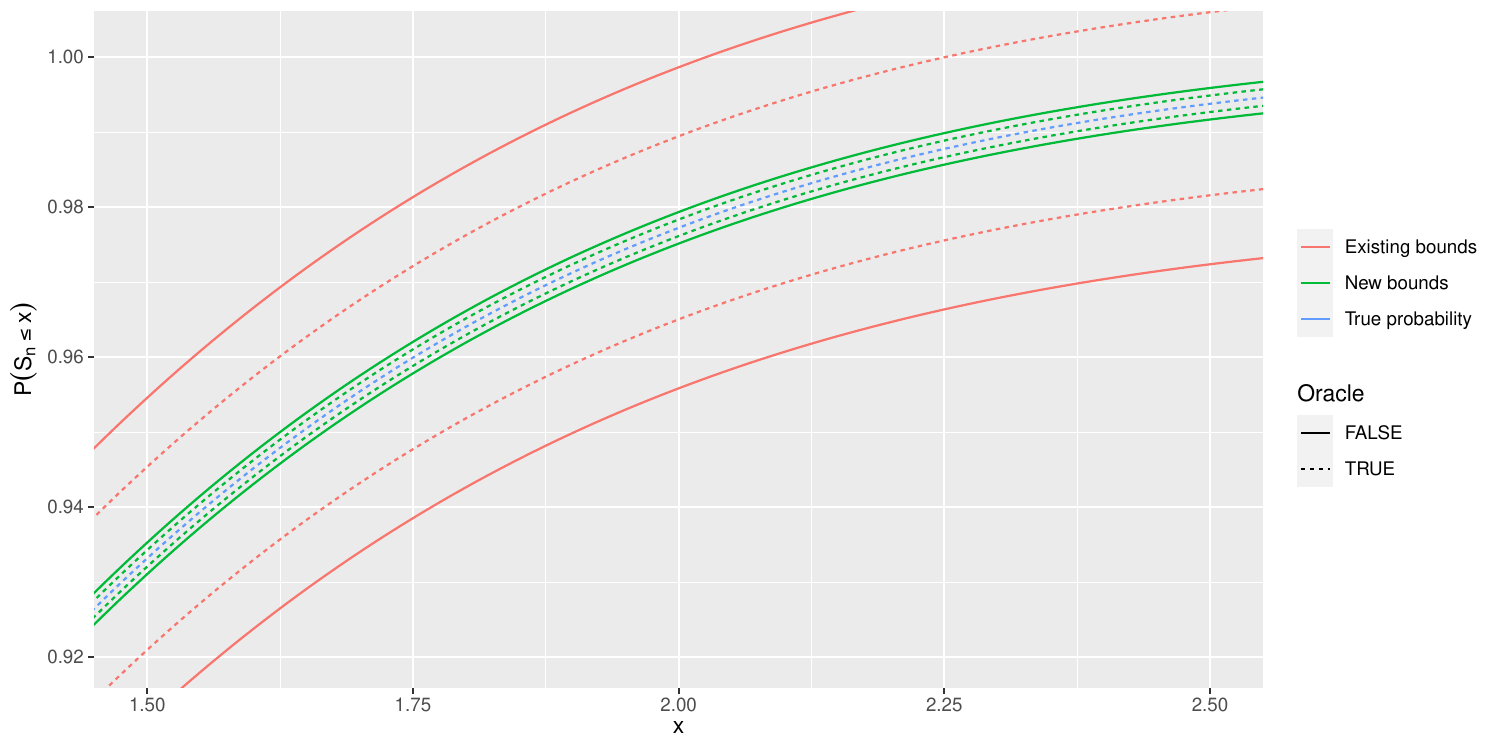}
    \caption{Setting: \iid{} unskewed ($\lambdatroisn = 0$)
    with $X_n \sim \textnormal{Student}(\textnormal{df} = 8)$ and $n = 5,\!000$. \\  
    {\footnotesize \textbf{Blue line:} $\Prob(S_n \leq x)$ as a function of $x$.} \\
    {\footnotesize \textbf{Continuous green lines:} bounds
    \(\Phi(x) \pm \delta_n^{\textnormal{new}}\)
    where $\delta_n^{\textnormal{new}}$ denotes the right-hand side of Corollary~\ref{cor:improvement_iid_case} with \(\kappa_n = 0.99\), $\Kquatren \leq 9$, and \(\Ktroisn \leq 9^{3/4}\).} \\
    %, and \(\lambdatroisn = 0\). \\
    {\footnotesize \textbf{Dashed green lines:} bounds
    \(\Phi(x) \pm  \delta_n^{\textnormal{new, oracle}}\), where $\delta_n^{\textnormal{new, oracle}}$ denotes the right-hand side of Corollary~\ref{cor:improvement_iid_case} with \(\kappa_n = 0.99\) and using the true (oracle) values of $\Kquatren = 4.5$ and $\Ktroisn \approx 1.8$.} \\
    {\footnotesize \textbf{Continuous red lines:} bounds $\Phi(x) \pm \ExistingBoundBEiid \Ktroisn / \sqrt{n}$ using the bound \(\Ktroisn \leq 9^{3/4} \approx 5.2\).} \\
    {\footnotesize \textbf{Dashed red lines:} bounds $\Phi(x) \pm \ExistingBoundBEiid \Ktroisn / \sqrt{n}$ using the true value \(\Ktroisn \approx 1.8\).}
    }
    
    \label{fig:graph_Prob_Sn_student_df8}
\end{figure}

\begin{figure}[hbt]
    \centering
    \includegraphics[width=1\textwidth]{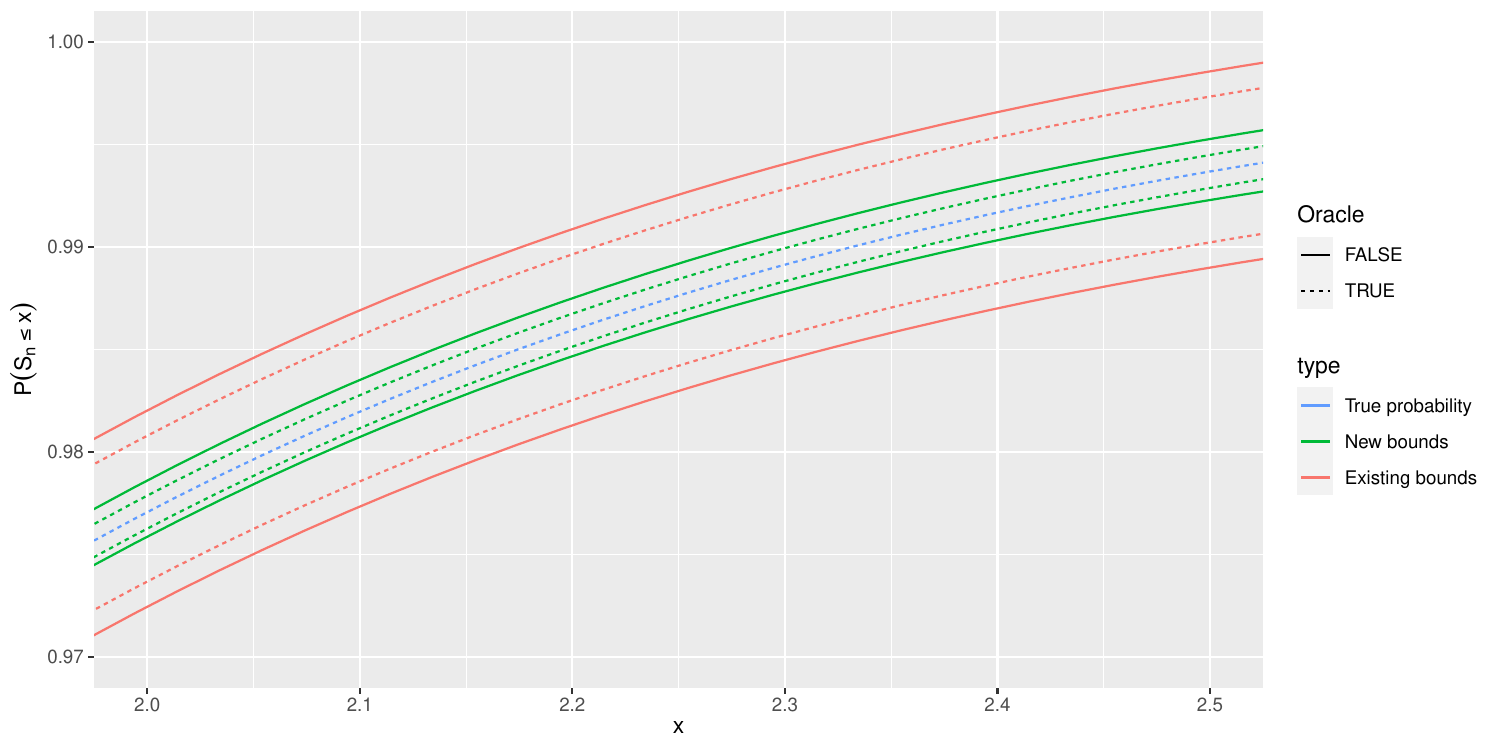}
    \caption{Setting: \iid{} skewed
    ($\lambdatroisn \neq 0$)
    with $X_n \sim \textnormal{Exp}(1) - 1$ and
    $n = 100,\!000$. \\
    {\footnotesize \textbf{Blue line:} $\Prob(S_n \leq x)$ as a function of $x$.} \\
    {\footnotesize \textbf{Continuous green lines:} bounds
    $\Phi(x) \pm 
    \Big(  0.621 \times 9^{3/4} / (6 \sqrt{n}) \times (1 - x^2) \varphi(x)
    + \delta_n^{\textnormal{new}} \Big)$
    where $\delta_n^{\textnormal{new}}$ denotes the right-hand side of Corollary~\ref{cor:improvement_iid_case} with \(\kappa_n = 0.99\) and $\Kquatren \leq 9$
    (as in Example~\ref{ex:be_bounds_comparison_cont}).} \\
    {\footnotesize \textbf{Dashed green lines:} bounds
    $\Phi(x)
    + \lambdatroisn / (6 \sqrt{n}) \times (1 - x^2) \varphi(x)
    \pm  \delta_n^{\textnormal{new, oracle}}$, where $\delta_n^{\textnormal{new, oracle}}$ denotes the right-hand side of Corollary~\ref{cor:improvement_iid_case} with \(\kappa_n = 0.99\) and using the true (oracle) values of $\Kquatren$, $\Ktroisn$ and $\lambdatroisn$.} \\
    {\footnotesize \textbf{Continuous red lines:} bounds $\Phi(x) \pm \ExistingBoundBEiid \Ktroisn / \sqrt{n}$ using the bound \(\Ktroisn \leq 9^{3/4} \approx 5.2\).} \\
    {\footnotesize \textbf{Dashed red lines:} bounds $\Phi(x) \pm \ExistingBoundBEiid \Ktroisn / \sqrt{n}$ using the true value \(\Ktroisn \approx 2.45\).}
    }
    \label{fig:graph_Prob_Sn_gamma}
\end{figure}

{\color{black}

\medskip

The Student distributions illustrate the unskewed case, where our bounds use the information~\(\lambdatroisn = 0\).
Figures~\ref{fig:graph_Prob_Sn_student_df5} and~\ref{fig:graph_Prob_Sn_student_df8} report several bounds contrasting the suggested practical choice \(\Kquatren \leq 9\), to deal with the fact that moments are unknown, with the ``oracle'' bounds where we use the true values of \(\lambdatroisn\), \(\Ktroisn\), and~\(\Kquatren\) (computed or approximated by Monte-Carlo).
As a comparison, we also report two versions of the existing bound: a ``practical'' one using \(\Ktroisn \leq \Kquatren^{3/4} \leq 9^{3/4}\), and an ``oracle'' version using the true value of~\(\Ktroisn\).
The kurtosis of a Student distribution is equal to \( 3 + 6 / (\textnormal{df } - 4)\) with \(\textnormal{df} > 4\) its degree of freedom.
Therefore, for any Student with at least 5 degrees of freedom, the upper bound \(\Kquatren \leq 9\) is valid, but all the more conservative as \(\textnormal{df}\) is large.
We consider two different values of \(\textnormal{df}\) to assess the loss of accuracy of our bounds when the discrepancy between the actual \(\Kquatren\) and our suggested default choice of~\(9\) increases.
%Considering different degrees of freedom enables to assess the performance of our practical bounds computed with \(\Kquatren \leq 9\) as a function of the sharpness or conservativeness of that default choice.

In Figure~\ref{fig:graph_Prob_Sn_student_df8}, we choose \(\textnormal{df} = 8\) so that the true value is \(\Kquatren = 4.5\) and the proposed bound \(\Kquatren \leq 9\) is thus conservative.
%The difference with the ``oracle'' bound indicates the loss of accuracy due to the absence of knowledge of the moments involved in our bound.
On the contrary, in Figure~\ref{fig:graph_Prob_Sn_student_df5}, because \(\textnormal{df} = 5\), the true value of \(\Kquatren\) is equal to the suggested choice of~\(9\), which becomes sharp.
In that respect, it is a more favorable situation.
Nonetheless, remark that there remains a difference between the ``practical'' and ``oracle'' versions of our bounds: the latter uses the true value of \(\Ktroisn\) (here, approximately equal to~\(1.8\)) while the former controls \(\Ktroisn\) by \(9^{3/4} \approx 5.2\).
%, which is generally not sharp.
%That difference isolates one source of the loss of accuracy due to unknown moments.
%Consequently, the difference between ``oracle'' and ``practical'' bounds in Figure~\ref{fig:graph_Prob_Sn_student_df5} isolates one source of the loss of accuracy due to unknown moments.

%The difference between ``oracle'' and ``pratical'' bounds visible in Figures~\ref{fig:graph_Prob_Sn_student_df8} and~\ref{fig:graph_Prob_Sn_student_df5} suggests that a ``plug-in'' approach is interesting.
%By the way, the remaining difference suggests that, in a ``plug-in'' approach, it is interesting to estimate \(\lambdatroisn\), \(\Ktroisn\), and \(\Kquatren\) instead of estimating \(\Kquatren\) only, then relying on the aforementioned inequalities (\(\Ktroisn \leq \Kquatren^{3/4}\) and \(|\lambdatroisn| \leq 0.621 \Ktroisn\)).
%Indeed, the ``oracle'' version can be interpreted as a noise-free implementation of the ``plug-in'' approach (with perfect estimation of the unknown moments).
%In practice, the ``plug-in'' approach entails some statistical uncertainty, of course, and the ``oracle'' bounds can only be seen as the most favorable case.

\medskip 

The Exponential distribution displayed in Figure~\ref{fig:graph_Prob_Sn_gamma} illustrates our bounds for a skewed distribution. We choose an Exponential distribution 
with expectation equal to~1. This distribution has a kurtosis \(\Kquatren = 9\) so that the main difference with Figure~\ref{fig:graph_Prob_Sn_student_df5} can be expected to stem from the presence of skewness. In line with the Student case, we report two versions of Shevtsova's bounds and ours, a practical version which uses only the information $\Kquatren \leq 9$ and an ``oracle'' one based on knowledge of $\lambdatroisn$, $\Ktroisn$ and $\Kquatren$. We recall that \(\DeltaB \neq \DeltaE\) when \(\lambdatroisn \neq 0\). What is more, the existing bounds (plotted in red) are bounds on~\(\DeltaB\) whereas ours (in green) originate from a control of \(\DeltaE\).

\medskip  

The ``oracle'' version can be interpreted as a noise-free implementation of the plug-in approach. We remark that oracle versions of existing bounds and ours are twice as accurate as their counterparts which rely on \(\Kquatren \leq 9\). These oracle bounds use by definition the true values of the moments, and therefore correspond to the most favorable case, in the sense of the tightness of the bounds.

}

\section{Non-asymptotic behavior of one-sided tests}
\label{sec:application_to_testing}

We now examine some implications of our theoretical results for the non-asymptotic validity of one-sided statistical tests based on the Gaussian approximation of the distribution of a sample mean using \iid{} data.

\medskip

Let $(Y_i)_{i=1, \dots, n}$ be an \iid{} sequence of random variable with expectation~$\mu$, known variance~$\sigma^2$ and finite fourth moment with ${K_4 : = \E\left[(Y_n-\mu)^4\right]/\sigma^4}$ the kurtosis of the distribution of $Y_n$.
We want to conduct a test of the null hypothesis ${H_0 : \mu \leq \mu_0}$, for some fixed real number $\mu_0$, against the alternative ${H_1 : \mu > \mu_0}$ with a type~I error at most $\alpha \in (0,1)$, and ideally equal to~$\alpha$.
The classical approach to this problem {\color{black}(Gauss test)} amounts to comparing
${S_n = \sum_{i=1}^n X_i / \sqrt{n}}$,
where $X_i := (Y_i - \mu_0) / \sigma$,
with the $1-\alpha$ quantile of the $\mathcal{N}(0,1)$ distribution, $q_{\mathcal{N}(0,1)}(1-\alpha)$, and reject $H_0$ if $S_n$ is larger.
{\color{black}We study this Gauss test in the general non-asymptotic framework without imposing Gaussianity of the data distribution, and we control the difference with respect to normality using the bounds developed in the previous sections.}

\subsection{Computation of sufficient sample sizes}

{\color{black}

In certain fields such as medicine or economics, researchers routinely set up experiments that seek to answer a specific question on an explained variable~\(Y\).
The number of individuals included in the experiment has to be carefully justified as large-scale analyses are very costly. 
This is typically done through the construction of a so-called ``pre-analysis plan'' which presents the sample size needed to detect a given effect with a pre-specified testing power $\beta \in (0,1)$.
% In the setting of the test studied in Section~\ref{subsec:uninformativeness_examples},
In the Gauss test setting considered here,
the researcher determines the effect of interest by fixing a particular alternative hypothesis 
$H_{1, \eta}: \mu = \mu_0 + \sigma \eta$ (with $\mu>\mu_0$).
The quantity \(\eta := (\mu - \mu_0) / \sigma\) is a positive number called the effect size that indicates how far away (in terms of standard deviations) the alternative hypothesis is, compared to the null hypothesis $H_0: \mu \leq \mu_0$. Remark that in our framework, $H_{1, \eta}$ is formally the set of all distributions with mean $\mu$, variance $\sigma^2$, that satisfy our additional moment and regularity conditions.
$H_{1, \eta}$ can be seen as a nonparametric class of distributions at a fixed distance~$\eta$ of the null hypothesis.

\medskip

Researchers usually rely on an asymptotic normal approximation to infer the sample size needed to detect a given effect at power $\beta$.
%Often, researchers determine such a sample size relying on the asymptotic normal approximation, for a given power.
Our results allow us to bypass this asymptotic approximation %and to propose a procedure that is guaranteed to have at least the prescribed power $\beta$ against the alternative hypothesis.
and to propose a procedure to choose the sample size $n$ of the experiment such that
\begin{equation}
    \Prob \Big( \textnormal{Rejection of } H_0 \Big)
    := \Prob \Big(\sum_{i=1}^n (Y_i - \mu_0) / \sqrt{n \sigma^2}
    > q_{\mathcal{N}(0,1)}(1-\alpha) \Big)
    \geq \beta,
    \label{eq:def_power}
\end{equation}
for any distribution belonging to the alternative hypothesis space. %We insist that no Gaussian assumption is needed here to get this finite-sample result.
Any $n$ that satisfies Equation~\eqref{eq:def_power} for all distributions in the alternative hypothesis is called a (non-asymptotic) sufficient sample size for the effect size $\eta$ at power~$\beta$.

\medskip 

Observe that
\begin{align*}
    \Prob \Big( \textnormal{Rejection } H_0 \Big)
    &= \Prob \Big(\sum_{i=1}^n
    (Y_i - \mu + \mu - \mu_0) / \sqrt{n \sigma^2}
    > q_{\mathcal{N}(0,1)}(1-\alpha) \Big) \\
    &= \Prob \Big(\sum_{i=1}^n X_i / \sqrt{n} > x_n \Big),
\end{align*}
where $X_i := (Y_i - \mu) / \sigma$ are centered with mean $0$ and variance $1$ %but are not necessarily Gaussian,
and $x_n := q_{\mathcal{N}(0,1)}(1-\alpha) - \eta \sqrt{n}$.
We remind the reader that the general result from Theorem~\ref{thm:nocont_choiceEps} or Corollary~\ref{cor:improvement_iid_case} implies the following upper and lower bounds for every~\(x \in \Rb\) and~\(n \geq 3\),
\begin{align}
    \frac{\lambda_{3,n}}{6\sqrt{n}}(1-x^2)\varphi(x)
    - \delta_n
    \leq \Prob(S_n \leq x) - \Phi(x)
    \leq \frac{\lambda_{3,n}}{6\sqrt{n}}(1-x^2)\varphi(x)
    + \delta_n,
    \label{eq:bracket_Edgeworth}
\end{align}
where $\delta_n$ is the corresponding bound on $\DeltaE$. 
From Equation~\eqref{eq:bracket_Edgeworth}, we thus obtain
\begin{align*}
    1 - \Prob\Big(\sum_{i=1}^n X_i / \sqrt{n} > x_n \Big)
    - \Phi(x_n)
    \leq \frac{\lambdatroisn}{6\sqrt{n}}(1 - x_n^2) \varphi(x_n)
    + \delta_n.
\end{align*}
Therefore,
\begin{align*}
    \Prob\Big(\sum_{i=1}^n X_i / \sqrt{n} > x_n \Big)
    \geq
    1 - \Phi(x_n)
    - \frac{\lambdatroisn}{6\sqrt{n}}(1 - x_n^2) \varphi(x_n)
    - \delta_n.
\end{align*}
As a consequence, the sample size $n = n_{\eta, \beta}$ defined as the solution of the following equation
\begin{align*}
    1 - \Phi \Big( q_{\mathcal{N}(0,1)}(1-\alpha)
    - \eta \sqrt{n} \Big)
    &- \frac{\lambdatroisn \times \left(1 - \left( q_{\mathcal{N}(0,1)}(1-\alpha)
    - \eta \sqrt{n} \right)^2 \right)}{6\sqrt{n}} \\
    &\times \varphi \Big( q_{\mathcal{N}(0,1)}(1-\alpha)
    - \eta \sqrt{n} \Big)
    - \delta_n = \beta,
\end{align*}
is a non-asymptotic sufficient sample size.
Note that the same reasoning can be also applied if we only impose an upper bound on $\lambdatroisn$.
In particular, if we only know $\Kquatren$, we can use the bound $0.621 \Kquatren^{3/4}$ and then a sufficient sample size $n$ can be found as the solution to
\begin{align}
    1 - \Phi \Big( q_{\mathcal{N}(0,1)}(1-\alpha)
    - \eta \sqrt{n} \Big)
    &- \frac{0.621 \Kquatren^{3/4} \times \left(1 - \left( q_{\mathcal{N}(0,1)}(1-\alpha)
    - \eta \sqrt{n} \right)^2 \right)}{6\sqrt{n}} \nonumber \\
    &\times \varphi \Big( q_{\mathcal{N}(0,1)}(1-\alpha)
    - \eta \sqrt{n} \Big)
    - \delta_n = \beta.
    \label{eq:power_func_n_eta}
\end{align}

\begin{table}[htb]
    \centering
    \begin{tabular}{rrrrrrrr}
    \toprule
    $\beta$ (\%) & $\eta = 0.01$ & $\eta = 0.02$ & $\eta = 0.05$ & $\eta = 0.1$
    & $\eta = 0.2$ & $\eta = 0.5$ \\ 
    \midrule
50 & 27,993 & 7,489 & 1,463 & 501 & 280 & 265 \\ 
80 & 62,597 & 16,237 & 2,988 & 967 & 549 & 548 \\ 
85 & 72,686 & 18,841 & 3,490 & 1,176 & 789 & 789 \\ 
90 & 86,507 & 22,462 & 4,255 & 1,636 & 1,469 & 1,469 \\ 
95 & 109,374 & 28,665 & 5,976 & 4,070 & 4,070 & 4,070 \\ 
99 & 161,151 & 45,735 & 27,946 & 27,946 & 27,946 & 27,946 \\ 
    \bottomrule
    \end{tabular}
    \vspace{0.1cm}
    \caption{Sufficient sample sizes for the experiment to be well-powered for a nominal power $\beta$ for the detection of an effect size $\eta$.
    We use the bound from Corollary~\ref{cor:improvement_iid_case} (\iid{} case with additional regularity assumption).
    As in Examples~\ref{ex:be_bounds_comparison_nocont} and~\ref{ex:be_bounds_comparison_cont}, we use $\Kquatren \leq 9$, \(\lambdatroisn \leq 0.621 \Kquatren^{3/4}\), and $\kappa \leq 0.99$ to compute \(n_{\eta, \beta}\) (see Equation~\eqref{eq:power_func_n_eta}).}
    \label{tab:sufficient_size}
\end{table}

Numerical applications can be found in Table~\ref{tab:sufficient_size}
which displays the computed sample sizes for different choices of effect sizes $\eta$ and of power $\beta$.
In this experiment, we choose $\Kquatren \leq 9$ and $\kappa \leq 0.99$, as before.
We can observe that, as expected, $n_{\eta, \beta}$ increases with $\beta$ and decreases with $\eta$.
For $\eta$ large enough, $n_{\eta, \beta}$ becomes approximately constant in $\eta$ as Equation~\eqref{eq:power_func_n_eta} simplifies to $1 - \delta_n = \beta.$
Conversely, it is also possible to use directly Equation~\eqref{eq:power_func_n_eta} to compute the power for different effects and sample sizes. The results are displayed in Table~\ref{tab:implied_power}.

\begin{table}[htb]
    \centering
    \begin{tabular}{rrrrrrrr}
    \toprule
    $n$ & $\eta = 0.01$ & $\eta = 0.02$ & $\eta = 0.05$ & $\eta = 0.1$
    & $\eta = 0.2$ & $\eta = 0.5$ \\ 
    \midrule
200 & 0.0 & 0.0 & 0.0 & 0.0 & 15.8 & 27.3 \\ 
500 & 0.0 & 0.0 & 7.4 & 49.9 & 78.0 & 78.1 \\ 
800 & 0.0 & 0.0 & 25.3 & 73.5 & 85.1 & 85.1 \\ 
1,000 & 0.0 & 2.8 & 34.0 & 81.0 & 87.2 & 87.2 \\ 
2,000 & 3.5 & 14.3 & 63.9 & 91.7 & 91.8 & 91.8 \\ 
5,000 & 13.1 & 36.3 & 92.9 & 95.7 & 95.7 & 95.7 \\ 
10,000 & 23.3 & 61.2 & 97.4 & 97.5 & 97.5 & 97.5 \\ 
50,000 & 71.7 & 99.2 & 99.5 & 99.5 & 99.5 & 99.5 \\ 
100,000 & 93.3 & 99.8 & 99.8 & 99.8 & 99.8 & 99.8 \\ 
    \bottomrule
    \end{tabular}
    \vspace{0.1cm}
    \caption{Lower bound~\eqref{eq:power_func_n_eta} on the power $\beta$ (\%) as a function of the effect size $\eta$ and sample size $n$, with our bounds from Corollary~\ref{cor:improvement_iid_case},
    $\Kquatren \leq 9$, and $\kappa \leq 0.99$.}
    \label{tab:implied_power}
\end{table}

}

\subsection{Assessing the lack of information}
\label{subsec:uninformativeness_examples}

%Uninformativity: The lack of information conveyed by the Berry-Esseen inequality for inference on an expectation.}

{\color{black}
As explained below, the non-asymptotic bounds introduced in Sections~\ref{ssec:thms_nocont} and~\ref{ssec:thms_cont} can be used to evaluate the actual (for a finite sample size) level of our one-sided test of interest.

\medskip 

Recall that Berry-Esseen-type inequalities aim to bound \(\DeltaB\), defined in Equation~\eqref{eq:def_Delta_n_B}, the uniform distance between \(\Prob(S_n \leq \cdot)\) and \(\Phi(\cdot)\).
In particular, for a nominal level~\(\alpha\), we thus have
% \footnote{
% %
% {\color{black}
% We could sharpen the analysis slightly by resorting to~\eqref{eq:edg_exp_res} and the triangle inequality which allows us to write, denoting \(q_{\alpha} = q_{\mathcal{N}(0,1)}(1-\alpha)\),
% \begin{equation*}
%     \Big| \Prob\big( S_n \leq q_{\alpha} \big)
%     - (1-\alpha) \Big|
%     \leq 
%     \frac{|\lambdatroisn|}{6\sqrt{n}}(1-q_{\alpha}^2)\varphi(q_{\alpha})+\DeltaE.    
% \end{equation*}
% However, in line with the Berry-Esseen literature, we consider uniform bounds only in this article.}
% %
% % On fait en borne uniforme (Delta_n^B qu'on tire de nos résultats) mais on note que dans ce cas particulier où on se place à un point particulier, on pourrait faire plus finement.
% }
\begin{equation*}
    \Big| \Prob\big( S_n \leq q_{\mathcal{N}(0,1)}(1-\alpha) \big)
    - (1-\alpha) \Big|
    \leq 
    \DeltaB,
\end{equation*}
where the probability operator is to be understood under any data-generating process such that~\(\mu = \mu_0\), to be as close as possible to the alternative hypothesis $H_1$.
Either ``classical'' Berry-Esseen inequalities or ours obtained through an Edgeworth expansion provide bounds on~\(\DeltaB\) (see the different bounds displayed in Examples~\ref{ex:be_bounds_comparison_nocont} and~\ref{ex:be_bounds_comparison_cont} in the \iid{} case).
In this context, a bound on~\(\DeltaB\) is said to \emph{uninformative} when it is larger than~\(\alpha\).
Indeed, in that case, we cannot exclude that $\Prob\!\left( S_n \leq q_{\mathcal{N}(0,1)}(1-\alpha) \right)$ is arbitrarily close to~1, or equivalently, that the probability to reject $H_0$ is arbitrarily close to~$0$, and therefore that the test is arbitrarily conservative (type~I error arbitrarily smaller than the nominal level~\(\alpha\)).
We denote by $n_{\max}(\alpha)$ the largest sample size $n$ for which the bound is uninformative.
Intuitively, $n_{\max}(\alpha)$ indicates the sample size above which the asymptotic normal approximation to the distribution of~\(S_n\) becomes sensible under the assumptions used to bound~\(\DeltaB\).
Indeed, $n_{\max}(\alpha)$ is specific to the bound \(\delta_n\) used, which itself depends on various features of the distribution: number of finite moments, (lack of) skewness, regularity, etc.
Table~\ref{tab:largest_weakly_inf_n} reports the value of $n_{\max}(\alpha)$ for different Berry-Esseen bounds and usual nominal levels~\(\alpha \in \{0.10, 0.05, 0.01\}\).

\begin{table}[ht]
\begin{center}
\begin{minipage}{174pt}
\makegapedcells % for beautiful alignment of cells
\begin{tabular}{@{}llll@{}}
    \toprule
        Bound on \(\DeltaB\) & $\alpha = 0.10$ & $\alpha = 0.05$ & $\alpha = 0.01$ \\
    \midrule
        Existing & 593 & 2,375 & 59,389 \\
        Thm. \ref{thm:nocont_choiceEps} & 2,339 & 6,705 & 55,894 \\ 
        Thm. \ref{thm:nocont_choiceEps} unskewed & 443 & 1,229 & 17,934 \\ 
        Cor. \ref{cor:improvement_iid_case} & 1,468 & 4,069 & 27,945 \\ 
        Cor. \ref{cor:improvement_iid_case} unskewed & 375 & 474 & 1,062 \\ 
    \bottomrule
    \end{tabular}
\end{minipage}
\caption{$n_{\max}(\alpha)$, for different assumptions and Berry-Esseen bounds: \cite{shevtsova2013}'s bound with finite third moment (Existing), our bound with finite fourth moment (Thm.~\ref{thm:nocont_choiceEps}), our bound with additional regularity condition on $f_{X_n / \sigma_n}$ (Cor.~\ref{cor:improvement_iid_case}).\\
We impose the same restrictions as in Examples~\ref{ex:be_bounds_comparison_nocont} and~\ref{ex:be_bounds_comparison_cont}, namely $\Kquatren \leq 9$ and $\kappa \leq 0.99$.}
\label{tab:largest_weakly_inf_n}
\end{center}
\end{table}

% Commentaires tableau
For each bound, \(n_{\max}(\alpha)\) is decreasing in~\(\alpha\).
For \(\alpha = 0.01\) in particular, the situation deteriorates strikingly except in the most favorable case of a regular and unskewed distribution.
With our bounds, the presence or absence of skewness strongly influences \(n_{\max}(\alpha)\).
We also remark that imposing the additional regularity assumption introduced in Section~\ref{ssec:thms_cont} significantly lowers~\(n_{\max}(\alpha)\). 
}

\subsection{Distortions of the level of the test and of the p-values}

We explain now that our non-asymptotic bounds on the Edgeworth expansion can be used to detect whether the test is conservative or liberal.
This goes one step further than merely checking whether it is arbitrarily conservative or not.
Equation~\eqref{eq:bracket_Edgeworth} shows that $\Prob(S_n\leq x)$ belongs to the interval 
\[
\mathcal{I}_{n,x} := \left[ \Phi(x) + \lambdatroisn (1-x^2)\varphi(x) / (6\sqrt{n}) \pm \delta_n \right],
\]
which is not centered at $\Phi(x)$ whenever $\lambdatroisn \neq 0$ and $x \neq \pm \, 1$. The length of the interval does not depend on~$x$ and shrinks at speed $\delta_n$.
On the contrary, its location depends on~$x$.
For given nonzero skewness~\(\lambdatroisn\) and sample size~\(n\), the middle point of $\mathcal{I}_{n,x}$ is all the more shifted away from the asymptotic approximation $\Phi(x)$ as \( (1-x^2)\varphi(x)\) is large in absolute value.
The function \(x \mapsto (1-x^2)\varphi(x)\) has global maximum at $x=0$ and minima at the points $x \approx \pm \, 1.73$.
Consequently, irrespective of $n$, the largest gaps between $\Prob(S_n\leq x)$ and $\Phi(x)$ may be expected around $x=0$ or $x = \pm \, 1.73$.
$\Phi(x)$ could even lie outside $\mathcal{I}_{n,x}$, in which case $\Prob( S_n \leq x )$ has to be either strictly smaller or larger than $\Phi(x)$.
More precisely, \(\Prob( S_n \leq x )\) is all the further from its normal approximation \(\Phi(x)\) as the skewness \(\lambdatroisn\) is large in absolute value; whether \(\Prob( S_n \leq x )\) is strictly smaller or larger than \(\Phi(x)\) depends on the sign of \(1 - x^2\) as developed in Table~\ref{tab:cases_distortion_P_Sn_leq_x_Phi_x}.
\begin{table}[ht]
\begin{center}
\begin{minipage}{\textwidth}
\makegapedcells % for beautiful alignment of cells
\begin{tabular*}{\textwidth}{@{\extracolsep{\fill}}lcc@{\extracolsep{\fill}}}
    \toprule
        & \(\Prob( S_n \leq x ) < \Phi(x) \) & \(\Prob( S_n \leq x ) > \Phi(x) \) \\
    \midrule
        If \( |x| < 1 \) & \( \lambdatroisn < 6\sqrt{n}\delta_n/\big((x^2-1)\varphi(x)\big) < 0 \) & \( \lambdatroisn > 6\sqrt{n}\delta_n/\big((1-x^2)\varphi(x)\big) > 0 \) \\
        If \( |x| >  1 \) & \( \lambdatroisn > 6\sqrt{n}\delta_n/\big((x^2-1)\varphi(x)\big)  > 0\) & \( \lambdatroisn < 6\sqrt{n}\delta_n/\big((1-x^2)\varphi(x)\big) < 0 \) \\
    \bottomrule
    \end{tabular*}
    \end{minipage}
    \caption{{\small Cases and conditions on the skewness \(\lambdatroisn\) under which \(\Prob( S_n \leq x )\) is either strictly smaller or larger than its normal approximation~\(\Phi(x)\) for any given sample size~\(n \geq 3\).}}
    \label{tab:cases_distortion_P_Sn_leq_x_Phi_x}
\end{center}
\end{table}

These observations allow us to quantify possible non-asymptotic distortions between the nominal level and actual rejection rate of the one-sided test we consider. 
Let us set $x = q_{\mathcal{N}(0,1)}(1-\alpha)$ (henceforth denoted $q_{1-\alpha}$ to lighten notation), which implies that $\Phi(x) = 1- \alpha$.
Here, we focus solely on the case $|q_{1-\alpha}|>1$ to encompass all tests with nominal level $\alpha \leq 0.15$, thus in particular the conventional levels 10\%, 5\%, and~1\%.
When $\lambdatroisn > 6\sqrt{n}\delta_n/\big((q_{1-\alpha}^2-1)\varphi(q_{1-\alpha})\big)$, we conclude that $\Prob\left( S_n \leq q_{1-\alpha} \right) < 1-\alpha$. Since the event \( \{ S_n \leq q_{1-\alpha} \} \) is the complement of the rejection region, the probability of rejecting $H_0$ under the null exceeds~\(\alpha\); in other words, the test cannot guarantee its stated control~\(\alpha\) on the type~I error and is said liberal.
Conversely, when $\lambdatroisn < 6\sqrt{n}\delta_n/\big((1-q_{1-\alpha}^2)\varphi(q_{1-\alpha})\big)$, the probability $\Prob\left( S_n \leq q_{1-\alpha} \right)$ has to be larger than $1-\alpha$; equivalently, the probability to reject under the null is below~\(\alpha\) so that the test is conservative.

The distortion can also be seen in terms of p-values.
In the unilateral test we consider, the p-value is \({pval := 1 - \Prob(S_n \leq s_n)}\) with $s_n$ the observed value of $S_n$ in the sample.
In contrast, the approximated p-value is ${\widetilde{pval} := 1 - \Phi(s_n)}$.
Setting $x = s_n$ in Equation~\eqref{eq:bracket_Edgeworth} yields
\begin{equation*}
    \frac{\lambda_{3,n}}{6\sqrt{n}}(1-s_n^2)\varphi(s_n)
    - \delta_n
    \leq (1 - pval) - (1 - \widetilde{pval})
    \leq \frac{\lambda_{3,n}}{6\sqrt{n}}(1-s_n^2)\varphi(s_n)
    + \delta_n.
\end{equation*}
Therefore,
\begin{equation}
\label{eq:bracket_pval}
    \widetilde{pval}
    - \frac{\lambda_{3,n}}{6\sqrt{n}}(1 - s_n^2)\varphi(s_n)
    - \delta_n
    \leq
    pval 
    \leq 
    \widetilde{pval} - \frac{ \lambda_{3,n}}{6\sqrt{n}}(1 - s_n^2)\varphi(s_n)
    + \delta_n.
\end{equation}
In line with the explanations preceding Table~\ref{tab:cases_distortion_P_Sn_leq_x_Phi_x}, \(\widetilde{pval}\) is strictly smaller or larger than \(pval\) when the skewness is sufficiently large in absolute value relative to~\(\delta_n\).
Indeed, if \(\lambdatroisn \neq 0\), the interval from Equation~\eqref{eq:bracket_pval} that contains the true p-value \(pval\) is not centered at the approximated p-value \(\widetilde{pval}\).
Under additional regularity assumptions (see Corollary~\ref{cor:improvement_iid_case} in the \iid{} case), the remainder term \(\delta_n = O(n^{-1})\) whereas the ``bias'' term involving \(\lambdatroisn\) vanishes at rate \(n^{-1/2}\).
As a result, the interval locates closer to \(\widetilde{pval}\) as $n$ increases and its width shrinks to zero at an even faster rate.

Finally, we stress that such distortions regarding rejection rates and p-values are specific to one-sided tests.
For bilateral or two-sided tests, the skewness of the distribution enters symmetrically in the approximation error and cancels out thanks to the parity of \( x \mapsto (1 - x^2) \phi(x) \).

%\newpage

\bibliography{main_arxiv_v3_v9project}

\newpage 

\appendix

\section{Proof of the main theorems}
\label{appendix:sec:proof_main_theorems}

\subsection{Outline of the proofs of Theorems~\ref{thm:nocont_choiceEps} and~\ref{thm:cont_choiceEps}}
\label{ssec:proof:outline}

We start by presenting a lemma derived in \cite{prawitz1975}, which is central to prove our theorems.
This result helps control the distance between the cumulative distribution function $F$ of a random variable with skewness $v$ and its first order Edgeworth expansion  $G_v(x):=\Phi(x)+\frac{v}{6}(1-x^2) \varphi(x)$ in terms of their respective Fourier transforms.

\begin{lemma}
    Let $F$ be an arbitrary cumulative distribution function with characteristic function $f$ and skewness $v$.
    Let $\tau, T > 0$. Then we have
    \begin{align}
        \sup_{x\in\Rb} \big| F(x) - G_v(x) \big| 
        & \leq \, \Omega_1(T, v, \tau)
        + \Omega_2(T) + \Omega_3(T, v, \tau)
        + \Omega_4(\tau \wedge T/\pi, T/\pi, T),
        \label{eq:smoothing}
    \end{align}
    where
    \vspace{-0.5em}
    \begin{align*}\arraycolsep=0.15em \def\arraystretch{2}
    \begin{array}{lll}
        \Omega_1(T, v, \tau)
        &:=& 2 \displaystyle\int_0^{T/\pi}
        \left|\frac{1}{T}\Psi(u/T)-\dfrac{i}{2\pi u}\right|e^{-u^2/2}
        \left(1+\dfrac{|v|u^3}{6}\right) du \\
        && \hspace{2cm} + \dfrac{1}{\pi} \displaystyle \int_{T/\pi}^{+\infty}
        \dfrac{e^{-u^2/2}}{u} \left(1+\dfrac{|v|u^3}{6}\right) du \\
        && \hspace{2cm} + 2 \displaystyle \int_{\tau \wedge T/\pi}^{T/\pi}
        \left|\frac{1}{T}\Psi(u/T)\right|\, e^{-u^2/2}
        \dfrac{|v|u^3}{6} du, \\
        \Omega_2(T) 
        &:=& 2\displaystyle\int_{T/\pi}^T
        \left|\frac{1}{T}\Psi(u/T)\right|\,|f(u)| du, \\
        \Omega_3(T, v, \tau)
        &:=& \displaystyle 2 \int_0^{\tau \wedge T/\pi}      \left|\frac{1}{T}\Psi(u/T)\right|\, \left|f(u)-e^{-u^2/2}
        \left(1-\dfrac{viu^3}{6}\right)\right| du, \\
        \Omega_4(a, b, T)
        &:=& \displaystyle 2 \int_{a}^{b}
        \left|\frac{1}{T}\Psi(u/T)\right| \,
        \left|f(u)-e^{-u^2/2} \right| du, \\
    \end{array}
    \end{align*}
    and
    $\Psi(t):= \frac{1}{2}
    \left(1-|t|+i\left[(1-|t|)\cot(\pi t)+\frac{\sign(t)}{\pi}\right]\right)
    \Indicator\left\{ |t| \leq 1 \right\}$.
    \label{lemma:smoothing}
\end{lemma}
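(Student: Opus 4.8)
The plan is to derive this inequality from the fundamental smoothing inequality of \cite{prawitz1975}, which bounds an arbitrary distribution function both from above and from below by a principal-value integral of its characteristic function against the truncated kernel $t \mapsto T^{-1}\Psi(t/T)$. Concretely, Prawitz's inequality states that for every $x$ and every $T > 0$,
\[
F(x) \leq \tfrac{1}{2} + \vp\!\int_{-T}^{T} \tfrac{1}{T}\,\Psi(t/T)\, e^{-itx} f(t)\, dt,
\]
together with a matching lower bound obtained by applying the same estimate to $1 - F(-x)$. The second ingredient is that the first-order Edgeworth expansion $G_v$ is, through the inversion formula, the ``signed measure'' whose Fourier--Stieltjes transform is $\widehat{g}_v(t) := e^{-t^2/2}\big(1 - v i t^3/6\big)$: this follows from $\int e^{itx}\varphi(x)\,dx = e^{-t^2/2}$ and the Hermite identity $\int e^{itx}(-1)^k \varphi^{(k)}(x)\, dx = (it)^k e^{-t^2/2}$ applied with $k = 3$, since $\tfrac{v}{6}(1-x^2)\varphi(x)$ is the integral of the correction density $\tfrac{v}{6}\varphi(x)H_3(x)$.

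First I would apply Prawitz's two bounds to $F$ and subtract the exact Fourier representation of $G_v$, which is recovered by the \emph{ideal} (untruncated) kernel $t \mapsto i/(2\pi t)$. Because $\Psi$ is constructed so that $T^{-1}\Psi(u/T)$ agrees with $i/(2\pi u)$ to leading order near the origin (the $\cot(\pi t)$ term contributes precisely the singular part $i/(2\pi t)$), the difference $F(x) - G_v(x)$ is controlled, after using the conjugate symmetry $f(-u) = \overline{f(u)}$ to fold the integral onto $[0, T]$ and doubling, by
\[
\big|F(x) - G_v(x)\big| \leq 2\int_0^{T}\Big|\tfrac{1}{T}\Psi(u/T)\Big|\,\big|f(u) - \widehat{g}_v(u)\big|\, du \; + \; (\text{deterministic kernel-mismatch terms}).
\]
The crucial point is that this bound is free of $x$, which is what produces the supremum on the left-hand side.

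Next I would split the integration range to match the four $\Omega$ terms, applying the triangle inequality on each piece. On the inner region $[0, \tau \wedge T/\pi]$ I keep the full comparison $|f(u) - \widehat{g}_v(u)|$, yielding $\Omega_3$. On the intermediate band $[\tau \wedge T/\pi,\, T/\pi]$ I drop the skewness term from $\widehat{g}_v$ and compare $f$ only to the Gaussian transform through $|f(u) - e^{-u^2/2}|$ (giving $\Omega_4$), relegating the discarded skewness contribution $\tfrac{|v| u^3}{6} e^{-u^2/2}$ to the third deterministic integral of $\Omega_1$. On the outer band $[T/\pi, T]$ I bound $|f(u) - \widehat{g}_v(u)| \leq |f(u)| + |\widehat{g}_v(u)|$: the first piece gives $\Omega_2$, while the Edgeworth piece, together with the part of the inversion of $\widehat{g}_v$ lying beyond $T/\pi$, is estimated against the ideal kernel weight $1/(\pi u)$ and feeds the Gaussian/Edgeworth tail integral in $\Omega_1$. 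Finally, replacing $i/(2\pi u)$ by $T^{-1}\Psi(u/T)$ on $[0, T/\pi]$, weighted by the Edgeworth envelope $e^{-u^2/2}(1 + |v|u^3/6)$, is exactly the first integral defining $\Omega_1$.

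I expect the main obstacle to be the careful treatment of the principal value and the matched singularity at the origin: both $T^{-1}\Psi(u/T)$ and the ideal kernel $i/(2\pi u)$ diverge like $1/u$ as $u \to 0$, so that only their \emph{difference} is integrable, and one must verify that $\big|T^{-1}\Psi(u/T) - i/(2\pi u)\big|$ remains integrable near $0$ (via the Laurent expansion of $\cot$) before the decomposition above is legitimate. The other substantial ingredient is Prawitz's inequality itself, whose proof rests on exhibiting the extremal trigonometric majorant and minorant of the indicator whose transform is $\Psi$; I would import this from \cite{prawitz1975} rather than reprove it, and concentrate the effort on the domain decomposition and the singularity bookkeeping described above.
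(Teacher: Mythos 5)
Your proposal follows essentially the same route as the paper's proof: Prawitz's smoothing inequalities combined with the Gil-Pelaez--type Fourier representation of $G_v$ (whose generalized density has transform $e^{-u^2/2}(1 - viu^3/6)$), folding the integral onto $[0,+\infty)$ by conjugate symmetry, and the same domain decomposition $[0,\tau\wedge T/\pi]$, $[\tau\wedge T/\pi, T/\pi]$, $[T/\pi,T]$, $[T,+\infty)$ with triangle inequalities producing exactly $\Omega_1$ through $\Omega_4$. The singularity bookkeeping you flag as the main obstacle is handled in the paper by the elementary bound $\left|\Psi(t) - \frac{i}{2\pi t}\right| \leq \frac{1}{2}\left(1 - |t| + \frac{\pi^2}{18}t^2\right)$ together with the observation that the principal value of a nonnegative integrand coincides with the ordinary integral.
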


For the sake of completeness, we give a proof of this lemma in Section~\ref{proof:lemma:smoothing}.
We also use the following properties on the function $\Psi$ \cite[Equations (I.29) and (I.30)]{prawitz1975}
\begin{equation}
    \label{eq:properties_Psi}
    \big| \Psi(t) \big| \leq \frac{1.0253}{2 \pi |t|}
    \; \text{ and } \;
    \left| \Psi(t) - \frac{i}{2\pi t} \right|
    \leq \frac{1}{2}
    \left( 1 - |t| + \frac{\pi^2}{18} t^2\right).
\end{equation}

Lemma~\ref{lemma:smoothing} is valid for any positive values \(T\) and \(\tau\).
The latter are free parameters whose values determine which terms are the dominant ones among \(\Omega_1\) to \(\Omega_4\).

\medskip

Theorem~\ref{thm:nocont_choiceEps} written in the body of the article synthesizes Theorems~\ref{thm:nocont_inid} and \ref{thm:nocont_iid} stated and proven below respectively in the \inid{} and the \iid{} cases.
Likewise, Theorem~\ref{thm:cont_choiceEps} corresponds to Theorems~\ref{thm:cont_inid} (\inid{} case) and~\ref{thm:cont_iid} (\iid{} case).
The four proofs start by applying Lemma~\ref{lemma:smoothing} with $F$ the cdf of $S_n$ and thus $v = \lambdatroisn / \sqrt{n}$.
Then, for specific values of \(T\) and \(\tau\), we derive upper bounds on each of the four terms of Equation~\eqref{eq:smoothing}.

In all our theorems, we set 
\begin{equation}
\label{eq:tau_choice}
    \tau = \sqrt{2\eps} (n/K_{4,n})^{1/4},
\end{equation}
where \(\eps\) is a dimensionless free parameter.
It is not obvious to optimize our bounds over that parameter.
Consequently, Theorems~\ref{thm:nocont_inid}
to~\ref{thm:cont_iid} are proven for any \(\eps \in (0, 1/3)\) and, in the body of the article, we present the results with \(\eps = 0.1\), a sensible value according to our numerical comparisons.

Unlike~\(\tau\), we vary the rate of \(T\) across theorems.
In Theorems~\ref{thm:nocont_inid} and~\ref{thm:nocont_iid}, we choose
\[
T = \frac{2 \pi \sqrt{n}}{\Ktroisntilde}.
\]
The resulting bound is interesting under moment conditions only (Assumption~\ref{hyp:basic_as_inid} for \inid{} cases and~\ref{hyp:basic_as_iid} for \iid{} cases).

In Theorems~\ref{thm:cont_inid} and~\ref{thm:cont_iid}, we make a different choice, namely
\begin{equation*}
    T = \frac{16 \pi^4 n^2}{\Ktroisntilde^4}.
\end{equation*}
These last two theorems present alternative bounds, also valid under moment conditions only.
They improve on Theorems~\ref{thm:nocont_inid} and~\ref{thm:nocont_iid} under regularity conditions on the tail behavior of the characteristic function~\(\caracfsum\) of~\(S_n\).
Examples of such conditions are to be found in Corollaries~\ref{cor:improvement_inid_case} (\inid{} case) and~\ref{cor:improvement_iid_case} (\iid{} case).

\subsection{Proof of Theorem~\ref{thm:nocont_choiceEps} under Assumption~\ref{hyp:basic_as_inid}}
\label{ssec:proof:inid_nocont}

In this section, we state and prove a more general theorem (Theorem~\ref{thm:nocont_inid} below). We recover Theorem~\ref{thm:nocont_choiceEps} when we set $\eps = 0.1$.

\begin{theorem}[One-term Edgeworth expansion under Assumption~\ref{hyp:basic_as_inid}]
\label{thm:nocont_inid}
    
    (i) Under Assumption~\ref{hyp:basic_as_inid}, for every $\eps \in (0,1/3)$ and every $n \geq 1$, we have the bound
    % such that $n \geq 9 \, K_{4,n}$,
    % such that $\indicnepsun = 1$,
    \begin{align}
    \label{eq:edg_exp_inid_nocont}
        \DeltaE \leq  \frac{0.1995 \, \Ktroisntilde}{\sqrt{n}}
        + \frac{1}{n} \Bigg\{ & 0.031 \, \Ktroisntilde^2
        + 0.327 \, K_{4,n}\left(\frac{1}{12}+\frac{1}{4(1-3\eps)^2}\right) \nonumber \\
        &+ 0.054 \, |\lambda_{3,n}|\Ktroisntilde
        + 0.037 \, e_{1,n}(\eps)\lambda_{3,n}^2 \Bigg\} + \rninidskew{1}(\eps),
    \end{align}
    where $e_{1,n}(\eps)$ is given in Equation~\eqref{eq:def_e_1n} and $\rninidskew{1}(\eps)$ is given in Equation~\eqref{eq:def_r_1n_inid_eps}.

    \medskip
    
    (ii) If we further impose $\E[X_i^3]=0$ for every $i = 1,\dots,n$, the upper bound reduces to
    \begin{align}
    \label{eq:edg_exp_inid_nocont_sym}
        & \frac{0.1995\Ktroisntilde}{\sqrt{n}} + \frac{1}{n}\left\{ 0.031\Ktroisntilde^2 + 0.327K_{4,n}\left(\frac{1}{12}+\frac{1}{4(1-3\eps)^2}\right) \right\} + \rninidnoskew{1}(\eps),
    \end{align}
    where $\rninidnoskew{1}(\eps)$ is given in Equation~\eqref{eq:def_r_1n_inid_noskew_eps}.
    
    \medskip
    
    (iii) Finally, when $K_{4,n} = O(1)$ as \(n \to \infty\), we obtain $\rninidskew{1}(\eps) = O(n^{-5/4})$ and $ \rninidnoskew{1}(\eps) = O(n^{-3/2})$.
\end{theorem}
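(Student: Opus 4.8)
The plan is to apply Lemma~\ref{lemma:smoothing} with $F$ the cumulative distribution function of $S_n$, so that the skewness parameter becomes $v = \lambdatroisn/\sqrt{n}$, and with the specific tuning parameters $\tau = \sqrt{2\eps}\,(n/K_{4,n})^{1/4}$ and $T = 2\pi\sqrt{n}/\Ktroisntilde$ announced in Section~\ref{ssec:proof:outline}. The bound then reduces to controlling each of the four terms $\Omega_1$, $\Omega_2$, $\Omega_3$, $\Omega_4$ separately. The strategy for part (i) is to show that $\Omega_1$ contributes the leading $0.1995\,\Ktroisntilde/\sqrt{n}$ term together with lower-order pieces, while $\Omega_2$, $\Omega_3$, and $\Omega_4$ are absorbed into the $1/n$ terms and the remainder $\rninidskew{1}(\eps)$. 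Part (ii) follows by re-running the same estimates with $v=0$ wherever $\lambdatroisn$ appears (since $\E[X_i^3]=0$ for all $i$ forces $\lambdatroisn=0$), which kills every term carrying a factor of $|\lambda_{3,n}|$ or $\lambda_{3,n}^2$ and simplifies $\Omega_1$ and $\Omega_3$. Part~(iii), the asymptotic order of the remainders, is what I treat as the genuine content to verify here.

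For the leading term I would expand $\Omega_1(T,v,\tau)$ using the second inequality in~\eqref{eq:properties_Psi}, namely $|\Psi(t)-i/(2\pi t)| \leq \tfrac12(1-|t|+\tfrac{\pi^2}{18}t^2)$, on the first integral, and bound the tail integrals $\int_{T/\pi}^{\infty} u^{-1}e^{-u^2/2}(1+|v|u^3/6)\,du$ using the Gaussian decay. Since $T \asymp \sqrt{n}/\Ktroisntilde$ is large, these tail contributions are exponentially small and land in the remainder. Carrying the $1/T = \Ktroisntilde/(2\pi\sqrt{n})$ factor through the dominant part of the first integral of $\Omega_1$ produces the coefficient $0.1995 \approx 1/(2\pi) \cdot (\text{numerical integral})$ times $\Ktroisntilde$, explaining both the constant and the $n^{-1/2}$ rate. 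The terms $\Omega_2$ and $\Omega_4$ require bounding $|f(u)|$ and $|f(u)-e^{-u^2/2}|$ on the relevant ranges; this is where the independence structure enters, since $\caracfsum(u) = \prod_{i=1}^n f_{X_i/B_n}(u)$ and one controls each factor via a Taylor expansion with Lagrange remainder using the fourth moment, invoking $\chi_1$ and the finite-$\gamma_i$ hypothesis of Assumption~\ref{hyp:basic_as_inid}.

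I expect the main obstacle to be the careful bookkeeping in $\Omega_3(T,v,\tau)$, which measures the discrepancy $|f(u)-e^{-u^2/2}(1-viu^3/6)|$ between the characteristic function of $S_n$ and its Edgeworth surrogate on the window $[0,\tau\wedge T/\pi]$. This is the step where the choice $\tau = \sqrt{2\eps}(n/K_{4,n})^{1/4}$ pays off: on that range one obtains a fourth-order Taylor estimate whose error is governed by $K_{4,n}u^4/n$ and by cross terms in $\lambda_{3,n}^2 u^6$, producing exactly the factor $\bigl(\tfrac{1}{12}+\tfrac{1}{4(1-3\eps)^2}\bigr)$ attached to $K_{4,n}$ and the $e_{1,n}(\eps)\lambda_{3,n}^2$ contribution. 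The constraint $\eps<1/3$ is precisely what keeps the denominator $(1-3\eps)^2$ positive and the underlying series convergent, so I would verify that all the geometric-type sums controlling higher cumulants remain finite under this restriction. For part~(iii), the rates $O(n^{-5/4})$ and $O(n^{-3/2})$ come from tracking the slowest-decaying leftover in $\rninidskew{1}(\eps)$ and $\rninidnoskew{1}(\eps)$; under $K_{4,n}=O(1)$ one has $\Ktroisntilde=O(1)$ and $\tau\asymp n^{1/4}$, so the dominant residual scales like $\tau^{-1}$ times an $n^{-1}$ factor in the skew case (giving $n^{-5/4}$) while the extra cancellation from $\lambdatroisn=0$ removes an $n^{-1/4}$-order term in the noskew case, improving the exponent to $n^{-3/2}$. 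I would finish by confirming that setting $\eps=0.1$ in~\eqref{eq:edg_exp_inid_nocont} and~\eqref{eq:edg_exp_inid_nocont_sym} and using $\tfrac{1}{12}+\tfrac{1}{4(1-0.3)^2}\le 0.596$ recovers the constant $0.195$ in front of $K_{4,n}$ in the body statement~\eqref{eq:generall_bound_nocont}.
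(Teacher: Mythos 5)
Your skeleton does match the paper's actual proof: the same application of Lemma~\ref{lemma:smoothing} with $v=\lambdatroisn/\sqrt{n}$, $T=2\pi\sqrt{n}/\Ktroisntilde$, $\tau=\sqrt{2\eps}(n/\Kquatren)^{1/4}$, the leading $0.1995\,\Ktroisntilde/\sqrt{n}$ coming from $\Omega_1$, the $0.327\,\Kquatren(\tfrac{1}{12}+\tfrac{1}{4(1-3\eps)^2})$ and $e_{1,n}(\eps)\lambdatroisn^2$ terms coming from the Taylor analysis of $\Omega_3$ (Lemma~\ref{lemma:bound_omega3} via Lemma~\ref{lem:taylor_exp_cf}), and $\Omega_2$, $\Omega_4$ feeding the $\Ktroisntilde^2/n$ term and the remainder. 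One caveat: the paper obtains the constants $1.2533$, $0.3334$, $67.0415$, $1.2187$ by numerical integration and optimization over $T$, not from the analytic bound $|\Psi(t)-i/(2\pi t)|\le\tfrac12(1-|t|+\pi^2t^2/18)$; as remarked at the end of Section~\ref{subsection:bound_Omega1}, a crude analytic bound can inflate the leading constant by roughly a factor of two, so your route for $\Omega_1$ needs care to actually deliver $0.1995$.

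The genuine gaps are in parts (ii) and (iii). For (ii), the hypothesis is $\E[X_i^3]=0$ for \emph{every} $i$, which is strictly stronger than $\lambdatroisn=0$, and the improvement is not obtained by ``killing every term carrying $|\lambda_{3,n}|$ or $\lambda_{3,n}^2$'': the decisive terms are those in $U_{1,2,n}(t)$ (Equation~\eqref{eq:def_U_112n}) and in $P_{1,n}(\eps)$ flagged by $\Indicator\{\exists i:\E[X_i^3]\neq 0\}$, which carry no $\lambdatroisn$ factor at all, being bounded purely through fourth moments (e.g.\ $\sum_j\sigma_j^2|\E[X_j^3]|/B_n^5\le(\Kquatren/n)^{5/4}$); in addition, the paper replaces the $\Omega_4$ control of Lemma~\ref{lemma:bound_I32}(i) (third-moment based, factor $K_{3,n}/(3\sqrt{n})$) by Lemma~\ref{lemma:bound_I32}(ii) (fourth-moment based, factor $\Kquatren/(3n)$). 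Merely setting $\lambdatroisn=0$ in the part-(i) bound leaves a remainder that is still $O(n^{-5/4})$, contradicting the rate you claim in (iii). For (iii), your mechanism (``the dominant residual scales like $\tau^{-1}$ times an $n^{-1}$ factor'') gets the exponent by coincidence but is not what happens: $\tau$ does not enter the dominant residual at all, since the relevant integrals $\int_0^\infty u^pe^{-u^2/2}\,du$ are constants. The $n^{-5/4}$ rate is exactly the $A_3$ term $\asymp(\Kquatren/n)^{5/4}$ of $\RinidInt(\eps)$ (Section~\ref{sec_bound_R1n}, Equation~\eqref{eq:simplified_R1n_int_skewed}), i.e.\ the cross term $|t|^5\sigma_j^2|\E[X_j^3]|$ in $\sum_j|U_{j,n}(t)|^2$; when all individual third moments vanish this whole family disappears and the leading piece becomes the $(\Kquatren/n)^{3/2}$ term, whence $O(n^{-3/2})$. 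Finally, (iii) also requires disposing of the incomplete-Gamma difference terms (from $\Omega_4$ via Lemma~\ref{lemma:bound_J2p}, and from $I_{1,5}$ in Lemma~\ref{lemma:bound_omega_1}); the paper does this by checking $\Delta=(1-4\chi_1-\sqrt{\Kquatren/n})/2>0$ for $n$ large and applying the expansion $\Gamma(a,x)=x^{a-1}e^{-x}(1+O((a-1)/x))$, a step your sketch omits entirely.
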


\medskip

Using Theorem~\ref{thm:nocont_inid}, we can finish the proof of Theorem~\ref{thm:nocont_choiceEps} by plugging-in our choice \(\eps = 0.1\) and computing the numerical constants.
In particular, the computation of $e_{1,n}(0.1)$ gives the upper bound \({e_{1,n}(0.1) \leq 1.0157}\).
% e1(0.1) = 1.015655 (skew)

In the general case with skewness, using the computations for $\RinidInt(0.1)$ carried out in Section~\ref{sec_bound_R1n}, the rest \( \rninidskew{1}(0.1) \) is bounded by the explicit expression
{\color{black}given in Equation~\eqref{eq:def_r_1n_inid_eps_eq_01}.}
% \begin{align}
%     \rninidskew{1}
%     &:= \frac{(14.1961 + 67.0415) \, \Ktroisntilde^4}{16\pi^4 n^2}
%     + \frac{4.3394 \, |\lambdatroisn| \Ktroisntilde^3}{8 \pi^3 n^2} \nonumber \\ 
%     & + \frac{1.0435 K_{4,n}^{5/4}}{n^{5/4}} + 
%     \frac{1.1101 \Kquatren^{3/2} + 31.9921 |\lambdatroisn| \times \Kquatren}{n^{3/2}} +
%     \frac{0.6087 \Kquatren^{7/4}}{n^{7/4}}
%     \nonumber \\
%     &+
%     \frac{9.8197\Kquatren^{2}}{n^2} \nonumber \\
%     &+ \frac{ |\lambdatroisn| \big( \Gamma( 3/2 , \sqrt{0.2} (n/K_{4,n})^{1/4} \wedge 2 \sqrt{n} / \Ktroisntilde)
%     - \Gamma( 3/2 , 2 \sqrt{n} / \Ktroisntilde) \big) }{\sqrt{n}} \nonumber \\
%     &+ \frac{1.0253\Ktroisn}{6\pi\sqrt{n}} \Bigg\{ 0.5|\Delta|^{-3/2}\Indicator_{\{\Delta \neq 0\}} \times \bigg|\gamma(3/2, 4 \Delta n / \Ktroisntilde^2) \nonumber \\
%     & \qquad\qquad\qquad\;\; - \gamma\big(3/2, 2\Delta ( 0.1 (n/K_{4,n})^{1/2} \wedge 2 n / \Ktroisntilde^2 ) \big) \bigg| \nonumber \\
%     & \qquad\qquad\qquad\;\; + \Indicator_{\{\Delta = 0\}}
%     \frac{ ( 2 \sqrt{n} / \Ktroisntilde )^3
%     - (\sqrt{0.2} (n/K_{4,n})^{1/4} \wedge 2 \sqrt{n} / \Ktroisntilde )^3 }{3} \Bigg\},
%     \label{eq:def_r_1n_inid_eps_eq_01}
% \end{align}
% where $\Delta:= (1 - 4 \chi_1 - \sqrt{K_{4,n}/n}) / 2$.

In the no-skewness case,
the rest \( \rninidnoskew{1}(0.1) \)
is bounded by the explicit expression {\color{black}given in Equation~\eqref{eq:def_r_1n_inid_eps_eq_01_noskew},}
% \begin{align}
%     \rninidnoskew{1}
%     &:= \frac{(14.1961 + 67.0415) \, \Ktroisntilde^4}{16\pi^4 n^2} + 
%     \frac{0.6661 \Kquatren^{3/2}}{n^{3/2}} + 
%     \frac{6.1361 \Kquatren^{2}}{n^{2}} \nonumber \\
%     &+ \frac{1.0253\Kquatren}{6\pi n} \Bigg\{ 0.5|\Delta|^{-2}\Indicator_{\{\Delta \neq 0\}} \times \bigg|\gamma(2, 4 \Delta n / \Ktroisntilde^2) \nonumber \\
%     & \qquad\qquad\qquad\;\; - \gamma\big(2, 2\Delta ( 0.1 (n/K_{4,n})^{1/2} \wedge 2 n / \Ktroisntilde^2 ) \big) \bigg| \nonumber \\
%     & \qquad\qquad\qquad\;\; + \Indicator_{\{\Delta = 0\}}
%     \frac{ ( 2 \sqrt{n} / \Ktroisntilde )^4
%     - (\sqrt{0.2} (n/K_{4,n})^{1/4} \wedge 2 \sqrt{n} / \Ktroisntilde )^4 }{4} \Bigg\},
%     \label{eq:def_r_1n_inid_eps_eq_01_noskew}
% \end{align}
where we use the expression of $\RinidInt(\eps)$ in Equation~\eqref{eq:simplified_R1n_int_noskewed} and the computations when \( \eps = 0.1 \) that follow Equation~\eqref{eq:simplified_R1n_int_noskewed}.

\begin{proof}[Proof of Theorem~\ref{thm:nocont_inid}.]
\textbf{We first prove (i).}
We apply Lemma~\ref{lemma:smoothing} with $F$ denoting the cdf of $S_n$ and obtain
\begin{align*}
    \DeltaE
    &\leq \, \Omega_1(T, v, \tau)
    + \Omega_2(T) + \Omega_3(T, v, \tau)
    + \Omega_4(\tau \wedge T/\pi, T/\pi, T).
\end{align*}
Let $T := 2 \pi \sqrt{n} / \Ktroisntilde$,
$v := \lambda_{3,n} / \sqrt{n}$ and
$\tau := \sqrt{2\eps} (n/K_{4,n})^{1/4}$.
We combine now Lemma~\ref{lemma:bound_omega_1} (control of $\Omega_1$), 
Equation~\eqref{eq:bound_omega2_nocont} (control of $\Omega_2$), 
Lemma~\ref{lemma:bound_omega3} (control of $\Omega_3$), and
Lemma~\ref{lemma:bound_I32}(i) (control of $\Omega_4$) so that we get
\begin{align*}
    \DeltaE
    &\leq \frac{1.2533}{T}
    + \frac{0.3334 \, |\lambda_{3,n}|}{T\sqrt{n}}
    + \frac{14.1961}{T^4}
    + \frac{4.3394 \, |\lambdatroisn|}{T^3 \sqrt{n}} \nonumber \\
    &+ \frac{|\lambdatroisn|
    \big( \Gamma( 3/2 , \tau \wedge T/\pi)
    - \Gamma( 3/2 , T/\pi) \big)}{\sqrt{n}} \\
    &+ \frac{67.0415}{T^4} + \frac{1.2187}{T^2}
    + \frac{0.327 \, K_{4,n}}{n}
    \left(\frac{1}{12}+\frac{1}{4(1-3\eps)^2}\right)
    + \frac{1.306 \, e_{1,n}(\eps) |\lambda_{3,n}|^2}{36n} \nonumber \\
    &+ \frac{1.0253}{\pi}
    \int_0^{\tau \wedge T/\pi} u e^{-u^2/2} \Rinid(u,\eps) du
    + \frac{K_{3,n}}{3\sqrt{n}} J_{2} \big(3, \tau \wedge T/\pi, T/\pi, T/\pi , T \big).
\end{align*}
Bounding \((1.0253/\pi)\times\int_0^{\tau \wedge T/\pi} u e^{-u^2/2} \Rinid(u,\eps)\) by \(\RinidInt(\eps) := (1.0253/\pi)\times\int_0^{+ \infty} u e^{-u^2/2} \Rinid(u,\eps)\), bounding $J_2$ by Lemma~\ref{lemma:bound_J2p}, and replacing $T$ and $\tau$ by their values, we obtain
\begin{align*}
    \DeltaE
    &\leq \frac{1.2533 \, \Ktroisntilde}{2\pi \sqrt{n}}
    + \frac{0.3334 \, |\lambda_{3,n}|\Ktroisntilde}{2\pi n}
    + \frac{1.2187 \, \Ktroisntilde^2}{4\pi n} + \frac{0.327 K_{4,n}}{n}\left(\frac{1}{12}
    + \frac{1}{4(1-3\eps)^2}\right) \\
    & \hspace{2cm}
    + \frac{1.306 \, e_{1,n}(\eps) \lambda_{3,n}^2}{36n} + \rninidskew{1}(\eps)
\end{align*}
where
\begin{align}
    \rninidskew{1}(\eps)
    &:= \frac{(14.1961 + 67.0415) \, \Ktroisntilde^4}{16\pi^4 n^2}
    + \frac{4.3394 \, |\lambdatroisn| \Ktroisntilde^3}{8 \pi^3 n^2}
    + \RinidInt(\eps)
    \nonumber \\
    &+ \frac{ |\lambdatroisn| \big( \Gamma( 3/2 , \sqrt{2\eps} (n/K_{4,n})^{1/4} \wedge 2 \sqrt{n} / \Ktroisntilde)
    - \Gamma( 3/2 , 2 \sqrt{n} / \Ktroisntilde) \big) }{\sqrt{n}} \nonumber \\
    &+ \frac{1.0253\Ktroisn}{6\pi\sqrt{n}} \Bigg\{ 0.5|\Delta|^{-3/2}\Indicator_{\{\Delta \neq 0\}} \times \bigg|\gamma(3/2, 4 \Delta n / \Ktroisntilde^2) \nonumber \\
    & \qquad\qquad\qquad\;\; - \gamma\big(3/2, 2\Delta ( \eps (n/K_{4,n})^{1/2} \wedge 2 n / \Ktroisntilde^2 ) \big) \bigg| \nonumber \\
    & \qquad\qquad\qquad\;\; + \Indicator_{\{\Delta = 0\}}
    \frac{ ( 2 \sqrt{n} / \Ktroisntilde )^3
    - (\sqrt{2\eps} (n/K_{4,n})^{1/4} \wedge 2 \sqrt{n} / \Ktroisntilde )^3 }{3}
    \Bigg\},
    \label{eq:def_r_1n_inid_eps}
\end{align}
and $\Delta := (1 - 4 \chi_1 - \sqrt{K_{4,n}/n}) / 2$.

\medskip

We obtain the result of Equation~\eqref{eq:edg_exp_inid_nocont} by computing all numerical constants; for instance, \( 1.0253 / (2\pi) \approx 0.19942 < 0.1995 \).

\bigskip

\textbf{We now prove (ii).} In the no-skewness case, namely when \(\E[X_i^3] = 0\) for every~\(i = 1, \ldots, n\), the start of the proof is identical except that Lemma~\ref{lemma:bound_I32}(ii) is used in lieu of Lemma~\ref{lemma:bound_I32}(i) to control $\Omega_4$.
This yields %(with $\RinidInt(\eps)$ referring to $(1.0253/\pi)\times\int_0^{+ \infty} u e^{-u^2/2} \Rinid(u,\eps)$)
\begin{align*}
    \DeltaE
    &\leq \frac{1.2533}{T}
    + \frac{14.1961}{T^4} + \frac{67.0415}{T^4} + \frac{1.2187}{T^2} + \frac{0.327 \, K_{4,n}}{n}
    \left(\frac{1}{12}+\frac{1}{4(1-3\eps)^2}\right) \\
    &+ \RinidInt(\eps) + \frac{K_{4,n}}{3n} J_{2} \big(4, \tau \wedge T/\pi, T/\pi , T/\pi , T \big).
\end{align*}
Bounding $J_2$ by Lemma~\ref{lemma:bound_J2p} and
replacing $T$ and $\tau$ by their values, we obtain
\begin{align*}
    \DeltaE
    &\leq \frac{1.2533 \, \Ktroisntilde}{2\pi \sqrt{n}} + \frac{1.2187 \, \Ktroisntilde^2}{4\pi n} + \frac{0.327 K_{4,n}}{n}\left(\frac{1}{12}
    + \frac{1}{4(1-3\eps)^2}\right) + \rninidnoskew{1}(\eps)
\end{align*}
where
\begin{align}
    \rninidnoskew{1}(\eps)
    &:= \frac{(14.1961 + 67.0415) \, \Ktroisntilde^4}{16\pi^4 n^2} + \RinidInt(\eps) \nonumber \\
    &+ \frac{1.0253\Kquatren}{6\pi n} \Bigg\{ 0.5|\Delta|^{-2}\Indicator_{\{\Delta \neq 0\}} \times \bigg|\gamma(2, 4 \Delta n / \Ktroisntilde^2) \nonumber \\
    & \qquad\qquad\qquad\;\; - \gamma\big(2, 2\Delta ( \eps (n/K_{4,n})^{1/2} \wedge 2 n / \Ktroisntilde^2 ) \big) \bigg| \nonumber \\
    & \qquad\qquad\qquad\;\; + \Indicator_{\{\Delta = 0\}}
    \frac{(2 \sqrt{n} / \Ktroisntilde)^4
    - (\sqrt{2\eps} (n/K_{4,n})^{1/4} \wedge 2 \sqrt{n} / \Ktroisntilde)^4 }{4}
    \Bigg\}
    \label{eq:def_r_1n_inid_noskew_eps}
\end{align}

We obtain the result of Equation~\eqref{eq:edg_exp_inid_nocont_sym} by computing all the numerical constants.

\bigskip

\textbf{We finally prove (iii).} 
When $\Kquatren = O(1)$, we remark that \(\lambdatroisn\), \(\Ktroisn\), and \(\Ktroisntilde\) are bounded as well. Given the detailed analysis of $\RinidInt(\eps)$ carried out in Section~\ref{sec_bound_R1n} (in particular Equations~\eqref{eq:simplified_R1n_int_skewed} and~\eqref{eq:simplified_R1n_int_noskewed}), boundedness of the former moments ensures that $\RinidInt(\eps) = O(n^{-5/4})$ in general and $\RinidInt(\eps) = O(n^{-3/2})$ in the no-skewness case. 

We can also see (remember that \(\chi_1 \approx 0.099 \)) that $\Delta > 0$ for $n$ large enough when $\Kquatren = O(1)$. Consequently, for $n$ large enough, we can write in the general case
\begin{align*}
    & \frac{1.0253\Ktroisn}{6\pi\sqrt{n}} \Bigg\{ 0.5|\Delta|^{-3/2} \Indicator_{\{\Delta \neq 0\}} \times
    \bigg| \gamma(3/2, 4 \Delta n / \Ktroisntilde^2)
    - \gamma\big(3/2, 2\Delta ( \eps (n/K_{4,n})^{1/2} \wedge 2 n / \Ktroisntilde^2\big) \bigg| \\
    & \qquad\qquad\qquad + \Indicator_{\{\Delta = 0\}}
    \frac{ ( 2 \sqrt{n} / \Ktroisntilde )^3
    - (\sqrt{2\eps} (n/K_{4,n})^{1/4} \wedge 2 \sqrt{n} / \Ktroisntilde )^3 }{3} \Bigg\} \\
    &= \frac{1.0253\Ktroisn}{6\pi\sqrt{n}} \big\{ \Gamma\big(3/2, 2\Delta ( \eps (n/K_{4,n})^{1/2} \wedge 2 n / \Ktroisntilde^2) \big) - \Gamma(3/2, 4 \Delta n / \Ktroisntilde^2) \big\},
\end{align*}
and, in the no-skewness case,
\begin{align*}
    & \frac{1.0253\Kquatren}{6\pi n} \Bigg\{ 0.5|\Delta|^{-2} 
    \Indicator_{\{\Delta \neq 0\}} \times
    \bigg| \gamma(2, 4 \Delta n / \Ktroisntilde^2)
    - \gamma\big(2, 2\Delta ( \eps (n/K_{4,n})^{1/2} \wedge 2 n / \Ktroisntilde^2\big) \bigg| \\
    & \qquad\qquad\qquad + \Indicator_{\{\Delta = 0\}}
    \frac{ ( 2 \sqrt{n} / \Ktroisntilde )^4
    - (\sqrt{2\eps} (n/K_{4,n})^{1/4} \wedge 2 \sqrt{n} / \Ktroisntilde )^4 }{4} \Bigg\} \\
    &= \frac{1.0253\Kquatren}{6\pi n} \big\{ \Gamma\big(2, 2\Delta ( \eps (n/K_{4,n})^{1/2} \wedge 2 n / \Ktroisntilde^2) \big) - \Gamma(2, 4 \Delta n / \Ktroisntilde^2) \big\}.
\end{align*}

This reasoning enables us to obtain a difference of Gamma functions and therefore apply the asymptotic expansion $\Gamma(a,x) = x^{a-1} e^{-x} (1 + O((a-1) / x))$ which is valid for every fixed $a$ in the regime $x\to \infty$, see Equation~(6.5.32) in \cite{abramowitz85handbook}.
We also use this asymptotic expansion for the term
\begin{equation*}
    \frac{ |\lambdatroisn| \big( \Gamma( 3/2 , \sqrt{2\eps} (n/K_{4,n})^{1/4} \wedge 2 \sqrt{n} / \Ktroisntilde)
    - \Gamma( 3/2 , 2 \sqrt{n} / \Ktroisntilde) \big) }{\sqrt{n}}.
\end{equation*}
Consequently, we get the stated rate \(\rninidskew{1}(\eps) = O(n^{-5/4})\) in the general case and \(\rninidskew{1}(\eps) = O(n^{-3/2})\) in the no-skewness case. 

\end{proof}

\subsection{Proof of Theorem~\ref{thm:nocont_choiceEps} under Assumption~\ref{hyp:basic_as_iid}}
\label{ssec:proof:iid_nocont}

We present and prove a more general result, Theorem~\ref{thm:nocont_iid}, and choose $\eps = 0.1$ to recover Theorem~\ref{thm:nocont_choiceEps} under Assumption~\ref{hyp:basic_as_iid} 

\begin{theorem}[One-term Edgeworth expansion under Assumption~\ref{hyp:basic_as_iid}]
\label{thm:nocont_iid}

(i) Under Assumption~\ref{hyp:basic_as_iid}, for every $\eps \in (0,1/3)$ and every $n \geq 3$, we have the bound 
    \begin{align}
    \label{eq:edg_exp_iid_nocont}
        \DeltaE \leq & \frac{0.1995\Ktroisntilde}{\sqrt{n}} + \frac{1}{n}\bigg\{ 0.031\Ktroisntilde^2 + 0.327K_{4,n}\left(\frac{1}{12}+\frac{1}{4(1-3\eps)^2}\right) \nonumber \\
        & \quad\quad\quad\quad\quad\quad\quad\quad  + 0.054|\lambda_{3,n}|\Ktroisntilde + 0.037e_{3}(\eps)\lambda_{3,n}^2 \bigg\} + \rniidskew{1}(\eps), 
    \end{align}
    where $\rniidskew{1}(\eps)$ is given in Equation~\eqref{eq:def_r_1n_iid_eps} and $e_{3}(\eps) = e^{\eps^2/6 + \eps^2 / (2(1-3 \eps) )^2}$.
    
    \medskip
    
    (ii) If we further impose $\E[X_n^3]=0$, the upper bound reduces to
    \begin{align}
    \label{eq:edg_exp_iid_nocont_sym}
        \frac{0.1995\Ktroisntilde}{\sqrt{n}} + \frac{1}{n}\bigg\{ 0.031\Ktroisntilde^2 + 0.327K_{4,n}\left(\frac{1}{12}+\frac{1}{4(1-3\eps)^2}\right) \bigg\} + \rniidnoskew{1}(\eps), 
    \end{align}    
    where $\rniidnoskew{1}(\eps)$ is given in Equation~\eqref{eq:def_r_1n_iid_noskew_eps}. 
    
    \medskip
    
    (iii) Finally, when $K_{4,n}=O(1)$ as \(n \to \infty\), we obtain $\rniidskew{1}(\eps) = O(n^{-5/4})$ and $\rniidnoskew{1}(\eps) = O(n^{-2})$.
\end{theorem}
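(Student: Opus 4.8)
The plan is to mirror the architecture of the proof of Theorem~\ref{thm:nocont_inid} just completed, since the \iid{} setting is a special instance of the \inid{} one in which the product structure $\caracfsum(t) = \big[f_{X_1/\sigma_1}(t/\sqrt{n})\big]^n$ (using $B_n = \sqrt{n}\,\sigma_1$ and the equality of all $\sigma_i$) permits sharper control of two of the four smoothing terms. I would begin identically: apply Prawitz's inequality (Lemma~\ref{lemma:smoothing}) to $F$, the cdf of $S_n$, with $v = \lambda_{3,n}/\sqrt{n}$, $T = 2\pi\sqrt{n}/\Ktroisntilde$, and $\tau = \sqrt{2\eps}\,(n/K_{4,n})^{1/4}$, reducing the task to bounding $\Omega_1,\Omega_2,\Omega_3,\Omega_4$.

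The terms $\Omega_1$ and $\Omega_2$ depend on the distribution only through $\Ktroisntilde$, $\lambda_{3,n}$, and $K_{4,n}$, so their control via Lemma~\ref{lemma:bound_omega_1} and Equation~\eqref{eq:bound_omega2_nocont} transfers verbatim, producing the same leading term $0.1995\,\Ktroisntilde/\sqrt{n}$ and the same order-$1/n$ pieces $0.031\,\Ktroisntilde^2$ and $0.327\,K_{4,n}(1/12 + 1/(4(1-3\eps)^2))$. The \iid-specific work concerns $\Omega_3$ and $\Omega_4$. For $\Omega_3$, which measures $|\caracfsum(u) - e^{-u^2/2}(1 - \lambda_{3,n}iu^3/(6\sqrt{n}))|$ on $[0,\tau\wedge T/\pi]$, I would Taylor-expand the single factor $f_{X_1/\sigma_1}(u/\sqrt{n})$ about the origin and raise it to the $n$-th power; bounding the resulting product by an $\exp\!\big(n\log(\cdots)\big)$ estimate valid on the range $u \le \tau$ enforced by $\tau$ is exactly what replaces the data-dependent prefactor $e_{1,n}(\eps)$ of the \inid{} case by the closed form $e_3(\eps) = e^{\eps^2/6 + \eps^2/(2(1-3\eps))^2}$.

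For $\Omega_4 = 2\int_{\tau\wedge T/\pi}^{T/\pi}|T^{-1}\Psi(u/T)|\,|\caracfsum(u) - e^{-u^2/2}|\,du$, which governs the dominant remainder, the product structure again helps: on this intermediate range $u/\sqrt{n}$ stays bounded, so $|\caracfsum(u)| = |f_{X_1/\sigma_1}(u/\sqrt{n})|^n$ admits a sharp Gaussian-type majorant which, combined with the zero-skewness cancellation of the third-cumulant term, yields an $O(n^{-2})$ contribution in place of the $O(n^{-3/2})$ obtained in the \inid{} case via Lemma~\ref{lemma:bound_I32}. Assembling the four terms, substituting the values of $T$ and $\tau$, and collecting the $n^{-1/2}$ and $n^{-1}$ pieces then gives~\eqref{eq:edg_exp_iid_nocont} and~\eqref{eq:edg_exp_iid_nocont_sym}, with all remaining contributions absorbed into the explicit remainders $\rniidskew{1}(\eps)$ and $\rniidnoskew{1}(\eps)$ of~\eqref{eq:def_r_1n_iid_eps} and~\eqref{eq:def_r_1n_iid_noskew_eps}. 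Part~(iii) then follows exactly as in the \inid{} case: rewriting the incomplete-Gamma differences and applying the expansion $\Gamma(a,x) = x^{a-1}e^{-x}(1 + O((a-1)/x))$ from \cite{abramowitz85handbook} turns each remainder piece into its claimed rate, $O(n^{-5/4})$ with skewness and $O(n^{-2})$ without.

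The main obstacle I anticipate is the quantitative control of $\Omega_3$ and $\Omega_4$ that actually extracts the improved \iid{} rates. Specifically, the delicate steps are verifying that the $\exp(n\log(\cdots))$ bound on $|f_{X_1/\sigma_1}(u/\sqrt{n})|^n$ remains integrable and produces precisely the constant $e_3(\eps)$, and confirming that the \iid{} refinement of the $\Omega_4$ estimate genuinely upgrades the unskewed remainder to $O(n^{-2})$ rather than merely reproducing the $O(n^{-3/2})$ \inid{} rate. Tracking the explicit numerical constants through these steps, so that upon setting $\eps = 0.1$ the displayed coefficients reduce to those of Theorem~\ref{thm:nocont_choiceEps}, is the most error-prone bookkeeping.
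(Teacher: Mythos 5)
Your skeleton matches the paper's proof: the same application of Lemma~\ref{lemma:smoothing} with $T = 2\pi\sqrt{n}/\Ktroisntilde$, $\tau = \sqrt{2\eps}(n/\Kquatren)^{1/4}$, $v = \lambdatroisn/\sqrt{n}$, the same treatment of $\Omega_1$ and $\Omega_2$, a Taylor expansion of the single factor $\log f_{P_{X_n}}(t/B_n)$ raised to the $n$-th power for $\Omega_3$ (this is Lemma~\ref{lem:taylor_exp_cf_iid}), and incomplete-Gamma asymptotics for part (iii). However, you misattribute where the \iid{} rate improvements come from, and this gap would derail the execution. The term $\Omega_4$ does not ``govern the dominant remainder'': its contribution is bounded (Lemmas~\ref{lemma:bound_I32}(iii)--(iv) together with Lemma~\ref{lemma:bound_J3p}) by differences of incomplete Gamma functions whose arguments grow polynomially in $n$ (the lower integration limit is $\tau \wedge T/\pi \asymp n^{1/4}$), so it is exponentially small in both the skewed and unskewed cases. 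The genuine source of the upgrade from $O(n^{-3/2})$ to $O(n^{-2})$ in the unskewed case is $\Omega_3$: under Assumption~\ref{hyp:basic_as_iid} one has $\sigma_i^2/B_n^2 = 1/n$ exactly, so the cross terms in $n\,U_{1,n}(t)^2$ (e.g.\ $t^6\Kquatren/(24n^2)$) carry exact powers of $n^{-2}$, whereas in the \inid{} case they can only be bounded by quantities of the type $(\Kquatren/n)^{3/2}$; this is precisely the content of Lemma~\ref{lem:taylor_exp_cf_iid} and of the integrated remainder $\RiidInt(\eps)$ analyzed in Section~\ref{sec_bound_R2n} (Equation~\eqref{eq:simplified_R2n_int}). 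If you refine only $\Omega_4$ and keep the \inid{} control of $\Omega_3$, the unskewed remainder stays at $O(n^{-3/2})$ and part (iii) fails.

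Relatedly, the exponential bound in the $\Omega_3$ step does not produce ``precisely the constant $e_3(\eps)$'': it produces $e_{2,n}(\eps)$ of Equation~\eqref{eq:definition_e2n}, which still depends on the moments through $P_{2,n}(\eps)$. To display the closed-form $e_3(\eps)$ in the theorem one must write $e_{2,n}(\eps) = e_3(\eps)\exp\!\big(2\eps^2 P_{2,n}(\eps)/(576(1-3\eps)^2)\big)$ and move the difference term $1.306\,\big(e_{2,n}(\eps)-e_3(\eps)\big)\lambdatroisn^2/(36n)$ into $\rniidskew{1}(\eps)$. This is not mere bookkeeping: since $P_{2,n}(\eps) = O(n^{-1/4})$ when $\Kquatren = O(1)$, that difference term is $O(n^{-5/4})$ and is the \emph{dominant} piece of the skewed remainder (recall $\RiidInt(\eps) = O(n^{-3/2})$ in the skewed case), so it --- not the Gamma-function differences --- determines the $O(n^{-5/4})$ rate claimed in (iii). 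Hence ``exactly as in the \inid{} case'' is inaccurate for the skewed rate as well: in the \inid{} proof the $O(n^{-5/4})$ comes from the $(\Kquatren/n)^{5/4}$ term of $\RinidInt(\eps)$, while here it comes from the $e_{2,n}(\eps)-e_3(\eps)$ split.
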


We use this result to finish the proof of Theorem~\ref{thm:nocont_choiceEps}, which corresponds to the case \( \eps = 0.1 \), by computing the numerical constants.
In particular, the computation of $e_{3}(0.1)$ gives the upper bound $e_{3}(0.1) \leq 1.0068 $.
%e3(0.1) = 1.006792
Note that in the statement of Theorem~\ref{thm:nocont_choiceEps}, to obtain a more concise presentation, we control $e_{3}(0.1)$ from above by the slightly larger bound $1.0157$ used in the \inid{} case to upper bound $e_{1,n}(0.1)$. 

In this case, we obtain the bound \( \rniidskew{1}\) on \( \rniidskew{1}(0.1) \)
{\color{black}which is given in Equation~\eqref{eq:def_r_1n_iid_eps_eq_01},}
% \begin{align}
%     \rniidskew{1} & :=
%     \frac{(14.1961 + 67.0415) \, \Ktroisntilde^4}{16\pi^4 n^2}
%     + \frac{4.3394 \, |\lambdatroisn| \Ktroisntilde^3}{8 \pi^3 n^2}
%     + \RiidIntSkew \nonumber \\
%     & + \frac{1.306 \big( e_{2,n}(0.1) - e_3(0.1) \big) \lambdatroisn^2}{36 n} \nonumber \\
%     &+ \frac{ |\lambdatroisn| \big( \Gamma( 3/2 , \sqrt{0.2} (n/K_{4,n})^{1/4} \wedge 2 \sqrt{n} / \Ktroisntilde)
%     - \Gamma( 3/2 , 2 \sqrt{n} / \Ktroisntilde) \big) }{\sqrt{n}} \nonumber \\
%     &+ 
%     \frac{1.0253 \times 2^{5/2} \, \Ktroisn}{3 \pi \sqrt{n}}
%     \bigg( \Gamma \big( 3/2, \big\{ \sqrt{0.2} (n/\Kquatren)^{1/4} \wedge 2\sqrt{n}/\Ktroisntilde \big\}^2/8 \big) \nonumber 
%     \\
%     & \qquad \qquad \qquad \qquad \qquad - \Gamma \big( 3/2,
%     4 n / (8 \Ktroisntilde^2)
%     \big) \bigg),
%     \label{eq:def_r_1n_iid_eps_eq_01}
% \end{align}
where \( \RiidIntSkew \) is explicitly defined in Equation~\eqref{eq:definition_R2n_int_skew_bound_when_eps_01}.

In the no-skewness case, the rest \( \rniidnoskew{1}(0.1) \) is bounded by the explicit expression {\color{black}given in Equation~\eqref{eq:def_r_1n_iid_eps_eq_01_noskew},}
% \begin{align}
%     \rniidnoskew{1}
%     &:= \frac{(14.1961 + 67.0415) \, \Ktroisntilde^4}{16\pi^4 n^2} + \RiidIntNoskew \nonumber \\
%     &+ \frac{16 \times 1.0253\Kquatren}{3\pi n} 
%     \bigg( \Gamma \big( 2 , \big\{ \sqrt{0.2} (n/\Kquatren)^{1/4}
%     \wedge 2\sqrt{n}/\Ktroisntilde \big\}^2/8 \big) \nonumber \\
%     & \qquad\qquad\qquad\qquad\qquad - \Gamma \big( 2 , 4 n / (8 \Ktroisntilde^2) \big) \bigg),
%     \label{eq:def_r_1n_iid_eps_eq_01_noskew}
% \end{align}
where \(\RiidIntNoskew\) is defined in Equation~\eqref{eq:definition_R2n_int_noskew_bound_when_eps_01}.

\begin{proof}[Proof of Theorem~\ref{thm:nocont_iid}.]

The overall scheme of the proof is similar to that of Theorem~\ref{thm:nocont_inid} except for some improvements obtained in the \iid{} set-up.

\medskip

\textbf{We first prove (i).} We apply Lemma~\ref{lemma:smoothing} with
$F$ the cdf of $S_n$ and obtain
\begin{align*}
    \DeltaE
    &\leq \, \Omega_1(T, v, \tau)
    + \Omega_2(T) + \Omega_3(T, v, \tau)
    + \Omega_4(\tau \wedge T/\pi, T/\pi, T).
\end{align*}
Let $T = 2 \pi \sqrt{n} / \Ktroisntilde$, $v = \lambda_{3,n} / \sqrt{n}$ and $\tau = \sqrt{2\eps} (n/K_{4,n})^{1/4}$. We combine Lemma~\ref{lemma:bound_omega_1} (control of $\Omega_1$), 
Equation~\eqref{eq:bound_omega2_nocont} (control of $\Omega_2$), 
Lemma~\ref{lemma:bound_omega3} (control of $\Omega_3$), 
Lemma~\ref{lemma:bound_I32}(iii) (control of $\Omega_4$) to get
\begin{align*}
    \DeltaE
    &\leq \frac{1.2533}{T}
    + \frac{0.3334 \, |\lambda_{3,n}|}{T\sqrt{n}}
    + \frac{14.1961}{T^4}
    + \frac{4.3394 \, |\lambdatroisn|}{T^3 \sqrt{n}} \nonumber \\
    &+ \frac{|\lambdatroisn|
    \big( \Gamma( 3/2 , \tau \wedge T/\pi)
    - \Gamma( 3/2 , T/\pi) \big)}{\sqrt{n}}
    + \frac{67.0415}{T^4} + \frac{1.2187}{T^2} \\
    &+ \frac{0.327 \, K_{4,n}}{n}
    \left(\frac{1}{12}+\frac{1}{4(1-3\eps)^2}\right)
    + \frac{1.306 \, e_{2,n}(\eps) \lambda_{3,n}^2}{36n} \nonumber \\
    &+ \frac{1.0253}{\pi}
    \int_0^{\tau \wedge T/\pi} u e^{-u^2/2} \Riid(u,\eps) du 
    + \frac{K_{3,n}}{3\sqrt{n}} J_{3} \big(3, \tau \wedge T/\pi, T/\pi , T/\pi , T \big).
\end{align*}
Bounding \((1.0253/\pi)\times\int_0^{\tau \wedge T/\pi} u e^{-u^2/2} \Riid(u,\eps)\) by \(\RiidInt(\eps) := (1.0253/\pi)\times\int_0^{+ \infty} u e^{-u^2/2} \Riid(u,\eps)\), bounding $J_3$ by Lemma~\ref{lemma:bound_J3p}, and replacing $T$ and $\tau$ by their values, we obtain
\begin{align*}
    \DeltaE
    &\leq
    \frac{1.2533 \Ktroisntilde}{2 \pi \sqrt{n}}
    + \frac{0.3334 |\lambdatroisn| \Ktroisntilde}{2 \pi n}
    + \frac{1.2187 \Ktroisntilde^2}{4 \pi n}
    + \frac{0.327 \Kquatren}{n} \left(\frac{1}{12}+\frac{1}{4(1-3\eps)^2}\right) \\
    & +
    \frac{1.306 e_3(\eps) \lambdatroisn^2}{36 n} 
    + \rniidskew{1}(\eps),
\end{align*}
where
\begin{align}
    \rniidskew{1}(\eps) & :=
    \frac{(14.1961 + 67.0415) \, \Ktroisntilde^4}{16\pi^4 n^2}
    + \frac{4.3394 \, |\lambdatroisn| \Ktroisntilde^3}{8 \pi^3 n^2}
    + \RiidInt(\eps) \nonumber \\
    & + \frac{1.306 \big( e_{2,n}(\eps) - e_3(\eps) \big) \lambdatroisn^2}{36 n} \nonumber \\
    &+ \frac{ |\lambdatroisn| \big( \Gamma( 3/2 , \sqrt{2\eps} (n/K_{4,n})^{1/4} \wedge 2 \sqrt{n} / \Ktroisntilde)
    - \Gamma( 3/2 , 2 \sqrt{n} / \Ktroisntilde) \big) }{\sqrt{n}} \nonumber \\
    &+ 
    % Majoration de J_3
    \frac{1.0253 \times 2^{5/2} \, \Ktroisn}{3 \pi \sqrt{n}}
    \bigg( 
    \Gamma \Big( 3/2, \big\{ \sqrt{2\eps} (n/\Kquatren)^{1/4} \wedge 2\sqrt{n}/\Ktroisntilde \big\}^2/8 \Big) \nonumber 
    \\
    & \qquad \qquad \qquad \qquad \qquad - \Gamma \big( 3/2,
    4 n / (8 \Ktroisntilde^2)
    \big) \bigg).
    \label{eq:def_r_1n_iid_eps}
\end{align}

We obtain the result of Equation~\eqref{eq:edg_exp_iid_nocont} by computing the numerical constants.

\bigskip

\textbf{We now prove (ii).} In the no-skewness case, namely when \(\E[X_n^3] = 0\), the start of the proof is identical except that Lemma~\ref{lemma:bound_I32}(iv) is used in lieu of Lemma~\ref{lemma:bound_I32}(iii) to control $\Omega_4$.
This yields
\begin{align*}
    \DeltaE
    &\leq \frac{1.2533}{T}
    + \frac{14.1961}{T^4} + \frac{67.0415}{T^4} + \frac{1.2187}{T^2} + \frac{0.327 \, K_{4,n}}{n}
    \left(\frac{1}{12}+\frac{1}{4(1-3\eps)^2}\right) \\
    &+ \RiidInt(\eps) + \frac{K_{4,n}}{3n} J_3 \big(4, \tau \wedge T/\pi, T/\pi , T/\pi , T \big).
\end{align*}
Bounding $J_3$ by Lemma~\ref{lemma:bound_J3p} and
replacing $T$ and $\tau$ by their values, we obtain
\begin{align*}
    \DeltaE
    &\leq \frac{1.2533 \, \Ktroisntilde}{2\pi \sqrt{n}} + \frac{1.2187 \, \Ktroisntilde^2}{4\pi n} + \frac{0.327 K_{4,n}}{n}\left(\frac{1}{12}
    + \frac{1}{4(1-3\eps)^2}\right) + \rniidnoskew{1}(\eps)
\end{align*}
where
\begin{align}
    \rniidnoskew{1}(\eps)
    &:= \frac{(14.1961 + 67.0415) \, \Ktroisntilde^4}{16\pi^4 n^2} + \RiidInt(\eps) \nonumber \\
    &+ \frac{16 \times 1.0253\Kquatren}{3\pi n} 
    \bigg(
    \Gamma \big(2, \big\{ \sqrt{2\eps} (n/\Kquatren)^{1/4} \wedge 2\sqrt{n}/\Ktroisntilde \big\}^2/8 \big) \nonumber \\
    &\qquad\qquad\qquad\qquad \quad - \Gamma \big( 2,
    4 n / (8 \Ktroisntilde^2)
    \big) \bigg).
    \label{eq:def_r_1n_iid_noskew_eps}
\end{align}

We obtain the result of Equation~\eqref{eq:edg_exp_iid_nocont_sym} by computing all the numerical constants.

\bigskip

\textbf{We finally prove (iii).} Following the line of proof as in Section~\ref{ssec:proof:inid_nocont}, we can prove that $\Kquatren = O(1)$ ensures the standardized moments \(\lambdatroisn\), \(\Ktroisn\), and \(\Ktroisntilde\) are bounded as well.
Given the detailed analysis of $\RiidInt(\eps)$ carried out in Section~\ref{sec_bound_R2n} (in particular Equation~\eqref{eq:simplified_R2n_int}), boundedness of the former moments ensures that $\RiidInt(\eps) = O(n^{-3/2})$ in general and $\RiidInt(\eps) = O(n^{-2})$ in the no-skewness case.

\medskip

From the definitions of \(e_{2,n}\) and \(e_{3}\) in Equations~\eqref{eq:definition_e2n} and~\eqref{eq:definition_e3}, we note that the term 
\begin{equation*}
    \frac{1.306 \big( e_{2,n}(\eps) - e_3(\eps) \big) \lambdatroisn^2}{36 n}
    = O(n^{-5/4}).
\end{equation*}

Applying the asymptotic expansion $\Gamma(a,x) = x^{a-1} e^{-x} (1 + O((a-1) / x))$, we can claim
\begin{align*}
    & \frac{ |\lambdatroisn| \big( \Gamma( 3/2 , \sqrt{2\eps} (n/K_{4,n})^{1/4} \wedge 2 \sqrt{n} / \Ktroisntilde)
    - \Gamma( 3/2 , 2 \sqrt{n} / \Ktroisntilde) \big) }{\sqrt{n}} \\
    & + \frac{1.0253 \times 2^{5/2} \, \Ktroisn}{3 \pi \sqrt{n}}
    \bigg( \Gamma \big( 3/2, \big\{ \sqrt{2\eps} (n/\Kquatren)^{1/4} \wedge 2\sqrt{n}/\Ktroisntilde \big\}^2/8 \big) - \Gamma \big( 3/2,
    4 n / (8 \Ktroisntilde^2)
    \big) \bigg) \\
    &= o\big( n^{-5/4} \big),
\end{align*}
and
\begin{align*}
    \frac{16 \times 1.0253\Kquatren}{3\pi n} 
    \bigg( \Gamma \big(2, \big\{ \sqrt{2\eps} (n/\Kquatren)^{1/4} \wedge 2\sqrt{n}/\Ktroisntilde \big\}^2/8 \big) - \Gamma \big( 2,
    4 n / (8 \Ktroisntilde^2)
    \big) \bigg) = o\big( n^{2} \big).
\end{align*}
As a result, we obtain \(\rniidskew{1}(\eps) = O(n^{-5/4})\) in general and \(\rniidnoskew{1}(\eps) = O(n^{-2})\) in the no-skewness case, as claimed.
\end{proof}

\subsection{Proof of Theorem~\ref{thm:cont_choiceEps} under Assumption~\ref{hyp:basic_as_inid}}
\label{ssec:proof:inid_cont}

We use Theorem~\ref{thm:cont_inid}, proved below, with the choice $\eps = 0.1$. Recall that $t_1^* = \theta_1^*/(2\pi) \approx 0.64$ where $\theta_1^*$ is the unique root in $(0,2\pi)$ of the equation $\theta^2+2\theta\sin(\theta)+6(\cos(\theta)-1)=0.$ Recall also that $a_n := 2t_1^*\pi\sqrt{n}/\Ktroisntilde \wedge 16\pi^3n^2/\Ktroisntilde^4,$ and $b_n := 16\pi^4n^2/\widetilde{K}_{3,n}^4$.

\begin{theorem}[Alternative one-term Edgeworth expansion under Assumption~\ref{hyp:basic_as_inid}]
\label{thm:cont_inid}

    (i) Under Assumption~\ref{hyp:basic_as_inid}, for every $\eps \in (0,1/3)$ and every $n \geq 1$, we have the bound 
    \begin{align}
        \DeltaE
        &\leq \frac{1}{n} \left\{ 0.327 \, K_{4,n} \left(\frac{1}{12}
        + \frac{1}{4(1-3\eps)^2} \right)
        + 0.037 \, e_{1,n}(\eps)\lambda_{3,n}^2 \right\} \nonumber \\
        & + \frac{1.0253}{\pi} \int_{a_n}^{b_n} \frac{|\caracfsum(t)|}{t}dt 
        + \rninidskew{2}(\eps), 
    \label{eq:edg_exp_inid_thm_3_1}
    \end{align}
    where $\rninidskew{2}(\eps)$ is given in Equation~\eqref{eq:def_r_2n_inid_eps}.
    
    \medskip
    
    (ii) If we further impose $\E[X_n^3]=0$, the upper bound reduces to
    \begin{align}
    \label{eq:edg_exp_inid_thm_3_1_sym}
        \frac{0.327K_{4,n}}{n}
        \left( \frac{1}{12} + \frac{1}{4(1-3\eps)^2} \right)
        + \frac{1.0253}{\pi} \int_{a_n}^{b_n} \frac{|\caracfsum(t)|}{t}dt
        + \rninidnoskew{2}(\eps),
    \end{align}    
    where $\rninidnoskew{2}(\eps)$ is given in Equation~\eqref{eq:def_r_2n_inid_eps_noskew}. 
    
    \medskip
    
    (iii) Finally, when $K_{4,n}=O(1)$ as \(n \to \infty\), we obtain $\rninidskew{2}(\eps) = O(n^{-5/4})$ and $\rninidnoskew{2}(\eps) = O(n^{-3/2})$.
\end{theorem}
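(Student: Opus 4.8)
The plan is to reproduce the argument behind Theorem~\ref{thm:nocont_inid} almost verbatim, the single substantive change being the value assigned to the free parameter $T$. Concretely, I would apply Prawitz's smoothing inequality (Lemma~\ref{lemma:smoothing}) to $F$ the cdf of $S_n$, with skewness $v = \lambdatroisn/\sqrt{n}$ and the same truncation $\tau = \sqrt{2\eps}\,(n/\Kquatren)^{1/4}$ of Equation~\eqref{eq:tau_choice}, but now take $T = 16\pi^4 n^2/\Ktroisntilde^4$ (so that $b_n = T$ and $T/\pi$ is the second branch of the minimum defining $a_n$). The controls of $\Omega_1$ (Lemma~\ref{lemma:bound_omega_1}), $\Omega_3$ (Lemma~\ref{lemma:bound_omega3}) and $\Omega_4$ (Lemma~\ref{lemma:bound_I32}, part (i) in general and part (ii) when $\E[X_i^3]=0$) are invoked exactly as there; only the final substitution of $T$ changes.

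Because $T$ is now of order $n^2$ rather than $\sqrt{n}$, the term $1.2533/T$ coming from $\Omega_1$ — which produced the leading $0.1995\,\Ktroisntilde/\sqrt{n}$ in Theorem~\ref{thm:nocont_inid} — drops to order $\Ktroisntilde^4/n^2$ and is therefore demoted to the remainder; the same happens to $14.1961/T^4$, $0.3334|\lambdatroisn|/(T\sqrt n)$ and $4.3394|\lambdatroisn|/(T^3\sqrt n)$. Hence the only surviving $1/n$ terms are the two produced by $\Omega_3$, namely $0.327\,\Kquatren\big(\tfrac1{12}+\tfrac1{4(1-3\eps)^2}\big)$ and $0.037\,e_{1,n}(\eps)\lambdatroisn^2$, which is precisely the bracket displayed in~\eqref{eq:edg_exp_inid_thm_3_1}. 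The leftover $\Omega_1$ pieces, the integral $\RinidInt(\eps)$, and the $\Gamma$- and $J_2$-terms returned by Lemma~\ref{lemma:bound_I32} on the Gaussian window $[\tau,a_n]$ are then gathered into $\rninidskew{2}(\eps)$ (respectively $\rninidnoskew{2}(\eps)$ under $\E[X_i^3]=0$), as recorded in Equations~\eqref{eq:def_r_2n_inid_eps} and~\eqref{eq:def_r_2n_inid_eps_noskew}.

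The hard part will be the control of $\Omega_2$, where the proof must genuinely diverge from the no-continuity case. With $T \asymp n^2$, the integration range $[T/\pi,T]$ of $\Omega_2$ sits far outside the window $|u|\lesssim a_n \asymp \sqrt n/\Ktroisntilde$ on which any Gaussian-type estimate for $|\caracfsum|$ is available, so Equation~\eqref{eq:bound_omega2_nocont} is no longer usable. Instead I would use only the modulus bound $|\Psi(t)|\le 1.0253/(2\pi|t|)$ from~\eqref{eq:properties_Psi}, retaining $\Omega_2 \le (1.0253/\pi)\int_{T/\pi}^{T}|\caracfsum(u)|/u\,du$ as an irreducible term. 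The identical obstruction reappears inside $\Omega_4$: on the high-frequency sub-range $[a_n,T/\pi]$ the Edgeworth approximation of $\caracfsum$ is unavailable, so one bounds $|\caracfsum(u)-e^{-u^2/2}|\le |\caracfsum(u)|+e^{-u^2/2}$, which yields $(1.0253/\pi)\int_{a_n}^{T/\pi}|\caracfsum(u)|/u\,du$ plus a negligible Gaussian tail. The subtlety is to place the switch-over radius exactly at $a_n = 2t_1^*\pi\sqrt n/\Ktroisntilde \wedge T/\pi$ — with $t_1^*$ being the threshold that governs the radius of validity of the Gaussian bound on $\caracfsum$ — so that the two retained pieces splice together into the single clean integral $(1.0253/\pi)\int_{a_n}^{b_n}|\caracfsum(t)|/t\,dt$ of~\eqref{eq:edg_exp_inid_thm_3_1}.

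Finally, part (iii) would follow exactly as in Theorem~\ref{thm:nocont_inid}(iii): assuming $\Kquatren=O(1)$ makes $\lambdatroisn,\Ktroisn,\Ktroisntilde$ bounded and $\Delta := (1-4\chi_1-\sqrt{\Kquatren/n})/2$ eventually positive, so the differences of lower incomplete Gamma functions inside $\rninidskew{2}(\eps)$ collapse into differences of upper incomplete Gamma functions; feeding these into the expansion $\Gamma(a,x)=x^{a-1}e^{-x}(1+O((a-1)/x))$ and using $\RinidInt(\eps)=O(n^{-5/4})$ (respectively $O(n^{-3/2})$ without skewness) gives $\rninidskew{2}(\eps)=O(n^{-5/4})$ and $\rninidnoskew{2}(\eps)=O(n^{-3/2})$, the $n^{-2}$ former-leading term being dominated.
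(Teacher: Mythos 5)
Your proposal tracks the paper's own proof in almost every step: the same application of Lemma~\ref{lemma:smoothing} with $T = 16\pi^4 n^2/\Ktroisntilde^4$ and $\tau = \sqrt{2\eps}\,(n/\Kquatren)^{1/4}$, the same controls of $\Omega_1$ and $\Omega_3$, the same retention of $(1.0253/\pi)\int_{T/\pi}^{T}|\caracfsum(u)|/u\,du$ for $\Omega_2$ via $|\Psi(t)|\le 1.0253/(2\pi|t|)$, the same splicing of the two kept integrals into $\int_{a_n}^{b_n}$, and the same argument for part (iii). The genuine gap is in your treatment of $\Omega_4$. You propose to run the Edgeworth-difference bound (Lemma~\ref{lemma:bound_I32}, hence the $J_2$ integrals and Lemma~\ref{lemma:bound_J2p}) on the whole window $[\tau, a_n]$. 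But Lemma~\ref{lemma:bound_I32} passes $q = 2\sqrt{n}/\Ktroisntilde$ as the fourth argument of $J_2$, and the bound of Lemma~\ref{lemma:bound_J2p} with $\Delta = (1-4\chi_1-\sqrt{\Kquatren/n})/2$ rests on the monotonicity step $\exp\bigl(-\tfrac{u^2}{2}\bigl(1-\tfrac{4\chi_1}{q}u-\sqrt{\Kquatren/n}\bigr)\bigr)\le e^{-u^2\Delta}$, which requires $u/q\le 1$ throughout the integration range. Your upper limit is $a_n = 2\pi t_1^*\sqrt{n}/\Ktroisntilde\wedge T/\pi$ with $2\pi t_1^*\approx 4.02$, so on the stretch $[\,2\sqrt{n}/\Ktroisntilde,\,a_n\,]$ one has $u/q$ as large as $\pi t_1^*\approx 2.01$ and that inequality reverses; the $J_2$ bound you want to quote cannot be invoked there, so the step fails as written.

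This is precisely the role of Lemma~\ref{lemma:omega4_smooth}, which your proposal never uses: the paper stops the Edgeworth window at $T^{1/4}/\pi = 2\sqrt{n}/\Ktroisntilde$ (exactly the range where Lemma~\ref{lemma:bound_J2p} is legitimate), and on the intermediate stretch $[T^{1/4}/\pi,\,t_1^*T^{1/4}]$ it abandons the difference $|\caracfsum(u)-e^{-u^2/2}|$ altogether, bounding $|\caracfsum(u)|$ itself by Shevtsova's estimate~\eqref{eq:bound_caracfsum_u_exp} (the $J_4$ term) and the Gaussian part separately (the $J_1$ term); this is the origin of the incomplete-Gamma terms carrying the factor $(1-4\pi\chi_1 t_1^*)$ in the recorded remainder~\eqref{eq:def_r_2n_inid_eps}. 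Your variant is repairable, since on $[\tau,a_n]$ one still has $u/q\le \pi t_1^*$ and $4\pi\chi_1 t_1^*\approx 0.8<1$, so the exponent can be controlled with the weaker constant $\Delta' = (1-4\pi\chi_1 t_1^*-\sqrt{\Kquatren/n})/2$ and the contribution stays exponentially small. But the remainder produced that way is not the expression~\eqref{eq:def_r_2n_inid_eps} cited in the statement (nor~\eqref{eq:def_r_2n_inid_eps_noskew} in the unskewed case), so without this repair, and the bookkeeping that goes with it, you have proved a variant of the theorem rather than the theorem itself.
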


Using Theorem~\ref{thm:cont_inid}, we can finish the proof of Theorem~\ref{thm:nocont_choiceEps} by setting \(\eps = 0.1\), computing the numerical constants and using the upper bounds on $\RinidInt(0.1)$ computed in Section~\ref{sec_bound_R1n}. In particular, \( \rninidskew{2}(0.1) \) is bounded by the explicit expression {\color{black} given in Equation~\eqref{eq:def_r_2n_inid_eps_eq_01}}.
while \( \rninidnoskew{2}(0.1) \) is {\color{black}bounded by the quantity $\rninidnoskew{2}$ defined in Equation~\eqref{eq:def_r_2n_inid_eps_eq_01_noskew}.}
% \begin{align}
%     \rninidnoskew{2} & := \frac{1.2533 \, \Ktroisntilde^4}{16\pi^4n^2}
%     + \frac{14.1961 \, \Ktroisntilde^{16}}{(2\pi)^{16}n^8}
%     \nonumber \\
%     & + \frac{1.0253\Kquatren}{6\pi n} \Bigg\{ 0.5|\Delta|^{-2}\Indicator_{\{\Delta \neq 0\}} \times \bigg|\gamma(2, 2^8\pi^6 \Delta n^4 / \Ktroisntilde^8) \nonumber \\
%     & \qquad\qquad\qquad\;\; - \gamma\big(2, \Delta ( 0.2 (n/K_{4,n})^{1/2} \wedge 2^8\pi^6 n^4 / \Ktroisntilde^8 ) \big) \bigg| \nonumber \\
%     & \qquad\qquad\quad + \Indicator_{\{\Delta = 0\}}
%     \frac{(16 \pi^3 n^2 / \Ktroisntilde^4)^4 
%     - (\sqrt{0.2} (n/K_{4,n})^{1/4} \wedge 16 \pi^3 n^2 / \Ktroisntilde^4)^4}{4}
%     \Bigg\} \nonumber \\  
%     & + \frac{1.0253}{\pi} \left( \Gamma\left( 0 , (4\pi^2n/\Ktroisntilde^2 \wedge 144\pi^8n^4/\Ktroisntilde^8)(1-4\pi\chi_1t_1^*) / (2\pi^2) \right) \right. \nonumber \\
%     & \left. \qquad\qquad\quad - \Gamma\left( 0 , (4t_1^{*2}\pi^2n/\Ktroisntilde^2 \wedge 144\pi^6n^4/\Ktroisntilde^8)(1-4\pi\chi_1t_1^*) / 2 \right) \right) \nonumber \\
%     & + \frac{1.0253}{\pi} \left( \Gamma\left( 0 , (4\pi^2n/\Ktroisntilde^2 \wedge 144\pi^8n^4/\Ktroisntilde^8) / (2\pi^2) \right) \right. \nonumber \\
%     & \left. \qquad\qquad\quad - \Gamma\left( 0 , 144\pi^6n^4/\Ktroisntilde^8 \right) \right).
%     \label{eq:def_r_2n_inid_eps_eq_01_noskew}    
% \end{align}

\begin{proof}[Proof of Theorem~\ref{thm:cont_inid}.]
\textbf{We first prove (i).}
We apply Lemma~\ref{lemma:smoothing} with
$F$ the cdf of $S_n$ and obtain
\begin{align*}
    \DeltaE
    &\leq \, \Omega_1(T, v, \tau)
    + \Omega_2(T) + \Omega_3(T, v, \tau)
    + \Omega_4(\tau \wedge T/\pi, T/\pi, T).
\end{align*}
Let $T = 16 \pi^4 n^2 / \Ktroisntilde^4$, $v = \lambda_{3,n} / \sqrt{n}$ and $\tau = \sqrt{2\eps} (n/K_{4,n})^{1/4}$.
We combine Lemma~\ref{lemma:bound_omega_1} (control of $\Omega_1$),
Lemma~\ref{lemma:bound_omega3} (control of $\Omega_3$), 
Lemma~\ref{lemma:omega4_smooth} and then~\ref{lemma:bound_I32}(i) (control of $\Omega_4$) to get
\begin{align*}
    \DeltaE
    &\leq \frac{1.2533}{T}
    + \frac{0.3334 \, |\lambda_{3,n}|}{T\sqrt{n}}
    + \frac{14.1961}{T^4}
    + \frac{4.3394 \, |\lambdatroisn|}{T^3 \sqrt{n}} \nonumber \\
    &+ \frac{|\lambdatroisn|
    \big( \Gamma( 3/2 , \tau \wedge T/\pi)
    - \Gamma( 3/2 , T/\pi) \big)}{\sqrt{n}} + \frac{1.0253}{\pi} \int_{T / \pi}^T \frac{| \caracfsum(u) |}{u} du \nonumber \\ 
    &+ \frac{0.327 \, K_{4,n}}{n}
    \left(\frac{1}{12}+\frac{1}{4(1-3\eps)^2}\right)
    + \frac{1.306 \, e_{1,n}(\eps) \lambda_{3,n}^2}{36n} \nonumber \\
    &+ \frac{1.0253}{\pi}
    \int_0^{\tau \wedge T/\pi} u e^{-u^2/2} \Rinid(u,\eps) du \nonumber \\
    & + \Big| \Omega_4(\sqrt{2\eps}(n/K_{4,n})^{1/4} \wedge T/\pi, T^{1/4}/\pi \wedge T/\pi, T) \Big| \\
    & + \frac{1.0253}{2\pi} \left( \Gamma\left( 0 , T^{1/2}(1-4\pi\chi_1t_1^*) / (2\pi^2) \right) - \Gamma\left( 0 , t_1^{*2}T^{1/2}(1-4\pi\chi_1t_1^*) / 2 \right) \right) \nonumber \\
    & + \frac{1.0253}{\pi} \int_{t_1^*T^{1/4} \wedge T/\pi}^{T/\pi}
        \frac{|\caracfsum(u)|}{u}du
        + \frac{1.0253}{4\pi}
        \big| \Gamma(0, T^2 / 2\pi) - \Gamma(0, T^{1/2} / 2\pi^2) \big| \nonumber \displaybreak[0] \\
    & \leq \frac{1.2533}{T}
    + \frac{0.3334 \, |\lambda_{3,n}|}{T\sqrt{n}}
    + \frac{14.1961}{T^4}
    + \frac{4.3394 \, |\lambdatroisn|}{T^3 \sqrt{n}} \nonumber \\
    &+ \frac{|\lambdatroisn|
    \big( \Gamma( 3/2 , \tau \wedge T/\pi)
    - \Gamma( 3/2 , T/\pi) \big)}{\sqrt{n}} + \frac{1.0253}{\pi} \int_{t_1^*T^{1/4} \wedge T / \pi}^T \frac{| \caracfsum(u) |}{u} du \nonumber \\ 
    &+ \frac{0.327 \, K_{4,n}}{n}
    \left(\frac{1}{12}+\frac{1}{4(1-3\eps)^2}\right)
    + \frac{1.306 \, e_{1,n}(\eps) \lambda_{3,n}^2}{36n} \nonumber \\
    &+ \frac{1.0253}{\pi}
    \int_0^{\tau \wedge T/\pi} u e^{-u^2/2} \Rinid(u,\eps) du \nonumber \\
    & + \frac{K_{3,n}}{3\sqrt{n}} \Big| J_2 \big(3, \tau \wedge T/\pi, T^{1/4}/\pi \wedge T/\pi, T^{1/4}/\pi , T \big) \Big| \\
    & + \frac{1.0253}{\pi} \left( \Gamma\left( 0 , (T^{1/2}\wedge T^2)(1-4\pi\chi_1t_1^*) / (2\pi^2) \right) \right. \nonumber \\
    & \left. \qquad\qquad\quad - \Gamma\left( 0 , (t_1^{*2}T^{1/2}\wedge T^2/\pi^2)(1-4\pi\chi_1t_1^*) / 2 \right) \right) \nonumber \\
    & + \frac{1.0253}{\pi} \left( \Gamma\left( 0 , (T^{1/2} \wedge T^2) / (2\pi^2) \right) - \Gamma\left( 0 , T^2/(2\pi^2) \right) \right).
\end{align*}

Bounding \((1.0253/\pi)\times\int_0^{\tau \wedge T/\pi} u e^{-u^2/2} \Rinid(u,\eps)\) by \(\RinidInt(\eps) := (1.0253/\pi)\times\int_0^{+ \infty} u e^{-u^2/2} \Rinid(u,\eps)\), bounding $J_2$ by Lemma~\ref{lemma:bound_J2p}, and replacing $T$ and $\tau$ by their values, we obtain
\begin{align*}
    \DeltaE
    &\leq \frac{0.327 K_{4,n}}{n}\left(\frac{1}{12}
    + \frac{1}{4(1-3\eps)^2}\right) + \frac{1.306 \, e_{1,n}(\eps) \lambda_{3,n}^2}{36n} + \frac{1.0253}{\pi} \int_{a_n}^{b_n} \frac{| \caracfsum(u) |}{u} du \\& + \rninidskew{2}(\eps),
\end{align*}
where $a_n := 2t_1^*\pi\sqrt{n}/\Ktroisntilde \wedge 16\pi^3n^2/\Ktroisntilde^4$ and $b_n := 16\pi^4n^2/\Ktroisntilde^4$,
\begin{align}
    \rninidskew{2}(\eps) & := \frac{1.2533 \, \Ktroisntilde^4}{16\pi^4n^2}
    + \frac{0.3334 \, \Ktroisntilde^4 \, |\lambda_{3,n}|}{16\pi^4n^{5/2}}
    + \frac{14.1961 \, \Ktroisntilde^{16}}{(2\pi)^{16}n^8}
    + \frac{4.3394 \, |\lambdatroisn| \, \Ktroisntilde^{12}}{(2\pi)^{12} n^{13/2}} \nonumber \\
    & + \frac{|\lambdatroisn|
    \big( \Gamma( 3/2 , \sqrt{2\eps} (n/K_{4,n})^{1/4} \wedge 16\pi^3n^2/\Ktroisntilde^4)
    - \Gamma( 3/2 , 16\pi^3n^2/\Ktroisntilde^4) \big)}{\sqrt{n}} \nonumber \\
    & + \RinidInt(\eps) \nonumber \\
    & + \frac{1.0253\Ktroisn}{6\pi\sqrt{n}} \Bigg\{ 0.5|\Delta|^{-3/2}\Indicator_{\{\Delta \neq 0\}} \times \bigg|\gamma(3/2, 2^8\pi^6 \Delta n^4 / \Ktroisntilde^8) \nonumber \\
    & \qquad\qquad\qquad - \gamma\big(3/2, \Delta ( 2\eps (n/K_{4,n})^{1/2} \wedge 2^8\pi^6 n^4 / \Ktroisntilde^8 ) \big) \bigg| \nonumber \\
    & \qquad\qquad\qquad\;\; + \Indicator_{\{\Delta = 0\}}
    \frac{(16 \pi^3 n^2 / \Ktroisntilde^4)^3
    - (\sqrt{2\eps} (n/K_{4,n})^{1/4} \wedge 16 \pi^3 n^2 / \Ktroisntilde^4)^3 }{3} \Bigg\} \nonumber \\  
    & + \frac{1.0253}{\pi} \left( \Gamma\left( 0 , (4\pi^2n/\Ktroisntilde^2 \wedge 144\pi^8n^4/\Ktroisntilde^8)(1-4\pi\chi_1t_1^*) / (2\pi^2) \right) \right. \nonumber \\
    & \left. \qquad\qquad\quad - \Gamma\left( 0 , (4t_1^{*2}\pi^2n/\Ktroisntilde^2 \wedge 144\pi^6n^4/\Ktroisntilde^8)(1-4\pi\chi_1t_1^*) / 2 \right) \right) \nonumber \\
    & + \frac{1.0253}{\pi} \left( \Gamma\left( 0 , (4\pi^2n/\Ktroisntilde^2 \wedge 144\pi^8n^4/\Ktroisntilde^8) / (2\pi^2) \right) - \Gamma\left( 0 , 144\pi^6n^4/\Ktroisntilde^8 \right) \right),
    \label{eq:def_r_2n_inid_eps}    
\end{align}
and $\Delta := (1 - 4 \chi_1 - \sqrt{K_{4,n}/n}) / 2$.

\bigskip

\textbf{We now prove (ii).} The proof is exactly the same as the one we have used in (i) just above, except that Lemma~\ref{lemma:bound_I32}(i) is replaced with Lemma~\ref{lemma:bound_I32}(ii). Consequently,
\begin{align*}
    \DeltaE
    &\leq \frac{0.327 K_{4,n}}{n}\left(\frac{1}{12}
    + \frac{1}{4(1-3\eps)^2}\right) + \frac{1.0253}{\pi} \int_{a_n}^{b_n} \frac{| \caracfsum(u) |}{u} du + \rninidnoskew{2}(\eps),
\end{align*}
where
\begin{align}
    \rninidnoskew{2}(\eps) & := \frac{1.2533 \, \Ktroisntilde^4}{16\pi^4n^2}
    + \frac{14.1961 \, \Ktroisntilde^{16}}{(2\pi)^{16}n^8} + \RinidInt(\eps) \nonumber \\
    & + \frac{1.0253\Kquatren}{6\pi n} \Bigg\{ 0.5|\Delta|^{-2}\Indicator_{\{\Delta \neq 0\}} \times \bigg|\gamma(2, 2^8\pi^6 \Delta n^4 / \Ktroisntilde^8) \nonumber \\
    & \qquad\qquad\qquad\;\; - \gamma\big(2, \Delta ( 2\eps (n/K_{4,n})^{1/2} \wedge 2^8\pi^6 n^4 / \Ktroisntilde^8 ) \big) \bigg| \nonumber \\
    & \qquad\qquad\qquad + \Indicator_{\{\Delta = 0\}}
    \frac{(16 \pi^3 n^2 / \Ktroisntilde^4)^4
    - (\sqrt{2\eps} (n/K_{4,n})^{1/4} \wedge 16 \pi^3 n^2 / \Ktroisntilde^4)^4}{4}
     \Bigg\} \nonumber \\  
    & + \frac{1.0253}{\pi} \left( \Gamma\left( 0 , (4\pi^2n/\Ktroisntilde^2 \wedge 144\pi^8n^4/\Ktroisntilde^8)(1-4\pi\chi_1t_1^*) / (2\pi^2) \right) \right. \nonumber \\
    & \left. \qquad\qquad\quad - \Gamma\left( 0 , (4t_1^{*2}\pi^2n/\Ktroisntilde^2 \wedge 144\pi^6n^4/\Ktroisntilde^8)(1-4\pi\chi_1t_1^*) / 2 \right) \right) \nonumber \\
    & + \frac{1.0253}{\pi} \left( \Gamma\left( 0 , (4\pi^2n/\Ktroisntilde^2 \wedge 144\pi^8n^4/\Ktroisntilde^8) / (2\pi^2) \right) - \Gamma\left( 0 , 144\pi^6n^4/\Ktroisntilde^8 \right) \right).
    \label{eq:def_r_2n_inid_eps_noskew}    
\end{align}

\bigskip

\textbf{We finally prove (iii).} The reasoning is completely analogous to the proof of Theorem~\ref{thm:nocont_inid}.(iii). Leading terms in $\rninidskew{2}(\eps)$ (\textit{resp.} $\rninidnoskew{2}(\eps)$) stem from $\RinidInt(\eps)$. This term appeared in $\rninidskew{1}(\eps)$ and $\rninidnoskew{1}(\eps)$ and we showed $\RinidInt(\eps) = O(n^{-5/4})$ in the general case and $\RinidInt(\eps) = O(n^{-3/2})$ in the no-skewness case.
\end{proof}

\subsection{Proof of Theorem~\ref{thm:cont_choiceEps} under Assumption~\ref{hyp:basic_as_iid}}
\label{ssec:proof:iid_cont}

We use Theorem~\ref{thm:cont_iid}, proved below, with the choice $\eps = 0.1$. Recall that $t_1^* = \theta_1^*/(2\pi) \approx 0.64$ where $\theta_1^*$ is the unique root in $(0,2\pi)$ of the equation $\theta^2+2\theta\sin(\theta)+6(\cos(\theta)-1)=0.$ Recall also that $a_n := 2t_1^*\pi\sqrt{n}/\Ktroisntilde \wedge 16\pi^3n^2/\Ktroisntilde^4,$ and $b_n := 16\pi^4n^2/\widetilde{K}_{3,n}^4$.

\begin{theorem}[Alternative one-term Edgeworth expansion under Assumption~\ref{hyp:basic_as_iid}]
\label{thm:cont_iid}

(i) Under Assumption~\ref{hyp:basic_as_iid}, for every $\eps \in (0,1/3)$ and every $n \geq 3$, we have the bound 
    \begin{align}
        \DeltaE
        &\leq \frac{1}{n}\left\{ 0.327K_{4,n}\left(\frac{1}{12}
        +  \frac{1}{4(1-3\eps)^2}\right)
        + 0.037 e_3(\eps)\lambda_{3,n}^2 \right\} \nonumber \\
        &+ \frac{1.0253}{\pi}\int_{a_n}^{b_n}\frac{|\caracfsum(t)|}{t}dt
        + \rniidskew{2}(\eps),
    \end{align}
    where $\rniidskew{2}(\eps)$ is given in Equation~\eqref{eq:def_r_2n_iid_eps} and $e_{3}(\eps) = e^{\eps^2/6 + \eps^2 / (2(1-3 \eps) )^2}$.
    
    \medskip
    
    (ii) If we further impose $\E[X_n^3]=0$, the upper bound reduces to
    \begin{align}
        \frac{0.327K_{4,n}}{n} \left(\frac{1}{12}
        + \frac{1}{4(1-3\eps)^2}\right)
        + \frac{1.0253}{\pi} \int_{a_n}^{b_n} \frac{|\caracfsum(t)|}{t}dt
        + \rniidnoskew{2}(\eps),
    \end{align}    
    where $\rniidnoskew{2}(\eps)$ is given in Equation~\eqref{eq:def_r_2n_iid_eps_noskew}. 
    
    \medskip
    
    (iii) Finally, when $K_{4,n} = O(1)$ as \(n \to \infty\), we obtain $\rniidskew{2}(\eps) = O(n^{-5/4})$ and $\rniidnoskew{2}(\eps) = O(n^{-2})$.
\end{theorem}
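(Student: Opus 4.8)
The plan is to mirror the proof of Theorem~\ref{thm:cont_inid} (the \inid{} continuity case) while substituting the sharper \iid{}-specific controls already exploited in Theorem~\ref{thm:nocont_iid}. First I would apply Lemma~\ref{lemma:smoothing} to $F$ the cdf of $S_n$, so that $v = \lambdatroisn/\sqrt{n}$, and make the ``continuity'' choices $T = 16\pi^4 n^2/\Ktroisntilde^4$ and $\tau = \sqrt{2\eps}(n/K_{4,n})^{1/4}$. Taking $T$ of this larger order, rather than $2\pi\sqrt{n}/\Ktroisntilde$, is precisely what demotes the would-be leading $n^{-1/2}$ contribution of $\Omega_1$ to a negligible piece, at the cost of introducing a characteristic-function integral through $\Omega_2$ and $\Omega_4$.

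Next I would bound the four terms of~\eqref{eq:smoothing} separately. For $\Omega_1$ I would invoke Lemma~\ref{lemma:bound_omega_1} and for $\Omega_3$ Lemma~\ref{lemma:bound_omega3}, exactly as in the \iid{} moment-only proof; the latter yields the $\lambdatroisn^2$ term weighted by $e_{2,n}(\eps)$, which I would split into an $e_3(\eps)$-weighted leading piece of order $n^{-1}$ plus an $\bigl(e_{2,n}(\eps)-e_3(\eps)\bigr)$ remainder, together with the integral $\RiidInt(\eps)$. The essential \iid{} gain comes from $\Omega_4$: I would apply the smoothing Lemma~\ref{lemma:omega4_smooth} to peel off a characteristic-function integral, then control the remaining moment term via Lemma~\ref{lemma:bound_I32}(iii) in the skewed case (respectively (iv) under $\E[X_n^3]=0$), followed by the \iid{} estimate $J_3$ of Lemma~\ref{lemma:bound_J3p}, in place of the $J_2$ bound used for \inid{} data.

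The bookkeeping step is to merge the characteristic-function integrals: the direct contribution from $\Omega_2$ over $[T/\pi,T]$ combines with the residual integral produced by smoothing $\Omega_4$ over $[t_1^* T^{1/4}\wedge T/\pi,\,T/\pi]$ into the single term $\tfrac{1.0253}{\pi}\int_{a_n}^{b_n}|\caracfsum(t)|/t\,dt$ once $T$ is replaced by its value, so that $b_n = T = 16\pi^4 n^2/\Ktroisntilde^4$ and $a_n = 2t_1^*\pi\sqrt{n}/\Ktroisntilde \wedge 16\pi^3 n^2/\Ktroisntilde^4$. Substituting $T$ and $\tau$ everywhere else collects the remaining pieces---the $\Omega_1$ leftovers, the incomplete-Gamma differences, the $(e_{2,n}-e_3)$ term, and the $J_3$ residuals---into $\rniidskew{2}(\eps)$ (respectively $\rniidnoskew{2}(\eps)$), giving parts (i) and (ii). For part (iii) I would argue verbatim as in Theorem~\ref{thm:nocont_iid}.(iii): $K_{4,n}=O(1)$ forces $\lambdatroisn$, $\Ktroisn$, $\Ktroisntilde = O(1)$, the dominant remainder contribution is $\RiidInt(\eps)$ (which is $O(n^{-3/2})$ in general and $O(n^{-2})$ when $\lambdatroisn=0$), the $(e_{2,n}-e_3)\lambdatroisn^2/n$ term is $O(n^{-5/4})$, and the incomplete-Gamma differences are $o$ of these by the expansion $\Gamma(a,x)=x^{a-1}e^{-x}(1+O((a-1)/x))$, yielding $\rniidskew{2}(\eps)=O(n^{-5/4})$ and $\rniidnoskew{2}(\eps)=O(n^{-2})$.

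I expect the main obstacle to be the $\Omega_4$ handling and the subsequent merging of the two characteristic-function integrals: one must apply Lemma~\ref{lemma:omega4_smooth} with the correct split point, track which portion of $|\caracfsum|$ stays inside the $\int_{a_n}^{b_n}$ window versus which incomplete-Gamma leftovers get discharged into the remainder, and verify that the lower limit emerges as the stated minimum (so that $a_n$ equals $2t_1^*\pi\sqrt{n}/\Ktroisntilde$ for moderately large $n$). Getting these limits and the residual orders right is the delicate accounting; the rest is routine substitution of $T$ and $\tau$ and numerical evaluation of constants.
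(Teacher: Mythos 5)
Your proposal matches the paper's own proof in all essentials: the same choices $T = 16\pi^4 n^2/\Ktroisntilde^4$ and $\tau = \sqrt{2\eps}(n/K_{4,n})^{1/4}$ in Lemma~\ref{lemma:smoothing}, the same lemmas for each term ($\Omega_1$ via Lemma~\ref{lemma:bound_omega_1}, $\Omega_3$ via the \iid{} part of Lemma~\ref{lemma:bound_omega3} with the $e_{2,n}/e_3$ split, $\Omega_4$ via Lemma~\ref{lemma:omega4_smooth} followed by Lemma~\ref{lemma:bound_I32}(iii)--(iv) and $J_3$ from Lemma~\ref{lemma:bound_J3p}), and the same merging of the $\Omega_2$ and $\Omega_4$ characteristic-function integrals into $\int_{a_n}^{b_n}|\caracfsum(t)|/t\,dt$. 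Your accounting for part (iii) — attributing the $O(n^{-5/4})$ rate in the skewed case to the $(e_{2,n}-e_3)\lambdatroisn^2/n$ term while $\RiidInt(\eps)=O(n^{-3/2})$ — is in fact stated more precisely than the paper's own terse argument, which loosely calls $\RiidInt(\eps)$ the $O(n^{-5/4})$ leading term.
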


We can use this result to wrap up the proof of Theorem~\ref{thm:cont_choiceEps}. We set $\eps = 0.1$, use the upper bound $\RiidInt(0.1) \leq \RiidIntSkew$ in the general case (\textit{resp.} $\RiidInt(0.1) \leq \RiidIntNoskew$ in the no-skewness case) and compute all the numerical constants depending on $\eps$. {\color{black} This gives us the explicit expression written in Equation~\eqref{eq:def_r_2n_iid_eps_eq_01} as an upper bound on $\rniidskew{2}(0.1)$.}
% \begin{align}
%     \rniidskew{2} & := \frac{1.2533 \, \Ktroisntilde^4}{16\pi^4n^2}
%     + \frac{0.3334 \, \Ktroisntilde^4 \, |\lambda_{3,n}|}{16\pi^4n^{5/2}}
%     + \frac{14.1961 \, \Ktroisntilde^{16}}{(2\pi)^{16}n^8}
%     + \frac{4.3394 \, |\lambdatroisn| \, \Ktroisntilde^{12}}{(2\pi)^{12} n^{13/2}} \nonumber \\
%     & + \frac{|\lambdatroisn|
%     \big( \Gamma( 3/2 , \sqrt{0.2} (n/K_{4,n})^{1/4} \wedge 16\pi^3n^2/\Ktroisntilde^4)
%     - \Gamma( 3/2 , 16\pi^3n^2/\Ktroisntilde^4) \big)}{\sqrt{n}} \nonumber \\
%     & + \RiidIntSkew \nonumber \\
%     & + \frac{1.0253 \times 2^{5/2} \, \Ktroisn}{3\pi\sqrt{n}} \big| \Gamma(3/2, 2^5\pi^6n^4/\Ktroisntilde^8) \nonumber \\
%     &\qquad\qquad\qquad\qquad\quad\; - \Gamma(3/2, 0.1\sqrt{n/(16\Kquatren)} \wedge 2^5\pi^6n^4/\Ktroisntilde^8) \big| \nonumber \\
%     & + \frac{1.306 \big( e_{2,n}(0.1) - e_3(0.1) \big) \lambdatroisn^2}{36 n} \nonumber \\
%     & + \frac{1.0253}{\pi} \left( \Gamma\left( 0 , (4\pi^2n/\Ktroisntilde^2 \wedge 144\pi^8n^4/\Ktroisntilde^8)(1-4\pi\chi_1t_1^*) / (2\pi^2) \right) \right. \nonumber \\
%     & \left. \qquad\qquad\quad - \Gamma\left( 0 , (4t_1^{*2}\pi^2n/\Ktroisntilde^2 \wedge 144\pi^6n^4/\Ktroisntilde^8)(1-4\pi\chi_1t_1^*) / 2 \right) \right) \nonumber \\
%     & + \frac{1.0253}{\pi} \left( \Gamma\left( 0 , (4\pi^2n/\Ktroisntilde^2 \wedge 144\pi^8n^4/\Ktroisntilde^8) / (2\pi^2) \right) - \Gamma\left( 0 , 144\pi^6n^4/\Ktroisntilde^8 \right) \right),
%     \label{eq:def_r_2n_iid_eps_eq_01}    
% \end{align}
{\color{black}In the same way,  $\rniidnoskew{2}(0.1)$ is bounded by the value $\rniidnoskew{2}$ given in Equation~\eqref{eq:def_r_2n_iid_eps_eq_01_noskew}.}
% and $\rniidnoskew{2}(0.1)$
% \begin{align}
%     \rniidnoskew{2}
%     & := \frac{1.2533 \, \Ktroisntilde^4}{16\pi^4n^2}
%     + \frac{14.1961 \, \Ktroisntilde^{16}}{(2\pi)^{16}n^8} + \RiidIntNoskew \nonumber \\
%     & + \frac{16 \times 1.0253 \, \Ktroisn
%         \big| \Gamma(2, 2^5\pi^6n^4/\Ktroisntilde^8) - \Gamma(2, 0.1\sqrt{n/(16\Kquatren)} \wedge 2^5\pi^6n^4/\Ktroisntilde^8) \big|}{3\pi n} \nonumber \\  
%     & + \frac{1.0253}{\pi} \left( \Gamma\left( 0 , (4\pi^2n/\Ktroisntilde^2 \wedge 144\pi^8n^4/\Ktroisntilde^8)(1-4\pi\chi_1t_1^*) / (2\pi^2) \right) \right. \nonumber \\
%     & \left. \qquad\qquad\quad - \Gamma\left( 0 , (4t_1^{*2}\pi^2n/\Ktroisntilde^2 \wedge 144\pi^6n^4/\Ktroisntilde^8)(1-4\pi\chi_1t_1^*) / 2 \right) \right) \nonumber \\
%     & + \frac{1.0253}{\pi} \left( \Gamma\left( 0 , (4\pi^2n/\Ktroisntilde^2 \wedge 144\pi^8n^4/\Ktroisntilde^8) / (2\pi^2) \right) \right. \nonumber \\
%     & \left. \qquad\qquad\quad - \Gamma\left( 0 , 144\pi^6n^4/\Ktroisntilde^8 \right) \right).
%     \label{eq:def_r_2n_iid_eps_eq_01_noskew}
% \end{align}

\begin{proof}[Proof of Theorem~\ref{thm:cont_iid}.]
\textbf{We first prove (i).} The proof is similar to that of Theorem~\ref{thm:cont_inid} except that we use Lemma~\ref{lemma:bound_I32}(iii) instead of Lemma~\ref{lemma:bound_I32}(i) (and the second part of Lemma~\ref{lemma:bound_omega3}). This leads to
\begin{align*}
    \DeltaE
    & \leq \frac{1.2533}{T}
    + \frac{0.3334 \, |\lambda_{3,n}|}{T\sqrt{n}}
    + \frac{14.1961}{T^4}
    + \frac{4.3394 \, |\lambdatroisn|}{T^3 \sqrt{n}} \nonumber \\
    &+ \frac{|\lambdatroisn|
    \big( \Gamma( 3/2 , \tau \wedge T/\pi)
    - \Gamma( 3/2 , T/\pi) \big)}{\sqrt{n}} + \frac{1.0253}{\pi} \int_{t_1^*T^{1/4} \wedge T / \pi}^T \frac{| \caracfsum(u) |}{u} du \nonumber \\ 
    &+ \frac{0.327 \, K_{4,n}}{n}
    \left(\frac{1}{12}+\frac{1}{4(1-3\eps)^2}\right)
    + \frac{1.306 \, e_{2,n}(\eps) \lambda_{3,n}^2}{36n} \nonumber \\
    &+ \frac{1.0253}{\pi}
    \int_0^{\tau \wedge T/\pi} u e^{-u^2/2} \Riid(u,\eps) du \nonumber \\
    & + \frac{K_{3,n}}{3\sqrt{n}} \Big| J_3 \big(3, \tau \wedge T/\pi, T^{1/4}/\pi \wedge T/\pi, T^{1/4}/\pi , T \big) \Big| \\
    & + \frac{1.0253}{\pi} \left( \Gamma\left( 0 , (T^{1/2}\wedge T^2)(1-4\pi\chi_1t_1^*) / (2\pi^2) \right) \right. \\
    & \left. \qquad\qquad\quad - \Gamma\left( 0 , (t_1^{*2}T^{1/2}\wedge T^2/\pi^2)(1-4\pi\chi_1t_1^*) / 2 \right) \right) \nonumber \\
    & + \frac{1.0253}{\pi} \left( \Gamma\left( 0 , (T^{1/2} \wedge T^2) / (2\pi^2) \right) - \Gamma\left( 0 , T^2/(2\pi^2) \right) \right).
\end{align*}

Using Lemma~\ref{lemma:bound_J3p} instead of Lemma~\ref{lemma:bound_J2p}, we arrive at
\begin{align*}
    \DeltaE
    &\leq \frac{0.327 K_{4,n}}{n}\left(\frac{1}{12}
    + \frac{1}{4(1-3\eps)^2}\right) + \frac{1.306 \, e_3(\eps) \lambda_{3,n}^2}{36n} + \frac{1.0253}{\pi} \int_{a_n}^{b_n} \frac{| \caracfsum(u) |}{u} du \\& + \rniidskew{2}(\eps),
\end{align*}
where
\begin{align}
    \rniidskew{2}(&\eps) := \frac{1.2533 \, \Ktroisntilde^4}{16\pi^4n^2}
    + \frac{0.3334 \, \Ktroisntilde^4 \, |\lambda_{3,n}|}{16\pi^4n^{5/2}}
    + \frac{14.1961 \, \Ktroisntilde^{16}}{(2\pi)^{16}n^8}
    + \frac{4.3394 \, |\lambdatroisn| \, \Ktroisntilde^{12}}{(2\pi)^{12} n^{13/2}} \nonumber \\
    & + \frac{|\lambdatroisn|
    \big( \Gamma( 3/2 , \sqrt{2\eps} (n/K_{4,n})^{1/4} \wedge 16\pi^3n^2/\Ktroisntilde^4)
    - \Gamma( 3/2 , 16\pi^3n^2/\Ktroisntilde^4) \big)}{\sqrt{n}} + \RiidInt(\eps) \nonumber \\
    & + \frac{1.0253 \times 2^{5/2} \, \Ktroisn
        \big| \Gamma(3/2, 2^5\pi^6n^4/\Ktroisntilde^8) - \Gamma(3/2, \eps\sqrt{n/(16\Kquatren)} \wedge 2^5\pi^6n^4/\Ktroisntilde^8) \big|}{3\pi\sqrt{n}} \nonumber \\  
    & + \frac{1.306 \big( e_{2,n}(\eps) - e_3(\eps) \big) \lambdatroisn^2}{36 n} \nonumber \\
    & + \frac{1.0253}{\pi} \left( \Gamma\left( 0 , (4\pi^2n/\Ktroisntilde^2 \wedge 144\pi^8n^4/\Ktroisntilde^8)(1-4\pi\chi_1t_1^*) / (2\pi^2) \right) \right. \nonumber \\
    & \left. \qquad\qquad\quad - \Gamma\left( 0 , (4t_1^{*2}\pi^2n/\Ktroisntilde^2 \wedge 144\pi^6n^4/\Ktroisntilde^8)(1-4\pi\chi_1t_1^*) / 2 \right) \right) \nonumber \\
    & + \frac{1.0253}{\pi} \left( \Gamma\left( 0 , (4\pi^2n/\Ktroisntilde^2 \wedge 144\pi^8n^4/\Ktroisntilde^8) / (2\pi^2) \right) - \Gamma\left( 0 , 144\pi^6n^4/\Ktroisntilde^8 \right) \right).
    \label{eq:def_r_2n_iid_eps}    
\end{align}

\bigskip

\textbf{We now prove (ii).} The proof is the same as that of Result (i), except that we use Lemma~\ref{lemma:bound_I32}(iv) instead of Lemma~\ref{lemma:bound_I32}(iii). We conclude
\begin{align*}
    \DeltaE
    &\leq \frac{0.327 K_{4,n}}{n}\left(\frac{1}{12}
    + \frac{1}{4(1-3\eps)^2}\right) + \frac{1.0253}{\pi} \int_{a_n}^{b_n} \frac{| \caracfsum(u) |}{u} du + \rniidnoskew{2}(\eps),
\end{align*}
where
\begin{align}
    \rniidnoskew{2}(\eps)
    & := \frac{1.2533 \, \Ktroisntilde^4}{16\pi^4n^2}
    + \frac{14.1961 \, \Ktroisntilde^{16}}{(2\pi)^{16}n^8} + \RiidInt(\eps) \nonumber \\
    & + \frac{16 \times 1.0253 \, \Ktroisn
        \big| \Gamma(2, 2^5\pi^6n^4/\Ktroisntilde^8) - \Gamma(2, \eps\sqrt{n/(16\Kquatren)} \wedge 2^5\pi^6n^4/\Ktroisntilde^8) \big|}{3\pi n} \nonumber \\  
    & + \frac{1.0253}{\pi} \left( \Gamma\left( 0 , (4\pi^2n/\Ktroisntilde^2 \wedge 144\pi^8n^4/\Ktroisntilde^8)(1-4\pi\chi_1t_1^*) / (2\pi^2) \right) \right. \nonumber \\
    & \left. \qquad\qquad\quad - \Gamma\left( 0 , (4t_1^{*2}\pi^2n/\Ktroisntilde^2 \wedge 144\pi^6n^4/\Ktroisntilde^8)(1-4\pi\chi_1t_1^*) / 2 \right) \right) \nonumber \\
    & + \frac{1.0253}{\pi} \left( \Gamma\left( 0 , (4\pi^2n/\Ktroisntilde^2 \wedge 144\pi^8n^4/\Ktroisntilde^8) / (2\pi^2) \right) - \Gamma\left( 0 , 144\pi^6n^4/\Ktroisntilde^8 \right) \right).
    \label{eq:def_r_2n_iid_eps_noskew}  
\end{align}

\bigskip

\textbf{We finally prove (iii).} $\RiidInt(\eps)$ is the leading term in both $\rniidskew{2}(\eps)$ and $\rniidnoskew{2}(\eps)$. In the proof of Theorem~\ref{thm:nocont_iid}, $\RiidInt(\eps)$ was shown to be of order $n^{-5/4}$ in general and $n^{-2}$ in the no-skewness case.
\end{proof}

\subsection{Proof of Lemma~\ref{lemma:smoothing}}
\label{proof:lemma:smoothing}

Let us denote by ``$\vp \int \,$'' Cauchy's principal value, defined by
$$\vp \int_{-a}^a f(u)du := \lim_{x \to 0, \, x > 0} \int_{-a}^{-x} f(u) du + \int_{x}^a f(u) du,$$
where $f$ is a measurable function on $[-a,a] \backslash \{0\}$ for a given $a > 0$.
In the following, we use the following inequalities, which are due to \cite{prawitz1972}
\begin{align*}
    \lim_{y \to x, \, y > x} F(y)
    &\leq \frac{1}{2} +
    \vp \int_{-T}^{T} e^{-ixu} \frac{1}{T} \Psi \left( \frac{u}{T} \right) f(u) du, \\
    \lim_{y \to x, \, y < x} F(y)
    &\geq \frac{1}{2} +
    \vp \int_{-T}^{T} e^{-ixu} \frac{1}{T} \Psi \left( \frac{-u}{T} \right) f(u) du.
\end{align*}
Note that these inequalities hold for every distribution $F$ with characteristic function $f$, without any assumption. However, they only involve values of the characteristic function $f$ on the interval $[-T, T]$ (independently of the fact that $f$ may be non-zero elsewhere).

Therefore,
\begin{align}
    F(x)-G_v(x) &\leq \frac{1}{2} +
    \vp \int_{-T}^{T} e^{-ixu} \frac{1}{T} \Psi \left( \frac{u}{T} \right) f(u) du - G_v(x) 
    \label{eq:bound_diff_cdf_sup} \\
    F(x)-G_v(x) &\geq \frac{1}{2} +
    \vp \int_{-T}^{T} e^{-ixu} \frac{1}{T} \Psi \left( \frac{-u}{T} \right) f(u) du - G_v(x).
    \label{eq:bound_diff_cdf_inf} 
\end{align}
Note that the Gil-Pelaez inversion formula~(see \cite{gil1951note}) is valid for any bounded-variation function. Formally, for every bounded-variation function $G(x)=\int_{-\infty}^x g(t) dt$, denoting the Fourier transform of a given function $g$ by
$\check g := \int_{-\infty}^{+\infty} e^{ixu} g(u) du$, we have
\begin{align}
    G(x) &= \frac{1}{2} + \frac{i}{2\pi} \, \vp \int_{-\infty}^{+\infty} e^{-ixu} \check g(u) du.
    \label{eq:fourier_cdf}
\end{align}
Therefore, applying Equation~(\ref{eq:fourier_cdf}) to the function $G_v(x)
:= \Phi(x) + v(1-x^2) \varphi(x) / 6$ whose (generalized) density has the Fourier transform $(1 - vix^3 / 6) e^{- x^2 / 2}$, we get
\begin{align*}
    G_v(x)
    &= \frac{1}{2} + \frac{i}{2\pi} \,
    \vp \int_{-\infty}^{+\infty} e^{-ixu} \left(
    1 - \frac{v}{6} iu^3
    \right) e^{- u^2 / 2} \frac{du}{u}.
\end{align*}
Combining this expression of $G_v(x)$ with the bounds (\ref{eq:bound_diff_cdf_sup}) and (\ref{eq:bound_diff_cdf_inf}), we get
\begin{align*}
    & \left| F(x) - G_v(x) \right| \\
    \leq & \left| \vp \int_{-\infty}^{+\infty}e^{-ixu}\left\{ \frac{1}{T}\Indicator_{\{ |u|\leq T \}}
    \Psi\left(\frac{u}{T}\right)f(u) - \frac{i}{2\pi}\left(
    1 - \frac{v}{6} iu^3
    \right) \frac{e^{- u^2 / 2}}{u} \right\}du \right| \\
    \leq & \vp \int_{-\infty}^{+\infty} \left| \frac{1}{T}\Indicator_{\{ |u|\leq T \}}
    \Psi\left(\frac{u}{T}\right)f(u) - \frac{i}{2\pi}\left(
    1 - \frac{v}{6} iu^3
    \right) \frac{e^{- u^2 / 2}}{u} \right| du \\
    = & \int_{-\infty}^{+\infty} \left| \frac{1}{T}\Indicator_{\{ |u|\leq T \}}
    \Psi\left(\frac{u}{T}\right)f(u) - \frac{i}{2\pi}\left(
    1 - \frac{v}{6} iu^3
    \right) \frac{e^{- u^2 / 2}}{u} \right|du,
\end{align*}
where we resort to the triangle inequality and to the fact that the principal value of the integral of a positive function is the (usual) integral of that function.
Combining $\Psi(-u) = \overline\Psi(u)$ and $f(-u) = \overline{f(u)}$ with basic properties of conjugate and modulus, so that
\begin{align*}
    &\left| \frac{1}{T}\Indicator_{\{ |-u|\leq T \}}
    \Psi\left(\frac{-u}{T}\right)f(-u) - \frac{i}{2\pi}\left(
    1 - \frac{v}{6} i(-u)^3
    \right) \frac{e^{- (-u)^2 / 2}}{-u} \right| \\
    &= \left| \frac{1}{T}\Indicator_{\{ |u|\leq T \}}
    \overline \Psi\left(\frac{u}{T}\right) \overline f(u)
    + \frac{i}{2\pi}\left(
    1 + \frac{v}{6} iu^3
    \right) \frac{e^{- u^2 / 2}}{u} \right| \\
    &= \left| \overline{ \frac{1}{T}\Indicator_{\{ |u|\leq T \}}
    \Psi\left(\frac{u}{T}\right) f(u)}
    - \overline{\frac{i}{2\pi}\left(
    1 - \frac{v}{6} iu^3
    \right) \frac{e^{- u^2 / 2}}{u}} \right| \\
    &= \left| \frac{1}{T}\Indicator_{\{ |u|\leq T \}}
    \Psi\left(\frac{u}{T}\right) f(u)
    - \frac{i}{2\pi}\left(
    1 - \frac{v}{6} iu^3
    \right) \frac{e^{- u^2 / 2}}{u} \right|.
\end{align*}
Using this symmetry with respect to $u$, we obtain
\begin{align*}
    \left| F(x) - G_v(x) \right|
    &= 2 \int_{0}^{+\infty} \left| \frac{1}{T}\Indicator_{\{ u \leq T \}}
    \Psi\left(\frac{u}{T}\right)f(u) - \frac{i}{2\pi}\left(
    1 - \frac{v}{6} iu^3
    \right) \frac{e^{- u^2 / 2}}{u} \right| du.
\end{align*}
By distinguishing the cases $u \leq T$ and $u \geq T$, we obtain
\begin{align*}
    &\left| F(x) - G_v(x) \right| \\
    &\leq 2 \int_{0}^{T} \left| \frac{1}{T}
    \Psi\left(\frac{u}{T}\right)f(u) - \frac{i}{2\pi}\left(
    1 - \frac{v}{6} iu^3
    \right) \frac{e^{- u^2 / 2}}{u} \right| du \\
    & \;\;\; + 2 \int_{T}^{+\infty} \left| \frac{i}{2\pi} \left(
    1 - \frac{v}{6} iu^3
    \right) \frac{e^{- u^2 / 2}}{u} \right| du \\
    &\leq 2 \int_{0}^{T/\pi} \left| \frac{1}{T}
    \Psi\left(\frac{u}{T}\right)f(u) - \frac{i}{2\pi}\left(
    1 - \frac{v}{6} iu^3
    \right) \frac{e^{- u^2 / 2}}{u} \right| du \\
    & \;\;\; + 2 \int_{T/\pi}^{T} \left| \frac{1}{T}
    \Psi\left(\frac{u}{T}\right)f(u) - \frac{i}{2\pi}\left(
    1 - \frac{v}{6} iu^3
    \right) \frac{e^{- u^2 / 2}}{u} \right| du \\
    & \;\;\; + \int_{T}^{+\infty} \frac{1}{\pi} \left(
    1 + \frac{|v|}{6} u^3
    \right) \frac{e^{- u^2 / 2}}{u} du \\
    &\leq 2 \int_{0}^{T/\pi} \left| \frac{1}{T}
    \Psi\left(\frac{u}{T}\right)f(u) - \frac{i}{2\pi}\left(
    1 - \frac{v}{6} iu^3
    \right) \frac{e^{- u^2 / 2}}{u} \right| du
    + 2 \int_{T/\pi}^{T} \left| \frac{1}{T}
    \Psi\left(\frac{u}{T}\right)f(u) \right| du \\
    & \hspace{2cm} + 2 \int_{T/\pi}^{T} \left| \frac{1}{T}
    \frac{i}{2\pi}\left(
    1 - \frac{v}{6} \right) \frac{e^{- u^2 / 2}}{u} \right| du
    + \int_{T}^{+\infty} \frac{1}{\pi} \left(
    1 + \frac{|v|}{6} u^3
    \right) \frac{e^{- u^2 / 2}}{u} du.
\end{align*}
We merge the last two terms together as they correspond to the same integrand, integrated from $T/\pi$ to $+ \infty$.
\begin{align*}
    \left| F(x) - G_v(x) \right|
    &\leq 2 \int_{0}^{T/\pi} \left| \frac{1}{T}
    \Psi\left(\frac{u}{T}\right)f(u) - \frac{i}{2\pi}\left(
    1 - \frac{v}{6} iu^3
    \right) \frac{e^{- u^2 / 2}}{u} \right| du \\
    &\hspace{2cm} + 2 \int_{T/\pi}^{T} \left| \frac{1}{T}
    \Psi\left(\frac{u}{T}\right)f(u) \right| du \\
    &\hspace{2cm} + \int_{T/\pi}^{+\infty} \frac{1}{\pi} \left(
    1 + \frac{|v|}{6} u^3
    \right) \frac{e^{- u^2 / 2}}{u} du.
\end{align*}
We use the triangle inequality to break the first integral into two parts
\begin{align*}
    & \left| F(x) - G_v(x) \right| \\
    \leq & 2 \int_{0}^{T/\pi} \left| \frac{1}{T}
    \Psi\left(\frac{u}{T}\right)f(u) 
    - \frac{1}{T} \Psi\left(\frac{u}{T}\right)\left(
    1 + \frac{v}{6} iu^3
    \right)e^{- u^2 / 2} \right| du \\
    & + 2 \int_{0}^{T/\pi} \left| \frac{1}{T} \Psi\left(\frac{u}{T}\right)\left(
    1 + \frac{v}{6} iu^3
    \right)e^{- u^2 / 2}
    - \frac{i}{2\pi}\left(
    1 + \frac{v}{6} iu^3
    \right) \frac{e^{- u^2 / 2}}{u} \right| du \\
    & + 2 \int_{T/\pi}^{T} \left| \frac{1}{T}
    \Psi\left(\frac{u}{T}\right)f(u) \right| du
    + \int_{T/\pi}^{+\infty} \frac{1}{\pi} \left(
    1 + \frac{|v|}{6} u^3
    \right) \frac{e^{- u^2 / 2}}{u} du.
\end{align*}
We successively split the first term into two integrals, and apply the triangle inequality to break the first integral into two parts
\begin{align*}
    \left| F(x) - G_v(x) \right|
    &\leq 2\int_0^{\tau \wedge T/\pi} \left| \frac{1}{T}
    \Psi\left(\frac{u}{T}\right)\left( f(u) -  \left(
    1 + \frac{|v|}{6} u^3
    \right) e^{- u^2 / 2} \right) \right| du \\
    &\hspace{2cm} + 2\int_{\tau \wedge T/\pi}^{T/\pi} \left| \frac{1}{T}
    \Psi\left(\frac{u}{T}\right)\left(
    1 + \frac{|v|}{6} u^3
    \right) e^{- u^2 / 2} \right| du \\
    &\hspace{2cm} + 2\int_{\tau \wedge T/\pi}^{T/\pi} \left| \frac{1}{T}
    \Psi\left(\frac{u}{T}\right)\left( f(u) - e^{- u^2 / 2} \right) \right| du \\
    &\hspace{2cm} + 2 \int_{0}^{T/\pi} \left| \frac{1}{T} \Psi\left(\frac{u}{T}\right)
    - \frac{i}{2\pi} \right| \left(
    1 + \frac{|v|}{6} u^3
    \right) \frac{e^{- u^2 / 2}}{u} du \\
    &\hspace{2cm} + 2 \int_{T/\pi}^{T} \left| \frac{1}{T}
    \Psi\left(\frac{u}{T}\right)f(u) \right| du \\
    &\hspace{2cm} + \int_{T/\pi}^{+\infty} \frac{1}{\pi} \left(
    1 + \frac{|v|}{6} u^3
    \right) \frac{e^{- u^2 / 2}}{u} du.
    \qquad\qquad \Box
\end{align*}

\section{Control of \texorpdfstring{$(\Omega_\ell)_{\ell=1}^4$}{Omega}}
\label{appendix:sec:omega}

\subsection{Control of the term \texorpdfstring{$\boldsymbol{\Omega_1}$}{omega1}}
\label{subsection:bound_Omega1}

The following lemma enables to control the term \(\Omega_1\).
The same control is used in all cases (\iid{} and \inid{} cases, Theorems~\ref{thm:nocont_choiceEps} and~\ref{thm:cont_choiceEps}).

\begin{lemma}
    For every $T > 0$, we have
    \begin{align}
    \label{eq:up_bound_easy_term}
        \Omega_1(T, |\lambda_{3,n}|/\sqrt{n}, \tau)
        &\leq \frac{1.2533}{T}
        + \frac{0.3334|\lambda_{3,n}|}{T\sqrt{n}}
        + \frac{14.1961}{T^4}
        + \frac{4.3394 |\lambdatroisn|}{T^3 \sqrt{n}} \nonumber \\
        &+ \frac{|\lambdatroisn|
        \big( \Gamma( 3/2 , \tau \wedge T/\pi)
        - \Gamma( 3/2 , T/\pi) \big)}{\sqrt{n}}
      . 
    \end{align}
    \label{lemma:bound_omega_1}
\end{lemma}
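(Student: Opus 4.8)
The plan is to bound separately the three integrals defining $\Omega_1(T,|\lambda_{3,n}|/\sqrt n,\tau)$. Writing $v:=|\lambda_{3,n}|/\sqrt n$, denote by $A$ the near-origin integral $2\int_0^{T/\pi}|\frac1T\Psi(u/T)-\frac{i}{2\pi u}|e^{-u^2/2}(1+vu^3/6)\,du$, by $B$ the tail integral $\frac1\pi\int_{T/\pi}^{+\infty}\frac{e^{-u^2/2}}u(1+vu^3/6)\,du$, and by $C$ the skewness-correction integral $2\int_{\tau\wedge T/\pi}^{T/\pi}|\frac1T\Psi(u/T)|e^{-u^2/2}\frac{vu^3}6\,du$. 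The two estimates in~\eqref{eq:properties_Psi} will control $A$ and $C$, and every explicit constant will trace back to the Gaussian moments $\int_0^{+\infty}u^k e^{-u^2/2}\,du$ for $k=0,\dots,5$, equal to $\sqrt{\pi/2},1,\sqrt{\pi/2},2,3\sqrt{\pi/2},8$.

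For $A$, I would factor $\frac1T\Psi(u/T)-\frac{i}{2\pi u}=\frac1T(\Psi(u/T)-\frac{i}{2\pi(u/T)})$ and invoke the second inequality of~\eqref{eq:properties_Psi} at $t=u/T\le1$, giving $|\frac1T\Psi(u/T)-\frac{i}{2\pi u}|\le\frac1{2T}(1-\frac uT+\frac{\pi^2}{18}\frac{u^2}{T^2})$. The quadratic $1-t+\frac{\pi^2}{18}t^2$ has negative discriminant, hence is positive on $[0,+\infty)$, so the resulting integrand is nonnegative and the upper limit may be raised from $T/\pi$ to $+\infty$. Expanding $(1-\frac uT+\frac{\pi^2}{18}\frac{u^2}{T^2})(1+\frac{vu^3}6)$ and integrating against $e^{-u^2/2}$ then yields the leading terms $\frac{\sqrt{\pi/2}}T$ and $\frac v{3T}$ (matching $\frac{1.2533}T$ and $\frac{0.3334\,|\lambda_{3,n}|}{T\sqrt n}$ up to rounding), the negative $T^{-2}$ terms $-\frac1{T^2}$ and $-\frac{\sqrt{\pi/2}}2\frac v{T^2}$, and positive terms of order $T^{-3}$.

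For $C$, I would apply the first inequality of~\eqref{eq:properties_Psi}, $|\frac1T\Psi(u/T)|\le\frac{1.0253}{2\pi u}$, reducing $C$ to $\frac{1.0253\,v}{6\pi}\int_{\tau\wedge T/\pi}^{T/\pi}u^2e^{-u^2/2}\,du$. The decisive ingredient is the pointwise inequality $\frac{1.0253}{6\pi}u^2e^{-u^2/2}\le u^{1/2}e^{-u}$ valid for every $u>0$: it rewrites as $u^{3/2}e^{u-u^2/2}\le6\pi/1.0253\approx18.39$, and the function $u^{3/2}e^{u-u^2/2}$ peaks at $u=(1+\sqrt7)/2$ with value $\approx2.89$, so the inequality holds with a wide margin. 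Integrating the dominating function $u^{1/2}e^{-u}$ between $\tau\wedge T/\pi$ and $T/\pi$ produces exactly $v(\Gamma(3/2,\tau\wedge T/\pi)-\Gamma(3/2,T/\pi))$, i.e. the final term of the claim.

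It remains to fold $B$ and the residual $T^{-2}$/$T^{-3}$ pieces of $A$ into the buffers $\frac{14.1961}{T^4}$ and $\frac{4.3394\,|\lambda_{3,n}|}{T^3\sqrt n}$. Splitting $B$ into its $1$- and $u^3$-weighted parts and applying the tail bounds $\int_x^{+\infty}\frac{e^{-u^2/2}}u\,du\le\frac{e^{-x^2/2}}{x^2}$ and $\int_x^{+\infty}u^2e^{-u^2/2}\,du\le(x+x^{-1})e^{-x^2/2}$ at $x=T/\pi$ expresses $B$ as polynomial-in-$T$ prefactors multiplied by $e^{-(T/\pi)^2/2}$; maximizing the resulting Gaussian-weighted polynomials over $T$ converts these into the pure powers $T^{-4}$ and $T^{-3}$. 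The main obstacle is that the inequality must hold for \emph{every} $T>0$: the constants $14.1961$ and $4.3394$ have to dominate the sum of all positive lower-order contributions uniformly in $T$, with the negative $T^{-2}$ terms harvested from $A$ doing the work for large $T$ and the strong $T^{-4}$ buffer—against which $B$ grows only logarithmically as $T\to0$—doing it for small $T$. Since the crudest tail estimates slightly overshoot these constants, I expect to need a short extremization in $T$ rather than a single global bound to pin them down.
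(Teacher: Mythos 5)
Your decomposition into $A$, $B$, $C$ is the same as the paper's: after the rescaling $t=u/T$, the paper splits exactly these three integrals into five pieces $I_{1,1},\dots,I_{1,5}$ (with $A=I_{1,1}/T+vI_{1,2}/T$, $B=I_{1,3}/T^4+vI_{1,4}/T^3$, $C=vI_{1,5}$) and bounds each piece by its supremum over $T$, computed \emph{numerically} (packages \texttt{cubature} and \texttt{optimize}). Where you genuinely differ is in making the argument analytic, and two of your steps are both correct and instructive. For $A$, using the second inequality of~\eqref{eq:properties_Psi} together with $\Phi(T/\pi)-1/2\le 1/2$ yields the leading constants $\sqrt{\pi/2}\approx 1.2533$ and $1/3\le 0.3334$; the paper's own closing remark performs this computation loosely (bounding $\Phi-1/2$ by $1$), which is why it reports an analytic main term "twice as large" — your version shows the analytic route essentially recovers the numerical constants. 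For $C$, the pointwise domination $\frac{1.0253}{6\pi}u^2e^{-u^2/2}\le u^{1/2}e^{-u}$ is valid (the maximum of $u^{3/2}e^{u-u^2/2}$ is $\approx 2.89$, far below $6\pi/1.0253\approx 18.4$) and it produces $\frac{|\lambda_{3,n}|}{\sqrt n}\bigl(\Gamma(3/2,\tau\wedge T/\pi)-\Gamma(3/2,T/\pi)\bigr)$ with the arguments exactly as stated. This is cleaner than the paper's own route: the paper invokes Lemma~\ref{lemma:bound_J1p}, whose change of variable gives $\Gamma(3/2,(\cdot)^2/2)$, and then writes the unsquared form without comment; your argument is the one that literally matches the statement.

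The genuine gap is the pair of buffer constants. In the paper, $14.1961$ and $4.3394$ are by construction $\sup_{T>0}\frac{T^4}{2\pi}\Gamma\bigl(0,\tfrac{T^2}{2\pi^2}\bigr)$ and $\sup_{T>0}\frac{T^3}{3\sqrt 2\,\pi}\Gamma\bigl(3/2,\tfrac{T^2}{2\pi^2}\bigr)$, obtained by one-dimensional numerical maximization of the \emph{exact} expressions, and nothing else has to be absorbed by them. In your scheme these constants are not established: the crude tail bound $\int_x^{\infty}u^2e^{-u^2/2}\,du\le(x+x^{-1})e^{-x^2/2}$ gives $\sup_{s}\frac{\pi^2}{6}(s^4+s^2)e^{-s^2/2}\approx 4.50>4.3394$ (attained near $s=T/\pi\approx 1.9$), so the bound fails there unless rescued by other terms; moreover your analytic bound for $A$ creates extra positive $O(T^{-3})$ contributions that the paper never faces — $\frac{\pi^2}{18T^3}\int_0^{T/\pi}u^2e^{-u^2/2}\,du\le 0.687/T^3$ and a $v$-weighted analogue $\approx 0.731\,v/T^3$ — which must also be covered. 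Consequently the inequality can only be closed by a \emph{joint}, global-in-$T$ verification in which the negative terms $-\frac{1}{T^2}(1-e^{-T^2/(2\pi^2)})$ and $-\frac{v}{6T^2}\int_0^{T/\pi}u^4e^{-u^2/2}\,du$ simultaneously cover the tail overshoot and the new $T^{-3}$ terms; you correctly flag that such an extremization is needed but do not carry it out, and it is the crux, not a formality (near $T\approx 5.9$ the inequality fails without the harvested negative term). A last caveat: since $\sqrt{\pi/2}=1.25331\ldots>1.2533$, your leading term overshoots the stated constant by $\approx 1.4\times 10^{-5}/T$, which the $-1/T^2$ term cannot cover once $T\gtrsim 7\times 10^4$; this rounding artifact is shared by the paper itself (as $I_{1,1}(T)\to\sqrt{\pi/2}$ when $T\to\infty$), but in a term-by-term numerical scheme it is invisible, whereas in your absorption scheme it surfaces explicitly.
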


\begin{proof}
We can decompose $\Omega_1(T, v, \tau)$ as
\begin{align*}
    & \Omega_1(1/\pi,T,v)
    := \frac{I_{1,1}(T)}{T} + v \times \frac{I_{1,2}(T)}{T} 
    + \frac{I_{1,3}(T)}{T^4}
    + v \times \frac{I_{1,4}(T)}{T^3}
    + v \times I_{1,5}(T)
\end{align*}
where 
\begingroup \allowdisplaybreaks
\begin{align*}
    I_{1,1}(T) &:= T \int_0^{1/\pi}
    \left|2\Psi(t)-\frac{i}{\pi t}\right| e^{-(Tt)^2/2} dt, \\
    I_{1,2}(T) &:= T^4 \int_0^{1/\pi}
    \left|2\Psi(t)-\frac{i}{\pi t}\right| e^{-(Tt)^2/2} \frac{t^3}{6} dt,
    \\
    I_{1,3}(T) &:= T^4 \frac{1}{\pi} \int_{1/\pi}^{+\infty}
    \frac{e^{-(Tt)^2/2}}{t} dt
    = \frac{T^4}{2\pi} \Gamma \left( 0 \, , \, \frac{T^2}{2 \pi^2} \right) , \\
    I_{1,4}(T) &:= T^6 \frac{1}{\pi} \int_{1/\pi}^{+\infty}
    e^{-(Tt)^2/2} \frac{t^2}{6} dt
    = \frac{T^3}{3 \sqrt{2} \pi}
    \int_{T^2 / (2\pi^2)}^{+\infty} e^{-u} \sqrt{u} du
    = \frac{T^3}{3 \sqrt{2} \pi}
    \,\Gamma\!\left(\frac{3}{2}, \frac{T^2}{2 \pi^2}\right) \\
    I_{1,5}(T,\tau) &:= 2 \int_{\tau \wedge T/\pi}^{T/\pi}
    \left|\frac{1}{T}\Psi(u/T)\right|\, e^{-u^2/2}
     \frac{u^3}{6} du
\end{align*}
\endgroup
To compute $I_{1,3}$ and $I_{1,4}$, we used the change of variable $u = (tT)^2/2$ and the incomplete Gamma function $\Gamma(a,x) := \int_x^{+\infty} u^{a-1} e^{-u} du$ which can be computed numerically using the package \texttt{expint}~\citep{goulet2016expint} in R.
We estimate numerically the first two integrals using the R package \texttt{cubature}~\citep{balasubramanian2020cubature} and optimize using the \texttt{optimize} function with the \texttt{L-BFGS-B} method, we find the following upper bounds:
\begin{align*}
    & \sup_{T \geq 0} I_{1,1}(T) \leq 1.2533, \hspace{0.5cm}
    & \sup_{T \geq 0} I_{1,2}(T) \leq 0.3334, \\ 
    & \sup_{T \geq 0} I_{1,3}(T) \leq 14.1961,
    \hspace{0.5cm}
    & \sup_{T \geq 0} I_{1,4}(T) \leq 4.3394,
\end{align*}
which can be used to bound the first four terms.

By Lemma~\ref{lemma:bound_J1p}, we obtain
\begin{align*}
    I_{1,5}(T,\tau)
    &= \frac{1}{3}
    J_{1} \big(3, \tau \wedge T/\pi, T/\pi, T \big) \\
    &\leq \frac{1.0253}{3 \pi \sqrt{2}}
    \big( \Gamma( 3/2 , \tau \wedge T/\pi)
    - \Gamma( 3/2 , T/\pi) \big),
\end{align*}
as claimed.
\end{proof}

Note that
\begin{align*}
    I_{1,5}(T,\tau)
    &= \frac{1}{3}
    J_{1} \big(3, \tau \wedge T/\pi, T/\pi, T \big) \\
    &\leq \frac{1.0253}{3 \pi \sqrt{2}}
    \Big( \Gamma \big( 3/2 , \tau \wedge T/\pi \big)
    - \Gamma \big( 3/2 , T/\pi \big) \Big) \\
    &= O \Big( n^{1/4} e^{- \eps \sqrt{n}/ \sqrt{K_{4,n}} } \Big),
\end{align*}
where we apply the asymptotic expansion $\Gamma(a,x) = x^{a-1} e^{-x} (1 + O((a-1) / x))$ which is valid for every fixed $a$ in the regime $x\to \infty$, see Equation~(6.5.32) in \cite{abramowitz85handbook}.

\medskip

Note that the first term on the right-hand side of \eqref{eq:up_bound_easy_term} is of leading order as soon as $|\lambda_{3,n}|/\sqrt{n} = o(1)$ and $T = T(n) = o(1).$
Our approach is related to the one used in \cite{shevtsova2012}, except that we do not upper bound $\Omega_1$ analytically, which allows us to get a sharper control on this term.
To further highlight the gains from using numerical approximations instead of direct analytical upper bounds, we remark that from $\left|\Psi(t)-\frac{i}{2\pi t}\right| \leq \frac{1}{2}\left(1-|t|+\frac{\pi^2t^2}{18}\right)$ and some integration steps, we get
\begin{align*}
    I_{1,1}(T)
    &\leq T \int_0^{1/\pi} \left(1-|t|+\frac{\pi^2t^2}{18}\right)
    e^{-(Tt)^2/2}dt \nonumber \\
    &= \sqrt{2\pi}\left( \Phi(T/\pi)-\frac{1}{2} \right) + \frac{1}{T}\left( e^{-(T/\pi)^2/2}-1 \right) \\
    & \;\;\; + \frac{\pi^{5/2}}{9\sqrt{2}T^2}\mathbb{E}_{U\sim\mathcal{N}(0,1)}[U^2\Indicator\left\{ 0\leq U\leq T/\pi\right\}] \nonumber \\
    &\leq \sqrt{2\pi} + \frac{1}{T}\left( e^{-T^2/(2\pi^2)}-1 \right) + \frac{\pi^{5/2}}{9\sqrt{2} \, T^2},
\end{align*}
whose main term is approximately twice as large as the numerical bound $1.2533$ that we obtained before.

\subsection{Control of the term \texorpdfstring{$\boldsymbol{\Omega_2}$}{omega2}}
\label{sssec:bound_nocount_inid_Omega2}

In this section, we control
$\Omega_2(T)
= 2\int_{1/\pi}^1|\Psi(t)|\,|\caracfsum(Tt)|dt.$
The control used in Theorem~\ref{thm:cont_choiceEps} comes directly from the upper bound on the absolute value of \(\Psi\) (Equation~\eqref{eq:properties_Psi}):
\begin{equation*}
    \Omega_2(T) 
    \leq 
    \frac{1.0253}{\pi T}
    \int_{1 / \pi}^1 \frac{|\caracfsum(Tt)|}{t}  dt.
\end{equation*}

In Theorem~\ref{thm:nocont_choiceEps}, we derive a bound based on the following lemma.
\begin{lemma}
\label{lemma:bound_Psi_Caracfsum}
    Let $t_1^* = \theta_1^* / (2\pi)$ where $\theta_1^*$ is the unique root in $(0,2\pi)$ of the equation $\theta^2+2\theta\sin(\theta)+6(\cos(\theta)-1)=0$ and $\xi_n := \Ktroisntilde/\sqrt{n}$.
    We obtain
    \begin{align*}
        (i) \quad  & \int_{1/\pi}^{t_1^*} |\Psi(t)|\,|\caracfsum(2\pi t/\xi_n)|dt
        \leq 
        \int_{1/\pi}^{t_1^*} |\Psi(t)|
        e^{-(2\pi t/\xi_n)^2 (1 - 4 \pi \chi_1 t) / 2}dt \\
        (ii) \quad 
        & \int_{t_1^*} |\Psi(t)|\,|\caracfsum(2\pi t/\xi_n)|dt
        \leq 
        \int_{t_1^*}^1 |\Psi(t)| e^{-(1-\cos(2\pi t))/\xi_n^2}dt.
    \end{align*}
\end{lemma}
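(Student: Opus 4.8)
The plan is to reduce both inequalities to pointwise bounds on $|\caracfsum|$ and then establish those bounds. Since $\Psi$ is supported on $[-1,1]$ and $|\Psi(t)| \geq 0$, inequality~(i) follows at once from the pointwise estimate $|\caracfsum(2\pi t/\xi_n)| \leq \exp\!\big(-(2\pi t/\xi_n)^2(1-4\pi\chi_1 t)/2\big)$ for $t \in [1/\pi, t_1^*]$, and inequality~(ii) from $|\caracfsum(2\pi t/\xi_n)| \leq \exp\!\big(-(1-\cos(2\pi t))/\xi_n^2\big)$ for $t \in [t_1^*, 1]$ (for $|t|>1$ both integrands vanish because $\Psi(t)=0$, so the effective upper limit is $1$). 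By independence, $\caracfsum(u) = \prod_{i=1}^n f_{X_i}(u/B_n)$, and with $u = 2\pi t/\xi_n$ and $\xi_n = \Ktroisntilde/\sqrt{n}$ the per-factor argument $s := u/B_n = 2\pi t/(\Ktroisntilde\,\overline{B}_n)$ stays bounded in $n$ even though $u \to \infty$; this is what makes a factor-by-factor Taylor analysis viable.

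For part~(i) I would invoke the near-origin single-variable bound (of the type collected in Appendix~\ref{sec:lemmas}, built from the $\cos$ Taylor remainder controlled by $\chi_1$, the odd imaginary part, and a symmetrization of the cube), namely $|f_{X_i}(s)| \leq \exp\!\big(-s^2\sigma_i^2/2 + \chi_1|s|^3(\E|X_i|^3 + \sigma_i^2\E|X_i|)\big)$. Taking the product over $i$ and using $\sum_i \sigma_i^2 = B_n^2$ turns the quadratic part into $-u^2/2$, while the definition of $\Ktroisntilde$ together with $B_n^3 = n^{3/2}\overline{B}_n^3$ assembles the cubic part into $\chi_1|u|^3\,\Ktroisntilde/\sqrt{n}$, giving $|\caracfsum(u)| \leq \exp\!\big(-u^2/2 + \chi_1|u|^3\Ktroisntilde/\sqrt{n}\big)$. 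Substituting $u = 2\pi t/\xi_n$ and using $\xi_n\sqrt{n} = \Ktroisntilde$, the cubic term collapses to $\chi_1 u^2\cdot 2\pi t = 2\pi\chi_1 t\,u^2$, so that the exponent equals exactly $-u^2(1-4\pi\chi_1 t)/2$, which is the claimed bound. The restriction $t \leq t_1^*$ is precisely the range on which this near-origin estimate is the operative (and non-vacuous) one.

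For part~(ii) the near-origin estimate degrades, so I would pass to the moderate-frequency regime. Writing $|f_{X_i}(s)|^2 = \E[\cos(s(X_i - X_i'))]$ for an independent copy $X_i'$, and using $|z| \leq e^{-(1-|z|)}$ together with $1-|f_{X_i}(s)| \geq \tfrac12(1-|f_{X_i}(s)|^2)$, gives $|\caracfsum(u)| \leq \exp\!\big(-\tfrac12\sum_i \E[1-\cos(s(X_i - X_i'))]\big)$; the task is then to lower bound this sum by $(1-\cos(2\pi t))/\xi_n^2$, exploiting the normalisation by $\Ktroisntilde$. Equivalently, one may quote the companion cosine-type single-variable bound from Appendix~\ref{sec:lemmas} and take the product. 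The threshold $t_1^*$, the root in $(0,2\pi)$ of $\theta^2 + 2\theta\sin\theta + 6(\cos\theta-1)=0$ read through $\theta = 2\pi t$, is exactly the crossover at which one switches from the near-origin bound of~(i) to the cosine bound of~(ii).

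The main obstacle is the two single-variable characteristic-function estimates themselves: securing the sharp constant $\chi_1$ and the exact third-moment surrogate $\Ktroisntilde$ in the near-origin bound, and --- more delicately --- producing the $(1-\cos(2\pi t))/\xi_n^2$ decay in the moderate regime, which is not a soft consequence of $|\caracfsum|\leq 1$ but genuinely uses the moment normalisation. Verifying that $t_1^*$ is the correct splitting point, so that~(i) remains valid up to $t_1^*$ and~(ii) takes over from $t_1^*$ on, is the remaining piece of bookkeeping.
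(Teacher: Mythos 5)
Your reduction of both inequalities to pointwise bounds on $|\caracfsum|$ (with effective upper limit $1$ because $\Psi$ vanishes outside $[-1,1]$) is exactly how the paper proceeds, and your algebra for part~(i) is right: multiplying the single-variable bounds $|f_{X_i}(s)| \leq \exp\!\big({-s^2\sigma_i^2/2} + \chi_1|s|^3(\E|X_i|^3+\sigma_i^2\E|X_i|)\big)$ over $i$ gives $|\caracfsum(u)| \leq \exp\!\big({-u^2/2}+\chi_1\xi_n|u|^3\big)$, and substituting $u = 2\pi t/\xi_n$ collapses the exponent to $-u^2(1-4\pi\chi_1 t)/2$. But note that the paper does not reprove any pointwise estimate: its entire proof consists of invoking Theorem~2.2 of \cite{shevtsova2012} (with $\delta=1$), which delivers in a single statement the piecewise bound $|\caracfsum(u)| \leq e^{-u^2/2+\chi_1\xi_n|u|^3}$ for $|u| < \theta_1^*/\xi_n$ and $|\caracfsum(u)| \leq e^{(\cos(\xi_n u)-1)/\xi_n^2}$ for $\theta_1^*/\xi_n \leq |u| \leq 2\pi/\xi_n$, followed by the same substitution and integration you describe.

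The genuine gap is part~(ii). You set up the symmetrization $|\caracfsum(u)| \leq \exp\!\big({-\tfrac12\sum_i\E[1-\cos(s(X_i-X_i'))]}\big)$ and then write that ``the task is then to lower bound this sum by $(1-\cos(2\pi t))/\xi_n^2$'' --- but that task \emph{is} the lemma, and it is never carried out; your fallback, a ``companion cosine-type single-variable bound from Appendix~\ref{sec:lemmas},'' does not exist in the paper (that appendix contains the Taylor-expansion and incomplete-Gamma lemmas, while the near-origin bound you use in~(i) appears only inside the proof of Lemma~\ref{lemma:bound_I32}, itself quoted from \cite{shevtsova2012}). Moreover, the route you sketch cannot be completed softly: applying the only pointwise inequality in play, $1-\cos v \geq v^2/2 - \chi_1|v|^3$, to $v = s(X_i - X_i')$ yields at best a lower bound of the form $(\theta^2/2 - \chi_1\theta^3)/\xi_n^2$ with $\theta = 2\pi t$ (and with a degraded third-moment constant from $\E|X_i-X_i'|^3$), which for $\theta \in (\theta_1^*, 2\pi]$ is \emph{strictly smaller} than the target $(1-\cos\theta)/\xi_n^2$; indeed $\theta_1^*$ is precisely the maximizer of $\theta^{-3}\big(\theta^2/2-(1-\cos\theta)\big)$, so the two exponents coincide at $\theta_1^*$ and the cosine bound is strictly stronger beyond it. Establishing the cosine-regime bound therefore requires the substantive argument behind Shevtsova's Theorem~2.2, which is exactly what the paper cites and what your proposal, as your own closing paragraph concedes, leaves as ``the main obstacle.''
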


\noindent
{\it Proof of
Lemma~\ref{lemma:bound_Psi_Caracfsum}: }
Applying Theorem 2.2 in \cite{shevtsova2012} with $\delta =1$, we get for all $u \in \Rb$
\begin{align*}
    |\caracfsum(u)|
    &\leq \exp \left( - \psi(u , \epsilon_n) \right),
\end{align*}
where $\epsilon_n := n^{-1/2} \Ktroisntilde,$
and, for any real $u, \epsilon > 0$
\begin{align*}
    \psi(u , \epsilon) &:=
    \begin{cases}
        u^2/2 - \chi_1 \epsilon |u|^3,
        & \text{ for } |u| < \theta_1^* \epsilon^{-1} , \\
        \dfrac{1 - \cos(\epsilon u)}{\epsilon^2},
        & \text{ for } \theta_1^* \epsilon^{-1}
        \leq |u| \leq 2 \pi \epsilon^{-1} , \\
        0,
        & \text{ for } |u| > 2 \pi \epsilon^{-1}.
    \end{cases}
\end{align*}
Therefore,
\begin{align}
    |\caracfsum(u)| &\leq \begin{cases}
        \exp \big( - u^2/2 + \chi_1 \xi_n |u|^3 \big),
        & \text{ for } |u| < \theta_1^* / \xi_n , \\
        \exp \bigg( \dfrac{\cos(\xi_n u)-1}{\xi_n^2} \bigg),
        & \text{ for } \theta_1^* / \xi_n
        \leq |u| \leq 2 \pi / \xi_n , \\
        1,
        & \text{ for } |u| > 2 \pi / \xi_n.
    \end{cases}
    \label{eq:bound_caracfsum_u_exp}
\end{align}
Choosing $u=2\pi t/\xi_n$, multiplying by $|\Psi|$, integrating from $1/\pi$ to $1$ and separating the two cases yields the claimed inequalities.
$\Box$

Recall that under moment conditions only, we choose
\( T = \frac{2\pi}{\xi_n}
= \frac{2 \pi \sqrt{n}}{\Ktroisntilde}
\).
Combining this with the two inequalities~(i) and~(ii) of Lemma~\ref{lemma:bound_Psi_Caracfsum} yields
\begin{align*}
    \int_{1/\pi}^1|\Psi(t)|\,|\caracfsum(Tt)|dt
    &= \frac{I_{2,1}(T)}{2 T^4} + \frac{I_{2,2}(T)}{2 T^2},
\end{align*}
where
\begin{align*}
    I_{2,1}(T) &:= T^4 \int_{1/\pi}^{t_1^*}
    2 |\Psi(t)| e^{-\frac{(Tt)^2}{2} \left(1-4\pi\chi_1|t|\right)}dt, \\
    I_{2,2}(T) &:= T^2 \int_{t_1^*}^1
    2 |\Psi(t)| e^{-T^2(1-\cos(2\pi t))/(4\pi^2)} dt.
\end{align*}
Note that the difference in the two exponents of $T$ in the above definitions may seem surprising as these two integrals look similar. However they have very different behaviors since the first one decays much faster than the second one.
In line with Section~\ref{subsection:bound_Omega1}, we compute numerically these integrals using the R package \texttt{cubature}~\citep{balasubramanian2020cubature} and optimize them using the \texttt{optimize} function with the \texttt{L-BFGS-B} method.
This gives
\begin{align*}
    \sup_{T \geq 0} I_{2,1}(T) \leq 67.0415, \hspace{0.25cm}
    \text{ and } \hspace{0.25cm}
    \sup_{T \geq 0} I_{2,2}(T) \leq 1.2187.
\end{align*}
Finally, we arrive at
\begin{align}
\label{eq:bound_omega2_nocont}
    &\Omega_2(T)
    =  2\int_{1/\pi}^1|\Psi(t)| \, |\caracfsum(Tt)|dt
    \leq \frac{67.0415}{T^4} + \frac{1.2187}{T^2}.
\end{align}

\subsection{Control of the term \texorpdfstring{$\boldsymbol{\Omega_3}$}{omega3}}

We recall that $\tau$ is defined as \( \tau = \sqrt{2\eps} (n/K_{4,n})^{1/4} \) (see Equation~\eqref{eq:tau_choice}).

\begin{lemma}
    \label{lemma:bound_omega3}
    Under Assumption~\ref{hyp:basic_as_inid}, we have for any \(\eps \in (0, 1/3)\) and any \(T > 0\),
    \begin{align}
    \label{eq:bound_omega_3_inid}
        \Omega_3(T, \lambda_{3,n} / \sqrt{n}, \tau)
        & \leq \frac{0.327 \, K_{4,n}}{n}
        \left(\frac{1}{12}+\frac{1}{4(1-3\eps)^2}\right)
        + \frac{1.306 \,  e_{1,n}(\eps) |\lambda_{3,n}|^2}{36n} \nonumber \\
        &+ \frac{1.0253}{\pi}
        \int_0^{\tau \wedge T/\pi} u e^{-u^2/2} \Rinid(u,\eps) du,
    \end{align}
    where the functions $\Rinid$ and $e_{1,n}$ are defined in Equations~\eqref{eq:def_R_1n} and~\eqref{eq:def_e_1n} respectively.
    
    Under Assumption~\ref{hyp:basic_as_iid}, we have
    \begin{align}
    \label{eq:bound_omega_3_iid}
        \Omega_3(T, \lambda_{3,n} / \sqrt{n}, \tau)
        & \leq \frac{0.327 \, K_{4,n}}{n}
        \left(\frac{1}{12}+\frac{1}{4(1-3\eps)^2}\right)
        + \frac{1.306 \, e_{2,n}(\eps) |\lambda_{3,n}|^2}{36n} \nonumber \\
        & + \frac{1.0253}{\pi}
        \int_0^{\tau \wedge T/\pi}
        u e^{-u^2/2} \Riid(u,\eps) du,
    \end{align}
    where the functions $\Riid$ and $e_{2,n}$ are defined in Equations~\eqref{eq:def_R_2n} and~\eqref{eq:definition_e2n} respectively.
\end{lemma}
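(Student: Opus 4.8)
The plan is to reduce $\Omega_3$ to an integral of $\big|f_{S_n}(u) - e^{-u^2/2}(1 - v i u^3/6)\big|$ against a simple weight, and then to control that integrand by a fully explicit expansion of the characteristic function of $S_n$ valid on the small-frequency window $[0, \tau \wedge T/\pi]$. First I would apply the first inequality of~\eqref{eq:properties_Psi}, namely $|\Psi(t)| \leq 1.0253/(2\pi|t|)$ evaluated at $t = u/T$, which gives $2\,T^{-1}|\Psi(u/T)| \leq 1.0253/(\pi u)$. Substituting into the definition of $\Omega_3$ from Lemma~\ref{lemma:smoothing} with $f = f_{S_n}$ and $v = \lambda_{3,n}/\sqrt{n}$ yields
\begin{equation*}
    \Omega_3\big(T, \lambda_{3,n}/\sqrt{n}, \tau\big) \leq \frac{1.0253}{\pi} \int_0^{\tau \wedge T/\pi} \frac{1}{u}\left| f_{S_n}(u) - e^{-u^2/2}\Big(1 - \frac{\lambda_{3,n}}{\sqrt{n}}\,\frac{i u^3}{6}\Big)\right| du,
\end{equation*}
so the remaining task is purely about bounding the integrand.

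Next I would establish a pointwise upper bound on $\big|f_{S_n}(u) - e^{-u^2/2}(1 - v i u^3/6)\big|$ for $u \in [0,\tau]$. Writing $f_{S_n}(u) = \prod_{j=1}^n f_{X_j}(u/B_n)$, I would Taylor-expand each factor to fourth order with an explicit remainder controlled by $\gamma_j = \E[X_j^4]$, so that the aggregate fourth-order contribution is governed by $K_{4,n}$ and the aggregate cubic one by $\lambda_{3,n}$. Passing from the product to $\exp\!\big(-u^2/2 + v(iu)^3/6\big)$ through the sum of logarithms, and then linearizing the cubic exponential as $e^{-u^2/2}(1 - v i u^3/6)$, splits the difference into three groups: a fourth-moment part whose $u^4 e^{-u^2/2}$ coefficient carries the $1/24$ Taylor factor plus a higher-order log/exponential residual bounded by $1/(8(1-3\eps)^2)$; a squared-skewness part $\tfrac12 (v u^3/6)^2 e^{-u^2/2}$ arising from the second-order remainder of the linearization and modulated by the correction factor $e_{1,n}(\eps)$ (resp.\ $e_{2,n}(\eps)$) of~\eqref{eq:def_e_1n} (resp.~\eqref{eq:definition_e2n}); and a genuinely higher-order residual packaged, after extracting one power of $u$, into $u^2 e^{-u^2/2}\Rinid(u,\eps)$ (resp.\ $\Riid(u,\eps)$) as defined in~\eqref{eq:def_R_1n} (resp.~\eqref{eq:def_R_2n}). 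The constraint $u \leq \tau = \sqrt{2\eps}(n/K_{4,n})^{1/4}$ is exactly what keeps these estimates valid: it gives $K_{4,n} u^4/n \leq 4\eps^2$, which both legitimizes the expansions and lets me absorb the exponential factors into $e_{1,n}(\eps)$, $e_{2,n}(\eps)$, while $\eps < 1/3$ keeps $1-3\eps$ positive.

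Finally I would integrate the three groups against $u^{-1}$ over $[0, \tau \wedge T/\pi]$ and pull the constant $\tfrac{1.0253}{\pi} \leq 0.327$ to the front. The fourth-moment coefficient integrates against $\int_0^{+\infty} u^3 e^{-u^2/2}\,du = 2$, turning the $1/24$ and the $1/(8(1-3\eps)^2)$ into the bracket $\tfrac1{12}+\tfrac1{4(1-3\eps)^2}$ and producing the term $\tfrac{0.327\,K_{4,n}}{n}\big(\tfrac1{12}+\tfrac1{4(1-3\eps)^2}\big)$; the squared-skewness piece integrates against $\int_0^{+\infty} u^5 e^{-u^2/2}\,du = 8$, giving $\tfrac{1.306\,e_{1,n}(\eps)\lambda_{3,n}^2}{36 n}$ (resp.\ with $e_{2,n}$), where $1.306$ slightly overestimates $1.0253\cdot 8 \cdot 36/(72\pi)$; and the residual term is left as the stated $\tfrac{1.0253}{\pi}\int u e^{-u^2/2}\Rinid(u,\eps)\,du$, since the weight $u^{-1}$ cancels one power of $u$. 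The hard part will be step two: producing a characteristic-function expansion that is simultaneously fully explicit in its constants, tight enough for the fourth-moment term to carry the clean $\tfrac{1.0253}{\pi}$-type coefficient, and organized so that everything not of exact order $K_{4,n}/n$ or $\lambda_{3,n}^2/n$ collapses into the single auxiliary function $\Rinid$ (resp.\ $\Riid$). The \iid{} case is the easier variant, because $f_{S_n} = f_{X_1}(\cdot/B_n)^n$ reduces the product estimate to an $n$-th power of a single-factor bound, which is precisely why it admits the sharper correction $e_{2,n}(\eps)$ and residual $\Riid$ rather than the coarser $e_{1,n}(\eps)$, $\Rinid$ needed in the general \inid{} setting.
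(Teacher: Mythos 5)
Your plan is the paper's own proof, just carried out in a different order: the paper first inserts its pointwise characteristic-function expansion (Lemma~\ref{lem:taylor_exp_cf} in the \inid{} case, Lemma~\ref{lem:taylor_exp_cf_iid} in the \iid{} case) into the definition of $\Omega_3$, obtaining the terms $\frac{K_{4,n}}{n}\big(\frac{1}{12}+\frac{1}{4(1-3\eps)^2}\big)J_1(4,0,\tau\wedge T/\pi,T)$ and $\frac{e_{1,n}(\eps)\lambda_{3,n}^2}{36n}J_1(6,0,\tau\wedge T/\pi,T)$ plus a residual integral, and only then applies $|\Psi(t)|\leq 1.0253/(2\pi|t|)$ through Lemma~\ref{lemma:bound_J1p} to get $J_1(4,0,+\infty,T)\leq 0.327$ and $J_1(6,0,+\infty,T)\leq 1.306$; you apply the $\Psi$-bound first and expand second, which is the same computation. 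Your bookkeeping is correct where it can be checked: the $u^4$-coefficient $\frac{1}{24}+\frac{1}{8(1-3\eps)^2}$, the squared-skewness term $\frac{1}{2}(vu^3/6)^2e_{1,n}(\eps)$ (resp.\ $e_{2,n}(\eps)$), the Gaussian moments $2$ and $8$, and the roundings $1.0253/\pi\leq 0.327$ and $4\times 1.0253/\pi\leq 1.306$ all agree with the paper.

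The gap is your treatment of the residual. You package it as $u^2e^{-u^2/2}\Rinid(u,\eps)$ with $\Rinid$ ``as defined in~\eqref{eq:def_R_1n}'', but that is not what the expansion yields: Lemma~\ref{lem:taylor_exp_cf} bounds the residual by $e^{-u^2/2}\Rinid(u,\eps)$, with no spare factor $u^2$ (the monomials $|u|^5,\dots,u^8$ are already inside $\Rinid$). Consequently, after multiplying by your weight $1.0253/(\pi u)$, the residual contribution is $\frac{1.0253}{\pi}\int_0^{\tau\wedge T/\pi}u^{-1}e^{-u^2/2}\Rinid(u,\eps)\,du$ --- weight $u^{-1}$, not the stated weight $u$ --- and over the truncated range these are not interchangeable: since $\Rinid$ has nonnegative coefficients, the $u^{-1}$-weighted integral strictly exceeds the $u$-weighted one whenever $\tau\wedge T/\pi\leq 1$, which the hypotheses allow (any $T>0$; and, e.g., $\eps=0.1$, $n=3$, $K_{4,n}=9$ give $\tau\approx 0.34$). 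To be fair, the paper's own proof makes the identical unjustified leap: it invokes the first inequality of~\eqref{eq:properties_Psi} and then writes weight $u$, so your step faithfully reproduces the paper's text, but neither argument proves the lemma in its stated truncated form. The legitimate repair, and what every downstream use actually needs, is to enlarge the integral to $[0,+\infty)$ and compare moments monomial by monomial: by Lemma~\ref{lemma:int_up_expmu2}, $\int_0^{+\infty}u^{k-1}e^{-u^2/2}du\leq\int_0^{+\infty}u^{k+1}e^{-u^2/2}du$ (the ratio is exactly $k$) for every power $k\geq 5$ appearing in $\Rinid$, so one obtains the bound with the residual integral taken over $[0,+\infty)$ --- which is precisely the quantity $\RinidInt(\eps)$ (resp.\ $\RiidInt(\eps)$) used in the proofs of Theorems~\ref{thm:nocont_inid}--\ref{thm:cont_iid}, leaving all of the paper's results intact.
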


\noindent
{\it Proof of
Lemma~\ref{lemma:bound_omega3}: }

First, assume that Assumption~\ref{hyp:basic_as_inid} holds.
Lemma~\ref{lem:taylor_exp_cf} enables us to write
\begin{align*}
    \Omega_3(T, \lambda_{3,n} / \sqrt{n}, \tau)
    & = \int_{0}^{\tau \wedge T/\pi}
    |\Psi(t)|\, \left|\caracfsum(Tt) - e^{-(Tt)^2/2}
    \left(1-\dfrac{vi(Tt)^3}{6}\right)\right|dt \\
    & \leq  \frac{K_{4,n}}{n}
    \left(\frac{1}{12}+\frac{1}{4(1-3\eps)^2}\right)
    J_{1} \left(4 ,  0 , \tau \wedge T/\pi , T \right)
    \\
    & \;\;\; + \frac{e_{1,n}(\eps)}{36}
    \frac{|\lambda_{3,n}|^2}{n}
    J_{1} \left(6 , 0 , \tau \wedge T/\pi , T \right)  \\
    & \;\;\; + \frac{2}{T}\int_0^{\tau \wedge T/\pi}
    |\Psi(u/T)|e^{-u^2/2}\Rinid(u,\eps)du,
\end{align*}
where the function $J_1$ is defined in Equation~\eqref{eq:def_J1_p}. 
Using Equation~\eqref{lemma:bound_J1p}, we obtain the bounds $J_{1}(4, 0,+\infty,T)\leq 0.327$
and $J_{1}(6, 0,+\infty,T) \leq 1.306.$
Besides, by the first inequality in~(\ref{eq:properties_Psi}), we get
\begin{align*}
    \Omega_3(T, \lambda_{3,n} / \sqrt{n}, \tau)
    &\leq \frac{0.327 K_{4,n}}{n}
    \left(\frac{1}{12} + \frac{1}{4(1-3\eps)^2}\right)
    + \frac{1.306e_{1,n}(\eps)}{36}\frac{|\lambda_{3,n}|^2}{n} \nonumber \\
    & \;\;\; + \frac{1.0253}{\pi}
    \int_0^{\tau \wedge T/\pi} u e^{-u^2/2} \Rinid(u,\eps) du.
\end{align*}
showing Equation~\eqref{eq:bound_omega_3_inid} as claimed.

\medskip

Assume now that Assumption~\ref{hyp:basic_as_iid} holds.
The integrand of $I_{4,1}(T)$ can be upper bounded thanks to Lemma~\ref{lem:taylor_exp_cf_iid}. We obtain
\begin{align*}
    % I_{4,1}(T)
    \Omega_3(T, \lambda_{3,n} / \sqrt{n}, \tau)
    &\leq \frac{K_{4,n}}{n}
    \left(\frac{1}{12}+\frac{1}{4(1-3\eps)^2}\right)
    J_{1}\left(4, 0, \tau \wedge T/\pi, T \right)
    \nonumber \\
    &\;\;\; + \frac{e_{2,n}(\eps)|\lambda_{3,n}|^2}{36n}
    J_{1}\left(6, 0, \tau \wedge T/\pi, T \right) \\
    &\;\;\; + \frac{2}{T} \int_0^{\tau \wedge T/\pi}
    |\Psi(u/T)| e^{-u^2/2} \Riid(u,\eps) du.
\end{align*}
This completes the proof of Equation~\eqref{eq:bound_omega_3_iid}.
$\Box$

\subsection{Control of the term \texorpdfstring{$\boldsymbol{\Omega_4}$}{omega4}}
\label{ssec:up_bound_omega4_basic}

\noindent
In this section, we bound the fourth term of Equation~\eqref{eq:smoothing}, which is
\begin{align*}
    \Omega_4(a, b, T)
    &:= 2 \int_{a}^{b}
    \left|\frac{1}{T}\Psi(u/T)\right| \,
    \left|f(u)-e^{-u^2/2} \right| du,
\end{align*}
for $f = \caracfsum$.

We prove a bound on 
% I_{3,2}(T)$
$\Omega_4(\sqrt{2\eps}(n/K_{4,n})^{1/4} \wedge T/\pi, T/\pi, T)$
under four different sets of assumptions.

\begin{lemma}
Let $-\infty < a \neq b < +\infty$ and $T > 0$. Then
\begin{enumerate}
    \item Under Assumption~\ref{hyp:basic_as_inid}, we have
    \begin{align*}
        \big| \Omega_4(a,b, T) \big|
        \leq \frac{K_{3,n}}{3\sqrt{n}}
        \Big| J_{2} \big(3, a, b , 2\sqrt{n}/\Ktroisntilde , T \big) \Big|,
    \end{align*}
    where $J_2$ is defined in Equation~\eqref{eq:def_J2_p}.
    
    \item Under Assumption~\ref{hyp:basic_as_inid} and assuming $\E[X_i^3]=0$ for all $i= 1,\dots,n$, we get the improved bound
    \begin{align*}
        \big| \Omega_4(a, b, T) \big|
        \leq \frac{K_{4,n}}{3n}
        \Big| J_{2} \big(4, a, b , 2\sqrt{n}/\Ktroisntilde , T) \Big|,
    \end{align*}
    
    \item Under Assumption~\ref{hyp:basic_as_iid}, we have
    \begin{align*}
        \big| \Omega_4(a, b, T) \big|
        \leq \frac{K_{3,n}}{3 \sqrt{n}}
        \Big| J_{3}(3, a, b, 2\sqrt{n}/\Ktroisntilde , T) \Big|,
    \end{align*}
    where $J_3$ is defined in Equation~\eqref{eq:def_J3_p}.
    
    \item Under Assumption~\ref{hyp:basic_as_iid} and assuming $\E[X_i^3]=0$ for all $i= 1,\dots,n$, we get the improved bound
    \begin{align*}
        \big| \Omega_4(a, b, T) \big|
        &\leq  \frac{K_{4,n}}{3 n}
        \Big| J_{3}(4, a, b, 2\sqrt{n}/\Ktroisntilde, T) \Big|.
    \end{align*}
    
\end{enumerate}
\label{lemma:bound_I32}
\end{lemma}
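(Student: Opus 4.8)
The plan is to reduce the bound on $\Omega_4(a,b,T) = 2\int_a^b |\Psi(u/T)/T|\,\bigl|\caracfsum(u) - e^{-u^2/2}\bigr|\,du$ to a pointwise estimate of the integrand $\bigl|\caracfsum(u) - e^{-u^2/2}\bigr|$, and then to recognize the resulting weighted integral as $J_2$ or $J_3$. The starting point is the product structure: by independence $\caracfsum(u) = \prod_{i=1}^n f_{X_i}(u/B_n)$, while $e^{-u^2/2} = \prod_{i=1}^n e^{-\sigma_i^2 u^2/(2B_n^2)}$ because $\sum_i \sigma_i^2 = B_n^2$. Setting $a_i := f_{X_i}(u/B_n)$ and $b_i := e^{-\sigma_i^2 u^2/(2B_n^2)}$, I would apply the telescoping inequality $\bigl|\prod_i a_i - \prod_i b_i\bigr| \le \sum_i |a_i - b_i|\prod_{j\neq i}\max(|a_j|,|b_j|)$, valid since every modulus is at most $1$.

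Next I would control each factor difference by a Taylor expansion of the characteristic function with explicit remainder — exactly the estimates packaged in Lemma~\ref{lem:taylor_exp_cf} (and Lemma~\ref{lem:taylor_exp_cf_iid} in the \iid{} case) already used to bound $\Omega_3$. In the general case the quadratic terms of $a_i$ and $b_i$ cancel, leaving a leading third-order contribution governed by $\E[X_i^3]$, so $|a_i - b_i|$ is controlled by $\E[|X_i|^3]\,|u/B_n|^3/6$ up to higher-order terms; summing over $i$ and using $\sum_i \E[|X_i|^3]/B_n^3 = K_{3,n}/\sqrt n$ gives a bound proportional to $K_{3,n}|u|^3/(6\sqrt n)$. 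When $\E[X_i^3]=0$ for every $i$, this term vanishes and the leading contribution is fourth-order, governed by $\gamma_i$ (after $\sigma_i^4 \le \gamma_i$), and sums to $K_{4,n}|u|^4/(6n)$ via $\sum_i\gamma_i/B_n^4 = K_{4,n}/n$. The remaining product $\prod_{j\neq i}\max(|a_j|,|b_j|)$ is dominated by a Gaussian-type exponential envelope, which only holds while $|u|$ stays below the threshold $2\sqrt n/\Ktroisntilde$; beyond it one reverts to the trivial bound $\bigl|\caracfsum(u) - e^{-u^2/2}\bigr| \le 1 + e^{-u^2/2}$. This dichotomy is precisely what the fourth argument $2\sqrt n/\Ktroisntilde$ of $J_2$ (resp.\ $J_3$) records.

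Inserting these pointwise bounds into the integral defining $\Omega_4$, the factor $2$ in front of $\Omega_4$ combines with the $1/6$ from the Taylor remainder to produce the prefactor $1/3$, and the surviving integral — carrying the weight $|\Psi(u/T)/T|$, the power $u^3$ (resp.\ $u^4$), the exponential envelope, and the regime switch at $2\sqrt n/\Ktroisntilde$ — is by definition $J_2(3,a,b,2\sqrt n/\Ktroisntilde,T)$ in case (i) and $J_2(4,a,b,2\sqrt n/\Ktroisntilde,T)$ in case (ii), cf.\ Equation~\eqref{eq:def_J2_p}. Parts (iii) and (iv) follow the same scheme under Assumption~\ref{hyp:basic_as_iid}, the only difference being that identical factors permit the sharper identity $|\alpha^n - \beta^n| \le n\,|\alpha-\beta|\,\max(|\alpha|,|\beta|)^{n-1}$, whose tighter $(n-1)$-fold exponential envelope is the content of $J_3$ in Equation~\eqref{eq:def_J3_p} rather than $J_2$. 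The absolute values on $\Omega_4$ and on $J_2,J_3$ accommodate the case $a > b$.

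The main obstacle is the second step: producing a clean, fully explicit, non-asymptotic bound on $\bigl|\caracfsum(u) - e^{-u^2/2}\bigr|$ with every constant tracked. One must simultaneously keep the third- (resp.\ fourth-) order remainder sharp, dominate the product of the remaining $n-1$ moduli by a single exponential — which forces the restriction to $|u| \le 2\sqrt n/\Ktroisntilde$ and hence the two-regime design baked into $J_2$ and $J_3$ — and preserve the exact factor $1/6$ so that the stated prefactors $K_{3,n}/(3\sqrt n)$ and $K_{4,n}/(3n)$ come out precisely. The \iid{} refinement adds only the bookkeeping of the sharper power-difference estimate and introduces no new conceptual difficulty.
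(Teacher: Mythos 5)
Your overall architecture coincides with the paper's proof: by independence, write $\caracfsum(u)-e^{-u^2/2}$ as a difference of products, telescope it into $\sum_i |a_i-b_i|\prod_{j\neq i}\max(|a_j|,|b_j|)$, bound each factor difference by a third-order estimate in cases (i)/(iii) and a fourth-order one in cases (ii)/(iv) (your $\sigma_i^4\le\gamma_i$ step, giving $1/24+1/8=1/6$, is exactly the paper's), dominate the remaining factors by an exponential envelope of Shevtsova type, sum using $\sum_i\E[|X_i|^3]/B_n^3=K_{3,n}/\sqrt n$ or $\sum_i\gamma_i/B_n^4=\Kquatren/n$, and integrate against $2|\Psi(u/T)|/T$ so that $2\times\tfrac16=\tfrac13$ yields the stated prefactors. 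Your treatment of the \iid{} refinement (the exact identity $\sigma_i^2/B_n^2=1/n$, equivalently your $(n-1)$-fold envelope, replacing the Cauchy--Schwarz bound $\max_i\sigma_i^2/B_n^2\le\sqrt{\Kquatren/n}$) is also the right explanation of why $J_3$ differs from $J_2$.

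There is, however, a genuine error in your final identification step. You assert that the exponential envelope ``only holds while $|u|$ stays below the threshold $2\sqrt n/\Ktroisntilde$,'' that beyond it one reverts to the trivial bound $1+e^{-u^2/2}$, and that this dichotomy ``is precisely what the fourth argument of $J_2$ (resp.\ $J_3$) records.'' That is not what $J_2$ and $J_3$ are: by Equations~\eqref{eq:def_J2_p} and~\eqref{eq:def_J3_p}, they are single integrals over $[l,m]$ with integrand $|\Psi(u/T)|\,u^p\exp\!\big(-\tfrac{u^2}{2}\big(1-\tfrac{4\chi_1}{q}u-\sqrt{\Kquatren/n}\big)\big)$ (with $1/n$ in place of $\sqrt{\Kquatren/n}$ for $J_3$), and the fourth argument $q=2\sqrt n/\Ktroisntilde$ is not a switching point but the coefficient of the cubic term in the exponent, coming from rewriting $\chi_1|u|^3\Ktroisntilde/\sqrt n=\tfrac{u^2}{2}\cdot\tfrac{4\chi_1}{q}u$. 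No regime switch is needed anywhere: the per-factor envelope $\max\{|f_{P_{X_i}}(t)|,e^{-t^2\sigma_i^2/2}\}\le\exp\!\big(-t^2\sigma_i^2/2+\chi_1|t|^3(\E[|X_i|^3]+\E[|X_i|]\sigma_i^2)\big)$ holds for \emph{every} real $t$; for $u$ beyond roughly $q/(4\chi_1)$ the resulting bound merely becomes weak (the exponent turns positive) but it stays valid, which is exactly why the lemma can allow arbitrary finite $a\neq b$. With your two-regime construction the ``surviving integral'' would not equal $J_2(3,a,b,2\sqrt n/\Ktroisntilde,T)$, so the inequality as stated would not be established. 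A related slip, likely the source of the confusion: the per-factor estimates are not those of Lemma~\ref{lem:taylor_exp_cf} (or Lemma~\ref{lem:taylor_exp_cf_iid}); those lemmas bound the distance of the \emph{full} $\caracfsum$ to its Edgeworth-corrected Gaussian and are only valid on the restricted range $|t|\le\sqrt{2\eps}(n/\Kquatren)^{1/4}$. What the proof needs are globally valid single-factor bounds, namely $|f(t)-e^{-\sigma^2t^2/2}|\le\E[|X|^3]\,|t|^3/6$ (Lemma 2.8 of Shevtsova, 2012) and the direct order-four Taylor bound $|f_{P_{X_i}}(t)-e^{-t^2\sigma_i^2/2}|\le t^4\gamma_i/24+\sigma_i^4t^4/8\le t^4\gamma_i/6$; once these replace your restricted-range estimates, your telescoping argument produces the $J_2$/$J_3$ integrands on all of $[a,b]$ and the proof closes.
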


Remark that if $a < b$, the four inequalities hold without absolute values since $\Omega_4$ and $J_2$ are then non-negative.

\noindent {\it Proof of Lemma~\ref{lemma:bound_I32}(i)}.
Let $t \in \Rb$.
As in the proof of Lemma 2.7 in \cite{shevtsova2012} with $\delta = 1$, using the fact that for every $i= 1,\dots,n$, we have
\begin{align*}
    \max\left\{|f_{P_{X_i}}(t)| , \,
    \exp \left( - \frac{t^2\sigma_i^2}{2} \right) \right\}
    \leq \exp \left(
    - \frac{t^2\sigma_i^2}{2}
    + \frac{\chi_1 t^3(\mathbb{E}[|X_i|^3] + \mathbb{E}[|X_i|]\sigma_i^2)}{B_n^3} \right),
\end{align*}
so that
\begin{align*}
    & \left|\caracfsum(t)-e^{-t^2/2}\right| \\
    \leq & \sum_{i=1}^n \left|
    f_{P_{X_i}} \Big( \frac{t}{B_n} \Big) - e^{-\dfrac{t^2\sigma_i^2}{2B_n^2}} \right|
    e^{\dfrac{t^2\sigma_i^2}{2B_n^2}}
    e^{-\dfrac{t^2}{2}
    + \dfrac{\chi_1|t|^3\sum_{l=1}^n\big( \mathbb{E}[|X_l|^3] + \mathbb{E}[|X_i|]\sigma_i^2 \big)}{B_n^3}} \\
    = & \sum_{i=1}^n \left|
    f_{P_{X_i}} \Big( \frac{t}{B_n} \Big)
    - e^{-\dfrac{t^2\sigma_i^2}{2B_n^2}}\right|
    e^{-\dfrac{t^2}{2} + \dfrac{\chi_1|t|^3\Ktroisntilde}{\sqrt{n}} + \dfrac{t^2\sigma_i^2}{2B_n^2}}.
\end{align*}
By Equation~\eqref{eq:edg_exp_1}, we have $\max_{1\leq i\leq n}\sigma_i^2 \leq B_n^2 \times (K_{4,n}/n)^{1/2}$ so that we obtain
\begin{align*}
    & \left|\caracfsum(t)-e^{-t^2/2}\right|
    \leq \sum_{i=1}^n
    \left|f_{P_{X_i}} \Big( \frac{t}{B_n} \Big)-e^{-\dfrac{t^2\sigma_i^2}{2B_n^2}}\right|
    e^{-\dfrac{t^2}{2}
    + \dfrac{\chi_1|t|^3\Ktroisntilde}{\sqrt{n}}
    + \dfrac{t^2}{2}\sqrt{\dfrac{K_{4,n}}{n}}}.
\end{align*}
Applying Lemma 2.8 in~\cite{shevtsova2012}, we get that for every variable $X$ such that $\E[|X|^3]$ is finite, $|f(t) - e^{-\sigma^2 t^2}| \leq \E[|X|^3] \times |t|^3 / 6$. Therefore,
\begin{align}
\label{eq:bound_caracfun_caracGauss}
    \left|\caracfsum(t)-e^{-t^2/2}\right|
    & \leq \sum_{i=1}^n
    \frac{\mathbb{E}[|X_i|^3]}{6B_n^3}\left|t\right|^3
    \exp \left(-\frac{t^2}{2}
    + \frac{\chi_1|t|^3\Ktroisntilde}{\sqrt{n}}
    + \dfrac{t^2}{2}\sqrt{\dfrac{K_{4,n}}{n}} \right) 
    \nonumber \\
    & = \frac{K_{3,n}}{6\sqrt{n}}|t|^3
    \exp \left( -\frac{t^2}{2}
    + \frac{\chi_1|t|^3\Ktroisntilde}{\sqrt{n}} 
    + \dfrac{t^2}{2}\sqrt{\dfrac{K_{4,n}}{n}} \right).
\end{align}
Integrating the latter equation,
we have
\begin{align}
\label{eq:gen_case_nocont_3}
    \Big| \Omega_4(a, b, T) \Big|
    &= \frac{2}{T} \, \Bigg| \int_{a}^{b}
    |\Psi(u/T)| \, \left|\caracfsum(u)-e^{-u^2/2}\right|du \Bigg| \nonumber \\
    &\leq \frac{K_{3,n}}{3\sqrt{n}T} \, \Bigg| \int_{a}^{b}
    |\Psi(u/T)| \, u^3
    \exp \left( -\frac{u^2}{2} + \frac{u^3 \chi_1 \Ktroisntilde}{\sqrt{n}} 
    + \dfrac{u^2}{2}\sqrt{\dfrac{K_{4,n}}{n}} \right) du \Bigg| \nonumber \\
    &= \frac{K_{3,n}}{3\sqrt{n}T} \, \Bigg| \int_{a}^{b}
    |\Psi(u/T)| \, u^3
    \exp \left( -\frac{u^2}{2}\Big( 1 + \frac{ 4 u \chi_1 \Ktroisntilde}{2\sqrt{n}} 
    + \sqrt{\dfrac{K_{4,n}}{n}} \Big) \right) du \Bigg| \nonumber \\    
    &= \frac{K_{3,n}}{3\sqrt{n}}
    \Big| J_{2} \big(3, a, b , T \big) \Big|,
\end{align}
as claimed.

\bigskip

\noindent {\it Proof of Lemma~\ref{lemma:bound_I32}(ii)}. This second part of the proof mostly follows the reasoning of the first one, with suitable modifications.

First, using a Taylor expansion of order 3 of $f_{P_{X_i}}$ around 0 (with explicit Lagrange remainder) and the inequality $\left| e^{-x} - 1 + x \right| \leq x^2/2,$ we can claim for every real $t$
\begin{align*}
    \left| f_{P_{X_i}}(t) - e^{-t^2\sigma_i^2/2} \right|
    \leq \frac{t^4\gamma_i}{24} + \frac{\sigma_i^4t^4}{8} \leq \frac{t^4\gamma_i}{6}.
\end{align*}
Reasoning as in the proof of Lemma~2.7 in \cite{shevtsova2012} with $\delta = 1$, we obtain
\begin{align*}
    \left|\caracfsum(t)-e^{-t^2/2}\right|
    & \leq \sum_{i=1}^n \frac{t^4\gamma_i}{6B_n^4}
    \exp \left( -\dfrac{t^2}{2} + \dfrac{\chi_1|t|^3\Ktroisntilde}{\sqrt{n}} + \dfrac{t^2}{2}\sqrt{\dfrac{K_{4,n}}{n}} \right) \\
    &\leq \frac{K_{4,n}}{6n}t^4
    \exp \left( -\dfrac{t^2}{2} + \dfrac{\chi_1|t|^3\Ktroisntilde}{\sqrt{n}}
    + \dfrac{t^2}{2}\sqrt{\dfrac{K_{4,n}}{n}} \right).
\end{align*}
Plugging this into the definition of $I_{3,2}(T)$, we can write
\begin{align}
\label{eq:gen_case_nocont_4}
    \Omega_4(a, b, T)
    &= \frac{2}{T} \int_{a}^{b}
    |\Psi(u/T)| \, \left|\caracfsum(u)-e^{-u^2/2}\right|du, \nonumber \\
    &\leq \frac{K_{4,n}}{3nT} \int_{a}^{b}
    |\Psi(u/T)| \, u^4
    \exp \left( -\dfrac{u^2}{2}
    + \dfrac{\chi_1|u|^3\Ktroisntilde}{\sqrt{n}}
    + \dfrac{\colRev{u^2}}{2}\sqrt{\dfrac{K_{4,n}}{n}} \right) du \nonumber \\
    &\leq \frac{K_{4,n}}{3n}
    J_{2} \big(4, a, b , T),
\end{align}
as claimed.

\bigskip

\noindent {\it Proof of Lemma~\ref{lemma:bound_I32}(iii)}.
Under the \iid{} assumption, we can prove that, for every real~$t$,
\begin{align*}
    & \left|\caracfsum(t)-e^{-t^2/2}\right|
    \leq \frac{K_{3,n}}{6\sqrt{n}} |t|^3
    \exp \left( -\dfrac{t^2}{2} + \dfrac{\chi_1|t|^3\Ktroisntilde}{\sqrt{n}} + \dfrac{t^2}{2n} \right),
\end{align*}
following the method of Lemma~\ref{lemma:bound_I32}(i).
Multiplying by $|\Psi(t)|$ and integrating this, we get the claimed inequality.

\bigskip

\noindent {\it Proof of Lemma~\ref{lemma:bound_I32}(iv)}.
This can be recovered using the same techniques as in the proof of Lemma~\ref{lemma:bound_I32}(ii).
$\Box$

\medskip

In Section~\ref{ssec:thms_cont}, we want to give improved bounds that uses the tail behavior of $\caracfsum$ via the integral $\int |\caracfsum(u)| u^{-1} du$. Therefore, the following lemma is used to control \(\Omega_4\) in Theorem~\ref{thm:cont_choiceEps}.

\begin{lemma}

    Let \(T = 16 \pi^4 n^2 / \Ktroisntilde^4 \).
    Then,
    \begin{align*}
        &\Omega_4(\sqrt{2\eps}(n/K_{4,n})^{1/4} \wedge T/\pi, T/\pi, T)
        \leq \Big| \Omega_4(\sqrt{2\eps}(n/K_{4,n})^{1/4} \wedge T/\pi, T^{1/4}/\pi \wedge T/\pi, T) \Big| \\
        & + \frac{1.0253}{\pi} \left( \Gamma\left( 0 , (T^{1/2}\wedge T^2)(1-4\pi\chi_1t_1^*) / (2\pi^2) \right) - \Gamma\left( 0 , (t_1^{*2}T^{1/2}\wedge T^2/\pi^2)(1-4\pi\chi_1t_1^*) / 2 \right) \right) \\
        & + \frac{1.0253}{\pi} \left( \Gamma\left( 0 , (T^{1/2} \wedge T^2) / (2\pi^2) \right) - \Gamma\left( 0 , T^2/(2\pi^2) \right) \right) + \frac{1.0253}{\pi} \int_{t_1^*T^{1/4} \wedge T/\pi}^{T/\pi}
        \frac{|\caracfsum(u)|}{u}du.
    \end{align*}
    \label{lemma:omega4_smooth}
\end{lemma}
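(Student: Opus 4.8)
The plan is to combine three facts: the additivity of $\Omega_4$ in its integration limits, the magnitude bound $|\Psi(t)| \le 1.0253/(2\pi|t|)$ from Equation~\eqref{eq:properties_Psi}, and the piecewise control of $|\caracfsum|$ recorded in Equation~\eqref{eq:bound_caracfsum_u_exp}. Writing $\tau := \sqrt{2\eps}(n/K_{4,n})^{1/4}$ and using $\Omega_4(a,c,T) = \Omega_4(a,b,T)+\Omega_4(b,c,T)$ for $a\le b\le c$, I would first split the range $[\tau\wedge T/\pi,\,T/\pi]$ at $T^{1/4}/\pi\wedge T/\pi$. The lower piece is exactly $\Omega_4(\tau\wedge T/\pi,\,T^{1/4}/\pi\wedge T/\pi,\,T)$, which, after taking absolute values to stay valid when the two endpoints are out of order, is the first term of the claim. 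It then remains to dominate the upper piece $\Omega_4(T^{1/4}/\pi\wedge T/\pi,\,T/\pi,\,T)$ by the other three terms.

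On that upper piece I would use $|\tfrac{1}{T}\Psi(u/T)| \le 1.0253/(2\pi u)$, giving $\Omega_4(\,\cdot\,,\,\cdot\,,T)\le \frac{1.0253}{\pi}\int u^{-1}|\caracfsum(u)-e^{-u^2/2}|\,du$. The choice $T = 16\pi^4 n^2/\Ktroisntilde^4$ is what aligns the breakpoints: with $\xi_n = \Ktroisntilde/\sqrt{n}$ as in Lemma~\ref{lemma:bound_Psi_Caracfsum}, one has $T^{1/4}=2\pi\sqrt{n}/\Ktroisntilde = 2\pi/\xi_n$ and $t_1^* T^{1/4}=\theta_1^*/\xi_n$, precisely the two thresholds of Equation~\eqref{eq:bound_caracfsum_u_exp}. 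I would split this piece again at $t_1^* T^{1/4}\wedge T/\pi$. Below $\theta_1^*/\xi_n=t_1^* T^{1/4}$ the bound $|\caracfsum(u)|\le \exp(-u^2/2+\chi_1\xi_n u^3)$ holds, and since $\xi_n u\le 2\pi t_1^*$ there it simplifies to $|\caracfsum(u)|\le \exp(-\tfrac{u^2}{2}(1-4\pi\chi_1 t_1^*))$; the same bound holds for $e^{-u^2/2}$ because $1-4\pi\chi_1 t_1^*\le 1$. Hence $|\caracfsum(u)-e^{-u^2/2}|\le 2\exp(-\tfrac{u^2}{2}(1-4\pi\chi_1 t_1^*))$, and the substitution $w=\tfrac{u^2}{2}(1-4\pi\chi_1 t_1^*)$ turns the resulting integral into a difference of $\Gamma(0,\cdot)$ values, the factor $2$ absorbing the $\tfrac12$ from the change of variables; this produces the second term.

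On the subinterval $[t_1^* T^{1/4},\,T/\pi]$ I would only use $|\caracfsum(u)-e^{-u^2/2}|\le |\caracfsum(u)|+e^{-u^2/2}$. The first summand is left untouched and gives the $\caracfsum$-integral, i.e.\ the fourth term, which is deliberately not bounded here since it is the quantity that Theorem~\ref{thm:cont_choiceEps} hands over to the tail assumptions on $\caracfsum$. The second summand reduces, again by the substitution $w=u^2/2$, to a difference of $\Gamma(0,\cdot)$ values; enlarging the constant from $1.0253/(2\pi)$ to $1.0253/\pi$ and lowering the inner endpoint from $t_1^* T^{1/4}$ down to $T^{1/4}/\pi$ only increases the expression, so it is dominated by the third term. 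Adding the three contributions yields the stated inequality.

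The substantive estimate is the simplification $\xi_n u\le 2\pi t_1^*$ together with the near-origin bound on $|\caracfsum|$, which is what creates the $(1-4\pi\chi_1 t_1^*)$ factor; the rest is bookkeeping. The hard part will be keeping the chain of splits and $\Gamma$-substitutions valid uniformly in $T>0$: for small $T$ the points $T^{1/4}/\pi$, $t_1^* T^{1/4}$ and $T/\pi$ need not be increasing, which is exactly why the endpoints carry the $\wedge$ operations (e.g.\ $T^{1/2}\wedge T^2$ and $t_1^{*2}T^{1/2}\wedge T^2/\pi^2$) and why the first term appears inside an absolute value. Verifying that these degenerate regimes only enlarge the right-hand side, so that the inequality survives, is where most of the care goes.
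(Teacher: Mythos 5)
Your proposal is correct and follows essentially the same route as the paper's proof: the same breakpoints $T^{1/4}/\pi \wedge T/\pi$ and $t_1^* T^{1/4} \wedge T/\pi$ (chosen so that $T^{1/4} = 2\pi\sqrt{n}/\Ktroisntilde$ aligns with the thresholds of the Shevtsova-type bound on $|\caracfsum|$), the same estimate $|\Psi(t)| \le 1.0253/(2\pi|t|)$, the same exponential bound producing the factor $1-4\pi\chi_1 t_1^*$, and the same incomplete-Gamma substitutions. The only difference is bookkeeping: the paper applies the triangle inequality over the whole upper piece and lets the third term absorb the Gaussian integral over all of $[T^{1/4}/\pi \wedge T/\pi,\, T/\pi]$, whereas you absorb $|\caracfsum(u)| + e^{-u^2/2} \le 2e^{-u^2(1-4\pi\chi_1 t_1^*)/2}$ on the near subinterval into the second term (the factor $2$ exactly cancelling the $1/2$ from the change of variables) and dominate only the far-subinterval Gaussian by the third term; both allocations yield the stated right-hand side.
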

Note that the first term of this inequality will be bounded by Lemma~\ref{lemma:bound_I32}. 
The second and fourth terms decrease to zero faster than polynomially with $n$ (see~\cite{abramowitz85handbook} and the discussion at the end of Subsection~\ref{subsection:bound_Omega1}).
Finally, the term containing the integral of $u^{-1}|\caracfsum(u)|$ is the dominant one and allows us to use the assumption on the tail behavior of $\caracfsum$ to obtain Corollaries~\ref{cor:improvement_inid_case} (\inid{} case) and~\ref{cor:improvement_iid_case} (\iid{} case).

\medskip

\noindent
{\it Proof of
Lemma~\ref{lemma:omega4_smooth}: }
We decompose $\Omega_4$ in two parts
\begin{align*}
    &\Omega_4(\sqrt{2\eps}(n/K_{4,n})^{1/4} \wedge T/\pi, T/\pi, T) \\
    = & 2 \int_{\sqrt{2\eps}(n/K_{4,n})^{1/4} \wedge T/\pi}^{T/\pi}
    \left|\frac{1}{T}\Psi(u/T)\right| \,
    \left| f_{S_n}(u) - e^{-u^2/2} \right| du \\
    \leq & \left| \Omega_4(\sqrt{2\eps}(n/K_{4,n})^{1/4} \wedge T/\pi, T^{1/4}/\pi \wedge T/\pi, T) \right|
    + \Omega_4(T^{1/4}/\pi \wedge T/\pi, T/\pi, T).
    % &= I_{5,3}(T) + I_{5,5}(T),
\end{align*}
Note that the second term of this inequality can be bounded as
\begin{align*}
    % & I_{5,5}(T)
    \Omega_4(T^{1/4}/\pi \wedge T/\pi, T/\pi, T)
    \leq J_4(T) + J_5(T)
    + J_1(0, T^{1/4} / \pi, T / \pi, T),
\end{align*}
where
\begin{align*}
    & J_4(T) := \frac{2}{T} \int_{T^{1/4}/\pi \wedge T/\pi}^{t_1^*T^{1/4} \wedge T/\pi}
    |\Psi(u/T)|
    \left|\caracfsum(u)\right| du \\
    & \qquad\quad = \frac{2}{T^{3/4}} \int_{1 / \pi \wedge T^{3/4}/\pi}^{t_1^* \wedge T^{3/4}/\pi}
    |\Psi(v/T^{3/4})|
    \left|\caracfsum(T^{1/4}v)\right| dv , \\
    &J_5(T)
    := \frac{2}{T} \int_{t_1^*T^{1/4} \wedge T/\pi}^{T / \pi}
    |\Psi(u/T)|
    \left|\caracfsum(u)\right| du , \\
    &J_1(0, T^{1/4} / \pi, T / \pi, T)
    := \frac{2}{T} \int_{T^{1/4}/\pi \wedge T/\pi}^{T / \pi}
    |\Psi(u/T)|e^{-u^2/2} du .
\end{align*}

By the first inequality of Equation~\eqref{eq:bound_caracfsum_u_exp} and our choice of $T$, we know $|\caracfsum(T^{1/4}v)|$ can be upper bounded by $\exp(-T^{1/2}v^2(1-4\pi\chi_1|v|)/2)$ when $v \in [1/\pi,t_1^*].$ 
Using the properties of $u \mapsto \Psi(u)$ in Equation~\eqref{eq:properties_Psi}, the fact that $1-4\pi\chi_1t_1^* > 0$ and a change of variable, we get
\begin{align*}
    J_4(T) &\leq \frac{2}{T^{3/4}} \int_{1 / \pi \wedge T^{3/4}/\pi}^{t_1^* \wedge T^{3/4}/\pi}
    |\Psi(v/T^{3/4})|
    e^{-\frac{T^{1/2}v^2}{2}\left(1-4\pi\chi_1|v|\right)} dv \\
    &\leq \frac{1.0253}{\pi} \int_{1 / \pi \wedge T^{3/4}/\pi}^{t_1^* \wedge T^{3/4}/\pi} v^{-1} e^{-\frac{T^{1/2}v^2}{2}\left(1-4\pi\chi_1t_1^*\right)} dv \\
    & = \frac{1.0253}{\pi} \int_{\pi^{-1}\sqrt{1-4\pi\chi_1t_1^*}\left(T^{1/4} \wedge T\right)}^{\sqrt{1-4\pi\chi_1t_1^*}\left(t_1^*T^{1/4} \wedge T/\pi\right)} v^{-1} e^{-v^2/2} dv \\
    &= \frac{1.0253}{\pi} \int_{(T^{1/2}\wedge T^2)(1-4\pi\chi_1t_1^*) / (2\pi^2)}^{(t_1^{*2}T^{1/2}\wedge T^2/\pi^2)(1-4\pi\chi_1t_1^*) / 2} u^{-1} e^{-u} du \\
    &= \frac{1.0253}{\pi} \left( \Gamma\left( 0 , (T^{1/2}\wedge T^2)(1-4\pi\chi_1t_1^*) / (2\pi^2) \right) \right. \\
    & \left. \quad - \Gamma\left( 0 , (t_1^{*2}T^{1/2}\wedge T^2/\pi^2)(1-4\pi\chi_1t_1^*) / 2 \right) \right).
\end{align*}

To control $J_5(T)$, we use~Equation~\eqref{eq:properties_Psi} to write
\begin{align*}
    & J_5(T)
    \leq \frac{1.0253}{\pi} \int_{t_1^*T^{1/4} \wedge T/\pi}^{T/\pi} u^{-1}|\caracfsum(u)|du.
\end{align*}

To control $J_1(0, T^{1/4} / \pi, T / \pi, T)$, we use~Equation~\eqref{eq:properties_Psi} and a change of variable
\begin{align*}
    J_1(0, T^{1/4} / \pi, T / \pi, T)
    & \leq \frac{1.0253}{\pi} \int_{T^{1/4}/\pi \wedge T/\pi}^{T/\pi} u^{-1}e^{-u^2/2}du \\
    & = \frac{1.0253}{\pi} \int_{(T^{1/2} \wedge T^2) / (2\pi^2)}^{T^2/(2\pi^2)} u^{-1}e^{-u}du \\
    & = \frac{1.0253}{\pi} \left( \Gamma\left( 0 , (T^{1/2} \wedge T^2) / (2\pi^2) \right) - \Gamma\left( 0 , T^2/(2\pi^2) \right) \right). \qquad \Box
\end{align*}

\section{Technical lemmas}
\label{sec:lemmas}

\subsection{Control of the residual term in an Edgeworth expansion under Assumption~\ref{hyp:basic_as_inid}}
\label{ssec:proof_edg_exp_inid}

For $\eps \in (0, 1/3)$ and $t \geq 0$, let us define the following quantities:
\begingroup \allowdisplaybreaks
\begin{align}
    &\Rinid(t,\eps)
    := \frac{U_{1,1,n}(t)+U_{1,2,n}(t)}{2(1-3\eps)^2} \nonumber \\ %
    & \qquad\qquad\qquad + e_{1,n}(\eps)\left(\frac{t^8K_{4,n}^2}{2n^2}\left(\frac{1}{24} + \frac{P_{1,n}(\eps)}{2(1-3\eps)^2}\right)^2 \right.\nonumber \\
    & \left. \qquad\qquad\qquad + \frac{|t|^7|\lambda_{3,n}|K_{4,n}}{6n^{3/2}}\left(\frac{1}{24} + \frac{P_{1,n}(\eps)}{2(1-3\eps)^2}\right)\right) , \label{eq:def_R_1n} \\
    & P_{1,n}(\eps) \nonumber \\
    := & \frac{144 + 48 \eps + 4 \eps^2
    + \left\{ 96 \sqrt{2\eps} + 32\eps
    + 16\sqrt{2}\eps^{3/2} \right\}
    \Indicator\!\left\{\exists i \in \{1,...,n\}:\E[X_i^3] \neq 0 \right\}}{576} , \nonumber \\
    & e_{1,n}(\eps) := \exp\left( \eps^2\left( \frac{1}{6}+\frac{2P_{1,n}(\eps)}{(1-3\eps)^2} \right) \right), \label{eq:def_e_1n} \\
    & U_{1,1,n}(t)
    := \frac{t^6}{24}\left(\frac{K_{4,n}}{n}\right)^{3/2} + \frac{t^8}{24^2}\left(\frac{K_{4,n}}{n}\right)^2 , \nonumber \\
    & U_{1,2,n}(t) \nonumber \\
    := & \left(\frac{|t|^5}{6}\left(\frac{K_{4,n}}{n}\right)^{5/4} + \frac{t^6}{36}\left(\frac{K_{4,n}}{n}\right)^{3/2} + \frac{|t|^7}{72}\left(\frac{K_{4,n}}{n}\right)^{7/4} \right)\Indicator\left\{\exists i \in \{1,...,n\}:\E[X_i^3] \neq 0 \right\}.
    \label{eq:def_U_112n}
\end{align}
\endgroup

We want to show the following lemma:
\begin{lemma}
    \label{lem:taylor_exp_cf}
    Under Assumption~\ref{hyp:basic_as_inid}, for every $\eps\in(0,1/3)$ and $t$ such that $|t|\leq\sqrt{2\eps}(n/K_{4,n})^{1/4},$
    we have
    \begin{align*}
        & \left| \caracfsum(t) - e^{-\frac{t^2}{2}}     \left(1-\frac{it^3\lambda_{3,n}}{6\sqrt{n}}\right) \right| \\
        \leq & e^{-t^2/2}\left\{ \frac{t^4K_{4,n}}{8n}\left(\frac{1}{3}
        + \frac{1}{(1-3\eps)^2}\right)
        + \frac{e_1(\eps)|t|^6|\lambda_{3,n}|^2}{72n}
        + \Rinid(t,\eps) \right\}.
    \end{align*}
\end{lemma}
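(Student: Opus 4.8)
The plan is to use independence to factor $\caracfsum(t)=\prod_{i=1}^n f_{X_i}(s)$ with $s:=t/B_n$, and to compare this product with the target after pulling out the Gaussian envelope. Since $\sum_{i=1}^n\sigma_i^2=B_n^2$ we have $\prod_i e^{-\sigma_i^2 s^2/2}=e^{-t^2/2}$, so it is natural to set $\phi_i(s):=e^{\sigma_i^2 s^2/2}f_{X_i}(s)$ and $\psi_i(s):=1-i\E[X_i^3]s^3/6$, which gives $\caracfsum(t)=e^{-t^2/2}\prod_i\phi_i(s)$, whereas the target equals $e^{-t^2/2}\big(1-it^3\lambda_{3,n}/(6\sqrt n)\big)$. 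Because $\sum_i\E[X_i^3]/B_n^3=\lambda_{3,n}/\sqrt n$, the linear-in-$\E[X_i^3]$ part of $\prod_i\psi_i(s)$ is exactly $1-it^3\lambda_{3,n}/(6\sqrt n)$. The proof then splits into (A) controlling $\big|\prod_i\phi_i(s)-\prod_i\psi_i(s)\big|$ and (B) controlling $\big|\prod_i\psi_i(s)-(1-it^3\lambda_{3,n}/(6\sqrt n))\big|$; the triangle inequality together with the $e^{-t^2/2}$ prefactor then yields the stated inequality.

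First I would derive the per-factor estimates. Taylor expanding $e^{isx}$ with Lagrange remainder and using $\E[X_i]=0$ gives $\big|f_{X_i}(s)-(1-\sigma_i^2 s^2/2-i\E[X_i^3]s^3/6)\big|\le\gamma_i s^4/24$; multiplying by $e^{\sigma_i^2 s^2/2}$ and expanding that exponential shows $\phi_i(s)-\psi_i(s)$ is of order $s^4$, with a $\gamma_i s^4/24$ piece, a $\sigma_i^4$ piece coming from $e^{\sigma_i^2 s^2/2}$, and mixed $\E[X_i^3]\sigma_i^2$ pieces of order $s^5$ and higher that carry the skewness indicator $\Indicator\{\exists i:\E[X_i^3]\neq 0\}$. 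The range restriction $|t|\le\sqrt{2\eps}(n/K_{4,n})^{1/4}$ makes all of this uniform: combined with the moment inequality $\max_i\sigma_i^2\le B_n^2(K_{4,n}/n)^{1/2}$ (also invoked in the proof of Lemma~\ref{lemma:bound_I32}) it forces $\sigma_i^2 s^2\le\sqrt{t^4K_{4,n}/n}\le 2\eps$ for every $i$ and $t^4K_{4,n}/n\le 4\eps^2$. These bounds let me control $|\phi_i|$, $|\psi_i|$ and the Taylor tails of the exponential by geometric factors $1/(1-3\eps)$, which are finite precisely because $\eps<1/3$ and are the source of the $(1-3\eps)^{-2}$ in the statement.

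Next I would multiply up. For (A) I would apply the telescoping identity $\big|\prod_i z_i-\prod_i w_i\big|\le\sum_i|z_i-w_i|\prod_{j<i}|z_j|\prod_{j>i}|w_j|$ with $z_i=\phi_i$, $w_i=\psi_i$, summing the per-factor differences against products bounded by an exponential envelope $\prod_j(1+\text{small}_j)\le\exp(\sum_j\text{small}_j)$; using $\sum_i\sigma_i^2/B_n^2=1$ and $\sum_i\gamma_i/B_n^4=K_{4,n}/n$ this produces the leading fourth-moment term $\frac{t^4K_{4,n}}{8n}\big(\frac13+\frac{1}{(1-3\eps)^2}\big)$, the higher-order pieces gathered into $U_{1,1,n}$ and $U_{1,2,n}$, and the envelope factors $e_{1,n}(\eps)$ and $P_{1,n}(\eps)$ of \eqref{eq:def_R_1n} and \eqref{eq:def_e_1n}. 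For (B) I would expand $\prod_i(1-i\E[X_i^3]s^3/6)$ and remove its linear term; the leading remaining contribution is $\frac12\big(\sum_i\E[X_i^3]s^3/6\big)^2$, whose modulus is exactly $\frac{|t|^6\lambda_{3,n}^2}{72n}$, while $\frac12\sum_i(\E[X_i^3]s^3/6)^2$ and the higher elementary symmetric functions are of smaller order and are absorbed into $\Rinid(t,\eps)$; the envelope contributes the factor $e_1(\eps)$, and the whole of (B) vanishes when all $\E[X_i^3]=0$, matching the indicators in $P_{1,n}$ and $U_{1,2,n}$. Assembling (A) and (B) and matching constants gives the claim.

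The main obstacle is the fully explicit, non-asymptotic accounting rather than any single conceptual step: one has to track every coefficient from $s^4$ up to $s^8$ through both the multiplication of exponential envelopes and the telescoping sum, package the $\eps$-dependence exactly as $P_{1,n}(\eps)$ and $e_{1,n}(\eps)$, and switch off the odd-order cross terms (the $U_{1,2,n}$ piece) when every third moment vanishes. The delicate choice is to calibrate the envelope bounds so that the two genuinely leading contributions — the $\gamma_i$-remainder, giving the $\frac13$, and the $\sigma_i^4$/Taylor-tail contribution, giving the $\frac{1}{(1-3\eps)^2}$ — emerge with exactly the claimed constants while everything else is safely dominated by $\Rinid(t,\eps)$.
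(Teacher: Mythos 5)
Your decomposition is genuinely different from the paper's: the paper never compares the two products factor by factor, but instead passes to $\log f_{P_{X_j}}(t/B_n)$ via three successive Taylor--Lagrange expansions (of $f_{P_{X_j}}$, of $\log(1+U_{j,n})$, and finally of $\exp$), sums the logarithms, and exponentiates, so that the whole skewness contribution $-it^3\lambdatroisn/(6\sqrt n)$ sits inside the exponential as a \emph{purely imaginary} term. Your step (A), the telescoping bound on $\bigl|\prod_i\phi_i-\prod_i\psi_i\bigr|$, can be made to work: the order-$s^3$ terms cancel inside each difference $\phi_i-\psi_i$, so the per-factor differences are $O(s^4)$, and the envelopes $\prod_j|\phi_j|$, $\prod_j|\psi_j|$ are bounded by $\exp(O(\eps^2))$-type constants on the allowed range of $t$ (this requires $|f_{P_{X_j}}(s)|\le e^{-\sigma_j^2s^2/2+\cdots}$ rather than the crude bound $|f_{P_{X_j}}|\le1$, and power-sum inequalities such as $\sum_j\gamma_j^{5/4}\le(\sum_j\gamma_j)^{5/4}$); matching the exact constants $\tfrac13+\tfrac1{(1-3\eps)^2}$ and the precise $\Rinid(t,\eps)$ is then the delicate bookkeeping you acknowledge.

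The genuine gap is in step (B). On the allowed range $|t|\le\sqrt{2\eps}(n/\Kquatren)^{1/4}$, the quantity $A:=\sum_i\E[X_i^3]s^3/6=t^3\lambdatroisn/(6\sqrt n)$ is \emph{not} small: at the edge of the range, $|A|\asymp n^{1/4}$ whenever $\lambdatroisn$ stays away from zero. Hence the expansion of $\prod_i(1-ia_i)$ in elementary symmetric functions is not an expansion in decreasing orders of magnitude: $e_3(a)$ contains the term $\bigl(\sum_ia_i\bigr)^3/6$, of size $|t|^9|\lambdatroisn|^3/(6^4n^{3/2})\asymp n^{3/4}$ at the edge of the range, which is \emph{larger} than the $k=2$ term $|t|^6\lambdatroisn^2/(72n)\asymp n^{1/2}$ that you keep, and far larger than anything in $\Rinid(t,\eps)$, whose largest admissible term (the one in $|t|^7|\lambdatroisn|\Kquatren/n^{3/2}$) is only $O(n^{1/4})$ there. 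So the claim that ``the higher elementary symmetric functions are of smaller order and are absorbed into $\Rinid(t,\eps)$'' is false, and step (B) as written cannot be repaired by sharper bookkeeping. The missing idea --- which is precisely the crux of the paper's argument --- is that the large skewness sum must be kept as a phase: writing $\prod_i(1-ia_i)=\exp\bigl(\sum_i\log(1-ia_i)\bigr)$, or comparing with $\prod_ie^{-ia_i}=e^{-iA}$, the term $-iA$ is purely imaginary and therefore never enters any modulus bound; its only trace is its square, through $|e^{-iA}-1+iA|\le A^2/2$, which is exactly what produces the $e_{1,n}(\eps)|t|^6\lambdatroisn^2/(72n)$ term in the statement. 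With that replacement (phase comparison plus a telescoping bound on $\bigl|\prod_i(1-ia_i)-\prod_ie^{-ia_i}\bigr|\le e^{\sum_ja_j^2/2}\sum_ia_i^2/2$, which is genuinely small), your two-step skeleton does go through; without it, step (B) fails.
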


\noindent
{\it Proof of
Lemma~\ref{lem:taylor_exp_cf}: }
Remember that $\gamma_j := \E[X_j^4]$,
$\sigma_j := \sqrt{\E[X_j^2]}$,
$B_n := \sqrt{\sum_{i=1}^n \E[X_i^2]}$ and
$K_{4,n}:= n^{-1} \sum_{i=1}^n\mathbb{E}[X_i^4]
\, / \left(n^{-1} B_n^2 \right)^{2}$. Applying Cauchy-Schwartz inequality, we get
\begin{align}
\label{eq:edg_exp_1}
    \max_{1 \leq j \leq n}\sigma_j^2
    \leq \max_{1 \leq j \leq n}\gamma_j^{1/2}
    \leq \bigg( \sum_{j=1}^n \gamma_j \bigg)^{1/2}
    = B_n^2(K_{4,n}/n)^{1/2},
\end{align}
\begin{align}
\label{eq:edg_exp_2}
    \max_{1 \leq j \leq n} \mathbb{E}[|X_j|^3]
    \leq \max_{1 \leq j \leq n} \gamma_j^{3/4}
    \leq \bigg( \sum_{j=1}^n \gamma_j \bigg)^{3/4}
    = B_n^3(K_{4,n}/n)^{3/4},
\end{align}
and
\begin{align}
\label{eq:edg_exp_3}
    \max_{1 \leq j \leq n}\gamma_j
    \leq \sum_{j=1}^n \gamma_j = B_n^4K_{4,n}/n.
\end{align}

Combining~\eqref{eq:edg_exp_1}, \eqref{eq:edg_exp_2} and \eqref{eq:edg_exp_3}, we observe that for every $\eps\in(0,1)$ and $t$ such that $|t| \leq \sqrt{2\eps}(n/K_{4,n})^{1/4},$
\begin{align}
    \label{eq:bound_max_Ujn}
    \max_{1\leq j\leq n}
    \left\{ \frac{\sigma_j^2 t^2}{2 B_n^2 }
    + \frac{\mathbb{E}[|X_j|^3] \times |t|^3}{6 B_n^3}
    + \frac{\gamma_jt^4}{24B_n^4} \right\}
    \leq 3 \eps.
\end{align}

As we assume that $X_j$ has a moment of order four for every $j=1,\dots,n$, the characteristic functions $(f_{P_{X_j}})_{j=1,\dots,n}$ are four times differentiable on $\Rb$. Applying a Taylor-Lagrange expansion, we get the existence of a complex number $\theta_{1,j,n}(t)$ such that $|\theta_{1,j,n}(t)|\leq 1$ and
\begin{align*}
    U_{j,n}(t) := f_{P_{X_j}}(t/B_n)-1
    = -\frac{\sigma_j^2t^2}{2B_n^2} - \frac{i\mathbb{E}[X_j^3]\,t^3}{6B_n^3} + \frac{\theta_{1,j,n}(t)\gamma_jt^4}{24B_n^4},
\end{align*}
for every $t \in \Rb$ and $j = 1,\dots,n$.
Let $\log$ stand for the principal branch of the complex logarithm function. For every $\eps\in(0,1/3)$ and $t$ such that
$|t| \leq \sqrt{2 \eps}(n / K_{4,n})^{1/4},$ Equation~(\ref{eq:bound_max_Ujn}) shows that $|U_{j,n}(t)| \leq 3\eps < 1$, so that we can use another Taylor-Lagrange expansion. This ensures existence of a complex number $\theta_{2,j,n}(t)$ such that $|\theta_{2,j,n}(t)| \leq 1$ and
\begin{align*}
    \log(f_{P_{X_j}}(t/B_n))
    = \log(1+U_{j,n}(t))
    = U_{j,n}(t)
    - \frac{U_{j,n}(t)^2}{2(1+\theta_{2,j,n}(t)U_{j,n}(t))^2}.
\end{align*}
Summing over $j= 1,\dots,n$ and exponentiating, we can claim that under the same conditions on $t$ and $\eps,$
\begin{align*}
    & \caracfsum(t) = \exp\left(-\frac{t^2}{2}-\frac{it^3\lambda_{3,n}}{6\sqrt{n}}+t^4\sum_{j=1}^n\frac{\theta_{1,j,n}(t)\gamma_j}{24B_n^4}-\sum_{j=1}^n\frac{U_{j,n}(t)^2}{2(1+\theta_{2,j,n}(t)U_{j,n}(t))^2} \right).
\end{align*}
A third Taylor-Lagrange expansion guarantees existence of a complex number $\theta_{3,n}(t)$ with modulus at most $\exp\bigg( \frac{t^4K_{4,n}}{24n} + \sum_{j=1}^n\frac{|U_{j,n}(t)|^2}{2|1+\theta_{2,j,n}(t)U_{j,n}(t)|^2} \bigg)$ such that
\begin{align*}
    \caracfsum(t) = \, & e^{-t^2 / 2} \left(1-\frac{it^3\lambda_{3,n}}{6\sqrt{n}}+t^4\sum_{j=1}^n\frac{\theta_{1,j,n}(t)\gamma_j}{24B_n^4}-\sum_{j=1}^n\frac{U_{j,n}(t)^2}{2(1+\theta_{2,j,n}(t)U_{j,n}(t))^2} \right. \nonumber \\
    & \left. \quad +\frac{\theta_{3,n}(t)}{2}\left(-\frac{it^3\lambda_{3,n}}{6\sqrt{n}}+t^4\sum_{j=1}^n\frac{\theta_{1,j,n}(t)\gamma_j}{24B_n^4}-\sum_{j=1}^n\frac{U_{j,n}(t)^2}{2(1+\theta_{2,j,n}(t)U_{j,n}(t))^2}\right)^2\right).
\end{align*}
Using the triangle inequality and its reverse version, as well as the restriction on $|t| \leq \sqrt{2\eps}(n/K_{4,n})^{1/4},$ we can write
\begin{align}
\label{eq:edg_exp_4}
    &\Bigg| \caracfsum(t) - e^{- t^2 / 2} \left(1-\frac{it^3\lambda_{3,n}}{6\sqrt{n}}\right) \Bigg|
    \leq e^{-t^2/2} \times \Bigg( \frac{t^4K_{4,n}}{24n} + \frac{1}{2(1-3\eps)^2}\sum_{j=1}^n|U_{j,n}(t)|^2 
    \nonumber \\
    & \qquad\qquad\qquad\qquad\qquad\qquad\qquad\quad + \frac{1}{2} \exp \bigg( \frac{\eps^2}{6} + \frac{1}{2(1-3\eps)^2}\sum_{j=1}^n|U_{j,n}(t)|^2 \bigg) \nonumber \\
    & \qquad\qquad\qquad\qquad\qquad \times \bigg( \frac{|t|^3|\lambda_{3,n}|}{6\sqrt{n}} + \frac{t^4K_{4,n}}{24n} + \frac{1}{2(1-3\eps)^2}\sum_{j=1}^n|U_{j,n}(t)|^2 \bigg)^2 \Bigg).
\end{align}
We now control $\sum_{j=1}^n |U_{j,n}(t)|^2$. We first expand the squares, giving the decomposition
\begin{align}
\label{eq:edg_exp_5}
    \sum_{j=1}^n |U_{j,n}(t)|^2
    = & \, \frac{t^4\sum_{j=1}^n\sigma_j^4}{4B_n^4}
    + \frac{t^6\sum_{j=1}^n|\mathbb{E}[X_j^3]|^2}{36B_n^6}
    + \frac{t^8\sum_{j=1}^n\gamma_j^2}{24^2B_n^8} \nonumber \\
    &
    + \frac{|t|^5\sum_{j=1}^n\sigma_j^2|\mathbb{E}[X_j^3]|}{6B_n^5}
    + \frac{t^6\sum_{j=1}^n\sigma_j^2\gamma_j}{24B_n^6}
    + \frac{|t|^7\sum_{j=1}^n|\mathbb{E}[X_j^3]|\gamma_j}{72B_n^7}.
\end{align}
Using Equations~(\ref{eq:edg_exp_1})-(\ref{eq:edg_exp_3}), we can bound the right-hand side of Equation~(\ref{eq:edg_exp_5}) in the following manner
\begin{align*}
    & \frac{t^4\sum_{j=1}^n\sigma_j^4}{4B_n^4}
    \leq \frac{t^4K_{4,n}}{4n},
\end{align*}
\begin{align*}
    & \frac{t^6\sum_{j=1}^n\sigma_j^2\gamma_j}{24B_n^6} + \frac{t^8\sum_{j=1}^n\gamma_j^2}{24^2B_n^8}
    \leq \frac{t^6}{24}\left(\frac{K_{4,n}}{n}\right)^{3/2} + \frac{t^8}{24^2}\left(\frac{K_{4,n}}{n}\right)^2 =: U_{1,1,n}(t),
\end{align*}
and
\vspace{0.2cm}
\begin{align}
\label{eq:nocount_U_112n}
    &\frac{|t|^5\sum_{j=1}^n\sigma_j^2|\mathbb{E}[X_j^3]|}{6B_n^5}
    + \frac{t^6\sum_{j=1}^n|\mathbb{E}[X_j^3]|^2}{36 B_n^6}
    + \frac{|t|^7\sum_{j=1}^n|\mathbb{E}[X_j^3]|\gamma_j}{72B_n^7} \nonumber \\
    \leq & \left(\frac{|t|^5}{6}\left(\frac{K_{4,n}}{n}\right)^{5/4} + \frac{t^6}{36}\left(\frac{K_{4,n}}{n}\right)^{3/2} + \frac{|t|^7}{72}\left(\frac{K_{4,n}}{n}\right)^{7/4} \right)\Indicator\left\{\exists i \in \{1,...,n\}:\E[X_i^3] \neq 0 \right\} \nonumber \\
    =: & U_{1,2,n}(t).
\end{align}
Moreover, we have $\sum_{j=1}^n U_{j,n}(t)^2
\leq \frac{t^4K_{4,n}}{n} P_{1,n}(\eps)$ under our conditions on $\eps$ and $t.$
Combining Equation~(\ref{eq:edg_exp_4}), the decomposition~(\ref{eq:edg_exp_5}) and the previous three bounds, and grouping similar terms together, we conclude that for every $\eps\in(0,1/3)$ and $t$ such that ${|t|\leq\sqrt{2\eps}(n/K_{4,n})^{1/4},}$
\begin{align*}
    \bigg| &\caracfsum(t) - e^{-\frac{t^2}{2}} \left(1-\frac{it^3\lambda_{3,n}}{6\sqrt{n}}\right) \bigg| \\
    &\leq  e^{-t^2/2} \Bigg\{ \frac{t^4K_{4,n}}{8n}\left(\frac{1}{3} + \frac{1}{(1-3\eps)^2}\right) + \frac{e_{1,n}(\eps)|t|^6|\lambda_{3,n}|^2}{72n} + \frac{U_{1,1,n}(t)+U_{1,2,n}(t)}{2(1-3\eps)^2} \\
    & \quad \quad \quad + e_{1,n}(\eps)\left(\frac{t^8K_{4,n}^2}{2n^2}\left(\frac{1}{24} + \frac{P_{1,n}(\eps)}{2(1-3\eps)^2}\right)^2
    + \frac{|t|^7|\lambda_{3,n}|K_{4,n}}{6n^{3/2}}\left(\frac{1}{24} + \frac{P_{1,n}(\eps)}{2(1-3\eps)^2}\right)\right) \Bigg\},
    \quad
\end{align*}
where $e_{1,n}(\eps) := \exp\left( \eps^2\left( \frac{1}{6}+\frac{2P_{1,n}(\eps)}{(1-3\eps)^2} \right) \right).$ Combining this with the definition of $\Rinid(t,\eps)$ finishes the proof. $\Box$

\subsection{Control of the residual term in an Edgeworth expansion under Assumption~\ref{hyp:basic_as_iid}}
\label{ssec:proof_edg_exp_iid}

Lemma~\ref{lem:taylor_exp_cf} can be improved in the \iid{} framework. To do so, we introduce analogues of $\Rinid(t,\eps),$ $P_{1,n}(\eps),$ $e_{1,n}(\eps)$ and $U_{1,2,n}(t)$ defined by
\begingroup \allowdisplaybreaks
\begin{align}
    \Riid(t,\eps)
    &:= \frac{U_{2,2,n}(t)}{2(1-3\eps)^2}
    + e_{2,n}(\eps)\Bigg( \frac{t^8}{8n^2}\left(\frac{K_{4,n}}{12} +
    \frac{1}{4 (1 - 3\eps)^2} +
    \frac{P_{2,n}(\eps)}{576 (1 - 3\eps)^2} \right)^2 \nonumber \\
    & \hspace{3.5cm} + \frac{|t|^7|\lambda_{3,n}|}{12n^{3/2}}\left(\frac{K_{4,n}}{12} +
    \frac{1}{4 (1 - 3\eps)^2} +
    \frac{P_{2,n}(\eps)}{576 (1 - 3\eps)^2} \right) \Bigg) ,  \label{eq:def_R_2n}  \\
    P_{2,n}(\eps)
    &:= \frac{96\sqrt{2\eps}|\lambda_{3,n}|}{(K_{4,n}^{1/4}n^{1/4})}
    + 48\eps \left(\frac{K_{4,n}}{n}\right)^{1/2}
    + \frac{32\eps\lambda_{3,n}^2}{(K_{4,n}n)^{1/2}}
    + \frac{16\sqrt{2}K_{4,n}^{1/4}|\lambda_{3,n}|\eps^{3/2}}{n^{3/4}} \nonumber \\
    & \quad\; + \frac{4\eps^2K_{4,n}}{n} , \label{eq:definition_P2n} \\
    e_{2,n}(\eps) & := \exp\left( \eps^2\left( \frac{1}{6}+\frac{1}{2(1-3\eps)^2}
    +\frac{2P_{2,n}(\eps)}{576(1-3\eps)^2}
    \right) \right), \label{eq:definition_e2n} \\
    U_{2,2,n}(t) 
    & := \frac{|t|^5|\lambda_{3,n}|}{6n^{3/2}}
    + \frac{t^6K_{4,n}}{24n^2}
    + \frac{t^6\lambda_{3,n}^2}{36n^2}
    + \frac{|t|^7K_{4,n}|\lambda_{3,n}|}{72n^{5/2}}
    + \frac{t^8K_{4,n}^2}{576n^3}. \nonumber
\end{align}
\endgroup
Note that
\begin{equation*}
    e_{2,n}(\eps)
    = e_{3}(\eps)
    \exp\left( \frac{2 \eps^2 P_{2,n}(\eps)}{576(1-3\eps)^2} \right),
\end{equation*}
where 
\begin{equation}
\label{eq:definition_e3}
    e_{3}(\eps) := 
    e^{\eps^2/6 + \eps^2 / (2(1-3 \eps) )^2}.    
\end{equation}

\begin{lemma}
    \label{lem:taylor_exp_cf_iid}
    Under Assumption~\ref{hyp:basic_as_iid}, for every $\eps\in(0,1/3)$ and $t$ such that $|t|\leq\sqrt{2\eps} (n/K_{4,n})^{1/4},$
    \begin{align*}
        \left| \caracfsum(t) -  e^{-\frac{t^2}{2}}     \left(1-\frac{it^3\lambda_{3,n}}{6\sqrt{n}}\right) \right|
        &\leq \, e^{-t^2/2} \Bigg\{ \frac{t^4K_{4,n}}{8n}\left(\frac{1}{3} + \frac{1}{(1-3\eps)^2}\right)
        \\ & \hspace{2cm} 
        + \frac{e_{2,n}(\eps)|t|^6|\lambda_{3,n}|^2}{72n} + \Riid(t,\eps) \Bigg\}.
    \end{align*}
\end{lemma}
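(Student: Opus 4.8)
The plan is to mirror the proof of Lemma~\ref{lem:taylor_exp_cf} and exploit the fact that under Assumption~\ref{hyp:basic_as_iid} all summands are identical, so that the sums which were merely \emph{bounded} in the \inid{} case (through the Cauchy--Schwarz inequalities~\eqref{eq:edg_exp_1}--\eqref{eq:edg_exp_3}) can now be evaluated \emph{exactly}. First I would record the \iid{} simplifications $B_n^2 = n\sigma^2$ and $\overline{B}_n = \sigma$, so that $\Kquatren = \gamma/\sigma^4$, $\lambdatroisn = \E[X_n^3]/\sigma^3$, $\Ktroisn = \E[|X_n|^3]/\sigma^3$, with $\Kquatren \geq 1$ by Jensen's inequality. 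The restriction $|t| \leq \sqrt{2\eps}(n/\Kquatren)^{1/4}$ again yields $|U_{j,n}(t)| \leq 3\eps < 1$ via the analogue of~\eqref{eq:bound_max_Ujn}, which is now a single inequality since the index $j$ is irrelevant.

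With this in hand I would rerun, essentially verbatim, the three successive Taylor--Lagrange expansions of the proof of Lemma~\ref{lem:taylor_exp_cf}: expand $U_{j,n}(t) := f_{P_{X_j}}(t/B_n) - 1$ to fourth order, take the principal branch of the logarithm and expand to second order, sum over $j$ and exponentiate, then apply a final second-order expansion of the exponential. Nothing in these steps uses the \inid{} structure, so this reproduces the master inequality~\eqref{eq:edg_exp_4} unchanged, including the prefactor $\exp\big(\tfrac{\eps^2}{6} + \tfrac{1}{2(1-3\eps)^2}\sum_{j=1}^n|U_{j,n}(t)|^2\big)$ in front of the quadratic remainder and the quadratic remainder itself, whose leading contribution $\tfrac{e_{2,n}(\eps)|t|^6|\lambdatroisn|^2}{72n}$ comes from squaring the term $\tfrac{|t|^3|\lambdatroisn|}{6\sqrt n}$.

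The heart of the argument is the exact evaluation of $\sum_{j=1}^n|U_{j,n}(t)|^2$ in place of the crude bounds that produced $U_{1,1,n}$ and $U_{1,2,n}$ in the \inid{} case. Expanding the square as in~\eqref{eq:edg_exp_5} and using $\sum_j\sigma_j^4 = n\sigma^4$, $\sum_j|\E[X_j^3]|^2 = n|\E[X_n^3]|^2$, $\sum_j\gamma_j^2 = n\gamma^2$ together with the matching cross sums, each term collapses to a single monomial in $t$, giving the identity $\sum_{j=1}^n|U_{j,n}(t)|^2 = \tfrac{t^4}{4n} + U_{2,2,n}(t)$. This identity is used twice: directly in the polynomial part of~\eqref{eq:edg_exp_4}, where the $U_{2,2,n}(t)$ piece becomes the first term $\tfrac{U_{2,2,n}(t)}{2(1-3\eps)^2}$ of $\Riid(t,\eps)$ (see~\eqref{eq:def_R_2n}); and, after a further bound over the range $|t|\leq\sqrt{2\eps}(n/\Kquatren)^{1/4}$, inside the exponential prefactor, where it produces the polynomial $P_{2,n}(\eps)$ and hence the factor $e_{2,n}(\eps)$ of~\eqref{eq:definition_e2n}.

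It then remains to substitute these expressions into~\eqref{eq:edg_exp_4}: the leading piece $\tfrac{t^4}{4n}$ coming from $\sum_j|U_{j,n}(t)|^2$ is bounded by $\tfrac{t^4\Kquatren}{4n}$ (using $\Kquatren\geq 1$) and merged with the term $\tfrac{t^4\Kquatren}{24n}$ from~\eqref{eq:edg_exp_4}, recovering exactly $\tfrac{t^4\Kquatren}{8n}\big(\tfrac13+\tfrac{1}{(1-3\eps)^2}\big)$ as in the \inid{} statement, while the residual monomials are collected into $\Riid(t,\eps)$. The main obstacle is purely the bookkeeping: one must check that the exact \iid{} evaluation of $\sum_{j=1}^n|U_{j,n}(t)|^2$, and the range bound feeding the exponential, reproduce term-for-term the stated $U_{2,2,n}(t)$, $P_{2,n}(\eps)$, $e_{2,n}(\eps)$ and $\Riid(t,\eps)$, and in particular that the $t^4$ piece is split off and absorbed into the main Edgeworth term without being counted twice.
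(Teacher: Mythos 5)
Your proposal is correct and takes essentially the same route as the paper: the paper's own proof runs the identical three Taylor--Lagrange expansions (written directly for a single \iid{} term, so that $\sum_{j=1}^n|U_{j,n}(t)|^2 = n|U_{1,n}(t)|^2$), decomposes $n|U_{1,n}(t)|^2$ as $\frac{t^4}{4n}+U_{2,2,n}(t)$, bounds it by $\frac{t^4}{n}\big(\frac{1}{4}+\frac{P_{2,n}(\eps)}{576}\big)$ inside the exponential prefactor to obtain $e_{2,n}(\eps)$, and absorbs the $\frac{t^4}{4n}$ piece into the leading $\frac{t^4 \Kquatren}{8n}$ term exactly as you do (implicitly using $\Kquatren \geq 1$). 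The only caveat, which the paper shares in Equation~\eqref{eq:edg_exp_iid_2}, is that your ``exact evaluation'' of $\sum_{j=1}^n|U_{j,n}(t)|^2$ is really an upper bound on the modulus (the $\theta$-factors and cross-term phases prevent a true identity), but that is the direction of inequality the argument needs.
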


\noindent
{\it Proof of
Lemma~\ref{lem:taylor_exp_cf_iid}: }
This proof is very similar to that of Lemma~\ref{lem:taylor_exp_cf}. We note that $B_n = \sigma \sqrt{n}.$
As before, using two Taylor-Lagrange expansions successively, we can write that for every $\eps\in(0,1/3)$ and $t$ such that $|t| \leq \sqrt{2\eps n}/K_{4,n}^{1/4}$
\begin{align*}
    & \log(f_{P_{X_n}}(t/B_n)) = U_{1,n}(t) - \frac{U_{1,n}(t)^2}{2(1+\theta_{2,n}(t)U_{1,n}(t))^2},
\end{align*}
where $$U_{1,n}(t) := -\frac{t^2}{2n}-\frac{i\lambda_{3,n}t^3}{6n^{3/2}}+\frac{\theta_{1,n}(t)K_{4,n}t^4}{24n^2},$$ and $\theta_{1,n}(t)$ and $\theta_{2,n}(t)$ are two complex numbers with modulus bounded by 1.
Using a third Taylor-Lagrange expansion, we can write that for some complex $\theta_{3,n}(t)$ with modulus bounded by $\exp\left( \frac{K_{4,n}t^4}{24n} + \frac{n|U_{1,n}(t)|^2}{2(1-3\eps)^2} \right),$ the following holds
\begin{align*}
    \caracfsum(t) &= e^{-\frac{t^2}{2}} \Bigg( 1-\frac{it^3\lambda_{3,n}}{6\sqrt{n}}+\frac{t^4K_{4,n}\theta_{1,n}(t)}{24n}-\frac{nU_{1,n}(t)^2}{2(1+\theta_{2,n}(t)U_{1,n}(t))^2}  \\
    & \quad \quad \quad + \frac{\theta_{3,n}(t)}{2}\left(-\frac{it^3\lambda_{3,n}}{6\sqrt{n}}+\frac{t^4K_{4,n}\theta_{1,n}(t)}{24n}-\frac{nU_{1,n}(t)^2}{2(1+\theta_{2,n}(t)U_{1,n}(t))^2}\right)^2 \Bigg).    
\end{align*}
Using the triangle inequality and its reverse version in addition to the condition $|t| \leq \sqrt{2\eps} (n/K_{4,n})^{1/4},$ we obtain
\begin{align}
    \label{eq:edg_exp_iid_1}
    \bigg| \caracfsum(t) &- e^{-t^2/2}\left(1-\frac{it^3\lambda_{3,n}}{6\sqrt{n}}\right) \bigg| 
    \leq e^{-t^2/2}
    \bigg\{ \frac{t^4K_{4,n}}{24n} + \frac{nU_{1,n}(t)^2}{2(1-3\eps)^2} \nonumber \\
    &+ \frac{1}{2} \exp \bigg( \frac{\eps^2}{6} + \frac{n|U_{1,n}(t)|^2}{2(1-3\eps)^2} \bigg)
    \times \left( \frac{|t|^3|\lambda_{3,n}|}{6\sqrt{n}} + \frac{t^4K_{4,n}}{24n} + \frac{nU_{1,n}(t)^2}{2(1-3\eps)^2} \right)^2 \bigg\}.
\end{align}
We can decompose $n U_{1,n}(t)^2$ as
\begin{align}
    \label{eq:edg_exp_iid_2}
    n U_{1,n}(t)^2  & = \frac{t^4}{4n}
    + \underbrace{ \frac{|t|^5|\lambda_{3,n}|}{6n^{3/2}}
    + \frac{t^6K_{4,n}}{24n^2} + \frac{t^6\lambda_{3,n}^2}{36n^2}
    + \frac{|t|^7K_{4,n}|\lambda_{3,n}|}{72n^{5/2}}
    + \frac{t^8K_{4,n}^2}{576n^3} }_{=U_{2,2,n}(t)} \nonumber \\
    & \leq \frac{t^4}{n} \left( \frac{1}{4} + \frac{P_{2,n}(\eps)}{576} \right).
\end{align}
Combining Equations~(\ref{eq:edg_exp_iid_1}) and~(\ref{eq:edg_exp_iid_2}) and grouping terms, we conclude that for every ${\eps\in(0,1/3)}$ and $t$ such that $|t| \leq \sqrt{2\eps}(n/K)^{1/4},$
\begin{align*}
    \Bigg| &\caracfsum(t) - e^{-\frac{t^2}{2}} \left(1-\frac{it^3\lambda_{3,n}}{6\sqrt{n}}\right) \Bigg| \\
    &\leq \, e^{-t^2/2}\Bigg\{\frac{t^4K_{4,n}}{8n}\left(\frac{1}{3} + \frac{1}{(1-3\eps)^2}\right) + \frac{e_{2,n}(\eps)t^6\lambda_{3,n}^2}{72n} + \frac{U_{2,2,n}(t)}{2(1-3\eps)^2} \\
    & \hspace{2cm} + e_{2,n}(\eps)\Bigg[ \frac{t^8}{8n^2}\left(\frac{K_{4,n}}{12} +
    \frac{1}{4 (1-3\eps)^2} +
    \frac{P_{2,n}(\eps)}{576 (1-3\eps)^2} \right)^2
    \\
    & \hspace{4cm} +
    \frac{|t|^7|\lambda_{3,n}|}{12n^{3/2}}\left(\frac{K_{4,n}}{12} +
    \frac{1}{4 (1-3\eps)^2} +
    \frac{P_{2,n}(\eps)}{576 (1-3\eps)^2} \right) \Bigg] \Bigg\}.
\end{align*}
\hfill $\Box$

\subsection{Bound on integrated \texorpdfstring{$\Rinid$}{Rinid} and \texorpdfstring{$\Riid$}{Riid}}

\subsubsection{Bound on integrated \texorpdfstring{$\Rinid$}{Rinid}}
\label{sec_bound_R1n}

Our goal in this section is to compute a bound on
\begin{align*}
    &\RinidInt(\eps) := \frac{1.0253}{\pi} \int_0^\infty u e^{-u^2/2} \Rinid(u,\eps) du
    = A_1(n, \eps) + \cdots + A_7(n, \eps),
\end{align*}
where
\begin{align*}
    A_1(n, \eps) &:= \frac{1.0253}{2(1-3\eps)^2 \pi} \int_0^\infty
    u e^{-u^2/2} \frac{u^6}{24}\left(\frac{K_{4,n}}{n}\right)^{3/2} du, \\
    A_2(n, \eps) &:= \frac{1.0253}{2(1-3\eps)^2 \pi} \int_0^\infty
    u e^{-u^2/2} \frac{u^8}{24^2}\left(\frac{K_{4,n}}{n}\right)^2 du, \\
    A_3(n, \eps) &:= \frac{1.0253}{ 2(1-3\eps)^2 \pi} \int_0^\infty
    u e^{-u^2/2} \frac{u^5}{6}\left(\frac{K_{4,n}}{n}\right)^{5/4} du
    \times \Indicator\left\{\exists i \in \{1,...,n\}:\E[X_i^3] \neq 0 \right\}
    , \\
    A_4(n, \eps) &:= \frac{1.0253}{ 2(1-3\eps)^2 \pi} \int_0^\infty
    u e^{-u^2/2} \frac{u^6}{36}\left(\frac{K_{4,n}}{n}\right)^{3/2} du
    \times \Indicator\left\{\exists i \in \{1,...,n\}:\E[X_i^3] \neq 0 \right\}
    , \\
    A_5(n, \eps) &:= \frac{1.0253}{ 2(1-3\eps)^2 \pi} \int_0^\infty
    u e^{-u^2/2} \frac{u^7}{72}\left(\frac{K_{4,n}}{n}\right)^{7/4} du
    \times \Indicator\left\{\exists i \in \{1,...,n\}:\E[X_i^3] \neq 0 \right\}
    , \\
    A_6(n, \eps) &:= \frac{1.0253 e_{1,n}(\eps)}{\pi} \int_0^\infty
    u e^{-u^2/2}\frac{u^8 K_{4,n}^2}{2n^2}\bigg(\frac{1}{24} + \frac{P_{1,n}(\eps)}{2(1-3\eps)^2}\bigg)^2 du, \\
    A_7(n, \eps) &:= \frac{1.0253 e_{1,n}(\eps)}{\pi} \int_0^\infty
    u e^{-u^2/2} \frac{u^7|\lambda_{3,n}|K_{4,n}}{6n^{3/2}}
    \bigg(\frac{1}{24}
    + \frac{P_{1,n}(\eps)}{2(1-3\eps)^2}\bigg) du,
\end{align*}
where
\begin{align*}
    P_{1,n}(\eps)
    & := \frac{144 + 48 \eps + 4 \eps^2
    + \left\{ 96 \sqrt{2\eps} + 32\eps
    + 16\sqrt{2}\eps^{3/2} \right\}
    \Indicator \left\{\exists i \in \{1,...,n\}:\E[X_i^3] \neq 0 \right\}}{576} , \nonumber \\
    e_{1,n}(\eps) & := \exp\left( \eps^2\left( \frac{1}{6}+\frac{2P_{1,n}(\eps)}{(1-3\eps)^2} \right) \right).
\end{align*}

\begin{lemma}
    For any $p > 0$,
    $\int_0^{+\infty} u^p e^{-u^2/2} du
    = 2^{(p-1)/2} \Gamma \big((p+1)/2 \big)$.
    \label{lemma:int_up_expmu2}
\end{lemma}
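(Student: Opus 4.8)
The plan is to reduce the integral to the defining formula for the standard Gamma function by means of a single change of variables. The recalled definition in the paper is $\Gamma(a) = \int_0^{+\infty} u^{a-1} e^{-u}\,du$ for $a > 0$, so the whole task is to transform the Gaussian-type integrand $u^p e^{-u^2/2}$ into the form $t^{a-1}e^{-t}$ with the right exponent $a$.

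First I would substitute $t := u^2/2$, which is a smooth strictly increasing bijection from $(0,+\infty)$ onto $(0,+\infty)$. Then $u = \sqrt{2t}$ and $du = dt/\sqrt{2t}$, while $u^p = (2t)^{p/2} = 2^{p/2} t^{p/2}$. Carrying this through gives
\[
    \int_0^{+\infty} u^p e^{-u^2/2}\,du
    = \int_0^{+\infty} 2^{p/2} t^{p/2} e^{-t} \frac{dt}{\sqrt{2t}}
    = 2^{(p-1)/2} \int_0^{+\infty} t^{(p-1)/2} e^{-t}\,dt.
\]
Recognizing the last integral as $\Gamma\big((p-1)/2 + 1\big) = \Gamma\big((p+1)/2\big)$ yields the claimed identity $2^{(p-1)/2}\,\Gamma\big((p+1)/2\big)$.

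There is essentially no hard part here; the only points requiring a word of justification are convergence and the legitimacy of the substitution. Convergence holds because near $u = 0$ the factor $u^p$ is integrable for every $p > 0$ (indeed for $p > -1$), and near $+\infty$ the Gaussian factor $e^{-u^2/2}$ dominates any polynomial, so the integral is finite; equivalently, the exponent $(p-1)/2$ in the transformed integral exceeds $-1$ precisely when $p > -1$, guaranteeing that $\Gamma\big((p+1)/2\big)$ is well defined for $p > 0$. Since $t \mapsto u$ is a $C^1$ diffeomorphism of the open half-line, the change of variables is valid without any further care, and the proof is complete.
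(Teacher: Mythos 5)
Your proof is correct and is essentially identical to the paper's own argument: both use the change of variable $t = u^2/2$ (the paper calls it $v$), pull out the factor $2^{(p-1)/2}$, and recognize the remaining integral as $\Gamma\big((p+1)/2\big)$ by the paper's stated definition of the Gamma function. Your added remarks on convergence and the validity of the substitution are fine but not needed beyond what the paper does.
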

\begin{proof}
    We use the change of variable $v = u^2/2$, $u = \sqrt{2v}$,
    $dv = u du$, $du = dv/\sqrt{2v}$, so that
    \begin{align*}
        \int_0^{+\infty} u^p e^{-u^2/2} du
        &= \int_0^{+\infty} 2^{(p-1)/2}v^{(p-1)/2} e^{-v} dv
        = 2^{(p-1)/2} \int_0^{+\infty} v^{(p-1)/2} e^{-v} dv,
    \end{align*}
    and, by definition of \(\Gamma(\cdot)\), this is equal to \(2^{(p-1)/2} \Gamma \big((p+1)/2 \big)\)
    as claimed.
\end{proof}

By Lemma~\ref{lemma:int_up_expmu2}, we get the following equalities
\begingroup \allowdisplaybreaks
\begin{align*}
    A_1(n, \eps) &= \frac{1.0253}{48(1-3\eps)^2 \pi} \left(\frac{K_{4,n}}{n}\right)^{3/2}
    \times 2^{(7-1)/2} \Gamma(8/2), \\
    A_2(n, \eps) &= \frac{1.0253}{24^2 \times 2(1-3\eps)^2 \pi} 
    \left(\frac{K_{4,n}}{n}\right)^2
    2^{(9-1)/2} \Gamma(10/2), \\
    A_3(n, \eps) &= \frac{1.0253}{12(1-3\eps)^2 \pi}
    \left(\frac{K_{4,n}}{n}\right)^{5/4}
    2^{(6-1)/2} \Gamma(7/2)
    \times \Indicator\left\{\exists i \in \{1,...,n\}:\E[X_i^3] \neq 0 \right\}
    ,\\
    A_4(n, \eps) &= \frac{1.0253}{ 72(1-3\eps)^2 \pi}
    \left(\frac{K_{4,n}}{n}\right)^{3/2}
    2^{(7-1)/2} \Gamma(8/2)
    \times \Indicator\left\{\exists i \in \{1,...,n\}:\E[X_i^3] \neq 0 \right\}
    , \\
    A_5(n, \eps) &= \frac{1.0253}{144(1-3\eps)^2 \pi}
    \left(\frac{K_{4,n}}{n}\right)^{7/4}
    2^{(8-1)/2} \Gamma(9/2)
    \times \Indicator\left\{\exists i \in \{1,...,n\}:\E[X_i^3] \neq 0 \right\}
    , \\
    A_6(n, \eps) &= \frac{1.0253 e_{1,n}(\eps)}{2\pi}
    \left(\frac{K_{4,n}}{n}\right)^{2}
    \bigg(\frac{1}{24} + \frac{P_{1,n}(\eps)}{2(1-3\eps)^2}\bigg)^2
    2^{(9-1)/2} \Gamma(10/2), \\
    A_7(n, \eps) &= \frac{1.0253 e_{1,n}(\eps)}{6\pi}
    \frac{|\lambda_{3,n}|K_{4,n}}{n^{3/2}}
    \bigg(\frac{1}{24}
    + \frac{P_{1,n}(\eps)}{2(1-3\eps)^2}\bigg)
    2^{(9-1)/2} \Gamma(10/2).
\end{align*}
\endgroup

When skewness is not ruled out, \(\RinidInt(\eps)\) can be written as a polynomial in~$n$ with coefficients \(a_{k,n}\) that still depend on~$n$ but only through the moments \(\lambdatroisn\) and \(\Kquatren\) (and are therefore constant when the distribution of the observations is fixed with the sample size)
\begin{equation}
\label{eq:simplified_R1n_int_skewed}
     \RinidInt(\eps) = \frac{a_{1,n}(\eps)}{n^{5/4}} + \frac{a_{2,n}(\eps)}{n^{3/2}} + \frac{a_{3,n}(\eps)}{n^{7/4}} + \frac{a_{4,n}(\eps)}{n^2}
\end{equation}
\begin{align*}
    % a1n
    a_{1,n}(\eps) & = 
    % Partie venant de A3
    \frac{1.0253 \times 2^{(6-1)/2}\Gamma(7/2) }{12(1-3\eps)^2\pi} K_{4,n}^{5/4} \\
    % a1n approche
    a_{1,n}(\eps) & 
    \overset{\text{if } \eps = 0.1}{\approx} 
    1.0435 K_{4,n}^{5/4} \\
    %
    % a2n
    a_{2,n}(\eps) & = 
    % Partie venant de A1
    \frac{1.0253 \Kquatren^{3/2}}{48(1-3\eps)^2 \pi} 
    2^{(7-1)/2} \Gamma(8/2) +
    % Partie venant de A4 (indicatrice majorée par 1)
     \frac{1.0253 \Kquatren^{3/2}}{ 72(1-3\eps)^2 \pi}
    2^{(7-1)/2} \Gamma(8/2) \\
    % Partie venant de A7 (indicatrice majorée par 1)
    & \qquad + \frac{1.0253 e_{1,n}(\eps) |\lambdatroisn| \Kquatren}{6\pi}
    \bigg(\frac{1}{24}
    + \frac{P_{1,n}(\eps)}{2(1-3\eps)^2}\bigg)
    2^{(9-1)/2} \Gamma(10/2) \\
    % a2n approche
    a_{2,n}(\eps) & 
    \overset{\text{if } \eps = 0.1}{\approx} 
    1.1101 \Kquatren^{3/2}
    + 8.2383 |\lambdatroisn| \times \Kquatren \\
    %
    % a3n
    a_{3,n}(\eps) & =
    % Partie venant de A5 (indicatrice majorée par 1)
     \frac{1.0253 \Kquatren^{7/4}}{144(1-3\eps)^2 \pi}
    2^{(8-1)/2} \Gamma(9/2) \\
    % a3n approche
    a_{3,n}(\eps) & 
    \overset{\text{if } \eps = 0.1}{\approx}
    0.6087 \Kquatren^{7/4} \\    
    % a4n
    a_{4,n}(\eps) & =
    % Partie provenant de A2
    \frac{1.0253 \Kquatren^2}{24^2 \times 2(1-3\eps)^2 \pi} 
    2^{(9-1)/2} \Gamma(10/2) \\
    & \;\;\; + 
    % Partie provenant de A6
     \frac{1.0253 e_{1,n}(\eps) \Kquatren^2}{2\pi}
    \bigg(\frac{1}{24} + \frac{P_{1,n}(\eps)}{2(1-3\eps)^2}\bigg)^2
    2^{(9-1)/2} \Gamma(10/2) \\
    % a4n approche
    a_{4,n}(\eps) & 
    \overset{\text{if } \eps = 0.1}{\approx}
    9.8197\Kquatren^{2}.
\end{align*}

When \(\E[X_i^3] = 0\) for every~$i$, which implies $\lambda_{3,n} = 0$, simplifications occur so that we get
\begin{align}
    \RinidInt(\eps) = \frac{a_{1,n}(\eps)}{n^{3/2}} + \frac{a_{2,n}(\eps)}{n^2}
    \label{eq:simplified_R1n_int_noskewed}
\end{align}
\begin{align*}
    a_{1,n}(\eps) & = 
    % Partie provenant de A1
    \frac{1.0253}{48(1-3\eps)^2 \pi} K_{4,n}^{3/2}
    \times 2^{(7-1)/2} \Gamma(8/2), \\
    a_{1,n}(\eps) & \overset{\text{if } \eps = 0.1}{\approx}
    0.6661 \Kquatren^{3/2} \nonumber \\
    a_{2,n}(\eps) &= 
    % Partie provenant de A2 et A6
    \frac{1.0253}{24^2 \times 2(1-3\eps)^2 \pi} 
    K_{4,n}^2
    2^{(9-1)/2} \Gamma(10/2) \nonumber \\
    & + \frac{1.0253 e_{1,n}(\eps)}{2\pi}
    K_{4,n}^2
    \bigg(\frac{1}{24} \frac{P_{1,n}(\eps)}{2(1-3\eps)^2}\bigg)^2
    2^{(9-1)/2} \Gamma(10/2) \nonumber \\
    a_{2,n}(\eps) & \overset{\text{if } \eps = 0.1}{\approx}
    6.1361 \Kquatren^{2}.
\end{align*}

\subsubsection{Bound on integrated \texorpdfstring{$\Riid$}{Riid}}
\label{sec_bound_R2n}

Our goal in this section is to compute a bound on
\begin{align*}
    &\RiidInt(\eps) := \frac{1.0253}{\pi} \int_0^\infty u e^{-u^2/2} \Riid(u,\eps) du
    = \widetilde{A}_1(n , \eps) + \cdots + \widetilde{A}_7(n , \eps),
\end{align*}
where
\begingroup \allowdisplaybreaks
\begin{align*}
    \widetilde{A}_1(n, \eps)
    &:= \frac{1.0253}{2(1-3\eps)^2 \pi}
    \int_0^\infty u e^{-u^2/2} \frac{u |\lambda_{3,n}|}{6n^{3/2}} du , \\
    \widetilde{A}_2(n, \eps)
    &:= \frac{1.0253}{2(1-3\eps)^2 \pi}
    \int_0^\infty u e^{-u^2/2} \frac{u^6 K_{4,n}}{24n^2} du , \\
    \widetilde{A}_3(n, \eps)
    &:= \frac{1.0253}{2(1-3\eps)^2 \pi}
    \int_0^\infty u e^{-u^2/2} \frac{u^6\lambda_{3,n}^2}{36n^2} du , \\
    \widetilde{A}_4(n, \eps)
    &:= \frac{1.0253}{2(1-3\eps)^2 \pi}
    \int_0^\infty u e^{-u^2/2} \frac{u^7 K_{4,n}|\lambda_{3,n}|}{72n^{5/2}} du , \\
    \widetilde{A}_5(n, \eps)
    &:= \frac{1.0253}{2(1-3\eps)^2 \pi}
    \int_0^\infty u e^{-u^2/2} \frac{u^8 K_{4,n}^2}{576n^3} du, \\
    \widetilde{A}_6(n, \eps)
    &:= \frac{1.0253}{\pi} \int_0^\infty u e^{-u^2/2} 
    e_{2,n}(\eps) \frac{u^8}{8n^2}\left(\frac{K_{4,n}}{12}
    + \frac{1}{4(1-3\eps)^2}
    + \frac{P_{2,n}(\eps)}{576 (1-3\eps)^2} \right)^2 du, \\
    \widetilde{A}_7(n, \eps)
    &:= \frac{1.0253}{\pi} \int_0^\infty u e^{-u^2/2} e_{2,n}(\eps) \frac{u^7|\lambda_{3,n}|}{12n^{3/2}}\left(\frac{K_{4,n}}{12}
    + \frac{1}{4(1-3\eps)^2}
    + \frac{P_{2,n}(\eps)}{576 (1-3\eps)^2}\right) du.
\end{align*}
\endgroup

By Lemma~\ref{lemma:int_up_expmu2}, we get
\begingroup \allowdisplaybreaks
\begin{align}
    \widetilde{A}_1(n, \eps)
    &= \frac{1.0253}{2(1-3\eps)^2 \pi}
    \frac{|\lambda_{3,n}|}{6n^{3/2}}
    2^{(2-1)/2} \Gamma \big(3/2 \big), \nonumber \\
    \widetilde{A}_2(n, \eps)
    &= \frac{1.0253}{2(1-3\eps)^2 \pi}
    \frac{K_{4,n}}{24n^2}
    2^{(7-1)/2} \Gamma (8/2), \nonumber \\
    \widetilde{A}_3(n, \eps)
    &= \frac{1.0253}{2(1-3\eps)^2 \pi} \frac{\lambda_{3,n}^2}{36n^2}
    2^{(7-1)/2} \Gamma (8/2), \nonumber \\
    \widetilde{A}_4(n, \eps)
    &= \frac{1.0253}{2(1-3\eps)^2 \pi}
    \frac{K_{4,n}|\lambda_{3,n}|}{72n^{5/2}}
    2^{(8-1)/2} \Gamma (9/2), \nonumber \\
    \widetilde{A}_5(n, \eps)
    &= \frac{1.0253}{2(1-3\eps)^2 \pi}
    \frac{K_{4,n}^2}{576n^3}
    2^{(9-1)/2} \Gamma (10/2), \nonumber \\
    \widetilde{A}_6(n, \eps)
    &= \frac{1.0253}{\pi} 
    e_{2,n}(\eps) \frac{1}{8n^2}\left(\frac{K_{4,n}}{12}
    + \frac{1}{4(1-3\eps)^2}
    + \frac{P_{2,n}(\eps)}{576 (1-3\eps)^2} \right)^2
    2^{(9-1)/2} \Gamma (10/2), \nonumber \\
    \widetilde{A}_7(n, \eps)
    &= \frac{1.0253}{\pi} 
    e_{2,n}(\eps) \frac{|\lambda_{3,n}|}{12n^{3/2}}\left(\frac{K_{4,n}}{12}
    + \frac{1}{4(1-3\eps)^2}
    + \frac{P_{2,n}(\eps)}{576 (1-3\eps)^2}\right)
    2^{(8-1)/2} \Gamma (9/2).
    \label{eq:simplified_R2n_int}
\end{align}
\endgroup

\medskip

When skewness is not ruled out and \(\Kquatren = O(1)\), the previous equalities show that \(\RiidInt(\eps)\) is of order \(n^{-3/2}\) for any \(\eps \in (0,1/3)\).
When \(\lambdatroisn = 0\), we get an improved rate equal to \(n^{-2}\).

\medskip

In our main theorems, we set \(\eps = 0.1\).
In that case, we can get two explicit bounds\footnote{Bounds instead of equalities in the sense that we round up to the fourth digit the obtained numerical constants.} on \(\RiidInt(0.1)\).
When skewness is not ruled out, the bound \(\RiidIntSkew\) can be written as {\color{black}in Equation~\eqref{eq:definition_R2n_int_skew_bound_when_eps_01}.}
Absent skewness, the bound \(\RiidIntNoskew\) is defined
{\color{black}by Equation~\eqref{eq:definition_R2n_int_noskew_bound_when_eps_01}.}
% below.
% \begingroup \allowdisplaybreaks
% \begin{align}
%     \RiidIntNoskew  
%     & := \dfrac{0.6661 \Kquatren}{n^{2}} 
%     + \dfrac{0.2221 \Kquatren^{2}}{n^{3}} 
%     + e_{2,n}(\eps, n) \times
%     \Bigg(\dfrac{0.1088 \Kquatren^{2}}{n^{2}} \nonumber \\
%     & + \dfrac{1.3321 \Kquatren}{n^{2}} 
%     + \dfrac{0.04441 \Kquatren^{1.5}}{n^{2.5}} 
%     \nonumber \\ & + \dfrac{0.0003701 \Kquatren^{2}}{n^{3}}
%     + \dfrac{4.0779 }{n^{2}}
%     + \dfrac{0.2719 \Kquatren^{0.5}}{n^{2.5}} 
%     + \dfrac{0.002266 \Kquatren}{n^{3}}
%     \nonumber \\ & + \dfrac{0.004531 \Kquatren}{n^{3}} 
%     + \dfrac{7.552 \times 10^{-5} \Kquatren^{1.5}}{n^{3.5}} 
%     + \dfrac{3.147 \times 10^{-7} \Kquatren^{2}}{n^{4}} 
%     \Bigg).
%     \label{eq:definition_R2n_int_noskew_bound_when_eps_01}
% \end{align}
% \endgroup
The quantity \(e_{2,n}(0.1)\) that appears in the two previous expressions can be upper bounded by 
\begin{equation*}
    e_{2,n}(0.1) \leq \exp\big(
    0.0119 + 
    0.000071 \times P_{2,n}(0.1) \big),
\end{equation*}
where \(P_{2,n}(0.1)\) itself satisfies
\begin{equation*}
    P_{2,n}(0.1)
    \leq \frac{42.9326|\lambda_{3,n}|}{(K_{4,n}^{1/4}n^{1/4})}
    + 4.8 \left(\frac{K_{4,n}}{n}\right)^{1/2}
    + \frac{3.2\lambda_{3,n}^2}{(K_{4,n}n)^{1/2}}
    + \frac{0.7156K_{4,n}^{1/4}|\lambda_{3,n}|}{n^{3/4}}
    + \frac{0.04K_{4,n}}{n}.
\end{equation*}

\subsection{Bounding incomplete Gamma-like integrals}

For every $p \geq 1$, $0 \leq l, m \leq q$ and $T > 0$, we define $J_1$, $J_2$, and $J_3$ by
\begin{align}
    &J_{1}(p, l, m, T)
    := \frac{1}{T} \int_l^m
    \left| \Psi(u/T) \right| u^p e^{-u^2/2} du 
    \label{eq:def_J1_p} \\
    &J_{2}(p,l,m,q,T)
    := \frac{1}{T} \int_l^m
    |\Psi(u/T)|u^p \exp \Bigg( -\frac{u^2}{2}
    \bigg( 1 - \frac{4\chi_1}{q} u
    - \sqrt{\frac{K_{4,n}}{n}} \bigg)
    \Bigg) du.
    \label{eq:def_J2_p} \\
    &J_{3}(p,l,m,q,T)
    := \frac{1}{T} \int_l^m
    |\Psi(u/T)|u^p \exp \Bigg( -\frac{u^2}{2}
    \bigg( 1 - \frac{4\chi_1}{q} u
    - \frac{1}{n} \bigg)
    \Bigg) du.
    \label{eq:def_J3_p}
\end{align}

We show now that all these integrals can be bounded by differences of incomplete Gamma functions.

\begin{lemma}
    We have
    \begin{align*}
        \big| J_{1}(p, l, m, T) \big|
        &\leq \frac{1.0253 \times 2^{p/2-2} 
        \big| \Gamma(p/2, m^2/2) - \Gamma(p/2, l^2/2) \big|}{\pi}
    \end{align*}
    \label{lemma:bound_J1p}
\end{lemma}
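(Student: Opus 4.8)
The plan is to bound $|J_1(p,l,m,T)|$ by first invoking the pointwise estimate on $\Psi$ and then recognizing the resulting integral as a difference of incomplete Gamma functions via a standard substitution. The quantity $J_1$ is defined as
\[
J_1(p,l,m,T) := \frac{1}{T} \int_l^m |\Psi(u/T)|\, u^p e^{-u^2/2}\, du.
\]

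First I would apply the first inequality in Equation~\eqref{eq:properties_Psi}, namely $|\Psi(t)| \leq 1.0253/(2\pi|t|)$, evaluated at $t = u/T$. This gives $|\Psi(u/T)| \leq 1.0253\, T / (2\pi u)$ for $u > 0$, so that the factor $1/T$ out front cancels the $T$ from the bound, leaving
\[
\big| J_1(p,l,m,T) \big|
\leq \frac{1.0253}{2\pi} \left| \int_l^m u^{p-1} e^{-u^2/2}\, du \right|.
\]
The absolute value around the integral accommodates the possibility that $l > m$ (since the lemma is stated for general endpoints with $0 \leq l,m$, and elsewhere in the paper $J_2$, $J_3$ are applied with possibly reversed limits); when $l \leq m$ the integrand is nonnegative and the modulus is harmless.

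Next I would evaluate the remaining integral through the change of variable $v = u^2/2$, so that $dv = u\,du$ and $u^{p-1} = (2v)^{(p-2)/2}$. This converts $u^{p-1}e^{-u^2/2}\,du$ into $2^{(p-2)/2} v^{p/2 - 1} e^{-v}\, dv$, whence
\[
\int_l^m u^{p-1} e^{-u^2/2}\, du
= 2^{(p-2)/2} \int_{l^2/2}^{m^2/2} v^{p/2-1} e^{-v}\, dv
= 2^{p/2-1}\big( \Gamma(p/2, l^2/2) - \Gamma(p/2, m^2/2) \big),
\]
using the definition $\Gamma(a,x) = \int_x^{+\infty} v^{a-1} e^{-v}\,dv$ of the upper incomplete Gamma function (valid since $p \geq 1$ ensures $p/2 > 0$). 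Substituting this back and noting $2^{p/2-1}/2 = 2^{p/2-2}$ yields
\[
\big| J_1(p,l,m,T) \big|
\leq \frac{1.0253 \times 2^{p/2-2}}{\pi}\, \big| \Gamma(p/2, m^2/2) - \Gamma(p/2, l^2/2) \big|,
\]
which is exactly the claimed bound. I do not anticipate any genuine obstacle here: the argument is a direct application of the $\Psi$-estimate followed by a routine substitution, and the only point requiring a moment's care is tracking the absolute values and the orientation of the integration limits, together with the powers of $2$ emerging from the change of variable.
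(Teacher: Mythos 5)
Your proof is correct and takes essentially the same route as the paper's: the pointwise bound $|\Psi(t)| \leq 1.0253/(2\pi|t|)$ at $t = u/T$ (which cancels the $1/T$ prefactor), followed by the substitution $v = u^2/2$ and identification of the resulting integral as a difference of upper incomplete Gamma functions, with the same bookkeeping $2^{p/2-1}/2 = 2^{p/2-2}$; the paper simply assumes $l \leq m$ without loss of generality where you carry absolute values. One typo-level slip: you wrote $u^{p-1} = (2v)^{(p-2)/2}$ where the correct identity is $u^{p-2} = (2v)^{(p-2)/2}$ (to be combined with $dv = u\,du$), but the displayed conversion $u^{p-1}e^{-u^2/2}\,du = 2^{(p-2)/2} v^{p/2-1} e^{-v}\,dv$ and all subsequent constants are right.
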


\noindent {\it Proof of Lemma \ref{lemma:bound_J1p}.}
Without loss of generality, we assume $l \leq m$.
By the first inequality in~(\ref{eq:properties_Psi}), we get
\begin{align*}
    J_{1}(p, l, m, T)
    &\leq \frac{1.0253}{2 \pi} \int_l^m u^{p-1}
    e^{-u^2/2}du
    = \frac{1.0253}{2 \pi} \int_{l^2/2}^{m^2/2} \sqrt{2v}^{p-1} e^{-v} \frac{dv}{\sqrt{2v}} \\
    &= \frac{1.0253 \times 2^{p/2-2}}{\pi} \int_{l^2/2}^{m^2/2} v^{p/2 - 1} e^{-v} dv,
\end{align*}
where we used the change of variable $v=u^2/2$, $dv = u du$, so that $du = dv / \sqrt{2v}$. The proof is completed by recognizing that the last integral can be written as a difference of two incomplete Gamma functions.
$\Box$

\begin{lemma}
    Let $\Delta := (1 - 4 \chi_1 - \sqrt{K_{4,n}/n}) / 2$ and $\gamma(a, x) := \int_0^x |v|^{a-1} \exp(-v) dv$. We have
    \begin{align*}
        \big| J_{2}(p,l,m,q,T) \big|
        &\leq \frac{1.0253}{4\pi} \times
        \begin{cases}
            |\Delta|^{-p/2}
            \big|\gamma(p/2, \Delta m^2) - \gamma(p/2, \Delta l^2) \big|,
            & \text{ if } {\color{black}\Delta > 0
            \text{ or } \Delta < 0}, \\
            (2/p) \cdot (m^p - l^p),
            & \text{ if } \Delta = 0. 
        \end{cases}
    \end{align*}
    \label{lemma:bound_J2p}
\end{lemma}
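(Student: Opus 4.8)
The plan is to follow the same two-step strategy as in the proof of Lemma~\ref{lemma:bound_J1p}, reducing $J_2$ to an elementary Gaussian-type integral and then recognizing that integral as a difference of (extended) incomplete Gamma functions. Throughout I would assume without loss of generality that $l \le m$; the case $l > m$ is recovered by reinstating the absolute values, since swapping the limits of integration merely changes the sign of $J_2(p,l,m,q,T)$ while its integrand is nonnegative (this is exactly the remark made after Lemma~\ref{lemma:bound_I32}).

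First I would discard $\Psi$ using the first inequality of~\eqref{eq:properties_Psi}, namely $|\Psi(u/T)| \le 1.0253\,T/(2\pi u)$ for $u>0$, so that the prefactor $\frac{1}{T}|\Psi(u/T)|$ is bounded by $1.0253/(2\pi u)$ and one power of $u$ is absorbed. This leaves
\[
J_2(p,l,m,q,T) \le \frac{1.0253}{2\pi}\int_l^m u^{p-1}\exp\!\Big(-\tfrac{u^2}{2}\big(1-\tfrac{4\chi_1}{q}u-\sqrt{K_{4,n}/n}\big)\Big)\,du.
\]
The key new step, compared with $J_1$, is to remove the $u$-dependence of the exponent's bracket. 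Since the domain of integration satisfies $u \in [l,m] \subseteq [0,q]$ by hypothesis, we have $\tfrac{4\chi_1}{q}u \le 4\chi_1$, hence $1-\tfrac{4\chi_1}{q}u-\sqrt{K_{4,n}/n} \ge 1-4\chi_1-\sqrt{K_{4,n}/n} = 2\Delta$. Multiplying this bracket bound by the nonpositive factor $-u^2/2$ reverses the inequality, so $\exp(-\tfrac{u^2}{2}(\cdots)) \le e^{-\Delta u^2}$ on the whole range, and this holds irrespective of the sign of $\Delta$. I would thereby obtain $J_2 \le \frac{1.0253}{2\pi}\int_l^m u^{p-1}e^{-\Delta u^2}\,du$.

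It then remains to evaluate $\int_l^m u^{p-1}e^{-\Delta u^2}\,du$. When $\Delta = 0$ the integrand is simply $u^{p-1}$, the integral equals $(m^p-l^p)/p$, and multiplying by $1.0253/(2\pi)$ gives $\tfrac{1.0253}{4\pi}\cdot\tfrac{2}{p}(m^p-l^p)$, i.e.\ the second branch. When $\Delta \neq 0$ I would substitute $v = \Delta u^2$; a short computation gives $\int_l^m u^{p-1}e^{-\Delta u^2}\,du = \tfrac12|\Delta|^{-p/2}\big|\gamma(p/2,\Delta m^2)-\gamma(p/2,\Delta l^2)\big|$, where $\gamma(a,x)=\int_0^x|v|^{a-1}e^{-v}\,dv$ is the extended lower incomplete Gamma function. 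Multiplying by $1.0253/(2\pi)$ produces the stated constant $1.0253/(4\pi)$ and closes the $\Delta\neq0$ branch.

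The main obstacle is the sign bookkeeping in the substitution when $\Delta < 0$: there $v=\Delta u^2$ is negative, $e^{-\Delta u^2}$ is increasing, the Jacobian $dv = 2\Delta u\,du$ carries a negative sign, and $\Delta^{-p/2}$ is not a well-defined real number. The extended definition of $\gamma$ (with $|v|^{a-1}$ and a signed upper limit) together with the absolute values $|\Delta|^{-p/2}$ and $\big|\gamma(\cdots)-\gamma(\cdots)\big|$ is precisely what packages the $\Delta>0$ and $\Delta<0$ computations into one formula. I would therefore verify separately that for $\Delta>0$ the difference $\gamma(p/2,\Delta m^2)-\gamma(p/2,\Delta l^2)$ is nonnegative while for $\Delta<0$ it is nonpositive, so that in both cases the right-hand side reduces to the same manifestly positive quantity equal to the integral, confirming the single uniform bound.
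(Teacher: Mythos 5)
Your proposal is correct and follows essentially the same route as the paper's proof: bound $|\Psi(u/T)|$ by $1.0253\,T/(2\pi u)$, use $0 \le u/q \le 1$ to replace the exponent's bracket by $2\Delta$ (valid for either sign of $\Delta$ since the factor $-u^2/2$ is nonpositive), then substitute $v = \Delta u^2$ and split into the cases $\Delta > 0$, $\Delta < 0$, $\Delta = 0$. Your explicit sign bookkeeping for the extended lower incomplete Gamma function matches the paper's computation, which handles the same issue by integrating over the signed interval $[\Delta l^2, \Delta m^2]$.
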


\noindent {\it Proof of Lemma \ref{lemma:bound_J2p}.}
Without loss of generality, we assume $l \leq m$.
Using the first inequality in~(\ref{eq:properties_Psi}) and the fact that $0 \leq u/q \leq 1$ when $u \in [l,m]$, we get
\begin{align*}
    J_{2}(p,l,m,q,T)
    &\leq \frac{1.0253}{2\pi}
    \int_l^m u^{p-1}
    \exp \Bigg( - \frac{u^2}{2}\bigg( 1
    - 4 \chi_1
    - \sqrt{\dfrac{K_{4,n}}{n}} \bigg)
    \Bigg) du.
\end{align*}

We can then write
\begin{align*}
    J_{2}(p,l,m,q,T)
    &\leq \frac{1.0253}{2\pi}
    \int_l^m u^{p-1}
    \exp \Big( - u^2 \Delta \Big) du \\
    & = \frac{1.0253}{2\pi}
    \int_l^m u^{p-1}
    \exp \Big( - u^2 |\Delta| \sign(\Delta) \Big) du.
\end{align*}
If $\Delta \neq 0$, we do the change of variable $v = u^2 \Delta$,
$dv = 2 \Delta u du$,
$u = \sqrt{v / \Delta}$,
$du
= (2 \sqrt{v \Delta})^{-1} dv$,
and get
\begin{align*}
    J_{2}(p,l,m,q,T)
    &\leq \frac{1.0253}{2\pi}
    \int_{[\Delta l^2 \, , \, \Delta m^2]} (v/\Delta)^{(p-1)/2}
    \exp \Big( - v \Big)
    (2 \sqrt{v \Delta})^{-1} dv \\
    &= \frac{1.0253}{4\pi}
    \int_{[\Delta l^2 \, , \, \Delta m^2]} (|v|/|\Delta|)^{(p-1)/2}
    \exp \Big( - v \Big)
    (\sqrt{|v| |\Delta|})^{-1} dv \\
    &= |\Delta|^{-p/2} \frac{1.0253}{4\pi}
    \int_{[\Delta l^2 \, , \, \Delta m^2]} |v|^{p/2-1}
    e^{-v} dv,
\end{align*}
where we remarked that $v / \Delta > 0$ in the sense that either $\Delta > 0$ and in this case $v > 0$ as well; or $\Delta < 0$ and $v < 0$ as well.
Finally, we get
\begin{align*}
    J_{2}(p,l,m,q,T)
    &\leq \frac{1.0253}{4\pi} \times
    \begin{cases}
        |\Delta|^{-p/2} \times
        \int_{\Delta l^2}^{\Delta m^2} |v|^{p/2-1} e^{-v} dv
        & \text{ if } \Delta > 0 \\
        |\Delta|^{-p/2} \times
        \int_{\Delta m^2}^{\Delta l^2} |v|^{p/2-1} e^{-v} dv
        & \text{ if } \Delta < 0  \\
        2 \int_{l}^{m} v^{p-1} dv
        & \text{ if } \Delta = 0 
    \end{cases}
\end{align*}
If $\Delta \neq 0$, the bound can be rewritten as
\begin{align*}
    J_{2}(p,l,m,q,T)
    &\leq |\Delta|^{-p/2} \frac{1.0253}{4\pi} \big|\gamma(p/2, \Delta m^2) - \gamma(p/2, \Delta l^2) \big|.
    \quad \Box
\end{align*}

\begin{lemma}
    If $n \geq 3$, then
    \begin{align*}
        \big| J_{3}(p,l,m,q,T) \big|
        &\leq \frac{1.0253 \times 2^{3p/2-2}
        \big| \Gamma(p/2, m^2/8) - \Gamma(p/2, l^2/8) \big|}{\pi}.
    \end{align*}
    \label{lemma:bound_J3p}
\end{lemma}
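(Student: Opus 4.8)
The plan is to reproduce, step for step, the argument used for Lemma~\ref{lemma:bound_J2p}, with the factor $\sqrt{K_{4,n}/n}$ appearing in $J_2$ replaced throughout by the factor $1/n$ that defines $J_3$ in Equation~\eqref{eq:def_J3_p}, and then to exploit the hypothesis $n \geq 3$ to collapse the resulting exponent into a clean constant. Without loss of generality I would assume $l \leq m$: since $p \geq 1$ and $0 \leq l,m \leq q$, the integrand of $J_3$ is nonnegative on $[l,m]$, so $|J_3(p,l,m,q,T)|$ is simply the integral taken between $\min(l,m)$ and $\max(l,m)$, and the absolute value bars on the right-hand side absorb the orientation of the limits.

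First I would apply the first inequality of~\eqref{eq:properties_Psi}, namely $|\Psi(u/T)| \leq 1.0253\,T/(2\pi u)$, so that the prefactor $T^{-1}|\Psi(u/T)|u^p$ reduces to $(1.0253/(2\pi))\,u^{p-1}$ and all $T$-dependence disappears. Next, using that $0 \leq u/q \leq 1$ on the range of integration, I replace $(4\chi_1/q)u$ by its upper bound $4\chi_1$ inside the exponent, obtaining
\begin{align*}
    J_{3}(p,l,m,q,T)
    &\leq \frac{1.0253}{2\pi}
    \int_l^m u^{p-1}
    \exp\!\left( -\frac{u^2}{2}\left( 1 - 4\chi_1 - \frac{1}{n} \right) \right) du.
\end{align*}

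The key quantitative step is to show that $1 - 4\chi_1 - 1/n \geq 1/4$ whenever $n \geq 3$. Since $\chi_1 \approx 0.099$ gives $4\chi_1 \approx 0.396$ and $1/n \leq 1/3$ for $n \geq 3$, one gets $1 - 4\chi_1 - 1/n \geq 1 - 0.396 - 0.334 = 0.270 > 1/4$, so the exponent is bounded above by $-u^2/8$ and the integrand by $(1.0253/(2\pi))\,u^{p-1}e^{-u^2/8}$. This is the only place where the hypothesis $n \geq 3$ enters, and it is the step I expect to be the main (if mild) obstacle, since it requires pinning down $\chi_1$ numerically; the monotonicity of $n \mapsto 1/n$ then makes the bound uniform over all $n \geq 3$.

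Finally I would perform the change of variable $v = u^2/8$ (so that $u = \sqrt{8v}$ and $du = 4(8v)^{-1/2}\,dv$), which transforms $\int_l^m u^{p-1}e^{-u^2/8}\,du$ into $2^{3p/2-1}\int_{l^2/8}^{m^2/8} v^{p/2-1}e^{-v}\,dv$, and then recognize the latter integral as the difference $\Gamma(p/2, l^2/8) - \Gamma(p/2, m^2/8)$ of upper incomplete Gamma functions. Collecting the constants $1.0253/(2\pi)$ and $2^{3p/2-1}$ into $1.0253\cdot 2^{3p/2-2}/\pi$, and writing the Gamma difference with absolute value bars (legitimate since it is nonnegative for $l \leq m$), yields exactly the claimed bound.
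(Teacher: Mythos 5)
Your proof is correct and takes essentially the same approach as the paper's: the bound $|\Psi(u/T)|\leq 1.0253\,T/(2\pi u)$ from Equation~\eqref{eq:properties_Psi}, replacing $u/q$ by $1$, using $n\geq 3$ and $\chi_1\approx 0.099$ to conclude $1-4\chi_1-1/n>1/4$, and then the change of variable $v=u^2/8$ yielding the difference of upper incomplete Gamma functions with the constant $1.0253\times 2^{3p/2-2}/\pi$. The only difference is cosmetic: you spell out the final substitution explicitly, whereas the paper abbreviates it as ``the same change of variable and computations'' as in Lemma~\ref{lemma:bound_J2p}.
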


\noindent {\it Proof of Lemma \ref{lemma:bound_J3p}.}
Without loss of generality, we assume $l \leq m$.
Using the first inequality in~(\ref{eq:properties_Psi}), we get
\begin{align*}
    J_{3}(p,l,m,q,T)
    &\leq \frac{1.0253}{2\pi}
    \int_l^m u^{p-1}
    \exp \Bigg( - \frac{u^2}{2}
    \bigg( 1 - \frac{4\chi_1}{q} u - \dfrac{1}{n} \bigg)
    \Bigg) du.
\end{align*}
We bound $u/q$, by $1$, so that
\begin{align*}
    J_{3}(p,l,m,q,T)
    &\leq \frac{1.0253}{2\pi}
    \int_l^m u^{p-1}
    \exp \Bigg( - \frac{u^2}{2} \bigg( 1 - 4 \chi_1 - \dfrac{1}{n} \bigg)
    \Bigg) du.
\end{align*}
Note that 
$1 - 4 \chi_1 - 1/n > 1/4$ when $n \geq 3$.
When this is the case, using the same change of variable and computations, we get the same result as for the previous lemma.
$\Box$

\subsection{Statement and proof of Proposition~\ref{prop:charac_bound_differentiable}}
\label{proof:prop:prop:charac_bound_differentiable}

A bound on the tail of the characteristic function is nearly equivalent to a regularity condition on the density. We detail this in the following proposition. 
The first part of this proposition is taken from \cite[Theorem 2.5.4]{ushakov2011} (see also \cite{ushakov1999some}).

\begin{proposition}
Let $p \geq 1$ be an integer, $Q$ be a probability measure that admits a density $q$ with respect to Lebesgue's measure, and $f_Q$ its corresponding characteristic function.

\begin{enumerate}
    \item If $q$ is $(p-1)$ times differentiable and $q^{(p-1)}$ is a function with bounded variation, then
    \begin{equation*}
        |f_Q(t)| \leq \frac{\mathrm{Vari}[q^{(p-1)}]}{|t|^p},
    \end{equation*}
    where $\mathrm{Vari}[\psi]$ denotes the total variation of a function $\psi$.
    
    \item If $t \mapsto |t|^{p-1} |f_Q(t)|$ is integrable on a neighborhood of $+ \infty$,
    then $q$ is $(p-1)$ times differentiable.
\end{enumerate}
\label{prop:charac_bound_differentiable}
\end{proposition}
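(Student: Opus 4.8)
The plan is to treat the two parts separately, exploiting the standard Fourier-analytic dictionary between the tail decay of $f_Q$ and the smoothness of $q$. Since the first part is borrowed from \cite[Theorem 2.5.4]{ushakov2011}, I would simply invoke that reference, but for completeness it can be reconstructed by repeated integration by parts. With the convention $f_Q(t)=\int_{\Rb} e^{itx} q(x)\,dx$, integrating by parts $p-1$ times and using that $q^{(j)} \to 0$ at $\pm\infty$ for $j \leq p-2$ (a consequence of the integrability of each successive derivative) yields the identity $(-it)^{p-1} f_Q(t) = \int_{\Rb} e^{itx} q^{(p-1)}(x)\,dx$. One further integration by parts, now reading $q^{(p-1)}$ as a function of bounded variation and writing the integral as a Riemann–Stieltjes integral against $dq^{(p-1)}$, gives $\big|(-it)^{p-1} f_Q(t)\big| \leq |t|^{-1} \int_{\Rb} \big|dq^{(p-1)}(x)\big| = |t|^{-1}\,\mathrm{Vari}[q^{(p-1)}]$, which rearranges into the claimed bound $|f_Q(t)| \leq \mathrm{Vari}[q^{(p-1)}]\,/\,|t|^p$.

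For the second part, the key observation is that $|f_Q|\leq 1$ everywhere, so integrability of $t\mapsto |t|^{p-1}|f_Q(t)|$ on a neighborhood of $+\infty$ upgrades to integrability of $t\mapsto |t|^{k}|f_Q(t)|$ over all of $\Rb$ for every integer $0\leq k\leq p-1$: on a bounded interval the factor $|t|^k$ is bounded and $|f_Q|\leq 1$, while on the tails $|t|^k\leq |t|^{p-1}$. In particular $f_Q\in L^1(\Rb)$, so the Fourier inversion formula applies and $q$ admits the continuous representative $q(x)=\tfrac{1}{2\pi}\int_{\Rb} e^{-itx} f_Q(t)\,dt$. I would then differentiate under the integral sign $k$ times for each $k\leq p-1$: since $\partial_x^k\big(e^{-itx}f_Q(t)\big)=(-it)^k e^{-itx} f_Q(t)$ is dominated, uniformly in $x$, by the integrable function $t\mapsto |t|^k|f_Q(t)|$, the dominated convergence theorem justifies both the interchange and the continuity of the resulting $q^{(k)}$. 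This shows $q$ is $(p-1)$ times differentiable (for $p=1$ the statement reduces to continuity of $q$, which is exactly what inversion with $f_Q\in L^1$ provides).

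The delicate points are the vanishing of the boundary terms in the first part and the construction of an integrable majorant in the second. For the first part one must ensure that each intermediate derivative decays at infinity, which follows from integrability of the next derivative; for the second part the crux is producing an $x$-independent integrable bound so that differentiation under the integral is legitimate, and this is precisely where the quantitative hypothesis on the tail of $|f_Q|$ enters. Since the first part is quoted verbatim from the literature, the genuine work is the inversion-and-differentiation argument of the second part.
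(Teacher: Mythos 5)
Your proof is correct and takes essentially the same route as the paper: Part~1 is delegated to Ushakov's Theorem~2.5.4 in both (your integration-by-parts sketch is just the standard argument behind that citation), and Part~2 proceeds, exactly as the paper does, by Fourier inversion followed by differentiation under the integral sign with the integrable majorant $t \mapsto |t|^k |f_Q(t)|$. If anything, you spell out a step the paper leaves implicit, namely upgrading integrability of $|t|^{p-1}|f_Q(t)|$ from a neighborhood of $+\infty$ to all of $\Rb$ (which for the negative tail also uses the symmetry $|f_Q(-t)| = |f_Q(t)|$).
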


Remark that the existence of $C>0$ and $\beta > 1$ such that $|f_Q(t)| \leq C / \big( |t|^p \log(|t|)^\beta \big)$ is sufficient to satisfy the integrability condition in the second part of Proposition~\ref{prop:charac_bound_differentiable}.

\bigskip

\noindent {\it Proof of Proposition \ref{prop:charac_bound_differentiable}.2.} The assumed integrability condition implies that $f_Q$ is absolutely integrable, and therefore we can apply the inversion formula \citep[Theorem 1.2.6]{ushakov2011}
so that for any $x \in \Rb$,
\begin{align*}
    q(x) = \int_{-\infty}^{+ \infty} r(x,t) dt.
\end{align*}
where $r(x,t) := \dfrac{1}{2 \pi} e^{-itx} f_P(t)$.
Note that $r$ is infinitely differentiable with respect to $x$, and that
\begin{align*}
    \left| \frac{\partial r(x,t)}{\partial x^{p-1}} \right|
    = \left| \dfrac{1}{2 \pi} (-it)^{p-1} e^{-itx} f_Q(t) \right|
    =  \dfrac{1}{2 \pi} |t|^{p-1} \big|f_Q(t)\big| ,
\end{align*}
which is integrable with respect to $t$, by assumption.
This concludes the proof that $q$ is $(p-1)$ times differentiable, as $r$ is measurable.

% %%=============================================%%
% %% For submissions to Nature Portfolio Journals %%
% %% please use the heading ``Extended Data''.   %%
% %%=============================================%%

% %%=============================================================%%
% %% Sample for another appendix section			       %%
% %%=============================================================%%

% %% \section{Example of another appendix section}\label{secA2}%
% %% Appendices may be used for helpful, supporting or essential material that would otherwise 
% %% clutter, break up or be distracting to the text. Appendices can consist of sections, figures, 
% %% tables and equations etc.

\end{document}